\documentclass[a4paper]{article}

\usepackage[utf8]{inputenc}
\usepackage[width=15cm,height=22cm]{geometry}
\usepackage{amsmath,amsfonts,amssymb,amsthm}
\usepackage[hidelinks,pdfusetitle]{hyperref}
\usepackage[capitalise]{cleveref}
\usepackage[usenames,dvipsnames]{xcolor}
\usepackage{bbm}
\usepackage{stmaryrd}

\crefname{equation}{}{}

\newtheorem{proposition}{Proposition}[section]
\newtheorem{theorem}[proposition]{Theorem}

\newtheorem{definition}[proposition]{Definition}
\newtheorem{corollary}[proposition]{Corollary}
\newtheorem{lemma}[proposition]{Lemma}
\newtheorem{remark}[proposition]{Remark}
\numberwithin{equation}{section}
\newtheorem{example}[proposition]{Example}

\newcommand*\samethanks[1][\value{footnote}]{\footnotemark[#1]}

\title{Convergent realizations of Lie subalgebras}

\author{Karine Beauchard\texorpdfstring{\thanks{Univ Rennes, CNRS, IRMAR -- UMR 6625, F-35000 Rennes, France}}{}, Jérémy Le Borgne\texorpdfstring{\samethanks}{},
Fr\'ed\'eric Marbach\texorpdfstring{\thanks{DMA, École normale supérieure, Université PSL, CNRS, 75005 Paris, France}}{}}

\newcommand{\N}{\mathbb{N}}
\newcommand{\K}{\mathbb{K}}
\newcommand{\C}{\mathbb{C}}
\newcommand{\R}{\mathbb{R}}

\newcommand{\dd}{\,\mathrm{d}}

\newcommand{\B}{\mathcal{B}}

\newcommand{\fg}{\mathfrak{g}}
\newcommand{\fh}{\mathfrak{h}}
\newcommand{\fp}{\mathfrak{p}}
\newcommand{\fhp}{\mathfrak{h}_{\fp}}
\newcommand{\Id}{\operatorname{Id}}
\newcommand{\im}{\operatorname{im}}

\newcommand{\Der}{\operatorname{Der}}
\newcommand{\End}{\operatorname{End}}
\newcommand{\Fac}{\operatorname{Fac}}
\newcommand{\codim}{\operatorname{codim}}
\newcommand{\ord}{\operatorname{ord}}
\newcommand{\rank}{\operatorname{rank}}
\newcommand{\GL}{\operatorname{GL}}
\newcommand{\kx}{\K \llbracket x \rrbracket}
\newcommand{\ky}{\K \llbracket y \rrbracket}
\newcommand{\lxl}{\llbracket x \rrbracket}

\DeclareMathOperator{\Br}{Br}
\DeclareMathOperator{\ad}{ad}

\DeclareMathOperator{\vect}{span}

\newcommand{\opnorm}[1]{\left|\mkern-1.5mu\left|\mkern-1.5mu\left| #1 \right|\mkern-1.5mu\right|\mkern-1.5mu\right|}

\newcommand{\intset}[1]{\llbracket #1 \rrbracket}

\begin{document}

\maketitle

\begin{abstract}
    It has been known since the seminal work of Guillemin and Sternberg that Lie subalgebras of finite codimension of an arbitrary real or complex Lie algebra can be realized as subalgebras of formal vector fields over formal power series.
    In this note, we characterize the Lie subalgebras which admit a convergent realization in the sense of locally analytic vector fields.
    We give generalizations of these properties for the problem of output realization.
    
    We give reformulations and applications of these algebraic results in the context of control theory.
    In particular, we recover and clarify previous results on the realization of Chen--Fliess series for control-affine systems, the equivalence of control systems, and the existence of embedded or canonical systems.
\end{abstract}

\setcounter{tocdepth}{1}
\tableofcontents

\newpage

\section{Introduction}
\label{sec:realizations-Lie}

\subsection{Power series and vector fields}
\label{sec:intro-powers}

We define our main objects of study, and refer to \cite{Draisma2012,Sambale2023} for a more thorough introduction.
Throughout this text, $\K$ denotes $\R$ or $\C$ and is the ground field of all vector spaces.
For a positive integer $d \in \N^*$, we consider the indeterminates $x := (x_1, \dotsc, x_d)$.
We denote by $\kx$ the $\K$-algebra of \emph{formal commutative power series} in the $x_i$ with coefficients in $\K$.
Its elements are series of the form
\begin{equation}
    f = \sum_{m \in \N^d} f_m x^m
\end{equation}
where $x^m := x_1^{m_1} \dotsb x_d^{m_d}$ and $f_m \in \K$.
For a multi-index $m \in \N^d$, we write $|m| := m_1 + \dotsb + m_d$ and $m! := m_1! \dotsb m_d!$.
We say that such a power series $f$ is \emph{convergent} when 
\begin{equation} \label{eq-def:ps-conv}
    \exists C_f \geq 0, \forall m \in \N^d, \quad |f_m| \leq C_f^{|m|+1}.
\end{equation}
This corresponds to the fact that $f$ can be seen as an analytic map in the ball of radius $1/C_f$.

We also consider $\Der \kx$, the Lie algebra of derivations on $\kx$, whose elements are \emph{formal vector fields in $d$ variables} (see \cite[p.\ 40]{Draisma2012}), which can thus be written as
\begin{equation}
    v = \sum_{i=1}^d v^i \partial_i
\end{equation}
where $v^i \in \kx$ and $\partial_i$ is the differentiation with respect to $x_i$.
We say that such a vector field is \emph{convergent} when each of its components $v^i$ satisfies \eqref{eq-def:ps-conv}.
For a vector field $v \in \Der\kx$, we use the notation $v(0) := (v^1_0, \dotsc, v^d_0) \in \K^d$.
We denote by $\Der_0 \kx$ the Lie subalgebra of formal vector fields vanishing at $0$ (i.e.\ such that $v(0) = 0$).

\subsection{Formal realizations of Lie subalgebras}

Let $\fg$ be a Lie algebra over $\K$, and $\fh$ a vector subspace of $\fg$.

\begin{definition}
    The pair $(\fg, \fh)$ is called \emph{transitive} when $\fh$ is a Lie subalgebra and $\codim_{\fg} \fh < +\infty$.
\end{definition}

\begin{definition}
    We say that $\rho : \fg \to \Der \kx$ is a \emph{realization in $d$ variables} of the pair $(\fg, \fh)$ when $\rho$ is a homomorphism of Lie algebras such that $\fh = \rho^{-1}(\Der_0 \kx)$.
\end{definition}

Let $\rho$ be a realization in $d$ variables of a pair $(\fg, \fh)$. 
Since $\rho$ is a homomorphism of Lie algebras and $\Der_0 \kx$ is a Lie subalgebra of $\Der \kx$, $\fh$ is a Lie subalgebra of $\fg$.
Since $\fh$ is the kernel of the linear map $g \mapsto \rho(g)(0)$ from $\fg$ to $\K^d$, $\codim_\fg \fh \leq d < +\infty$.
Hence, $(\fg,\fh)$ is a transitive pair.
Conversely, it has been known since Guillemin and Sternberg \cite{GuilleminSternberg1964} that any transitive pair admits a realization in minimal dimension, which is unique up to a change of coordinates (see \cref{thm:uniqueness}).

\begin{theorem}
    \label{thm:transitive}
    Any transitive pair $(\fg,\fh)$ admits a realization in $d := \codim_\fg \fh$ variables.
\end{theorem}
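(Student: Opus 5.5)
We construct the realization explicitly from a complement of $\fh$ in $\fg$, using the coset-space picture. Since $\fg$ may be infinite-dimensional, we cannot work with a Lie group $G/H$. Instead we use the formal analogue: the space of formal power series with values in the dual of the induced module. Concretely, let $\mathcal{U}(\fg)$ be the universal enveloping algebra and consider the coinduced module $\mathcal{A} := \operatorname{Hom}_{\mathcal{U}(\fh)}(\mathcal{U}(\fg), \K)$, where $\fh$ acts on $\K$ trivially. This is the space of linear forms on $\mathcal{U}(\fg)$ vanishing on $\mathcal{U}(\fg)\fh$, that is, the dual of $\mathcal{U}(\fg)\otimes_{\mathcal{U}(\fh)}\K$.

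First, we note that the coproduct of $\mathcal{U}(\fg)$ makes $\mathcal{U}(\fg)\otimes_{\mathcal{U}(\fh)}\K$ a cocommutative coalgebra. Hence $\mathcal{A}$ is a commutative associative algebra. The left action of $\fg$ on $\mathcal{U}(\fg)$ then induces an action of $\fg$ on $\mathcal{A}$ by derivations; this is the Leibniz rule, coming from the fact that elements of $\fg$ are primitive. This gives a Lie algebra homomorphism $\fg\to\Der\mathcal{A}$.

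Second, we identify $\mathcal{A}$ with $\kx$. Choose a basis $e_1,\dots,e_d$ of a complement $\mathfrak{m}$ of $\fh$ in $\fg$. By the Poincaré--Birkhoff--Witt theorem, the symmetrization map gives a coalgebra isomorphism $S(\mathfrak{m})\otimes\mathcal{U}(\fh)\cong\mathcal{U}(\fg)$. Concretely, the ordered monomials $e_1^{m_1}\cdots e_d^{m_d}$, or the symmetrized versions, form a basis of $\mathcal{U}(\fg)\otimes_{\mathcal{U}(\fh)}\K$, and the coproduct corresponds to that of $S(\mathfrak{m})$. Dualizing, $\mathcal{A}\cong S(\mathfrak{m})^*\cong\kx$ as algebras, via $f\mapsto\sum_m f(e^m/m!)\,x^m$ with divided powers. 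In particular, $\mathcal{A}$ is the full formal power series ring, not just polynomials, precisely because we take the full dual. This step is the main technical point: checking that the algebra structure transported by PBW is exactly the product of power series. Using symmetrization, which is a coalgebra isomorphism in characteristic $0$, settles this.

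Third, we check the transitivity condition. Evaluation at $0$ corresponds to $f\mapsto f(1)$. For $g\in\fg$, the constant term of $\rho(g)x_i$ is obtained by evaluating $x_i$ on $g\cdot 1=g$ (up to sign, depending on the convention for the action). It therefore equals the $e_i$-coordinate of $g$ modulo $\fh$. Hence $\rho(g)(0)$ is, up to sign, the class of $g$ in $\fg/\fh\cong\K^d$, and $\rho(g)(0)=0$ if and only if $g\in\fh$. Finally, every derivation of $\kx$ is determined by its values on the $x_i$, because the derivations here are continuous for the $x$-adic topology (the dual action is). So $\rho$ indeed lands in $\Der\kx$ and is a realization in $d=\codim_\fg\fh$ variables.
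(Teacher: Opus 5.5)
Your argument is correct, but it follows a different route from the paper. You use Blattner's original, coordinate-free construction:
\begin{itemize}
\item the dual of the cocommutative coalgebra $U(\fg)/U(\fg)\fh$ is a commutative algebra;
\item primitive elements act on it by transposed coderivations, hence by derivations;
\item PBW identifies this algebra with $\kx$.
\end{itemize}
The Lie homomorphism property is then automatic, provided you insert the sign or use the antipode, since transposing left multiplication gives an anti-homomorphism. The real work in your approach is the algebra identification.

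The paper does the opposite. It \emph{defines} $\rho(g)$ by the explicit formula \eqref{eq:formula-blattner}, so being a derivation is free. The work goes into the homomorphism property, which follows from the congruence \eqref{eq:key-congruence} for $\mathcal{E}(x)=e^{x_d b_d}\dotsm e^{x_1 b_1}$. That congruence is proved by conjugation ($\mathcal{E}g\mathcal{E}^{-1}\in\fg\lxl$) and inversion of the matrix $A(x)$, with no coalgebra language.

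The two routes are closer than they look. Suppose you use the right ideal $\fh U(\fg)$, the action $(g\cdot\phi)(u)=\phi(ug)$, and the ordered monomials $\mathbf{b}^m=b_d^{m_d}\dotsm b_1^{m_1}$ instead of symmetrized ones. Ordered monomials also work: multiplicativity of $\Delta$ gives $\Delta(\mathbf{b}^m)=\sum_{k\le m}\binom{m}{k}\mathbf{b}^k\otimes\mathbf{b}^{m-k}$, i.e.\ $\mathcal{E}(x)$ is grouplike. Then your identification is $\phi\mapsto\phi(\mathcal{E}(x))$ and $x_i$ corresponds to $\chi_i$. The coefficient of $x^m/m!$ in $\rho(g)x_i$ is $\chi_i(\mathbf{b}^m g)$, which is exactly \cref{thm:blattner}.

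With symmetrization you instead get a realization in coordinates of the first kind. It is valid and conjugate to the paper's, but it lacks the explicit coefficients $\chi_i(\mathbf{b}^m g)$ that \cref{sec:sufficient} estimates to prove convergence.

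In short, your route gives a conceptual reason why the construction produces derivations, and a canonical object whose only choice-dependent part is the identification with $\kx$. The paper's route gives a self-contained, elementary proof of an explicit formula, which is what the rest of the paper needs.
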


\subsection{An explicit realization formula}

Initially, Guillemin and Sternberg's realization theorem of \cite{GuilleminSternberg1964} is an abstract existence result.
Nevertheless, Draisma extracted in \cite{Draisma2002} from Blattner's subsequent proof of \cite{Blattner1970} an explicit realization formula, which we now state.
We start by recalling the following important result.

Given a Lie algebra $\fg$, we denote by $U(\fg)$ the universal enveloping algebra of $\fg$.

\begin{proposition}[Poincaré--Birkhoff--Witt] \label{thm:PBW}
    Let $\mathcal{B}$ be a totally ordered basis of $\fg$. 
    Then the products $b_p^{n_p} \dotsb b_1^{n_1}$, for $p \in \N$, $b_p > \dotsb > b_1 \in \mathcal{B}$, $n_1, \dotsc, n_p \in \N^*$ (including the empty product $1$) form a basis of $U(\fg)$, called the \emph{Poincaré--Birkhoff--Witt basis of $U(\fg)$ associated with $\mathcal{B}$}.
\end{proposition}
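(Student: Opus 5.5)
The plan is the classical two-part argument: show first that the ordered monomials span $U(\fg)$, then that they are linearly independent. For independence we build a representation of $\fg$ on a polynomial algebra in which these monomials act in a recognisably triangular way.

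\emph{Spanning.} Filter $U(\fg)$ by $U_n$, the span of products of at most $n$ elements of $\fg$. By linearity every element of $U_n$ is a combination of words $b_{i_1}\dotsb b_{i_k}$ with $k\le n$ and $b_{i_j}\in\B$. To such a word we attach its length and its number of inversions relative to the order. We then induct on the length, and for fixed length on the number of inversions. If two adjacent letters are out of order, the relation $b b' = b' b + [b,b']$ in $U(\fg)$ swaps them. The result is one word of the same length with one fewer inversion, plus a combination of words of length $k-1$, because $[b,b']$ expands in $\B$. After finitely many swaps the word is a combination of nonincreasing words, and after grouping equal letters these are exactly the products $b_p^{n_p}\dotsb b_1^{n_1}$.

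\emph{Independence.} Let $S=\K[z_b : b\in\B]$ be the polynomial algebra, with $S_n$ its polynomials of degree at most $n$. For a nonincreasing finite sequence $M$ of basis elements, written $b_{i_1}\ge b_{i_2}\ge \dotsb$, let $z_M$ be the corresponding monomial. We construct, by induction on $n$, bilinear maps $\fg\times S_n\to S_{n+1}$, written $(b,f)\mapsto b\cdot f$, with three properties:
\begin{itemize}
\item[(i)] $b\cdot z_M = z_b z_M$ whenever $b\ge \max M$;
\item[(ii)] $b\cdot z_M - z_b z_M \in S_n$ when $|M|\le n$;
\item[(iii)] $a\cdot(b\cdot f) - b\cdot(a\cdot f) = [a,b]\cdot f$ for $f\in S_{n-1}$.
\end{itemize}
If $b<\max M = c$, write $M=(c,N)$. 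Then we are forced to set
\[
b\cdot z_M = c\cdot(b\cdot z_N) + [b,c]\cdot z_N .
\]
Here $b\cdot z_N$ is already defined and equals $z_b z_N$ plus lower-degree terms, so $c$ acting on it is defined via (i) and the inductive hypothesis. Once this holds, (iii) makes $S$ a $\fg$-module, hence a $U(\fg)$-module. Applying $b_p^{n_p}\dotsb b_1^{n_1}$ to $1$ gives $z_{b_p}^{n_p}\dotsb z_{b_1}^{n_1}$ by (i). These images are distinct monomials, so they are independent, and therefore the PBW products are independent in $U(\fg)$.

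The main obstacle is verifying (iii) in the inductive step for all pairs $a,b$ and monomials $z_M$. By antisymmetry we may assume $a>b$. The case $b\ge\max M$ follows directly from the definition. The remaining case, $b$ and $a$ both smaller than $c=\max M$, requires expanding both sides using the inductive hypothesis on $z_N$. The difference then reduces to
\[
[c,[a,b]]+[a,[b,c]]+[b,[c,a]] = 0
\]
acting on $z_N$, which is the Jacobi identity. This is the same as Jacobson's proof, and I would follow it closely; the delicate part is keeping track of which degree each intermediate term lies in, so that the induction hypothesis legitimately applies. Alternatively, for an explicitly available shortcut in characteristic $0$, one can use the symmetrisation map $S(\fg)\to U(\fg)$. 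Showing it is injective via the associated graded is, however, equivalent work, so I would carry out the representation construction above.
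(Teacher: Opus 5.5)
The paper gives no proof of this proposition: it recalls the Poincaré--Birkhoff--Witt theorem as a classical fact and uses it as a black box. Your argument is the standard proof due to Jacobson (as in Humphreys, \emph{Introduction to Lie Algebras}, \S 17.4). You have correctly transposed it to the paper's convention, in which the largest basis element stands on the left. It uses only a total order on $\B$ and polynomials in finitely many variables at a time, so it covers the infinite-dimensional algebras the paper needs, such as free Lie algebras. The spanning part, the inductive definition of the action, property (ii), and the final step (applying the PBW products to $1$ via (i)) are all fine.

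Two points in your sketch of (iii) should be tightened.

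\textbf{The easy case.} With $a>b$, the case settled directly by the definition is $a\ge\max M$, not only $b\ge\max M$. In that case $a\cdot z_M=z_az_M$, and the defining formula with $c=a$ and $N=M$ gives $b\cdot(a\cdot z_M)=a\cdot(b\cdot z_M)+[b,a]\cdot z_M$. As you wrote it, the subcase $b<\max M\le a$ falls under neither of your two cases. It is exactly as easy, but it must be included for the case split to be exhaustive.

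\textbf{The hard case.} Take $a,b<c$ with $M=(c,N)$, and write $b\cdot z_N=z_bz_N+w$ with $w$ of lower degree. Expanding
\[
a\cdot(b\cdot z_M)=a\cdot\bigl(c\cdot(z_bz_N+w)\bigr)+a\cdot([b,c]\cdot z_N)
\]
produces the term $a\cdot\bigl(c\cdot(z_bz_N)\bigr)$. Here $z_bz_N$ has the same degree as $z_M$, so the induction hypothesis does not apply to it. What justifies commuting $a$ and $c$ there is the easy case of (iii) at the current level, which is available because $c>a$ and $c\ge\max(b,N)$. This is why the easy case must be established first at each level.

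The remaining terms are covered by the induction hypothesis: those involving $w$, the term $a\cdot([b,c]\cdot z_N)$, and, after antisymmetrizing, $c\cdot([a,b]\cdot z_N)=[a,b]\cdot z_M+[c,[a,b]]\cdot z_N$. The Jacobi identity then closes the computation as you say.
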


\begin{theorem} \label{thm:blattner}
    Let $(\fg, \fh)$ be a transitive pair and $d := \codim_{\fg} \fh$.
    Let $\mathcal{B}$ be a totally ordered basis of $\fg$ such that $\mathcal{B} = \{ b_1, \dotsc, b_d \} \cup \mathcal{B}'$ where $\mathcal{B}'$ is a basis of $\fh$, with $b_1 < \dotsb < b_d$ and $b_d < b$ for every $b \in \mathcal{B}'$.
    Define $\rho : \fg \to \Der \kx$ by
    \begin{equation}
        \label{eq:formula-blattner}
        \rho(g) := \sum_{i=1}^d \sum_{m \in \N^d} \frac{\chi_i(\mathbf{b}^m g)}{m!} x^m \partial_i
    \end{equation}
    where $\mathbf{b}^m := b_d^{m_d} \dotsb b_1^{m_1}$ and $\chi_i(u)$ denotes the component along $b_i$ of the decomposition of $u \in U(\fg)$ in the Poincaré--Birkhoff--Witt basis associated with $\mathcal{B}$.
    Then $\rho$ is a realization of $(\fg,\fh)$.
\end{theorem}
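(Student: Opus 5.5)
Our plan is to identify \eqref{eq:formula-blattner} with the vector fields induced by the natural left action of $\fg$ on a space of linear forms, through a formal Taylor expansion. Set $M := U(\fg)/U(\fg)\fh$, a left $U(\fg)$-module. By \cref{thm:PBW} applied to $\mathcal{B}$, with the elements of $\fh$ placed to the right, the classes of $\mathbf{b}^m$ for $m\in\N^d$ form a basis of $M$. Consider the dual $M^* = \operatorname{Hom}(M,\K)$ and the linear bijection $\Phi : M^* \to \kx$, $\Phi(\lambda) := \sum_m \lambda(\mathbf{b}^m)\, x^m/m!$. We first check that $\Phi$ is an algebra isomorphism for a suitable product on $M^*$. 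The coproduct $\Delta$ of $U(\fg)$ preserves the left ideal $U(\fg)\fh$ in the sense $\Delta(U\fh)\subset U\fh\otimes U + U\otimes U\fh$, so $M$ is a coalgebra and $M^*$ a commutative algebra. The point to verify is that $\Delta(\mathbf{b}^m) \equiv \sum_{k\le m}\binom{m}{k}\mathbf{b}^{k}\otimes \mathbf{b}^{m-k}$ modulo terms killed in $M\otimes M$ after reordering. This holds because $\Delta$ is an algebra morphism: $\Delta(\mathbf b^m)=\prod_i(b_i\otimes1+1\otimes b_i)^{m_i}$ taken in the order $d,\dots,1$ expands exactly into products of ordered monomials in each factor. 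The binomial identity is then exact, with no reordering needed, since both tensor factors retain the decreasing order. Hence $\Phi$ is multiplicative, being the dual of a divided-power coalgebra.

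Next, $\fg$ acts on $M^*$ by $(g\cdot\lambda)(u) := \lambda(ug)$? This is not a left action on $M$. Instead we use the right structure dual to the left action: $(\theta(g)\lambda)(u) := -\lambda(g u)$. Because $\fg$ consists of primitive elements, this acts by derivations of $M^*$, and $\theta$ is a Lie homomorphism. Transporting by $\Phi$ gives a homomorphism into $\Der\kx$. A derivation of $\kx$ is determined by its values on $x_i$, and $\Phi^{-1}(x_i)$ is the functional $\mathbf{b}^m\mapsto \delta_{m,e_i}$, i.e. $\chi_i$ restricted to $M$. Computing the coefficients of $\Phi(\theta(g)\chi_i)$ yields $\pm\chi_i(g\mathbf b^m)$. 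This does not match the stated $\chi_i(\mathbf{b}^m g)$, so the correct model must be the right module $U(\fg)\otimes_{U(\fh)}$ with left ideal replaced. We therefore switch to $M' := U(\fg)/\fh U(\fg)$, with basis $\mathbf b^m$ obtained by putting the $\fh$-elements on the left. In the given PBW order, however, $\fh$-elements are the larger ones, written leftmost, so $\chi_i$ vanishes on $\fh U(\fg)$. With $\theta(g)\lambda(u) := \lambda(ug)$ we obtain exactly the coefficients $\chi_i(\mathbf b^m g)/m!$. Redoing the coalgebra check with $M'$ is symmetric.

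Finally, $\rho(g)(0)=(\chi_i(g))_i$, which vanishes if and only if $g\in\fh$, since $\chi_i$ reads the $b_i$-coordinate of $g\in\fg$. This gives $\fh=\rho^{-1}(\Der_0\kx)$. The homomorphism property follows from $\theta$ being an anti-/homomorphism. The right action gives $\theta([g,h]) = \pm[\theta(g),\theta(h)]$, and we fix the sign by checking the convention that derivations are composed as operators on $\kx$. We expect the main obstacle to be this bookkeeping: verifying that $\Phi$ is multiplicative, i.e. that the coproduct on the quotient $M'$ is exactly the binomial one in the basis $\mathbf b^m$, and settling the sign and side conventions. Everything else is formal.
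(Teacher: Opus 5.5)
Once you discard the false start with $U(\fg)/U(\fg)\fh$, your final argument with $M' := U(\fg)/\fh U(\fg)$ is correct. This $M'$ is $\K\otimes_{U(\fh)}U(\fg)$, a right $U(\fg)$-module, and your route genuinely differs from the paper's.

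\textbf{Comparison of the two routes.} You identify $\kx$, as an algebra, with the full dual of the quotient coalgebra $M'$ via $\Phi(\lambda)=\lambda(\mathcal{E}(x))$, in the notation of \eqref{eq:gen-E}. Multiplicativity of $\Phi$ comes from the exact binomial formula for $\Delta(\mathbf{b}^m)$. The derivation property comes from primitivity of $g$, and the bracket relation is automatic from the module structure.

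The paper never uses the coproduct. It proves the congruence \eqref{eq:key-congruence}, which, after applying coefficientwise any linear form vanishing on $I$, is exactly your intertwining identity $\Phi\circ\theta(g)=\rho(g)\circ\Phi$. It does so by showing that $\mathcal{E}(x)g\mathcal{E}(x)^{-1}$ and $(\partial_j\mathcal{E}(x))\mathcal{E}(x)^{-1}$ lie in $\fg\lxl$, and then inverting the matrix $A(x)$. The homomorphism property then follows by right-multiplying the congruence and antisymmetrizing.

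What each buys:
\begin{itemize}
\item Your route is shorter. It explains conceptually why the operators are first order, and why only $\fh U(\fg)$ and $b_1,\dotsc,b_d$ matter.
\item The paper's route is elementary and uses only the associative structure of $U(\fg)\lxl$. It also yields \eqref{eq:key-congruence} in the explicit form reused in \cref{prop:canonical-comparison-morphism} and \cref{thm:output-realization}.
\end{itemize}

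\textbf{Points to settle in the write-up.}
\begin{itemize}
\item There is no sign to fix by convention. Since $M'$ is a right module, $(\theta(g)\theta(g')\lambda)(u)=\lambda(ugg')$. So $\theta$ is an honest representation of $U(\fg)$, hence a Lie homomorphism. This must be checked explicitly, not left as $\pm$.
\item The product on $(M')^*$ and the operators $\theta(g)$ are well defined. This is because $\Delta(ha)=(h\otimes 1+1\otimes h)\Delta(a)$ makes $\fh U(\fg)$ a coideal, and because $\fh U(\fg)\,g\subset\fh U(\fg)$.
\item The facts that the $\mathbf{b}^m$ form a basis of $M'$ and that $\chi_i$ vanishes on $\fh U(\fg)$ are precisely \eqref{eq:Ug=I+}.
\item You must identify the derivation $\Phi\theta(g)\Phi^{-1}$ with the formal vector field $\sum_i\Phi(\theta(g)\chi_i)\partial_i$ on all of $\kx$, not only on the $x_i$. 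Use that a derivation vanishing on $x_1,\dotsc,x_d$ maps $\kx$ into $\bigcap_N\mathfrak{m}_x^N=\{0\}$.
\end{itemize}
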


We give a proof of \cref{thm:blattner} in \cref{sec:proof-formula} (which obviously implies \cref{thm:transitive}).

\begin{remark}
    As noted in \cite[Section 2]{Draisma2002}, the realization defined in \cref{eq:formula-blattner} of course depends on the choice of $\{ b_1, \dotsc, b_d \}$ spanning a complement of $\fh$ in $\fg$, but not on $\mathcal{B}'$, the choice of basis for~$\fh$.
    Indeed, given $\{ b_1, \dotsc, b_d \}$ and using the Poincaré--Birkhoff--Witt theorem, one can write
    \begin{equation}
        \label{eq:Ug=I+}
        U(\fg) = \left( \bigoplus_{n \in \N^d} \K \mathbf{b}^n \right) \oplus I
        \quad \text{where} \quad
        I := \fh U(\fg) := \vect_{\K} \{ h a \mid h \in \fh, a \in U(\fg)\}.
    \end{equation}
    Thus, looking at \eqref{eq:formula-blattner} and taking $g \in \fg$ and $m \in \N^d$, computing $\chi_i(\mathbf{b}^m g)$ can be done through decompositions which only depend on $b_1,\dotsc,b_d$ and $\fh$ (but not on the chosen basis $\mathcal{B}'$ of $\fh$).
\end{remark}

\subsection{A criterion for convergent realizations}

Motivated by applications to control theory (see \cref{sec:applications}), we investigate the following notion.

\begin{definition}
    Let $\rho : \fg \to \Der \kx$ be a realization of a transitive pair $(\fg,\fh)$.
    We say that it is \emph{convergent} when, for all $g \in \fg$, $\rho(g)$ is a convergent formal vector field.
\end{definition}

Our main contribution is the following characterization of the transitive pairs which admit a convergent realization, and the convergence of the explicit Blattner--Draisma realizations.
For a transitive pair $(\fg,\fh)$ let $\chi : \fg \to \fg / \fh$ be the quotient map. 
We fix any norm on the finite-dimensional vector space $\fg / \fh$. 
The following condition is independent of this choice of norm:
\begin{equation}
    \label{eq:growth}
    \begin{split}
        \forall F\subset\fg \text{ finite}, \enskip
        \exists C_F>0, \enskip
        & \forall n\in\N^*, \enskip
        \forall g_1,\dotsc,g_n\in F, \\
        &
        \| \chi ([g_1, [\dotsc, [g_{n-1}, g_n]\dotsb]]) \|_{\fg / \fh}
        \leq
        C_F^n (n-1)!.
    \end{split}
\end{equation}

\begin{theorem} \label{thm:main}
    A transitive pair $(\fg,\fh)$ has a convergent realization if and only if \eqref{eq:growth} holds.
    When \eqref{eq:growth} holds, the realizations given by \cref{thm:blattner} are convergent.
\end{theorem}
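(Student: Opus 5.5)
Both directions go through how $\chi$ of iterated brackets can be read off from a realization $\rho$ in terms of evaluations at $0$ of iterated brackets of vector fields.

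\emph{Necessity.} Assume $\rho$ is a convergent realization in $d'$ variables. The map $g \mapsto \rho(g)(0)$ is linear from $\fg$ to $\K^{d'}$ and has kernel exactly $\fh$. It therefore induces an injective linear map $\fg/\fh \to \K^{d'}$. Since $\fg/\fh$ is finite-dimensional, this gives
\[
\|\chi(g)\|_{\fg/\fh} \leq C\,|\rho(g)(0)|.
\]
Because $\rho$ is a homomorphism, $\rho([g_1,[\dotsc,g_n]\dotsb]) = [\rho(g_1),[\dotsc,\rho(g_n)]\dotsb]$. It remains to bound the value at $0$ of iterated brackets of $n$ vector fields taken from a finite family of analytic fields, all analytic on a common ball. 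I would use majorant series, or equivalently Cauchy estimates on nested balls of radii $r(1-k/n)$. Each bracket costs one derivative, and the Cauchy estimate for one derivative loses a factor $n/r$. This gives the bound $C^n n^n/n! \cdot (n-1)!$, hence $\lesssim C'^n (n-1)!$ after Stirling. The standard nested-domain argument gives exactly this factorial growth, which is \eqref{eq:growth}.

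\emph{Sufficiency.} Assume \eqref{eq:growth} and show that the Blattner--Draisma realization \eqref{eq:formula-blattner} converges. We need
\[
|\chi_i(\mathbf{b}^m g)| \leq C^{|m|+1} m!.
\]
First I reduce $\chi_i(\mathbf{b}^m g)$ to a combination of terms $\chi([\cdot,[\cdot,\dotsc]])$. The plan is to rewrite the product $b_d^{m_d}\dotsb b_1^{m_1} g$ modulo $I=\fh U(\fg)$ by repeatedly commuting letters. Alternatively, and better, I would use the structure of the formula from the proof of \cref{thm:blattner}. Letting $\rho(g)$ act on functions, the coefficient should be expressible through the formal flow. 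The coefficients are $\partial^m$ at $0$ of $\rho(g)^i$ in coordinates $x \mapsto e^{x_1 b_1}\dotsb e^{x_d b_d}$, a second-kind coordinate system on "$G/H$". Then $\rho(g)$ corresponds to $\mathrm{Ad}$-type expressions such as $e^{-x_d\ad b_d}\dotsb e^{-x_1 \ad b_1} g$ projected by $\chi$, composed with the inverse of the Jacobian of the coordinate map.

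So the concrete steps are as follows.
\begin{enumerate}
\item Establish the identity $\rho(g)(x) = A(x)^{-1}\,\chi\bigl(e^{-x_d \ad b_d}\dotsb e^{-x_1\ad b_1} g\bigr)$, interpreting $\chi$ with values in $\K^d$ via the basis $b_i$. Here $A(x)$ is the $d\times d$ matrix with columns $\chi(e^{-x_d\ad b_d}\dotsb e^{-x_{j+1}\ad b_{j+1}}b_j)$. This should be a formal identity obtained from PBW manipulations.
\item Under \eqref{eq:growth} with $F=\{b_1,\dotsc,b_d,g\}$, the series $\chi(e^{-x_d\ad b_d}\dotsb g)$ has coefficient at $x^m$ bounded by $C^{|m|+1}(|m|)!/m!$. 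This is because the $m!$ in the exponentials cancels, and a bracket of length $|m|+1$ costs $|m|!$. The multinomial bound $|m|!/m! \le d^{|m|}$ then gives convergence. The same holds for $A(x)$, and $A(0)=\Id$.
\item Analytic inversion of $A$ near $0$ then gives convergence of $\rho(g)$.
\end{enumerate}

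Once sufficiency holds for these realizations, the "if and only if" follows, together with the last claim.

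The main obstacle is step 1: deriving a closed formula of this type rigorously from the PBW definition of $\chi_i$. If that fails, my fallback is a direct combinatorial estimate. In it, I would straighten $\mathbf{b}^m g$ modulo $I$ by induction on $|m|$, bounding the number of produced terms and their bracket lengths by $C^{|m|}m!$-type counts.
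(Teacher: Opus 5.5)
Your necessity argument is the paper's: nested radii, one derivative lost per bracket, and $n^{n-1}\le e^{n}(n-1)!$. The fact that each bracket expands into $2^{n-1}$ such terms is absorbed into $C^n$.

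For sufficiency you take a genuinely different route, and it works after one correction: your step 1 has the wrong signs. With $\mathcal{E}(x)=e^{x_d b_d}\dotsm e^{x_1 b_1}$, the identity behind \eqref{eq:formula-blattner} reads
\[
\rho(g)(x)=A(x)^{-1}\chi\bigl(e^{x_d\ad_{b_d}}\dotsm e^{x_1\ad_{b_1}}g\bigr),
\qquad
A_{ij}(x)=\chi_i\bigl(e^{x_d\ad_{b_d}}\dotsm e^{x_{j+1}\ad_{b_{j+1}}}b_j\bigr).
\]
Your version with minus signs is this right-hand side evaluated at $-x$, i.e.\ $-\phi_*\rho(g)$ with $\phi(x)=-x$. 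That map is an anti-homomorphism: for $[b_1,h]=cb_1$ and $\fh=\K h$ it sends $h$ to $-cx_1\partial_1$ instead of $cx_1\partial_1$. This does not affect any estimate, but it matters because the theorem concerns the specific realizations of \cref{thm:blattner}.

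The step you flag as the main obstacle is already Step 1 of the paper's proof of \cref{thm:blattner}. There one has $\partial_j\mathcal{E}=Y_j\mathcal{E}$, where $\chi(Y_j)$ is the (sign-corrected) $j$-th column of your $A$. One also has $\mathcal{E}g\mathcal{E}^{-1}=\sum_j w_jY_j+H_g$ with $H_g\in\fh\lxl$, as in \eqref{eq:decomposition-gtilde}. Combined with $\chi_i(\partial_j\mathcal{E})=\delta_{ij}$ and $\chi_i(\fh U(\fg))=0$, this gives $w_i=\chi_i(\mathcal{E}(x)g)=\rho(g)^i$. Applying $\chi$ to \eqref{eq:decomposition-gtilde} is then exactly your identity, so no fallback is needed.

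Your steps 2 and 3 are correct. The $x^m$-coefficient of $\chi(\mathcal{E}g\mathcal{E}^{-1})$ is $\chi(\ad_{b_d}^{m_d}\dotsm\ad_{b_1}^{m_1}g)/m!$. This is $\chi$ of a right-nested bracket of $|m|+1$ elements of $\{b_1,\dots,b_d,g\}$, so \eqref{eq:growth} bounds it by $C_F(dC_F)^{|m|}$. The entries of $A$ are bounded the same way, and $A(0)=\Id$ makes $A^{-1}$ (adjugate over $\det A$) convergent.

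The paper does not use this closed formula for the estimates. It works directly with PBW coefficients:
\begin{itemize}
\item it moves $g$ leftward with \cref{lem:bubble};
\item it proves by induction the left estimate $|\chi_i(ga_n\dotsm a_1)|\le C^n n!\,\|\chi(g)\|_{\fg/\fh}$ of \cref{prop:left-estimate};
\item it deduces $|\chi_i(\mathbf{b}^m g)|\le C_g^{|m|+1}(|m|+1)!$ in \cref{prop:bound-coeff}.
\end{itemize}
That route is purely combinatorial, gives coefficientwise bounds with explicit constants (e.g.\ $C=2C_F^2$), and needs no analytic inversion.

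Your route is shorter and more conceptual: an adjoint series together with a Maurer--Cartan-type matrix, recycling the proof of \cref{thm:blattner}. Beyond \eqref{eq:growth}, its only analytic input is that convergent series are closed under products and under inversion of units. The price is a less explicit radius of convergence, since it depends on where $\det A$ stays away from $0$.
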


We prove the sufficiency of condition \eqref{eq:growth} to obtain a convergent realization in \cref{sec:sufficient}, and its necessity in \cref{sec:necessary}.
We give applications of this result to control theory in \cref{sec:applications}.

The convergence of the realization was already known to hold in the case of finite-dimensional~$\fg$, without any additional assumption.
Our general characterization allows to recover the following result, due to Draisma in \cite[Proposition 2.5]{Draisma2002}.

\begin{corollary} \label{cor:draisma}
    Let $(\fg, \fh)$ be a transitive pair.
    Assume that $\fg$ is finite-dimensional.
    Then $(\fg, \fh)$ has a convergent realization.
\end{corollary}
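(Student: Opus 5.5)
By \cref{thm:main}, it suffices to check that condition \eqref{eq:growth} holds whenever $\fg$ is finite-dimensional. Since $\fg/\fh$ is also finite-dimensional, we fix a basis $e_1,\dotsc,e_N$ of $\fg$ and a norm $\|\cdot\|$ on $\fg$. We then bound everything in the stronger norm of $\fg$ itself, not just in the quotient. The quotient map $\chi : \fg \to \fg/\fh$ is linear between finite-dimensional spaces, hence bounded: there is $K \geq 0$ with $\|\chi(g)\|_{\fg/\fh} \leq K \|g\|$ for all $g \in \fg$.

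Next, the Lie bracket is a bilinear map on a finite-dimensional space, so it is continuous. Hence there is $M \geq 1$ such that $\|[a,b]\| \leq M \|a\|\,\|b\|$ for all $a,b \in \fg$. Given a finite set $F \subset \fg$, set $R := \max(1, \max_{g \in F} \|g\|)$. An induction on $n$ then gives
\begin{equation*}
    \|[g_1,[\dotsc,[g_{n-1},g_n]\dotsb]]\| \leq M^{n-1} R^n
\end{equation*}
for all $g_1,\dotsc,g_n \in F$. Indeed, the case $n=1$ is trivial, and the inductive step applies the bracket bound once to $g_1$ and the inner bracket of length $n-1$.

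Combining the two bounds, for all $n \geq 1$ and all $g_1,\dotsc,g_n \in F$,
\begin{equation*}
    \|\chi([g_1,[\dotsc,[g_{n-1},g_n]\dotsb]])\|_{\fg/\fh} \leq K M^{n-1} R^n \leq C_F^n \leq C_F^n (n-1)!,
\end{equation*}
where $C_F := \max(1,K) M R$. This is exactly \eqref{eq:growth}, in fact with purely geometric growth and no factorial needed. \Cref{thm:main} then yields a convergent realization, and more precisely shows that the realizations of \cref{thm:blattner} are convergent.

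There is no real obstacle here: all the analytic difficulty sits in the sufficiency direction of \cref{thm:main}, which we take as given. The only point to note is that condition \eqref{eq:growth} does not depend on the choice of norm on $\fg/\fh$, so working with the induced bound $K\|\cdot\|$ is legitimate.
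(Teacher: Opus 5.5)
Your proof is correct and follows the paper's argument. Both verify \eqref{eq:growth} with purely geometric growth, using continuity of the bracket in finite dimensions, and then invoke \cref{thm:main}. The differences are cosmetic: the paper bounds each $\ad_g$ for $g \in F$ by an operator-norm constant and uses the quotient norm so that $\|\chi(g)\|_{\fg/\fh} \leq \|g\|$, whereas you use the bilinear estimate $\|[a,b]\| \leq M\|a\|\,\|b\|$ and a generic constant $K$.
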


\begin{proof}
    Let $F$ be a finite subset of $\fg$.
    Let $\| \cdot \|$ be a norm on $\fg$.
    Since $\fg$ is finite-dimensional, the linear maps $\ad_{g} : a \mapsto [g,a]$ are continuous on $\fg$ and there exists $M > 0$ such that, for every $g \in F$ and $a \in \fg$, $\| [g, a] \| \leq M \| a \|$.
    Hence, for every $n \in \N^*$ and $g_1, \dotsc, g_n \in F$,
    \begin{equation}
        \| \chi([g_1, [g_2, [\dotsc, [g_{n-1}, g_n]\dotsb]]]) \|_{\fg / \fh}
        \leq 
        \| [g_1, [g_2, [\dotsc, [g_{n-1}, g_n]\dotsb]]] \|
        \leq M^{n-1} \| g_n \|,
    \end{equation}
    where we used the quotient norm on $\fg/\fh$ so that $\| \chi(g) \|_{\fg/\fh} \leq \| g \|$ for every $g \in \fg$.
    Thus \eqref{eq:growth} holds (even without the factorial term) with $C_F := \max \{ M, \max_{g \in F} \| g \| \}$ so that \cref{cor:draisma} indeed follows from \cref{thm:main}.
\end{proof}

\section{Proofs of the existence and convergence of realizations}

\subsection{Proof of the realization formula}
\label{sec:proof-formula}

We give a direct computational proof of \cref{thm:blattner}.
It is inspired by \cite{Draisma2002,Draisma2012}, but formulated by encoding the realization $\rho$ into a generating series in $U(\fg)\lxl$.
We do not assume that $\dim \fg < +\infty$.

\begin{proof}[Proof of \cref{thm:blattner}]
    Let $I := \fh U(\fg) \subset U(\fg)$ as in \eqref{eq:Ug=I+}.
    It is a right ideal of $U(\fg)$.
    
    For $1 \leq i \leq d$, the linear form $\chi_i : U(\fg) \to \K$ extracts the coefficient of $b_i=\mathbf{b}^{e_i}$ in the decomposition \eqref{eq:Ug=I+}. 
    In particular, $\chi_i = 0$ on $I$.
    
    We extend $\chi_i$ coefficientwise to a $\K$-linear map $\chi_i : U(\fg)\lxl \to \kx$.
    
    We now introduce the generating series
    \begin{equation}
        \label{eq:gen-E}
        \mathcal{E}(x)
        :=
        \sum_{m \in \N^d} \mathbf{b}^m \frac{x^m}{m!}
        =
        e^{x_d b_d}\dotsm e^{x_1 b_1}
        \in U(\fg)\lxl.
    \end{equation}
    Note that $\mathcal{E}(x)$ and each of the $e^{x_i b_i}$ are invertible in $U(\fg)\lxl$.
    
    For $g \in \fg$, the formal vector field defined in \eqref{eq:formula-blattner} is precisely
    \begin{equation}
        \label{eq:rho=E}
        \rho(g)=\sum_{i=1}^d \chi_i(\mathcal{E}(x)g) \partial_i.
    \end{equation}

    \medskip \noindent \textbf{Step 1: decomposition modulo $\fh$.}
    We prove that, for every $g \in \fg$,
    \begin{equation} \label{eq:key-congruence}
        \mathcal{E}(x)g
        \equiv
        \sum_{j=1}^d \chi_j(\mathcal{E}(x) g) \partial_j \mathcal{E}(x)
        \pmod{I\lxl},
    \end{equation}
    where $I\lxl$ denotes the subspace of $U(\fg)\lxl$ consisting of formal series whose coefficients lie in $I$.
    
    For $1 \leq j \leq d$, define $Y_j(x) \in U(\fg)\lxl$ by $\partial_j \mathcal{E}(x) = Y_j(x)\mathcal{E}(x)$.
    A direct differentiation of the product $\mathcal{E}(x)=e^{x_d b_d}\dotsm e^{x_1 b_1}$
    gives
    \begin{equation} \label{eq:explicit-Yj}
        Y_j(x)
        =
        e^{x_d b_d}\dotsm e^{x_{j+1} b_{j+1}}
        b_j
        e^{-x_{j+1} b_{j+1}}\dotsm e^{-x_d b_d}.
    \end{equation}
    For $a,b\in\fg$, in $U(\fg)\lxl$, one has
    \begin{equation}
        \label{eq:conjugation}
        e^{xb} a e^{-xb}
        = e^{x\ad_b}(a)
        = \sum_{n\geq 0} \frac{x^n}{n!}\ad_b^n(a) \in \fg\lxl.
    \end{equation}
    By repeated use of \eqref{eq:conjugation}, $Y_j(x)\in\fg\lxl$.
    Moreover, $Y_j(0) = b_j$.
    
    Similarly, for $g \in \fg$, define
    \begin{equation}
        \widetilde g(x) := \mathcal{E}(x) g \mathcal{E}(x)^{-1}.
    \end{equation}
    Again, using \eqref{eq:conjugation}, $\widetilde g(x)\in \fg\lxl$.

    We extend the quotient map $\chi : \fg \to (\fg/\fh)$ coefficientwise to a $\K$-linear map $\fg\lxl \to (\fg/\fh)\lxl$.
    Since $(\chi(b_1),\dotsc,\chi(b_d))$ is a basis of $\fg/\fh$, there exist unique coefficients $A_{ij}(x)\in\kx$ such that
    \begin{equation}
        \chi(Y_j(x))
        =
        \sum_{i=1}^d A_{ij}(x)\chi(b_i).
    \end{equation}
    Since $Y_j(0) = b_j$, we have $A_{ij}(0)=\delta_{ij}$, hence $\det A(0)=1$. 
    Thus $\det A(x)$ has nonzero constant term, so it is invertible in $\kx$. 
    By the adjugate formula, $A(x)$ is invertible in $M_d(\kx)$.
    Hence $\chi(Y_1(x)),\dotsc,\chi(Y_d(x))$ form a basis of $(\fg/\fh)\lxl$ over $\kx$. 
    
    It follows that, for every $g \in \fg$, there exist unique series $w_1(x),\dotsc,w_d(x)\in\kx$ and an element $H_g(x)\in\fh\lxl$ such that
    \begin{equation} \label{eq:decomposition-gtilde}
        \widetilde g(x)
        =
        \sum_{j=1}^d w_j(x) Y_j(x) + H_g(x).
    \end{equation}
    Multiplying on the right by $\mathcal{E}(x)$, and using $\partial_j \mathcal{E}(x) = Y_j(x) \mathcal{E}(x)$, we obtain
    \begin{equation} \label{eq:decomposition-Eg}
        \mathcal{E}(x)g
        =
        \sum_{j=1}^d w_j(x)\partial_j \mathcal{E}(x) + H_g(x)\mathcal{E}(x).
    \end{equation}
    Since $H_g(x)\in\fh\lxl$ and $I=\fh U(\fg)$ is a right ideal, we have $H_g(x)\mathcal{E}(x)\in I\lxl$.
    Therefore
    \begin{equation} \label{eq:key-congruence-w}
        \mathcal{E}(x)g
        \equiv
        \sum_{j=1}^d w_j(x)\partial_j \mathcal{E}(x)
        \pmod{I\lxl}.
    \end{equation}
    Since $\chi_i$ acts coefficientwise on $U(\fg)\lxl$, it commutes with the formal derivations $\partial_j$.
    Moreover, $\chi_i(\mathcal{E}(x)) = x_i$ so $\chi_i(\partial_j \mathcal{E}(x)) = \partial_j(\chi_i(\mathcal{E}(x))) = \delta_{ij}$.
    Since $\chi_i$ vanishes on $I$, applying $\chi_i$ to \eqref{eq:key-congruence-w} yields $\chi_i (\mathcal{E}(x) g) = w_i(x)$.
    Hence \eqref{eq:key-congruence} holds.
    
    \medskip \noindent \textbf{Step 2: the Lie algebra property.}
    Let $g_1,g_2 \in \fg$.
    Since $I\lxl$ is stable under right multiplication by constant elements of $U(\fg)$, multiplying \eqref{eq:key-congruence} for $g_1$ on the right by $g_2$ gives
    \begin{equation}
        \mathcal{E}(x)g_1g_2
        \equiv
        \sum_{j=1}^d \chi_j (\mathcal{E}(x) g_1) (\partial_j \mathcal{E}(x))g_2
        \pmod{I\lxl}.
    \end{equation}
    Because $g_2$ is constant with respect to $x$, we have $(\partial_j \mathcal{E}(x))g_2=\partial_j(\mathcal{E}(x)g_2)$.
    Applying $\chi_i$,
    \begin{equation} \label{eq:chi-g1g2}
        \chi_i(\mathcal{E}(x)g_1g_2)
        =
        \sum_{j=1}^d \chi_j (\mathcal{E}(x) g_1) \partial_j (\chi_i(\mathcal{E}(x)g_2)).
    \end{equation}
    Exchanging $g_1$ and $g_2$, and subtracting, we obtain
    \begin{equation} \label{eq:chi-g2g1}
        \chi_i(\mathcal{E}(x)[g_1, g_2])
        =
        \sum_{j=1}^d \chi_j (\mathcal{E}(x) g_1) \partial_j (\chi_i(\mathcal{E}(x)g_2)) - \chi_j (\mathcal{E}(x) g_2) \partial_j (\chi_i(\mathcal{E}(x)g_1)).
    \end{equation}
    Recalling \eqref{eq:rho=E}, $\rho([g_1,g_2])=[\rho(g_1),\rho(g_2)]$.
    Thus $\rho : \fg \to \Der \kx$ is a Lie algebra homomorphism.
    
    \medskip \noindent \textbf{Step 3: identification of the isotropy.}
    Evaluating at $x=0$, we have $\mathcal{E}(0)=1$.
    By \eqref{eq:rho=E}, $\rho(g)(0) = (\chi_1(g), \dotsc, \chi_d(g))$.
    Since $(\chi(b_1),\dotsc,\chi(b_d))$ is a basis of $\fg/\fh$, we have
    \begin{equation}
        g \in \fh
        \iff
        \chi_i(g)=0 \text{ for all } i \in \intset{1,d}
        \iff
        \rho(g)(0)=0
        \iff
        \rho(g)\in \Der_0 \kx.
    \end{equation}
    Hence $\fh=\rho^{-1}(\Der_0 \kx)$.
    This concludes the proof.
\end{proof}

\subsection{Proof of the sufficiency of the convergence criterion}
\label{sec:sufficient}

In this subsection, we prove that, if a transitive pair satisfies estimate \eqref{eq:growth}, then the formal realization given by \cref{thm:blattner} is convergent, which entails the reverse implication of \cref{thm:main}.

We start with the following lemma, which allows to ``bubble up'' an element $a$ from right to left, collecting commutators along the way, as in the identity
\begin{equation}
    a_2 a_1 a = a a_2 a_1 + [a_1,a] a_2 + [a_2,a] a_1 + [a_2,[a_1,a]].
\end{equation}

\begin{lemma} \label{lem:bubble}
    Let $\mathfrak{a}$ be an associative algebra.
    Let $n \in \N^*$ and $a, a_1, \dotsc, a_n \in \mathfrak{a}$.
    For $J = \{ j_1 < \dots < j_h \} \subset \intset{1,n}$, we use the notations: $\intset{1,n} \setminus J = \{t_1 < \dots < t_{\ell} \}$ and
    \begin{equation}
        \mu_{J}(a):=[a_{t_{\ell}}, \dots [a_{t_1}, a ]\dotsb ].
    \end{equation}
    Then the following identity holds:
    \begin{equation}
        a_n \dotsb a_1 a = \sum_{J \subset \intset{1,n}} \mu_{J}(a) a_{j_h}\cdots a_{j_1}.
    \end{equation}
\end{lemma}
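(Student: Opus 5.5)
We argue by induction on $n$. The only algebraic input is the elementary identity $b a = a b + [b,a]$, valid in any associative algebra with $[b,a] := ba - ab$. It lets us move $a$ (or a nested commutator built from $a$) one step to the left past $a_{k}$. We also use the convention that for $J = \intset{1,n}$ the nested commutator $\mu_J(a)$ is empty and equals $a$, and that for $J = \emptyset$ the trailing product $a_{j_h}\cdots a_{j_1}$ is empty and equals $1$. If $\mathfrak{a}$ is not unital, the formula is read with these empty products omitted.

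\textbf{Base case $n=1$.} The identity reads $a_1 a = a a_1 + [a_1,a]$. The first term corresponds to $J=\{1\}$, where $\mu_J(a)=a$, and the second to $J=\emptyset$, where $\mu_J(a)=[a_1,a]$ with empty trailing product.

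\textbf{Induction step.} Assume the identity holds for $n-1$ and the elements $a_1,\dotsc,a_{n-1}$. Multiply it on the left by $a_n$:
\[
a_n a_{n-1}\dotsb a_1 a = \sum_{J' \subset \intset{1,n-1}} a_n\, \mu_{J'}(a)\, a_{j_h}\cdots a_{j_1}.
\]
In each term write $a_n \mu_{J'}(a) = \mu_{J'}(a) a_n + [a_n, \mu_{J'}(a)]$. This splits each term into two pieces.

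The first piece is $\mu_{J'}(a)\, a_n a_{j_h}\cdots a_{j_1}$. Here $a_n$ stands to the left of the $a_{j}$ with $j \le n-1$, so the ordering is decreasing. This is exactly the term for $J = J'\cup\{n\}$, since the complement of $J$ in $\intset{1,n}$ equals the complement of $J'$ in $\intset{1,n-1}$, and hence $\mu_J(a)=\mu_{J'}(a)$.

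The second piece is $[a_n,\mu_{J'}(a)]\, a_{j_h}\cdots a_{j_1}$. This is the term for $J = J'$ viewed as a subset of $\intset{1,n}$. Its complement gains the largest index $t_{\ell+1}=n$, and by definition $\mu_J(a) = [a_{n}, [a_{t_\ell}, \dots [a_{t_1},a]\dotsb]] = [a_n,\mu_{J'}(a)]$.

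The map $J' \mapsto (J'\cup\{n\}, J')$ is a bijection from pairs (subset of $\intset{1,n-1}$, choice of piece) onto all subsets of $\intset{1,n}$, distinguished by whether $n\in J$. Summing over all these terms therefore gives the claimed identity.

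\textbf{Main difficulty.} The only delicate point is bookkeeping: we must check that the orderings match the definitions. The complement indices are nested as $a_{t_\ell}$ outermost, so the newly added largest index $n$ must be the outermost commutator, which it is. Likewise, the retained factors appear in decreasing order, and $n$ is placed leftmost. Both are consistent with the definitions, so no further obstacle arises.
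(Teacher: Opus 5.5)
Your proof is correct and follows the paper's argument: induction on $n$, left-multiplying the inductive identity by the new factor $a_n$, and splitting $a_n\mu_{J'}(a) = \mu_{J'}(a)a_n + [a_n,\mu_{J'}(a)]$. The terms are then reindexed according to whether $n \in J$, and your ordering checks (the new index becomes the outermost commutator or the leftmost retained factor) are what the paper's final partitioning step relies on.
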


\begin{proof}
    We proceed by induction on $n \in \N^*$.
    For $n = 1$, one has $a_1 a = [a_1, a] + a a_1 = \mu_{\emptyset}(a) + \mu_{\{1\}}(a) a_1$. 
    Let $n \in \N^*$. 
    We assume the identity proved up to $n$ and we prove it for $(n+1)$.
    To be clear, we write $\mu_J^n$ instead of $\mu_J$.
    By the induction hypothesis and the $n = 1$ case,
    \begin{equation}
        \begin{split}
            a_{n+1} a_n \dotsb a_1 a 
            & = a_{n+1} \sum_{J \subset \intset{1,n}} \mu^n_J(a) a_{j_h} \dotsb a_{j_1} \\
            & = \sum_{J \subset \intset{1,n}} ([a_{n+1}, \mu^n_J(a)] + \mu^n_J(a) a_{n+1}) a_{j_h} \dotsb a_{j_1} \\
            & = \sum_{J' \subset \intset{1,n+1}} \mu^{n+1}_{J'}(a) a_{j'_h} \dotsb a_{j'_1},
        \end{split}
    \end{equation}
    which gives the conclusion by partitioning the subsets $J'$ of $\intset{1,n+1}$ depending on whether $n+1\in J'$.
\end{proof}

From now on, we assume that $(\fg,\fh)$ is a transitive pair satisfying estimate \eqref{eq:growth}.
We fix a basis~$\mathcal{B}$ of~$\fg$ as in \cref{thm:blattner}, so that, in particular, $b_1 < \dotsb < b_d$ is a basis of a complement of~$\fh$ in~$\fg$. 
Implicitly, when we write products of elements of $\fg$, we mean products in $U(\fg)$. 
For $u \in U(\fg)$, we write $\chi_i(u)$ to denote the component along $b_i$ of the expansion of $u$ in the Poincaré--Birkhoff--Witt basis associated with $\mathcal{B}$.
In particular, we will use the following elementary remark.

\begin{lemma} \label{lem:chi}
    For $n \in \N$, $a_1 \leq \dotsb \leq a_n \in \{ b_1, \dotsc, b_d \}$ and $g \in \fg$,
    \begin{equation}
        \chi_i(g a_n \dotsb a_1) = \sum_{k=1}^d \chi_k(g) \chi_i(b_k a_n \dotsb a_1).
    \end{equation}
\end{lemma}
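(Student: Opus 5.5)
Decompose $g$ along the direct sum $\fg = \vect\{b_1,\dotsc,b_d\} \oplus \fh$, then treat the $\fh$-part separately. Write
\begin{equation}
    g = \sum_{k=1}^d \chi_k(g)\, b_k + h, \qquad h \in \fh,
\end{equation}
where $\chi_k(g)$ is the $b_k$-coordinate of $g$ in the basis $\mathcal{B}$ (which agrees with its PBW coefficient, since $g \in \fg$ is a basis-linear combination of degree one). By linearity of $\chi_i$ on $U(\fg)$,
\begin{equation}
    \chi_i(g a_n \dotsb a_1) = \sum_{k=1}^d \chi_k(g)\, \chi_i(b_k a_n \dotsb a_1) + \chi_i(h a_n \dotsb a_1).
\end{equation}
So the statement reduces to proving $\chi_i(h a_n \dotsb a_1) = 0$ for every $h \in \fh$.

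To get this vanishing, I use the right ideal $I = \fh U(\fg)$ from \eqref{eq:Ug=I+}. Since $h \in \fh$ and $a_n \dotsb a_1 \in U(\fg)$, the product $h a_n \dotsb a_1$ lies in $I$. The proof of \cref{thm:blattner} already noted that $\chi_i$ vanishes on $I$, provided $\chi_i$ is read through the decomposition \eqref{eq:Ug=I+}. The paper's remark asserts this is consistent with the PBW-coefficient definition, but I will make it explicit.

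The consistency check is the only step needing care. I need that the span of the PBW monomials involving at least one factor from $\mathcal{B}'$ is exactly $I$. These monomials have the form $b'_{p} \dotsb \mathbf{b}^n$, with the $\mathcal{B}'$ factors on the left because elements of $\mathcal{B}'$ are larger than every $b_j$ and the PBW ordering puts larger elements first.

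One inclusion is immediate: such a monomial begins with an element of $\fh$, so it lies in $\fh U(\fg) = I$.

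For the reverse inclusion, I show the sum of the two spans is all of $U(\fg)$ and that they intersect trivially. Directness follows from PBW applied to $\fg$. For $I$ being contained in the span, I argue by induction on PBW degree. Take $h u$ with $u$ a PBW monomial and $h \in \mathcal{B}'$. Reordering $h$ into position costs only commutator terms of lower degree. Those commutator terms are again of the form $h' u'$ with $h' \in \fh$, because $\fh$ is a subalgebra: commuting $h$ past a factor $c$ gives $[h,c]$, and a $b_j$-component of $[h,c]$ could escape $\fh$. So I must track this carefully. An alternative that avoids it: $U(\fg)$ is a free right $U(\fh)$-module (the symmetric direction of PBW), which gives $U(\fg) = \bigoplus_n \K \mathbf{b}^n \oplus \fh U(\fg)$ directly with the PBW coefficient equal to the component.

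Once the identification is settled, the $\mathbf{b}^n$ part has no $\mathcal{B}'$ content. Hence $\chi_i$, meaning the coefficient of $b_i = \mathbf{b}^{e_i}$, vanishes on $I$, which gives $\chi_i(h a_n \dotsb a_1) = 0$. The hypothesis $a_1 \le \dotsb \le a_n$ is not needed for this argument; it only makes $a_n \dotsb a_1$ itself a PBW monomial.
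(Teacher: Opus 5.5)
Your argument is correct, but it handles the $\fh$-part of $g$ by a different mechanism than the paper.

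The paper expands $g$ in the basis $\mathcal{B}$ itself, so the part outside the complement is $\sum_{k>d}\lambda_k b_k$ with $b_k\in\mathcal{B}'$. It then observes that each $b_k a_n\dotsb a_1$ is already a Poincar\'e--Birkhoff--Witt basis element distinct from $b_i$, because $b_k>b_d\ge a_n\ge\dotsb\ge a_1$. Hence $\chi_i$ kills it by definition. This needs nothing about $I=\fh U(\fg)$, but it uses the ordering $a_1\le\dotsb\le a_n$ in an essential way.

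You instead use $\chi_i|_I=0$. This gives the stronger identity $\chi_i(gu)=\sum_k\chi_k(g)\chi_i(b_ku)$ for every $u\in U(\fg)$. The price is a structural fact about $I$: that $I$ is contained in the span of the PBW monomials having a factor in $\mathcal{B}'$. That is the inclusion you actually need; the one you call immediate plays no role. The paper asserts this fact around \eqref{eq:Ug=I+}, so invoking it is legitimate.

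However, your justification has the side reversed. The relevant statement is that $U(\fg)=\bigoplus_n U(\fh)\,\mathbf{b}^n$ is a free \emph{left} $U(\fh)$-module on the $\mathbf{b}^n$. This is immediate from PBW with the chosen ordering, since every PBW monomial is an ordered $\mathcal{B}'$-monomial followed by some $\mathbf{b}^n$. It follows that $\fh U(\fg)=\bigoplus_n \fh U(\fh)\,\mathbf{b}^n$, and $\fh U(\fh)$ is spanned by the nonempty ordered $\mathcal{B}'$-monomials. The right-module structure you cite would instead describe $U(\fg)\fh$.

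Likewise, the worry in your abandoned induction is unfounded. With this ordering, an element $h\in\mathcal{B}'$ only ever has to be commuted past other elements of $\mathcal{B}'$, so every commutator produced stays in $\fh$.
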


\begin{proof}
    Since $\mathcal{B}$ is a (totally ordered) basis of $\fg$, there exists $s \in \N$, $b_{d+1} < \dotsb < b_{d+s} \in \mathcal{B}$ with $b_d < b_{d+1}$ and coefficients $\lambda_{d+1}, \dotsc, \lambda_{d+s} \in \K$ such that
    \begin{equation}
        g = \sum_{k=1}^d \chi_k(g) b_k + \sum_{k=d+1}^{d+s} \lambda_k b_k.
    \end{equation}
    For each $k \in \intset{d+1,d+s}$, $b_k > b_d \geq a_n$ so $b_k a_n \dotsb a_1$ belongs to the Poincaré--Birkhoff--Witt basis associated with $\mathcal{B}$ and hence $\chi_i(b_k a_n \dotsb a_1) = 0$.
    The formula follows by linearity of $\chi_i$.
\end{proof}

From now on, we use the following norm on the finite-dimensional quotient vector space $\fg / \fh$:
\begin{equation}
    \label{eq:def-norm-gh}
    \forall g \in \fg, \quad
    \| \chi(g) \|_{\fg / \fh} := \sum_{i=1}^d |\chi_i(g)|.
\end{equation}
We start with the following analytic-type estimate on quantities similar to the ones involved in formula \eqref{eq:formula-blattner}, but where $g$ is on the left.

\begin{proposition} \label{prop:left-estimate}
    There exists $C > 0$ such that, for every $n \in \N$ and $a_1 \leq \dotsb \leq a_n \in \{ b_1, \dotsc, b_d \}$, $g \in \fg$ and $i \in \intset{1, d}$,
    \begin{equation} \label{eq:left-estimate}
        | \chi_i(g a_n \dotsb a_1) | \leq C^n n! \| \chi(g) \|_{\fg / \fh}.
    \end{equation}
\end{proposition}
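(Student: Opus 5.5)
The plan is a strong induction on $n$, after reducing to the case where $g$ is one of the $b_k$. By \cref{lem:chi}, $\chi_i(g a_n \dotsb a_1) = \sum_{k} \chi_k(g)\, \chi_i(b_k a_n \dotsb a_1)$. With the norm \eqref{eq:def-norm-gh}, it therefore suffices to prove the following: for all $k, i \in \intset{1,d}$ and sorted $a_1 \leq \dotsb \leq a_n$ in $\{b_1,\dotsc,b_d\}$,
\begin{equation*}
    M_n := \max |\chi_i(b_k a_n \dotsb a_1)| \leq C^n n!.
\end{equation*}
I will prove this by strong induction on $n$, with $C$ chosen at the end depending only on the constant $C_F$ from \eqref{eq:growth} applied to $F := \{b_1, \dotsc, b_d\}$.

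For $n=0$ the quantity is $\delta_{ik}$. For $n \geq 1$, let $p$ be such that $a_p \leq b_k < a_{p+1}$, so that $a_{p+1}, \dotsc, a_n$ are the factors exceeding $b_k$. If $p = n$, then $b_k a_n \dotsb a_1$ is already a PBW monomial of degree $n+1 \geq 2$, so $\chi_i$ of it vanishes. Otherwise, I apply \cref{lem:bubble} to $a_n \dotsb a_{p+1} b_k$. The term $J = \intset{p+1,n}$ is $b_k a_n \dotsb a_{p+1}$, so solving for it gives
\begin{equation*}
    b_k a_n \dotsb a_1 = a_n \dotsb a_{p+1} b_k a_p \dotsb a_1 - \sum_{J \subsetneq \intset{p+1,n}} \mu_J(b_k)\, a_{J} \, a_p \dotsb a_1,
\end{equation*}
where $a_J$ denotes the decreasing product over $J$. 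The first term is a PBW monomial of degree $n+1 \geq 2$, so $\chi_i$ kills it. In each remaining term, $\mu_J(b_k)$ is an iterated bracket of $\ell+1$ elements of $F$, with $\ell := n-p-|J| \geq 1$. It is multiplied on the right by a sorted word of length $n-\ell < n$.

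Next I apply the induction hypothesis, in the form of \eqref{eq:left-estimate} with $g = \mu_J(b_k)$, which holds for shorter words via the first reduction. Combined with \eqref{eq:growth}, this gives
\begin{equation*}
    |\chi_i(\mu_J(b_k) a_J a_p \dotsb a_1)| \leq C^{n-\ell}(n-\ell)!\, C_F^{\ell+1}\, \ell!.
\end{equation*}
The number of $J$ with a given $\ell$ is $\binom{n-p}{\ell} \leq \binom{n}{\ell}$. Summing over $\ell$,
\begin{equation*}
    |\chi_i(b_k a_n\dotsb a_1)| \leq \sum_{\ell=1}^n \binom{n}{\ell} \ell!\,(n-\ell)!\, C_F^{\ell+1} C^{n-\ell} = n!\, C^n \sum_{\ell \geq 1} C_F \Bigl(\frac{C_F}{C}\Bigr)^{\ell}.
\end{equation*}
The last sum is at most $1$ once, say, $C \geq C_F(1+C_F)$, since then $C_F/C \leq 1/(1+C_F) < 1$ and the geometric series is at most $C_F \cdot \tfrac{C_F/C}{1-C_F/C} \leq 1$. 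This closes the induction and yields \eqref{eq:left-estimate} with the same $C$.

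The main obstacle is this combinatorial bookkeeping. The factorial $\ell!$ from \eqref{eq:growth} and the inductive $(n-\ell)!$ must recombine with the binomial count into exactly $n!$, and the remaining geometric factor must be absorbed by $C$ uniformly in $n$. The key point is that the growth condition carries $(\ell)!$ rather than $(\ell+1)!$ for a bracket of length $\ell+1$, which is what makes $\binom{n}{\ell}\ell!(n-\ell)! = n!$ come out exactly. I must also check that the induction is genuinely well-founded: each remaining word has length $n - \ell \leq n-1$, and each prefactor $\mu_J(b_k)$ lies in $\fg$, so \cref{lem:chi} applies to it.
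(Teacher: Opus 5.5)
Your proposal is correct and follows essentially the same route as the paper: reduce to $g = b_k$ via \cref{lem:chi}, induct on $n$, commute $b_k$ past the larger factors using \cref{lem:bubble} so that the leading term is a PBW monomial killed by $\chi_i$, and combine \eqref{eq:growth} with the induction hypothesis through $\binom{n-p}{\ell}\,\ell!\,(n-\ell)! \leq n!$. The only differences are cosmetic: you place the ties $a_j = b_k$ on the right of the split rather than the left, and you take $C \geq C_F(1+C_F)$ instead of $C = 2C_F^2$ with $C_F \geq 1$.
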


\begin{proof}
    By \cref{lem:chi}, it suffices to prove the estimate when $g \in \{ b_1, \dotsc, b_d \}$, and, in this case, $\| \chi(g) \|_{\fg / \fh} = 1$ by \eqref{eq:def-norm-gh}.

    \bigskip

    Let $F := \{ b_1, \dotsc, b_d \}$. 
    Let $C_F \ge 1$ be given by assumption \eqref{eq:growth}.
    We will prove \eqref{eq:left-estimate} with $C :=  2 C_F^2$.
    We proceed by induction on $n \in \N$. 
    If $n=0$, the conclusion holds by definition of $\| \cdot \|_{\fg / \fh}$ in \eqref{eq:def-norm-gh}. 
    Let $n \in \N^*$. 
    We assume that the estimate is proved up to $(n-1)$. 
    Let $i \in \intset{1,d}$, $g$ and $a_1 \leq \dotsb \leq a_n \in F$.
    Let $\nu \in \intset{0,n}$ be defined by
    \begin{equation}
        \nu :=
        \begin{cases}
            \max\{ j \in \intset{1,n} ; g > a_j \} & \text{ if } g>a_1\\
            0 & \text{ otherwise } 
        \end{cases}
    \end{equation}
    Then
    \begin{equation}
        g a_n \dotsb a_1 = a_n \dotsb a_{\nu+1} g a_{\nu} \dotsb a_1 + [g,a_n \dotsb a_{\nu+1}] a_{\nu}\dotsb a_1. 
    \end{equation}
    The first term on the right hand side is mapped to $0$ by $\chi_i$ and, by \cref{lem:bubble},
    \begin{equation}
        [g,a_n \dots a_{\nu+1}] = - \sum_{J \subsetneq \intset{\nu+1,n}} \mu_J(g) a_{j_h} \dots a_{j_1}.
    \end{equation}
    Thus, by \cref{lem:chi},
    \begin{equation}
        \chi_i \left( g a_n \dots a_1 \right) 
        = - \sum_{J \subsetneq \intset{\nu+1,n}}
        \sum_{k=1}^d \chi_k(\mu_J(g)) \chi_i \left( b_k a_{j_h} \dots a_{j_1} a_{\nu} \dots a_1 \right).
    \end{equation}
    For $J \subsetneq \intset{\nu+1,n}$, $\mu_J(g)$ is an iterated bracket of $(n-\nu-|J|+1)$ elements of $F$.
    Thus, by \eqref{eq:growth},
    \begin{equation}
         \sum_{k=1}^d |\chi_k(\mu_J(g))|
         =  \| \chi(\mu_J(g)) \|_{\fg / \fh}
         \leq C_F^{n-\nu-|J|+1} (n-\nu-|J|)!.
    \end{equation}
    Hence, using the induction assumption, and partitioning by the value of $h=|J|$,
    \begin{equation}
        \begin{split}
            |\chi_i \left( g a_n \dotsb a_1 \right)|
            & \leq \sum_{J \subsetneq \intset{\nu+1,n}}
            C_F^{n-\nu-|J|+1} (n-\nu-|J|)! C^{|J|+\nu} (|J|+\nu)! 
            \\ & \leq 
            \sum_{h=0}^{n-\nu-1} \binom{n-\nu}{h} C_F^{n-\nu-h+1} (n-\nu-h)! C^{h+\nu} (h+\nu)!
            \\ & \leq
            n! C_F C^{n} \sum_{h=0}^{n-\nu-1} \binom{n-\nu}{n-\nu-h} \binom{n}{n-\nu-h}^{-1} \left(\frac{C_F}{C}\right)^{n-\nu-h} 
            \\ & \leq 
            n! C_F C^n \sum_{\sigma = 1}^{+\infty} \left(\frac{C_F}{C}\right)^{\sigma} 
            \leq 2 C_F^2 C^{n-1} n! = C^n n!
        \end{split}
    \end{equation}
    because $\binom{n-\nu}{\cdot} \leq \binom{n}{\cdot}$, $C_F/C \leq 1/2$ and $C = 2 C_F^2$.
\end{proof}

We deduce the main analytic-type estimate on the quantities involved in formula \eqref{eq:formula-blattner}.

\begin{proposition} \label{prop:bound-coeff}
    For any $g \in \fg$, there exists $C_g>0$ such that, for every $m \in \N^d$ and $i \in \intset{1,d}$,
    \begin{equation}
        | \chi_i(\mathbf{b}^m g) | \leq C_g^{|m|+1} (|m|+1)!.
    \end{equation}
\end{proposition}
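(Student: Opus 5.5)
We move $g$ from the right end of $\mathbf{b}^m g$ to the left end with \cref{lem:bubble}, and then control each resulting term by \cref{prop:left-estimate}. Write $n := |m|$ and $\mathbf{b}^m = a_n \dotsb a_1$ with $a_n \geq \dotsb \geq a_1$ in $F := \{b_1,\dotsc,b_d\}$; the non-decreasing order from right to left matches the PBW ordering. \cref{lem:bubble} with $a := g$ gives
\begin{equation*}
    \mathbf{b}^m g = \sum_{J \subset \intset{1,n}} \mu_J(g)\, a_{j_h} \dotsb a_{j_1}.
\end{equation*}
Each subword $a_{j_h}\dotsb a_{j_1}$ is still ordered as $a_{j_1} \leq \dotsb \leq a_{j_h}$, so \cref{prop:left-estimate} applies to each term. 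It yields
\begin{equation*}
    |\chi_i(\mu_J(g) a_{j_h}\dotsb a_{j_1})| \leq C^{|J|} |J|!\, \|\chi(\mu_J(g))\|_{\fg/\fh}.
\end{equation*}

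Next we bound $\|\chi(\mu_J(g))\|_{\fg/\fh}$ using \eqref{eq:growth} with the finite set $F' := F \cup \{g\}$. The element $\mu_J(g)$ is an iterated bracket of $n-|J|+1$ elements of $F'$, so its norm is at most $C_{F'}^{\,n-|J|+1}(n-|J|)!$. Grouping the subsets $J$ by their size $h = |J|$ gives
\begin{equation*}
    |\chi_i(\mathbf{b}^m g)| \leq \sum_{h=0}^{n} \binom{n}{h} C^{h} h!\, C_{F'}^{\,n-h+1} (n-h)!
    = n! \sum_{h=0}^n C^h C_{F'}^{\,n-h+1}.
\end{equation*}
Set $K := \max(C, C_{F'}, 1)$. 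The sum has $n+1$ terms, each at most $K^{n+1}$, so the whole bound is at most $(n+1)\, n!\, K^{n+1} = (n+1)!\,K^{n+1}$. Taking $C_g := K$ gives the claim.

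The only subtle step is checking that the hypotheses of \cref{prop:left-estimate} hold after bubbling. Two things are needed. First, the retained subword must stay correctly ordered, and it does, since a subsequence of an ordered sequence is ordered. Second, $\mu_J(g)$ is an arbitrary element of $\fg$, possibly with a component in $\fh$, but \cref{prop:left-estimate} allows any $g \in \fg$ and only sees $\|\chi(\cdot)\|$. We also need $C_{F'}$ to depend only on $g$; it does, because $F$ is fixed. Finally, the case $J=\intset{1,n}$ is the term $g\,\mathbf{b}^m$, which is consistent with the bound, since there the bracket has length $1$ and \eqref{eq:growth} with $n=1$ gives $\|\chi(g)\|_{\fg/\fh} \leq C_{F'}$.
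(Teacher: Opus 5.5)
Your proof is correct and follows the paper's argument essentially step by step: you bubble $g$ to the left with \cref{lem:bubble}, bound each term by \cref{prop:left-estimate}, control $\|\chi(\mu_J(g))\|_{\fg/\fh}$ by \eqref{eq:growth} on the finite set $\{g,b_1,\dotsc,b_d\}$, and sum over $h=|J|$ using $\binom{n}{h}h!(n-h)!=n!$. The only omission is the case $m=0$, where \cref{lem:bubble} (stated for $n\in\N^*$) is not needed and the bound follows directly from \eqref{eq:growth} with one element; the paper also leaves this case implicit.
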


\begin{proof}
    Let $g \in \fg$.
    Let $F := \{ g \} \cup \{b_1,\dotsc,b_d\}$ and $C_F$ given by \eqref{eq:growth}.
    Let $C > 0$ be the constant given by \cref{prop:left-estimate}.
    Let $i \in \intset{1,d}$, $m \in \N^d$. 
    Let $n=|m|$ and $a_1 \leq \dotsb \leq a_n \in \{b_1,\dotsc,b_d\}$ such that $\mathbf{b}^m=a_n \dotsb a_1$.
    By \cref{lem:bubble} and linearity,
    \begin{equation}
        \chi_i (\mathbf{b}^m g) = \sum_{J \subset \intset{1,n}} \chi_i (\mu_J(g) a_{j_h} \dotsb a_{j_1}).
    \end{equation}
    By \cref{prop:left-estimate} and \eqref{eq:growth}, and partitioning by the value of $h=|J|$,
    \begin{equation}
        \begin{split}
            |\chi_i(\mathbf{b}^m g)| & \leq \sum_{J \subset \intset{1,n}} C^{|J|} |J|! \| \chi(\mu_J(g)) \|_{\fg / \fh} \\
            & \leq \sum_{J \subset \intset{1,n}} C^{|J|} |J|! C_F^{n-|J|+1} (n-|J|)! \\
            & \leq \sum_{h=0}^n \binom{n}{h} C^h h! C_F^{n-h+1} (n-h)!
            \leq (n+1)! C_g^{n+1}
        \end{split}
    \end{equation}
    with $C_g := \max \{ C, C_F \}$.
\end{proof}

Finally, this leads to our main result.

\begin{proof}[Proof of the reverse implication of \cref{thm:main}]
    Let $(\fg,\fh)$ be a transitive pair satisfying~\eqref{eq:growth}.
    By \cref{thm:blattner}, formula \eqref{eq:formula-blattner} defines a realization of $(\fg,\fh)$.
    Let $g \in \fg$.
    By \cref{prop:bound-coeff}, there exists $C_g > 0$ such that, for every $m \in \N^d$ and $i \in \intset{1,d}$,
    \begin{equation}
        \frac{|\chi_i(\mathbf{b}^m g)|}{m!} \leq C_g^{|m|+1} \frac{(m_1+\dotsb+m_d+1)!}{m_1! \dotsb m_d! 1!}
        \leq C^{|m|+1}
    \end{equation}
    where $C := (d+1) C_g$, by bounding the multinomial coefficient.
    Hence, $\rho(g)$ of \eqref{eq:formula-blattner} is a convergent formal vector field according to the definition of \cref{sec:intro-powers}.
    Thus, the realization $\rho$ given by \cref{thm:blattner} is a convergent realization of $(\fg,\fh)$.
\end{proof}

\subsection{Proof of the necessity of the convergence criterion}
\label{sec:necessary}

In this subsection, we prove that condition \eqref{eq:growth} is necessary for the existence of a convergent realization.
We start with classical definitions and analytic-type estimates for formal vector fields.
The following exposition is inspired by our previous work \cite[Section 3]{BeauchardLeBorgneMarbach2023} in the context of real-analytic vector fields.

\begin{definition}[Analytic norms]
    Given $r > 0$ and a formal power series $f \in \kx$, we define
    \begin{equation}
        \opnorm{f}_r := \sum_{m \in \N^d} r^{|m|} |f_m|.
    \end{equation}
    Given $r > 0$ and a formal vector field $v \in \Der \kx$, we define
    \begin{equation} \label{eq:def.analytic.r}
        \opnorm{v}_r := \sum_{i=1}^d \opnorm{v^i}_r.
    \end{equation}
    We will denote by $(\kx)_r$ and $(\Der \kx)_r$ the vector subspaces of $\kx$ and $\Der \kx$ on which these norms are finite.
    By definition, for every convergent formal power series $f$, there exists $r > 0$ such that $f \in (\kx)_r$, and for every convergent formal vector field $v$, there exists $r > 0$ such that $v \in (\Der \kx)_r$.
\end{definition}

\begin{lemma}[Submultiplicativity]
    \label{lem:submultiplicativity}
    Given $r > 0$ and formal power series $f, g \in (\kx)_r$,
	\begin{equation} \label{opnorm(fg)r}
		\opnorm{f g}_r \leq \opnorm{f}_r \opnorm{g}_r.
	\end{equation}
\end{lemma}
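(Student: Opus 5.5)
The plan is to expand the product $fg$ coefficientwise and compare it term by term with the product of the two norms. By definition of multiplication in $\kx$, the coefficient of $x^m$ in $fg$ is
\begin{equation*}
    (fg)_m = \sum_{p + q = m} f_p g_q,
\end{equation*}
where the sum ranges over pairs $(p,q) \in \N^d \times \N^d$ with $p+q = m$. This sum is finite, so $fg$ is well defined as a formal power series.

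Next I apply the triangle inequality to get $|(fg)_m| \leq \sum_{p+q=m} |f_p||g_q|$. Since $|m| = |p| + |q|$ whenever $p+q=m$, we have $r^{|m|} = r^{|p|} r^{|q|}$. Multiplying by $r^{|m|}$ and summing over $m \in \N^d$ gives
\begin{equation*}
    \opnorm{fg}_r \leq \sum_{m \in \N^d} \sum_{p+q=m} \bigl(r^{|p|}|f_p|\bigr)\bigl(r^{|q|}|g_q|\bigr)
    = \sum_{p \in \N^d} \sum_{q \in \N^d} r^{|p|}|f_p| \, r^{|q|}|g_q|.
\end{equation*}
The right-hand side equals $\opnorm{f}_r \opnorm{g}_r$. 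The equality above is a rearrangement of a double series of nonnegative terms, indexed by the bijection $(p,q) \mapsto (p+q, p)$, which is legitimate by Tonelli's theorem for series with nonnegative terms.

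There is no real obstacle here. The only point needing care is justifying the reordering of sums, and this is automatic because all terms are nonnegative. The same argument shows that $fg \in (\kx)_r$ whenever $f, g \in (\kx)_r$.
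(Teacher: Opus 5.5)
Your proof is correct and matches the paper's argument: expand $(fg)_m$ as a Cauchy product, apply the triangle inequality, use $r^{|m|} = r^{|p|} r^{|q|}$, and regroup the nonnegative double series into $\opnorm{f}_r \opnorm{g}_r$. Your explicit justification of the rearrangement, via nonnegativity of the terms, is a small detail the paper leaves implicit.
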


\begin{proof}
    For every $m \in \N^d$, $(fg)_m = \sum f_{m'} g_{m''}$ where the sum ranges over $m',m'' \in \N^d$ such that $m'+m''=m$. 
    Hence
    \begin{equation}
        \opnorm{fg}_r =
        \sum_{m \in \N^d} r^{|m|} | (fg)_m |
        \leq \sum_{m \in \N^d} \sum_{m' + m'' = m} r^{|m'|+|m''|} |f_{m'}| |g_{m''}|
        = \opnorm{f}_r \opnorm{g}_r,
    \end{equation}
    by the triangle inequality.
\end{proof}

\begin{lemma}[Control of gradients] 
    Given $0 < r_1 < r_2$, $f \in (\kx)_{r_2}$ and $i \in \intset{1,d}$,
	\begin{equation} \label{eq:djf.r.dr}
	    \opnorm{\partial_i f}_{r_1} \leq \frac{1}{r_2-r_1} \opnorm{f}_{r_2}.
	\end{equation}
\end{lemma}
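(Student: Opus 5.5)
The plan is a direct coefficientwise computation, comparing $\partial_i f$ term by term with $f$. Write $f = \sum_{m} f_m x^m$. Then
\begin{equation*}
    \partial_i f = \sum_{m \in \N^d,\, m_i \geq 1} m_i f_m x^{m - e_i},
\end{equation*}
where $e_i$ is the $i$-th canonical multi-index. Hence, by definition of the analytic norm,
\begin{equation*}
    \opnorm{\partial_i f}_{r_1} = \sum_{m_i \geq 1} m_i r_1^{|m|-1} |f_m| \leq \sum_{m \in \N^d} |m| \, r_1^{|m|-1} |f_m|.
\end{equation*}
Comparing with $\opnorm{f}_{r_2} = \sum_m r_2^{|m|} |f_m|$ term by term, it is enough to check the scalar inequality
\begin{equation*}
    k r_1^{k-1} \leq \frac{r_2^k}{r_2 - r_1} \qquad \text{for every } k \in \N.
\end{equation*}

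The key step is proving this scalar inequality, and it follows from the identity
\begin{equation*}
    r_2^k - r_1^k = (r_2 - r_1) \sum_{j=0}^{k-1} r_2^{j} r_1^{k-1-j}.
\end{equation*}
Since each of the $k$ summands is at least $r_1^{k-1}$, this gives $(r_2 - r_1) k r_1^{k-1} \leq r_2^k - r_1^k \leq r_2^k$. For $k = 0$ the left-hand side vanishes and there is nothing to prove.

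Summing over $m$ with weights $|f_m|$ then yields $\opnorm{\partial_i f}_{r_1} \leq \frac{1}{r_2 - r_1} \opnorm{f}_{r_2}$. All series involved have nonnegative terms, so the manipulations are legitimate, and in particular $\partial_i f \in (\kx)_{r_1}$. There is no real obstacle; the only point needing care is the elementary bound $k r_1^{k-1} \leq r_2^k/(r_2 - r_1)$. An alternative route to it is the mean value theorem for $t \mapsto t^k$ on $[r_1, r_2]$.
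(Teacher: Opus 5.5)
Your proof is correct and follows essentially the same route as the paper: a coefficientwise comparison that reduces everything to the scalar bound $k r_1^{k-1}(r_2-r_1) \leq r_2^k$. The paper gets this bound by keeping a single term in the binomial expansion of $(r_1+(r_2-r_1))^k$, whereas you use the factorization of $r_2^k - r_1^k$, an equally elementary and slightly sharper variant.
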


\begin{proof}
	For every $m \in \N^d$, $(\partial_i f)_{m} = (m_i + 1) f_{m+e_i}$.
    Thus
	\begin{equation}
	\begin{split}
		\opnorm{\partial_i f}_{r_1}
		= \sum_{m \in \N^d} r_1^{|m|} (m_i+1) |f_{m+e_i}|
		& = \sum_{m \in \N^d} \frac{r_1^{|m|}}{r_2^{|m|+1}} (m_i+1) r_2^{|m+e_i|} |f_{m+e_i}| \\
		& \leq \opnorm{f}_{r_2} \sup_{m \in \N^d} \frac{r_1^{|m|}}{r_2^{|m|+1}} (m_i+1) \\
		& = \opnorm{f}_{r_2} \sup_{n \geq 1} n \frac{r_1^{n-1}}{r_2^n}.
	\end{split}
	\end{equation}
    By the binomial formula, $n r_1^{n-1} (r_2-r_1) \leq (r_1 + (r_2-r_1))^n = r_2^n$, which concludes the proof. 
\end{proof}

\begin{lemma}[Product estimate] 
    \label{thm:prod.analytic}
    Given $0 < r_1 < r_2$, $n \in \N$, $v_1, \dotsc v_n \in (\Der \kx)_{r_2}$ and $f \in (\kx)_{r_2}$,
    \begin{equation} \label{eq:prod.analytic}
        \opnorm{v_n \dotsb v_1 f}_{r_1}
        \leq n! \left(\frac{e}{r_2-r_1}\right)^n \opnorm{v_n}_{r_2} \dotsb \opnorm{v_1}_{r_2} \opnorm{f}_{r_2}.
    \end{equation}
\end{lemma}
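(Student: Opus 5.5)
We prove \eqref{eq:prod.analytic} with a nested family of intermediate radii, using the previous lemmas (submultiplicativity and control of gradients) at each step. For $n = 0$ the statement is trivial. For $n \geq 1$, set $\delta := (r_2 - r_1)/n$ and $\rho_k := r_2 - k\delta$ for $k \in \intset{0,n}$, so that $\rho_0 = r_2$ and $\rho_n = r_1$. Write $f_0 := f$ and $f_k := v_k f_{k-1} = \sum_{i=1}^d v_k^i \partial_i f_{k-1}$, so that $f_n = v_n \dotsb v_1 f$.

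\textbf{Inductive step.} We apply \cref{lem:submultiplicativity} at radius $\rho_k$, then \eqref{eq:djf.r.dr} between $\rho_k < \rho_{k-1}$, and finally use $\opnorm{v_k^i}_{\rho_k} \leq \opnorm{v_k^i}_{r_2}$, which holds because the norm is monotone in the radius. This gives
\begin{equation}
    \opnorm{f_k}_{\rho_k}
    \leq \sum_{i=1}^d \opnorm{v_k^i}_{\rho_k} \opnorm{\partial_i f_{k-1}}_{\rho_k}
    \leq \frac{1}{\delta} \opnorm{v_k}_{r_2} \opnorm{f_{k-1}}_{\rho_{k-1}},
\end{equation}
where the last step uses $\sum_i \opnorm{v_k^i}_{r_2} = \opnorm{v_k}_{r_2}$ from \eqref{eq:def.analytic.r}. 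Each intermediate series lies in $(\kx)_{\rho_k}$ by this very inequality, so all norms involved are finite.

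\textbf{Iteration.} Iterating $n$ times yields
\begin{equation}
    \opnorm{v_n \dotsb v_1 f}_{r_1} \leq \left(\frac{n}{r_2-r_1}\right)^n \opnorm{v_n}_{r_2} \dotsb \opnorm{v_1}_{r_2} \opnorm{f}_{r_2}.
\end{equation}
It remains to use the elementary bound $n^n \leq e^n n!$, which follows from $e^n = \sum_k n^k/k! \geq n^n/n!$. This gives exactly \eqref{eq:prod.analytic}.

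The only real choice in the argument is the equal splitting of the radius gap into $n$ pieces. Splitting unevenly, or losing the full gap at each step, would produce a worse combinatorial constant. With the equal splitting, the factor $n^n$ is converted into $n!\,e^n$ by Stirling-type reasoning. There is no genuine obstacle beyond checking the finiteness of the intermediate norms, which the induction provides.
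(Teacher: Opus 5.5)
Your proof is correct and is essentially the paper's argument. You split the gap $r_2-r_1$ into $n$ equal increments, apply the one-step bound (submultiplicativity plus control of gradients, with monotonicity of the norm in the radius) $n$ times to get $(n/(r_2-r_1))^n$, and conclude with $n^n \leq e^n n!$; you merely make explicit the intermediate radii and the finiteness of the intermediate norms, which the paper leaves implicit.
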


\begin{proof}
    For $n = 0$, one uses that the norm \eqref{eq:def.analytic.r} is non-decreasing with respect to $r$.
	For $n = 1$, using \eqref{eq:def.analytic.r}, \eqref{opnorm(fg)r} and \eqref{eq:djf.r.dr}, we have
    \begin{equation}
        \label{eq:prodn1}
        \opnorm{v_1 f}_{r_1} \leq \frac{1}{r_2-r_1} \opnorm{v_1}_{r_1} \opnorm{f}_{r_2}.
    \end{equation}
    For $n > 1$, one applies  $n$ times \eqref{eq:prodn1} with a radius increment $(r_2-r_1) / n$ at each step. 
    This yields
	\begin{equation}
		\begin{split}
		    \opnorm{v_n \dotsb v_1 f}_{r_1}
		    & \leq 
			\left(\frac{n}{r_2-r_1}\right) 
			\opnorm{v_n}_{r_1} \opnorm{v_{n-1} \dotsb v_1 f}_{r_1+\frac{r_2-r_1}{n}} \\
			& \leq 
			\left(\frac{n}{r_2-r_1}\right)^n \opnorm{f}_{r_2}
			\prod_{j=1}^n \opnorm{v_j}_{r_1+(n-j)\frac{r_2-r_1}{n}},
		\end{split}
	\end{equation}
	which concludes the proof by monotonicity of \eqref{eq:def.analytic.r}, and using $n^n \leq n! e^n$.
\end{proof}

\begin{lemma}[Analytic estimate] 
    \label{thm:bracket.analytic}
    Given $0 < r_1 < r_2$, $n \in \N^*$ and $v_1, \dotsc v_n \in (\Der \kx)_{r_2}$,
    \begin{equation}
        \opnorm{[v_1, [v_2, [\dotsc, [v_{n-1},v_n] \dotsb ]]]}_{r_1} 
        \leq (n-1)! \left( \frac{2e}{r_2-r_1} \right)^{n-1}
        \prod_{k = 1}^n \opnorm{v_k}_{r_2}.
    \end{equation}
\end{lemma}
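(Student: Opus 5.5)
We will reduce the bracket estimate to the product estimate of \cref{thm:prod.analytic}. The nested bracket $[v_1,[v_2,[\dotsc,[v_{n-1},v_n]\dotsb]]]$ is itself a formal vector field. For each component $i$, its $i$-th coefficient equals the bracket acting on the coordinate function $x_i$. We therefore expand the bracket as an operator: writing $\ad_{v_1}\dotsb\ad_{v_{n-1}}(v_n)$ and expanding each commutator $[v,w]=vw-wv$ in the associative algebra of differential operators gives a sum of $2^{n-1}$ terms. Each term has the form $\pm v_{\sigma(n)}\dotsb v_{\sigma(1)}$, a product of the $n$ vector fields in some order.

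The plan is as follows.

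\textbf{Step 1.} We write the $i$-th component as
\[
[v_1,[\dotsc,[v_{n-1},v_n]\dotsb]]^i=[v_1,[\dotsc,[v_{n-1},v_n]\dotsb]](x_i).
\]
Expanding into the $2^{n-1}$ words gives terms $\pm w_{n}\dotsb w_{1}(x_i)$, where $(w_1,\dotsc,w_n)$ is a reordering of the $v_k$.

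\textbf{Step 2.} We must avoid a wasted factor. Applying \cref{thm:prod.analytic} directly with $f=x_i$ to all $n$ vector fields would give the factor $n!\,(e/(r_2-r_1))^n$, one power and one factorial too many. Instead, in each word we peel off the innermost field: $w_1(x_i)=w_1^i$, which is a power series with $\opnorm{w_1^i}_{r_2}\le \opnorm{w_1}_{r_2}$. Each word then becomes $w_n\dotsb w_2 (w_1^i)$, a product of $n-1$ vector fields applied to a function.

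\textbf{Step 3.} We apply \cref{thm:prod.analytic} with $n-1$ fields and $f=w_1^i$. This gives, for each word,
\[
\opnorm{w_n\dotsb w_2(w_1^i)}_{r_1}\le (n-1)!\,\Big(\frac{e}{r_2-r_1}\Big)^{n-1}\prod_{k=2}^n\opnorm{w_k}_{r_2}\,\opnorm{w_1^i}_{r_2}.
\]

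\textbf{Step 4.} We sum over the $2^{n-1}$ words and over $i$, using $\sum_i\opnorm{w_1^i}_{r_2}=\opnorm{w_1}_{r_2}$. The innermost field $w_1$ depends on the word, but summing over $i$ is done word by word, so this is harmless. Each word contributes at most
\[
(n-1)!\,\Big(\frac{e}{r_2-r_1}\Big)^{n-1}\prod_{k=1}^n\opnorm{v_k}_{r_2},
\]
and the factor $2^{n-1}$ is absorbed into $(2e/(r_2-r_1))^{n-1}$. This yields the claim.

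The only delicate point is the bookkeeping in Step 2, namely making sure the innermost field is evaluated on $x_i$ rather than differentiated once more. Everything else is a direct application of the earlier lemmas together with the triangle inequality.
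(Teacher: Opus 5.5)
Your proof is correct and follows the same route as the paper. The paper also writes the nested bracket as at most $2^{n-1}$ terms of the form $w_n\dotsb w_2(w_1^i)$, with $f=w_1^i$ a component of one of the $v_k$, applies \cref{thm:prod.analytic} with $n-1$ fields, and absorbs $2^{n-1}$ into $(2e/(r_2-r_1))^{n-1}$; you simply spell out the bookkeeping, including the word-by-word summation over components.
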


\begin{proof}
    This estimate stems from \eqref{eq:prod.analytic} because, as can be checked by induction on $n$, the iterated Lie bracket $[v_1, [v_2, [\dotsc, [v_{n-1},v_n] \dotsb ]]]$ is a sum of at most $2^{n-1}$ terms of the form studied in \cref{thm:prod.analytic}, where $f$ is a component of one of the vector fields $v_k$.
\end{proof}

\begin{proof}[Proof of the forward implication of \cref{thm:main}]
    Let $(\fg,\fh)$ be a transitive pair and $d' := \codim_\fg \fh$.
    Assume that it has a convergent realization $\rho : \fg \to \Der \kx$ in $d \geq d'$ variables.
    We want to prove that \eqref{eq:growth} holds.
    
    Let $b_1, \dotsc, b_{d'} \in \fg$ be a basis of a complement of $\fh$ in $\fg$.
    Then any element $g \in \fg$ has a unique decomposition of the form
    \begin{equation}
        g = h + \sum_{i=1}^{d'} \chi_i(g) b_i
    \end{equation}
    where $h \in \fh$ and $\chi_i(g) \in \K$.
    We use the norm \eqref{eq:def-norm-gh} on $\fg/\fh$.

    Since $\rho$ is a realization of $(\fg,\fh)$, for every $h \in \fh$, $\rho(h)(0) = 0$, and the vectors $\rho(b_i)(0)$ are linearly independent in $\K^d$.
    Let $V = \vect (\rho(b_1)(0), \dots, \rho(b_{d'})(0)) \subset \K^d$, and let $N_1$ be the norm on $V$ defined by $N_1(\sum \lambda_i \rho(b_i)(0)) = \sum |\lambda_i|$.
    Then, since $\dim V$ is finite, there exists a constant $C > 0$ such that $N_1 \leq C \| \cdot \|_{\K^d}$. 
    By \eqref{eq:def-norm-gh}, for every $g \in \fg$, 
    \begin{equation}
        \| \chi(g) \|_{\fg/\fh} = \sum_{i=1}^{d'} |\chi_i(g)| = N_1(\rho(g)(0)) \leq C \| \rho(g)(0) \|_{\K^d}.
    \end{equation}

    Let $F \subset \fg$ be a finite set.
    For every $g \in F$, $\rho(g)$ is a convergent formal vector field since $\rho$ is a convergent realization.
    Hence, since $F$ is finite, there exists $C_r, r > 0$ such that, for every $g \in F$, $\rho(g) \in (\Der \kx)_r$ and $\opnorm{\rho(g)}_r \leq C_r$.
    Let $n \in \N^*$ and $g_1, \dotsc, g_n \in F$.
    Then, using \cref{thm:bracket.analytic},
    \begin{equation}
        \begin{split}
            \| \chi([g_1, [g_2, [\dotsc, [g_{n-1},g_n]\dotsb]]]) \|_{\fg/\fh}
            & \leq C \| [\rho(g_1), [\rho(g_2), [\dotsc, [\rho(g_{n-1}),\rho(g_n)]\dotsb]]] (0) \|_{\K^d} \\
            & \leq C (n-1)! \left(\frac{2e}{r} \right)^{n-1} C_r^n
        \end{split}
    \end{equation}
    and this estimate corresponds to \eqref{eq:growth} with $C_F := \max \{ C C_r, \frac{2e C_r}{r} \}$.
\end{proof}

\section{Changes of coordinates and comparison of realizations}
\label{sec:coordinate-changes}

This section gathers change-of-coordinates and comparison results surrounding realizations, first for formal power series, then for convergent ones. 
More precisely, the setting is the following.

Let $\fh, \fh'$ be Lie subalgebras of a Lie algebra $\fg$ of codimension $d = \codim_\fg \fh$ and $d' = \codim_\fg \fh'$.
Let $\rho : \fg \to \Der \kx$ be a realization of $(\fg, \fh)$ and $\rho' : \fg \to \Der \ky$ be a realization of $(\fg,\fh')$, in minimal dimensions $d$ and $d'$.
We look for a local homomorphism $\tau : \ky \to \kx$ such that
\begin{equation}
    \label{eq:swap-rho-rho'}
    \forall g \in \fg, \quad \tau \circ \rho'(g) = \rho(g) \circ \tau.
\end{equation}
\emph{i.e.} such that $\tau$ is a homomorphism of representations of the Lie algebra $\fg$.
We prove that such a~$\tau$ is unique, exists if and only if $\fh \subset \fh'$, and is convergent when $\rho, \rho'$ are.

\subsection{Preliminary remarks}

\subsubsection{Local homomorphisms}

We recall that $\kx = \K \llbracket x_1, \dotsc, x_d \rrbracket$ is a \emph{local ring}: it admits a unique maximal ideal $\mathfrak{m}_x$, the subset of power series with a zero constant term.
Given another ring of power series $\ky = \K \llbracket y_1, \dotsc, y_{d'} \rrbracket$, a $\K$-algebra homomorphism $\tau : \ky \to \kx$ is called \emph{local} when $\tau(\mathfrak{m}_y) \subset \mathfrak{m}_x$.

\begin{lemma}
    \label{lem:auto-local}
    Any $\K$-algebra isomorphism $\ky \to \kx$ is local.
\end{lemma}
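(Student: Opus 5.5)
The idea is to characterize the maximal ideal $\mathfrak{m}_y$ purely algebraically: it is the set of non-units of $\ky$. Any ring isomorphism maps non-units to non-units, so it must send the maximal ideal of $\ky$ onto that of $\kx$. Concretely, let $\tau : \ky \to \kx$ be a $\K$-algebra isomorphism, with inverse $\tau^{-1}$, also a $\K$-algebra homomorphism.

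First I will recall the standard fact that $f \in \kx$ is invertible if and only if its constant term $f_0$ is nonzero. If $f_0 \neq 0$, write $f = f_0(1-u)$ with $u \in \mathfrak{m}_x$. The series $\sum_{k \geq 0} u^k$ is well defined in $\kx$, since $u^k$ has order at least $k$, so each coefficient involves a finite sum. This series inverts $1-u$. Conversely, if $fg = 1$, then $f_0 g_0 = 1$, so $f_0 \neq 0$. Hence $\mathfrak{m}_x$ is exactly the set of non-invertible elements of $\kx$, and likewise $\mathfrak{m}_y$ for $\ky$.

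Now take $f \in \mathfrak{m}_y$ and suppose $\tau(f)$ were invertible in $\kx$, with inverse $g$. Applying $\tau^{-1}$, which preserves $1$ and products, gives $f\,\tau^{-1}(g) = \tau^{-1}(\tau(f) g) = \tau^{-1}(1) = 1$. So $f$ would be invertible, contradicting $f \in \mathfrak{m}_y$. Therefore $\tau(f) \in \mathfrak{m}_x$, which proves $\tau(\mathfrak{m}_y) \subset \mathfrak{m}_x$.

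There is no real obstacle here. The only point requiring care is the convergence of the geometric series in the formal topology, which follows from the order argument above.
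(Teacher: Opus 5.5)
Your proof is correct. The paper states this lemma without proof and treats it as standard. Your justification is the standard one: $\mathfrak{m}_y$ and $\mathfrak{m}_x$ are exactly the sets of non-units, and a ring isomorphism, whose inverse is again a homomorphism, sends non-units to non-units. Your geometric-series argument for inverting a series with nonzero constant term is also handled properly.

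One side remark: bijectivity is not actually needed. Let $\tau : \ky \to \kx$ be any $\K$-algebra homomorphism and $f \in \mathfrak{m}_y$, and suppose $c := (\tau f)(0) \neq 0$. Then $f - c$ is a unit of $\ky$, so $\tau(f-c) = \tau f - c$ would be a unit of $\kx$. But it lies in $\mathfrak{m}_x$, a contradiction. That route uses $\K$-linearity (so that $\tau(c) = c$) but not invertibility. Your route uses invertibility but works for arbitrary ring isomorphisms.
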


\begin{lemma}
    \label{lem:local-homo}
    Let $\tau : \ky \to \kx$ be a local homomorphism.
    For any $f \in \ky$, $(\tau f)(0) = f(0)$.
\end{lemma}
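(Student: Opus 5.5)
The claim is that for a local homomorphism $\tau : \ky \to \kx$ and any $f \in \ky$, the constant term of $\tau f$ equals the constant term of $f$. The plan is to split $f$ into its constant part and its part in the maximal ideal, and use that $\tau$ is a unital $\K$-algebra homomorphism.

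First, write $f = f(0) \cdot 1 + (f - f(0))$, where $f(0) \in \K$ is the constant term of $f$. The series $f - f(0)$ has zero constant term, so $f - f(0) \in \mathfrak{m}_y$.

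Second, apply $\tau$. Since $\tau$ is a $\K$-algebra homomorphism, it is $\K$-linear and sends $1$ to $1$. This gives $\tau f = f(0)\cdot 1 + \tau(f - f(0))$. Here one should check that ``$\K$-algebra homomorphism'' is understood as unital, which is the standard convention. If unitality were not assumed, it still holds that $\tau(1)$ is an idempotent of $\kx$. Since $\kx$ is a local ring, hence an integral domain, its only idempotents are $0$ and $1$, so $\tau(1) \in \{0,1\}$. One then notes that the case $\tau(1)=0$ gives the zero map, which is excluded by the convention.

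Third, by locality, $\tau(f - f(0)) \in \mathfrak{m}_x$, so it has zero constant term. Taking constant terms in the identity from the second step gives $(\tau f)(0) = f(0)$.

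There is no real obstacle. The only point needing care is the unitality convention in the second step, which is handled as indicated there.
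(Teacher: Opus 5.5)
Your proof is correct. The paper states this lemma without proof, as an elementary fact, so there is no argument to compare against; your decomposition $f = f(0)\cdot 1 + (f - f(0))$, followed by unitality and $\K$-linearity and then locality, is the standard argument.

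There is one inaccuracy in your aside. A local ring need not be an integral domain; $\K[t]/(t^2)$ is a counterexample. The conclusion that idempotents of $\kx$ are trivial is still true. It holds because $\kx$ happens to be a domain, or, in any local ring, because one of $e$ and $1-e$ must be a unit.

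The aside is not needed anyway. Here $\K$-algebra homomorphisms are unital. That assumption is genuinely required, since for the zero map the statement fails at $f = 1$.
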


\begin{lemma}
    \label{lem:local-homo-substitution}
    Let $\tau:\ky\to\kx$ be a local homomorphism. 
    Then, for every $f\in\ky$, one has $\tau(f)=f\bigl(\tau(y_1),\dotsc,\tau(y_{d'})\bigr)$.
    Conversely, any $Y_1,\dotsc,Y_{d'}\in\mathfrak m_x$ uniquely determine a local homomorphism $\tau:\ky\to\kx$ such that $\tau(y_i)=Y_i$.
\end{lemma}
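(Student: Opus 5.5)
The first assertion is essentially the universal property of power series rings in the complete local setting, and the second is its converse. I will prove both by working modulo powers of the maximal ideals. Write $Y_i := \tau(y_i)$. Since $\tau$ is local, $Y_i \in \mathfrak m_x$. Given $f \in \ky$ and $N \in \N$, split $f = P_N + R_N$, where $P_N$ is the polynomial truncation of $f$ of degree $< N$ and $R_N \in \mathfrak m_y^N$. Because $\tau$ is a $\K$-algebra homomorphism, $\tau(P_N) = P_N(Y_1,\dotsc,Y_{d'})$.

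The key point is that $\tau(\mathfrak m_y^N) \subset \mathfrak m_x^N$. Indeed, $\mathfrak m_y^N$ is generated as an ideal by monomials $y^\alpha$ with $|\alpha| = N$. So $R_N = \sum_{|\alpha|=N} y^\alpha r_\alpha$, a finite sum with $r_\alpha \in \ky$. Applying $\tau$ gives $\tau(R_N) = \sum Y^\alpha \tau(r_\alpha) \in \mathfrak m_x^N$. This generation claim is immediate: group the terms of $R_N$ according to a chosen degree-$N$ monomial that divides them.

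Next, the substituted series $f(Y) := \sum_\beta f_\beta Y^\beta$ is well defined in $\kx$. Since $Y^\beta \in \mathfrak m_x^{|\beta|}$, only finitely many terms contribute to each coefficient. Moreover, $f(Y) - P_N(Y) \in \mathfrak m_x^N$. Hence $\tau(f) - f(Y) \in \mathfrak m_x^N$ for every $N$. Since $\bigcap_N \mathfrak m_x^N = \{0\}$ (any nonzero series has a lowest-degree term), we get $\tau(f) = f(Y)$.

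For the converse, take $Y_1,\dotsc,Y_{d'} \in \mathfrak m_x$ and define $\tau(f) := f(Y)$. This is well defined by the same finiteness argument. It is $\K$-linear, sends $1$ to $1$, and is multiplicative. Multiplicativity follows because truncations satisfy $(fg)_{<N} \equiv f_{<N} g_{<N} \pmod{\mathfrak m_y^N}$, polynomial substitution is multiplicative, and the error terms land in $\mathfrak m_x^N$ for every $N$. The map is local, since $f \in \mathfrak m_y$ has no constant term, so every term $f_\beta Y^\beta$ lies in $\mathfrak m_x$. Uniqueness follows from the first part, since any local homomorphism with $\tau(y_i) = Y_i$ must equal $f \mapsto f(Y)$.

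The main obstacle is the step $\tau(\mathfrak m_y^N) \subset \mathfrak m_x^N$. An abstract homomorphism need not be continuous a priori, and locality together with the finite generation of $\mathfrak m_y^N$ by monomials is exactly what makes this step work.
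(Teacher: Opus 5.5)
Your proof is correct and complete. The paper gives no proof of this lemma: it states it, together with \cref{lem:auto-local,lem:local-homo}, as a standard preliminary fact on formal power series, so there is no competing argument to compare against.

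You correctly identify the decisive step. Locality, together with the finite generation of $\mathfrak m_y^N$ by the monomials of degree $N$ (this is where $d'<\infty$ enters), gives $\tau(\mathfrak m_y^N)\subset\mathfrak m_x^N$, so $\tau$ is automatically $\mathfrak m$-adically continuous. The separation $\bigcap_N\mathfrak m_x^N=\{0\}$ then forces $\tau(f)=f(Y)$. The converse and uniqueness parts are also handled correctly.

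Two points you use tacitly are worth making explicit.
\begin{enumerate}
\item $\K$-algebra homomorphisms are unital. You need this for $\tau(P_N)=P_N(Y)$ to include the constant term, and it is consistent with \cref{lem:local-homo}.
\item $\mathfrak m_x^N$ coincides with the set of series of order at least $N$. This is your grouping argument run in $\kx$, and it is what justifies placing the infinite sum $\sum_{|\beta|\geq N} f_\beta Y^\beta$ in $\mathfrak m_x^N$. Alternatively, phrase every congruence on the $x$-side as $\ord(\cdot)\geq N$, which avoids needing this identification.
\end{enumerate}
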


\subsubsection{Extension of realizations to the universal enveloping algebra}

Let $\rho : \fg \to \Der \kx$ be a realization of a transitive pair $(\fg, \fh)$ in minimal dimension.
By the universal property of $U(\fg)$, it admits a unique extension $\rho : U(\fg) \to \End \kx$ such that, for any $g_1, \dotsc, g_n \in \fg$, one has $\rho(g_1 \dotsb g_n) = \rho(g_1) \dotsb \rho(g_n)$.

\begin{lemma}
    \label{lem:transitive-realization-separates-series}
    Let $\rho : \fg \to \Der \kx$ be a realization of a transitive pair $(\fg, \fh)$ with $d = \codim_\fg \fh$.
    Let $f\in\kx$.
    Then $f = 0$ if and only if
    \begin{equation}
        \label{eq:rho-F=0}
        \forall a \in U(\fg),
        \qquad
        (\rho(a) f)(0)=0.
    \end{equation}
\end{lemma}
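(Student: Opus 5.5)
The forward direction is immediate: if $f=0$ then $\rho(a)f=0$ for every $a\in U(\fg)$, so \eqref{eq:rho-F=0} holds. For the converse, assume \eqref{eq:rho-F=0} and $f\neq 0$. I will show that some Taylor coefficient of $f$ can be extracted as $(\rho(a)f)(0)$ for a suitable $a$, which gives a contradiction. The main tool is that the vector fields $\rho(b_1),\dotsc,\rho(b_d)$ form a formal frame near $0$.

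First, fix $b_1,\dotsc,b_d\in\fg$ spanning a complement of $\fh$. Since $\rho$ is a realization with $\fh=\rho^{-1}(\Der_0\kx)$ and $d=\codim_\fg\fh$, the map $g\mapsto\rho(g)(0)$ induces an injective linear map $\fg/\fh\to\K^d$. By dimension count it is bijective, so the vectors $\rho(b_1)(0),\dotsc,\rho(b_d)(0)$ form a basis of $\K^d$. Write $\rho(b_j)=\sum_i A_{ij}(x)\partial_i$ with $A(x)\in M_d(\kx)$ and $\det A(0)\neq0$. As in Step 1 of the proof of \cref{thm:blattner}, $A(x)$ is then invertible in $M_d(\kx)$. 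Hence each $\partial_i=\sum_j (A^{-1})_{ji}(x)\rho(b_j)$ is a $\kx$-linear combination of the $\rho(b_j)$.

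Second, let $m$ be a multi-index of minimal order $|m|=n$ with $f_m\neq0$, so that $f\in\mathfrak{m}_x^{n}$. Consider the operator $\rho(b_{j_1}\dotsb b_{j_n})=\rho(b_{j_1})\dotsb\rho(b_{j_n})$ applied to $f$ and evaluated at $0$. Expand each $\rho(b_j)=\sum_i A_{ij}(0)\partial_i+\sum_i(A_{ij}(x)-A_{ij}(0))\partial_i$. Every term that contains a non-constant coefficient either multiplies by an element of $\mathfrak{m}_x$ or differentiates such a factor. I will check by order counting that these terms applied to $f\in\mathfrak{m}_x^n$ and evaluated at $0$ vanish. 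Concretely, I will show by induction on $k$ that for $f\in\mathfrak{m}_x^{n}$, the difference between $\rho(b_{j_1})\dotsb\rho(b_{j_k})f$ and $L_{j_1}\dotsb L_{j_k}f$ lies in $\mathfrak{m}_x^{n-k+1}$, where $L_j:=\sum_iA_{ij}(0)\partial_i$ is the constant-coefficient field. The point is that each vector field lowers order by at most one, and a non-constant coefficient raises the order by at least one while removing one derivative's worth of loss.

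Therefore $(\rho(b_{j_1}\dotsb b_{j_n})f)(0)=(L_{j_1}\dotsb L_{j_n}f)(0)$. Since the $L_j$ are a constant linear change of the $\partial_i$, suitable linear combinations of these products give every $(\partial^{m'}f)(0)=m'!f_{m'}$ with $|m'|=n$. In particular they give $m!f_m\neq0$, which contradicts \eqref{eq:rho-F=0}. The main obstacle is stating the order-counting induction cleanly. I would phrase it as: $\rho(b_j)-L_j$ maps $\mathfrak{m}_x^k$ into $\mathfrak{m}_x^{k}$, while $L_j$ maps $\mathfrak{m}_x^k$ into $\mathfrak{m}_x^{k-1}$. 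The key fact is that modulo $\mathfrak{m}_x^{k}$ the operator $\rho(b_j)$ acts on $\mathfrak{m}_x^k$ exactly as $L_j$ does.
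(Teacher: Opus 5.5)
Your proof is correct and takes essentially the same approach as the paper. You take the lowest-degree nonzero homogeneous part of $f$, where the paper runs an induction on the degree, and show that $(\rho(b_{j_1}\dotsb b_{j_n})f)(0)=\mathrm{d}^nf(0)\bigl(\rho(b_{j_1})(0),\dotsc,\rho(b_{j_n})(0)\bigr)$; since the $\rho(b_j)(0)$ form a basis of $\K^d$, this is the same argument, and your $\mathfrak{m}_x$-filtration bookkeeping is a cleaner rendering of the paper's remark on ``terms involving derivatives of the coefficients'' (note that the invertibility of $A(x)$ in $M_d(\kx)$ is never used, only $\det A(0)\neq 0$, so that step can be dropped).
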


\begin{proof}
    Assume that \eqref{eq:rho-F=0} holds.
    Choose $b_1,\dotsc,b_d\in\fg$ whose classes form a basis of $\fg/\fh$.
    Since $\rho$ is a realization of $(\fg,\fh)$, the vectors $\rho(b_1)(0),\dotsc,\rho(b_d)(0)$ form a basis of $\K^d$.

    We prove by induction on $n\in\N$ that all homogeneous terms of $f$ of degree $n$ vanish.
    For $n = 0$, the assertion follows from the assumption with $a = 1$: $f_0 = f(0) = (\rho(1) f)(0) = 0$.

    Assume that all homogeneous terms of $f$ of degree $<n$ vanish. Let
    $i_1,\dotsc,i_n\in\llbracket 1,d\rrbracket$. Then
    \begin{equation}
        \big(\rho(b_{i_1})\dotsm\rho(b_{i_n})f\big)(0) = 0.
    \end{equation}
    By the induction hypothesis, all terms involving derivatives of the
    coefficients of the vector fields only depend on derivatives of $f$ of order $<n$, hence vanish at the origin.
    Therefore the previous identity reduces to
    \begin{equation}
        \mathrm d^n f(0)
        \big(
        \rho(b_{i_1})(0),
        \dotsc,
        \rho(b_{i_n})(0)
        \big)
        =
        0.
    \end{equation}
    Since $\rho(b_1)(0),\dotsc,\rho(b_d)(0)$ form a basis of $\K^d$, this implies that $\mathrm d^n f(0)=0$.
    Thus the homogeneous term of degree $n$ vanishes.
    The induction proves $f=0$.
\end{proof}

\subsection{Formal conjugation homomorphisms}
\label{sec:formal-conjugation}

We first prove that local homomorphisms satisfying the conjugation identity \eqref{eq:swap-rho-rho'} are unique.

\begin{proposition}
    \label{prop:unique-equivariant-formal-morphism}
    Let $\fh, \fh' \subset\fg$ be Lie subalgebras of codimension $d = \codim_\fg \fh$ and $d' = \codim_\fg \fh'$.
    Let $\rho :\fg \to \Der \kx$ and $\rho' : \fg \to \Der \ky$ be realizations of $(\fg,\fh)$ and $(\fg,\fh')$. 
    
    There is at most one local homomorphism $\tau : \ky \to \kx$ such that \eqref{eq:swap-rho-rho'} holds.
\end{proposition}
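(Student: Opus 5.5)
The idea is to show that $\tau$ is determined by the values it takes on the coordinate functions $y_1, \dotsc, y_{d'}$, and to recover those values through \cref{lem:transitive-realization-separates-series}.

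Let $\tau_1, \tau_2 : \ky \to \kx$ be two local homomorphisms satisfying \eqref{eq:swap-rho-rho'}. By \cref{lem:local-homo-substitution}, a local homomorphism is determined by the images $\tau(y_j)$. It therefore suffices to prove that $\tau_1(f) = \tau_2(f)$ for every $f \in \ky$, and in particular for $f = y_j$. Set $F := \tau_1(f) - \tau_2(f) \in \kx$. We will show that $F = 0$ by checking condition \eqref{eq:rho-F=0} of \cref{lem:transitive-realization-separates-series} for the realization $\rho$ of $(\fg,\fh)$, which is in minimal dimension $d$.

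\textbf{Step 1: extend the intertwining to $U(\fg)$.} We first pass from \eqref{eq:swap-rho-rho'} to the enveloping algebra. Write $\rho$ and $\rho'$ also for their extensions to $U(\fg)$. We claim that $\tau_k \circ \rho'(a) = \rho(a) \circ \tau_k$ for all $a \in U(\fg)$ and $k = 1,2$. This holds on products $g_1 \dotsb g_n$ by applying \eqref{eq:swap-rho-rho'} $n$ times and moving $\tau_k$ through one factor at a time. It then extends to all of $U(\fg)$ by linearity, and it holds trivially for $a = 1$.

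\textbf{Step 2: evaluate at the origin.} For every $a \in U(\fg)$ we compute
\begin{equation}
    (\rho(a) F)(0) = \bigl(\tau_1(\rho'(a) f)\bigr)(0) - \bigl(\tau_2(\rho'(a) f)\bigr)(0) = (\rho'(a) f)(0) - (\rho'(a) f)(0) = 0,
\end{equation}
where the middle equality uses \cref{lem:local-homo}: a local homomorphism preserves the value at $0$. So \eqref{eq:rho-F=0} holds, and \cref{lem:transitive-realization-separates-series} gives $F = 0$. Hence $\tau_1 = \tau_2$.

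The only delicate point is the hypothesis of \cref{lem:transitive-realization-separates-series}: $\rho$ must be a realization in exactly $d = \codim_\fg \fh$ variables, so that the vectors $\rho(b_i)(0)$ span $\K^d$. This is exactly the setting of the proposition. The argument never uses $\rho'$ beyond the intertwining relation and the fact that it acts on $\ky$, so no inclusion between $\fh$ and $\fh'$ is needed for uniqueness.
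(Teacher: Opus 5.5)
Your proof is correct and follows the paper's argument: you extend the intertwining relation to $U(\fg)$, use \cref{lem:local-homo} to get $(\rho(a)F)(0)=0$ for all $a \in U(\fg)$, and conclude with \cref{lem:transitive-realization-separates-series}. The only cosmetic difference is that you run the argument for an arbitrary $f \in \ky$, which makes the appeal to \cref{lem:local-homo-substitution} unnecessary; the paper instead applies it to the coordinates $y_i$ and then uses that lemma to conclude.
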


\begin{proof}
    Let $\tau_1,\tau_2$ be two such homomorphisms.
    For $i\in\llbracket 1,d'\rrbracket$, set $f_i := \tau_1(y_i) - \tau_2(y_i) \in \kx$.
    The $\fg$-equivariance relation \eqref{eq:swap-rho-rho'} extends naturally to a $U(\fg)$-equivariance relation.
    Thus, for every $a\in U(\fg)$, using \cref{lem:local-homo},
    \begin{equation}
        \begin{split}
            (\rho(a) f_i)(0)
            &= \bigl(\tau_1(\rho'(a) y_i)\bigr)(0) - \bigl(\tau_2(\rho'(a) y_i)\bigr)(0)
            =0.
        \end{split}
    \end{equation}
    By \cref{lem:transitive-realization-separates-series}, we get $f_i=0$ for all $i$, hence $\tau_1=\tau_2$ by \cref{lem:local-homo-substitution}.
\end{proof}

We now prove that a local homomorphism satisfying the conjugation identity \eqref{eq:swap-rho-rho'} exists if and only if $\fh \subset \fh'$.
We start with the case where $\rho$ is a canonical Blattner--Draisma realization.

\begin{proposition}
    \label{prop:canonical-comparison-morphism}
    Let $\fh, \fh' \subset\fg$ be Lie subalgebras of codimension $d = \codim_\fg \fh$ and $d' = \codim_\fg \fh'$.
    Let $\rho :\fg \to \Der \kx$ and $\rho' : \fg \to \Der \ky$ be realizations of $(\fg,\fh)$ and $(\fg,\fh')$. 

    Assume that $\fh \subset \fh'$ and that $\rho$ is the realization given by \cref{thm:blattner} for a given tuple $b_1, \dotsc, b_d$ whose classes form a basis of $\fg / \fh$.
    Recall $\mathcal{E}(x)$ of \eqref{eq:gen-E} and define, for $f \in \ky$,
    \begin{equation}
        \label{eq:Psi}
        \Psi(f) := \big( \rho'(\mathcal{E}(x)) f \big)\vert_{y = 0}.
    \end{equation}
    Then $\Psi$ is a local homomorphism from $\ky$ to $\kx$ satisfying \eqref{eq:swap-rho-rho'}.

    Moreover, if $\fh = \fh'$, then $\Psi$ is an isomorphism.
\end{proposition}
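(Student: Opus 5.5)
The proof checks, in order, that $\Psi$ is well defined and multiplicative, that it is local, that it satisfies \eqref{eq:swap-rho-rho'}, and finally that it is invertible when $\fh=\fh'$. The main tool is the congruence \eqref{eq:key-congruence} from the proof of \cref{thm:blattner}.

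\textbf{Well-definedness and multiplicativity.} First I make sense of $\Psi$. Extend $\rho'$ coefficientwise to a map $U(\fg)\lxl \to \End(\ky)\lxl$. Then
\begin{equation}
\rho'(\mathcal{E}(x)) f = \sum_m \frac{x^m}{m!}\rho'(\mathbf{b}^m)f
\end{equation}
is a well-defined element of $\K\llbracket x,y\rrbracket$, because each $x$-coefficient is an element of $\ky$. Evaluating at $y=0$ coefficientwise gives an element of $\kx$. Since $\mathcal{E}(x)=e^{x_d b_d}\dotsm e^{x_1 b_1}$, we get
\begin{equation}
\rho'(\mathcal{E}(x)) = \exp\bigl(x_d\rho'(b_d)\bigr)\circ\dotsb\circ\exp\bigl(x_1\rho'(b_1)\bigr).
\end{equation}
For a derivation $D$ of $\ky$, the operator $\exp(x_i D)$ is a $\K\llbracket x\rrbracket$-algebra endomorphism of $\K\llbracket x,y\rrbracket$. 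This follows from the Leibniz formula $D^n(fg)=\sum\binom{n}{k}D^kf\,D^{n-k}g$, compared coefficientwise in $x_i$. Evaluation at $y=0$ is also an algebra homomorphism, so $\Psi$ is a $\K$-algebra homomorphism. Since $\mathcal{E}(0)=1$, we have $\Psi(f)(0)=f(0)$, so $\Psi(\mathfrak m_y)\subset\mathfrak m_x$ and $\Psi$ is local.

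\textbf{Equivariance.} Fix $g\in\fg$ and $f\in\ky$. Then $\Psi(\rho'(g)f)=(\rho'(\mathcal{E}(x)g)f)|_{y=0}$. By \eqref{eq:key-congruence},
\begin{equation}
\mathcal{E}(x)g = \sum_j \chi_j(\mathcal{E}(x)g)\,\partial_j\mathcal{E}(x) + R(x), \qquad R(x)\in I\lxl.
\end{equation}
The key observation is that $(\rho'(u)f)|_{y=0}=0$ for every $u\in I=\fh U(\fg)$. Indeed, write $u=ha$ with $h\in\fh\subset\fh'$. Then $\rho'(h)$ vanishes at $0$, so $(\rho'(h)\rho'(a)f)(0)=0$. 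This is exactly where the hypothesis $\fh\subset\fh'$ is used.

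The $R$-term therefore drops out. The remaining terms have coefficients $\chi_j(\mathcal{E}(x)g)\in\kx$ that are scalars in $y$, so they commute with evaluation at $y=0$. The operator $\partial_{x_j}$ commutes both with $\rho'$ (extended coefficientwise) and with evaluation at $y=0$. Hence
\begin{equation}
\Psi(\rho'(g)f)=\sum_j \chi_j(\mathcal{E}(x)g)\,\partial_j\Psi(f)=\rho(g)\Psi(f),
\end{equation}
by \eqref{eq:rho=E}. This is \eqref{eq:swap-rho-rho'}.

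\textbf{Isomorphism when $\fh=\fh'$.} In this case $d=d'$. By \cref{lem:local-homo-substitution}, $\Psi$ is the substitution $y_i\mapsto\Psi(y_i)\in\mathfrak m_x$. The linear part of $\Psi(y_i)$ is
\begin{equation}
\partial_{x_j}\Psi(y_i)(0)=(\rho'(b_j)y_i)(0)=\rho'(b_j)^i(0).
\end{equation}
Since $\rho'$ is a realization of $(\fg,\fh)$ and the $b_j$ span a complement of $\fh$, these vectors form a basis of $\K^d$, so the Jacobian matrix is invertible. The formal inverse function theorem then gives $Z_1,\dotsc,Z_d\in\mathfrak m_y$ inverting the substitution, one homogeneous degree at a time. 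The corresponding local homomorphism is a two-sided inverse of $\Psi$.

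The main obstacle is bookkeeping rather than any real difficulty. One must make precise the coefficientwise extensions of $\rho'$, $\partial_{x_j}$ and evaluation on the mixed ring $\K\llbracket x,y\rrbracket$, and check the commutations between them. One must also verify that the vanishing on $I$ survives the infinite $x$-sums, which holds because everything is computed coefficient by coefficient in $x$.
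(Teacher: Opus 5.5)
Your proof is correct and follows essentially the same route as the paper: multiplicativity and locality from the exponentials of the derivations $\rho'(b_i)$ followed by evaluation at $y=0$; equivariance from the congruence \eqref{eq:key-congruence}, using that $(\rho'(u)f)|_{y=0}=0$ for $u\in I=\fh U(\fg)$ because $\fh\subset\fh'$; and invertibility when $\fh=\fh'$ from the Jacobian columns $\rho'(b_j)(0)$ together with the formal inverse function theorem. Your explicit bookkeeping of the coefficientwise extensions to $\K\llbracket x,y\rrbracket$ spells out details that the paper leaves implicit.
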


\begin{proof}
    The exponentials $e^{x_i \rho'(b_i)}$ are exponentials of derivations, hence algebra automorphisms after extension of scalars to $\K\llbracket x,y\rrbracket$. 
    Evaluation at $y=0$ is an algebra homomorphism. 
    Thus $\Psi$ is a $\K$-algebra homomorphism, and $\Psi(f)(0)=f(0)$, so it is local.

    Let $\chi_i$ be the coefficient functionals associated with the chosen basis of $\fg/\fh$. 
    Let $g \in \fg$.
    Recalling the PBW congruence \eqref{eq:key-congruence}, we have:
    \begin{equation}
        \mathcal{E}(x) g
        =
        \sum_{j=1}^{d} \chi_j(\mathcal{E}(x)g)\partial_{x_j}\mathcal{E}(x)
        + R_g(x)
        \qquad \text{where } \quad
        R_g(x)\in \fh U(\fg) \lxl.
    \end{equation}
    Since $\rho'$ is a realization of $(\fg,\fh')$ and $\fh \subset \fh'$, for any $h \in \fh$ and $f \in \ky$, $(\rho'(h) f)\vert_{y=0} = 0$.
    Therefore, for any $f \in \ky$, $(\rho'(R_g(x)) f) \vert_{y=0} = 0$.
    Thus, for any $f \in \ky$,
    \begin{equation}
        \begin{split}
            \Psi(\rho'(g)f)
            &=
            \left.\bigl(\rho'(\mathcal{E}(x)g)f\bigr)\right|_{y=0} \\
            &=
            \sum_{j=1}^{d}
            \chi_j (\mathcal{E}(x)g)
            \partial_{x_j}
            \left.\bigl(\rho'(\mathcal{E}(x))f\bigr)\right|_{y=0} \\
            &=
            \rho(g)\Psi(f),
        \end{split}
    \end{equation}
    which proves that $\Psi$ satisfies \eqref{eq:swap-rho-rho'}.

    If $\fh=\fh'$, then $d=d'$.
    For $1 \leq j \leq d$ and $f \in \ky$, one has $(\partial_{x_j} \Psi f)\vert_{x=0} = \rho'(b_j) f \vert_{y = 0}$.
    Thus the columns of the matrix $(\partial_{x_j} (\Psi y_i) \vert_{x=0})_{1 \leq i, j \leq d}$ are the $\rho'(b_1)(0), \dotsc, \rho'(b_d)(0)$.
    These form a basis of $\K^d$ because the classes of $b_1,\dotsc,b_d$ form a basis of $\fg/\fh $ and $\rho'$ realizes $(\fg,\fh)$ since $\fh = \fh'$. 
    Hence $\Psi$ is an isomorphism by the inverse function theorem for formal power series (see \cref{prop:ift-formal} in \cref{sec:ift-formal}).
\end{proof}

We now turn to the general case.

\begin{proposition}
    \label{thm:formal-morphism-transitive-pairs}
    Let $\fh, \fh' \subset\fg$ be Lie subalgebras of codimension $d = \codim_\fg \fh$ and $d' = \codim_\fg \fh'$.
    Let $\rho :\fg \to \Der \kx$ and $\rho' : \fg \to \Der \ky$ be realizations of $(\fg,\fh)$ and $(\fg,\fh')$. 

    There exists a local homomorphism $\tau : \ky \to \kx$ such that \eqref{eq:swap-rho-rho'} holds if and only if $\fh \subset \fh'$.
    When it exists, it is unique.
    When $\fh = \fh'$, it is an isomorphism.
\end{proposition}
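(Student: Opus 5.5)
Uniqueness is exactly \cref{prop:unique-equivariant-formal-morphism}, so two things remain: necessity of $\fh \subset \fh'$, and existence when $\fh \subset \fh'$ (with the isomorphism property when $\fh = \fh'$).

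\emph{Necessity.} Assume a local homomorphism $\tau$ satisfies \eqref{eq:swap-rho-rho'}. Let $h \in \fh$, so that $\rho(h)(0) = 0$. For each $i \in \intset{1,d'}$, apply \eqref{eq:swap-rho-rho'} to $y_i$ and evaluate at $0$ using \cref{lem:local-homo}:
\begin{equation*}
(\rho'(h)y_i)(0) = \bigl(\tau(\rho'(h) y_i)\bigr)(0) = \bigl(\rho(h)\tau(y_i)\bigr)(0).
\end{equation*}
The right-hand side vanishes, because $\rho(h)$ is a vector field vanishing at $0$ and hence sends any series to one with zero constant term. The $i$-th component of $\rho'(h)(0)$ is $(\rho'(h)y_i)(0)$, so $\rho'(h)(0)=0$. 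Therefore $h \in \rho'^{-1}(\Der_0 \ky) = \fh'$.

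\emph{Existence.} Assume $\fh \subset \fh'$. Choose $b_1,\dotsc,b_d$ whose classes form a basis of $\fg/\fh$, and let $\rho_0 : \fg \to \Der \kx$ be the realization of \cref{thm:blattner} for this choice. By \cref{prop:canonical-comparison-morphism}:
\begin{itemize}
\item applied to the pair $(\rho_0, \rho')$, it gives a local homomorphism $\Psi' : \ky \to \kx$ with $\Psi' \circ \rho'(g) = \rho_0(g) \circ \Psi'$;
\item applied to the pair $(\rho_0, \rho)$, where the case $\fh = \fh$ applies, it gives a local isomorphism $\Psi : \kx \to \kx$ with $\Psi \circ \rho(g) = \rho_0(g) \circ \Psi$.
\end{itemize}
Set $\tau := \Psi^{-1} \circ \Psi'$. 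The inverse $\Psi^{-1}$ is a $\K$-algebra isomorphism, hence local by \cref{lem:auto-local}, and a composition of local homomorphisms is local. The intertwining relation for $\Psi$ rewrites as $\Psi^{-1} \circ \rho_0(g) = \rho(g) \circ \Psi^{-1}$. Hence
\begin{equation*}
\tau \circ \rho'(g) = \Psi^{-1} \circ \rho_0(g) \circ \Psi' = \rho(g) \circ \Psi^{-1} \circ \Psi' = \rho(g) \circ \tau,
\end{equation*}
which is \eqref{eq:swap-rho-rho'}.

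\emph{Isomorphism when $\fh = \fh'$.} In that case $\Psi'$ is itself an isomorphism by the last assertion of \cref{prop:canonical-comparison-morphism}, so $\tau = \Psi^{-1} \circ \Psi'$ is an isomorphism.

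The only step needing care is the necessity evaluation. It relies on $(\rho(h)f)(0)=0$ for every $f \in \kx$ whenever $\rho(h)(0)=0$, which is immediate because each term $\rho(h)^i\,\partial_i f$ has $\rho(h)^i(0) = 0$. The rest is bookkeeping with the earlier propositions.
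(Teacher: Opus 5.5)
Your proof is correct and follows the paper's argument essentially line by line. You factor $\tau = \Psi^{-1}\circ\Psi'$ through the Blattner--Draisma realization of $(\fg,\fh)$ using \cref{prop:canonical-comparison-morphism} twice, quote \cref{prop:unique-equivariant-formal-morphism} for uniqueness, and obtain $\fh\subset\fh'$ by evaluating the intertwining relation at $0$ via \cref{lem:local-homo}; your added remark that $\Psi^{-1}$ is local by \cref{lem:auto-local} simply makes explicit a detail the paper leaves implicit.
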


\begin{proof}
    Choose $b_1,\dotsc,b_d$ whose classes form a basis of $\fg/\fh$, and let $\bar{\rho}$ be the corresponding realization given by \cref{thm:blattner}.
    By \cref{prop:canonical-comparison-morphism}, applied to $\rho'$ and $\bar{\rho}$, there exists a local homomorphism $\Psi' : \ky \to \kx$ such that, for all $g \in \fg$, $\Psi' \circ \rho'(g) = \bar{\rho}(g) \circ \Psi'$.
    By \cref{prop:canonical-comparison-morphism}, applied to $\rho$ and $\bar{\rho}$ (in the equality case), there exists an automorphism $\Psi$ of $\kx$ such that, for all $g \in \fg$, $\Psi \circ \rho(g) = \bar{\rho}(g) \circ \Psi$.
    Then $\tau := \Psi^{-1} \circ \Psi' : \ky \to \kx$ is a local homomorphism which satisfies~\eqref{eq:swap-rho-rho'}.
    Uniqueness was proved in \cref{prop:unique-equivariant-formal-morphism}.
    When $\fh = \fh'$, $\Psi'$ was also an isomorphism, so $\tau$ too.
    
    Conversely, assume such a $\tau$ exists and let $g\in\fh$. 
    Then $\rho(g)(0) = 0$.
    For any $f \in \ky$, by \cref{lem:local-homo},
    \begin{equation}
        (\rho'(g) f)(0) 
        = \bigl(\tau(\rho'(g)f)\bigr)(0)
        = \bigl(\rho(g)\tau(f)\bigr)(0)
        = 0.
    \end{equation}
    Hence $\rho'(g)\in\Der_0\ky$ and $g\in\fh'$, so $\fh \subset \fh'$.
\end{proof}

\cref{thm:formal-morphism-transitive-pairs} entails the following relation between two realizations of a given transitive pair.

\begin{corollary}
    \label{thm:uniqueness}
    Let $\rho_1, \rho_2 : \fg \to \Der \kx$ be two realizations of a transitive pair $(\fg,\fh)$ in minimal dimension.
    Then there exists a unique automorphism $\tau$ of $\kx$ such that 
    \begin{equation}
        \label{eq:tau-rho}
        \forall g \in \fg, \quad \tau \circ \rho_2(g) = \rho_1(g) \circ \tau.
    \end{equation}
\end{corollary}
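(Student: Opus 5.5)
The plan is to apply \cref{thm:formal-morphism-transitive-pairs} with $\fh' := \fh$, $\rho := \rho_1$ and $\rho' := \rho_2$, taking $\ky$ to be $\kx$ itself. This is legitimate because both realizations live in $d = \codim_\fg \fh$ variables. The only point to check is that the hypotheses match: $\rho_1$ and $\rho_2$ are realizations of the same pair $(\fg,\fh)$ in minimal dimension, so $d' = d$. We may then identify $\ky$ with $\kx$ by renaming the indeterminates $y_i \mapsto x_i$; this identification is harmless since all statements are invariant under renaming variables.

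Existence follows directly. Since $\fh \subset \fh$ trivially, \cref{thm:formal-morphism-transitive-pairs} provides a local homomorphism $\tau : \kx \to \kx$ satisfying \eqref{eq:swap-rho-rho'}, which here reads exactly as \eqref{eq:tau-rho}. The same proposition states that, in the case of equality $\fh = \fh'$, this $\tau$ is an isomorphism, so $\tau$ is an automorphism of $\kx$.

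For uniqueness, let $\tau'$ be any automorphism of $\kx$ satisfying \eqref{eq:tau-rho}. By \cref{lem:auto-local}, $\tau'$ is local. The uniqueness statement of \cref{prop:unique-equivariant-formal-morphism}, or equivalently of \cref{thm:formal-morphism-transitive-pairs}, concerns local homomorphisms satisfying \eqref{eq:swap-rho-rho'}, so it applies and gives $\tau' = \tau$.

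There is no real obstacle here. The only care needed is to invoke \cref{lem:auto-local}, so that uniqueness is obtained among all automorphisms and not merely among local ones.
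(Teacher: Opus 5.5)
Your proof is correct and follows the paper's route: the corollary is exactly the case $\fh' = \fh$ (with $\rho = \rho_1$, $\rho' = \rho_2$) of \cref{thm:formal-morphism-transitive-pairs}. Invoking \cref{lem:auto-local}, so that uniqueness among local homomorphisms yields uniqueness among all automorphisms, is the one extra detail the paper leaves implicit, and you handle it correctly.
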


Finally, we note that conjugation allows to produce other realizations from a given one.

\begin{lemma}
    \label{lem:conjugation-is-real-formal}
    Let $\rho' : \fg \to \Der \ky$ be a realization of a transitive pair $(\fg,\fh)$ in minimal dimension.
    Let $\tau : \ky \to \kx$ be an isomorphism.
    Then $\tau_* \rho' : g \mapsto \tau \circ \rho'(g) \circ \tau^{-1}$ is a realization of $(\fg,\fh)$.
\end{lemma}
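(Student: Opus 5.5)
The proof has three parts: check that $\tau_*\rho'$ takes values in $\Der\kx$, check that it is a Lie homomorphism, and identify its isotropy subalgebra with $\fh$.

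\emph{Derivations.} Since $\tau$ is an algebra isomorphism, \cref{lem:auto-local} shows it is local, and so is $\tau^{-1}$. The dimensions agree: $\kx\cong\ky$ forces $d=d'$, for instance by comparing $\dim \mathfrak m/\mathfrak m^2$. Fix $g\in\fg$ and set $v:=\rho'(g)$. For $f_1,f_2\in\kx$ we have
\begin{equation}
    \tau v\tau^{-1}(f_1f_2)=\tau\bigl(v(\tau^{-1}f_1)\,\tau^{-1}f_2+\tau^{-1}f_1\,v(\tau^{-1}f_2)\bigr)=(\tau v\tau^{-1}f_1)f_2+f_1(\tau v\tau^{-1}f_2).
\end{equation}
Hence $\tau v\tau^{-1}$ is a $\K$-linear derivation of $\kx$. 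It remains to identify it with an element $\sum_i w^i\partial_i$ of $\Der\kx$, where $w^i:=\tau v\tau^{-1}(x_i)$.

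This identification is the only point needing care, because a general $\K$-linear derivation of $\kx$ need not be continuous a priori. To handle it, I will use that every derivation $D$ of $\kx$ maps $\mathfrak m_x^{k+1}$ into $\mathfrak m_x^k$, by the Leibniz rule. Consequently $D-\sum_i D(x_i)\partial_i$ kills all polynomials and sends $\mathfrak m_x^{k+1}$ into $\mathfrak m_x^k$ for every $k$. Any $f\in\kx$ is a polynomial plus an element of $\mathfrak m_x^{k+1}$, so this difference maps $f$ into $\bigcap_k\mathfrak m_x^k=\{0\}$. Therefore $D=\sum_i D(x_i)\partial_i$, which is the convention used in the paper.

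\emph{Homomorphism.} Conjugation by $\tau$ preserves commutators:
\begin{equation}
    [\tau\rho'(g_1)\tau^{-1},\tau\rho'(g_2)\tau^{-1}]=\tau[\rho'(g_1),\rho'(g_2)]\tau^{-1}=\tau\rho'([g_1,g_2])\tau^{-1},
\end{equation}
and linearity in $g$ is clear.

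\emph{Isotropy.} Let $v\in\Der\ky$ and $w:=\tau v\tau^{-1}$. I claim $w(0)=0$ if and only if $v(0)=0$. By \cref{lem:local-homo}, for every $f\in\kx$ we have $(wf)(0)=(v\tau^{-1}f)(0)$. If $v(0)=0$, then $v$ maps $\mathfrak m_y$ into itself. Locality of $\tau$ then gives $w(\mathfrak m_x)\subset\mathfrak m_x$, so $w(x_i)(0)=0$ for all $i$, that is, $w(0)=0$. The converse follows by the same argument with the roles of $\tau$ and $\tau^{-1}$ exchanged. Applying this to $v=\rho'(g)$ gives $(\tau_*\rho')^{-1}(\Der_0\kx)=\rho'^{-1}(\Der_0\ky)=\fh$. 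Together with the two previous parts, this shows that $\tau_*\rho'$ is a realization of $(\fg,\fh)$.
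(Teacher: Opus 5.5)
Your proof is correct and follows the same route as the paper. The homomorphism property comes from conjugation, and the isotropy is identified using \cref{lem:auto-local,lem:local-homo}, i.e.\ the fact that $\tau$ preserves values at $0$. The differences are minor: you get the reverse implication by exchanging the roles of $\tau$ and $\tau^{-1}$ rather than via the invertibility of the linear parts of the $\tau y_i$ in $\mathfrak m_x/\mathfrak m_x^2$, and you explicitly check (via $D(\mathfrak m_x^{k+1})\subset\mathfrak m_x^k$) that $\tau\rho'(g)\tau^{-1}$ is a formal vector field $\sum_i w^i\partial_i$, which the paper takes for granted from its identification of $\Der\kx$ with formal vector fields.
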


\begin{proof}
    Since $\tau$ is an isomorphism, $\tau_* \rho' : \fg \to \Der \kx$ is an homomorphism of Lie algebras.
    By \cref{lem:auto-local,lem:local-homo}, for every $f\in\ky$,
    \begin{equation}
        ((\tau_*\rho'(g))(\tau f))(0)=(\rho'(g)f)(0).
    \end{equation}
    Since $\tau$ is an isomorphism, the linear parts of the $\tau y_i$ form a basis of $\mathfrak m_x/\mathfrak m_x^2$. 
    Hence $\tau_*\rho'(g)$ vanishes at $0$ if and only if $\rho'(g)$ vanishes at $0$.
    So $\fh = (\tau_* \rho')^{-1}(\Der_0 \kx)$.
\end{proof}

\subsection{Exhaustiveness of the realization formula}

By \cref{thm:uniqueness}, given any $b_1, \dotsc, b_d$ as in \cref{thm:blattner}, any other realization $\rho'$ is conjugated to the realization $\rho$ given by formula~\eqref{eq:formula-blattner} through a change of coordinates.
We wondered whether, for any realization $\rho'$, there exists a choice of $b_1, \dotsc, b_d$ such that $\rho'$ is given by~\eqref{eq:formula-blattner} for this choice of $b_1, \dotsc, b_d$.
This is false, as illustrated by the following counterexample.

\begin{example}
    Let $\fg$ denote the one-dimensional (commutative) Lie algebra over $\K$ and $\fh = \{0\}$ denote the zero Lie algebra (of codimension 1 in $\fg$, so $d = 1$).
    We write $\fg = \K g_1$ for some fixed generator $g_1 \in \fg$.
    To apply \cref{thm:blattner}, we need to choose $b_1$ spanning a complement of $\fh$ in $\fg$, so $b_1 = c g_1$ for some $c \neq 0$.
    Hence
    \begin{equation}
        \rho(g_1) = \sum_{m_1 \in \N} \frac{\chi_1(b_1^{m_1} g_1)}{m_1!} x_1^{m_1} \partial_1
        = \frac 1 c \sum_{m_1 \in \N} \frac{\chi_1(b_1^{m_1+1})}{m_1!} x_1^{m_1} \partial_1
        = \frac 1 c \partial_1
    \end{equation}
    since $\chi_1(b_1^k) = \mathbf{1}_{k = 1}$.
    Thus, in this setting, all realizations of the Blattner--Draisma form \cref{eq:formula-blattner} are proportional to the canonical realization $g_1 \mapsto \partial_1$.
    However, many other realizations exist.
    Indeed, given any formal power series $f \in \K \llbracket x_1 \rrbracket$ with $f_0 \neq 0$, $\rho(\lambda g_1) := \lambda f(x_1) \partial_1$ defines a realization of~$(\fg,\fh)$.
    In fact, all realizations of $(\fg,\fh)$ are of this form.
\end{example}

\subsection{Convergent changes of coordinates}

We now turn to the convergent counterparts of the formal results of \cref{sec:formal-conjugation}.

\begin{definition}
    \label{def:conv-homo}
    We say that a local homomorphism $\tau : \ky \to \kx$ is \emph{convergent} when, for each $i \in \intset{1,d'}$, $\tau y_i \in \kx$ is a convergent power series.
\end{definition}

We will use the following associated estimates.

\begin{lemma}[Composition estimate]
    \label{lem:composition-estimate}
    Let $Y=(Y_1,\dotsc,Y_{d'})$ with $Y_i \in \mathfrak{m}_x$.
    Let $R,r>0$.
    Assume that $Y_i\in(\kx)_r$ and $\opnorm{Y_i}_r\leq R$ for all $i$.
    Then, for any $f\in(\ky)_R$, $f(Y(x)) \in (\kx)_r$ and
    \begin{equation}
        \opnorm{f(Y)}_r\leq \opnorm{f}_R.
    \end{equation}
\end{lemma}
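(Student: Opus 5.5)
The plan is to expand $f$ monomial by monomial and use the submultiplicativity of the analytic norms. Write $f = \sum_{n \in \N^{d'}} f_n y^n$. By \cref{lem:local-homo-substitution}, $f(Y)$ is the formal series $\sum_n f_n Y^n$, where $Y^n := Y_1^{n_1} \dotsb Y_{d'}^{n_{d'}}$. This sum is well defined in $\kx$ because each $Y_i \in \mathfrak{m}_x$. Hence $Y^n$ only has monomials of degree at least $|n|$, so for each fixed $m \in \N^d$ only finitely many $n$ contribute to the coefficient of $x^m$.

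First I bound each monomial. By \cref{lem:submultiplicativity} applied repeatedly, $\opnorm{Y^n}_r \leq \prod_i \opnorm{Y_i}_r^{n_i} \leq R^{|n|}$. In particular $Y^n \in (\kx)_r$.

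Next I pass to the infinite sum. For each $m$, the coefficient is $(f(Y))_m = \sum_n f_n (Y^n)_m$, a finite sum by the degree remark. So
\begin{equation}
\opnorm{f(Y)}_r = \sum_m r^{|m|} \Bigl| \sum_n f_n (Y^n)_m \Bigr| \leq \sum_n |f_n| \sum_m r^{|m|} |(Y^n)_m| = \sum_n |f_n| \opnorm{Y^n}_r.
\end{equation}
Exchanging the two sums is legitimate because all terms are nonnegative (Tonelli for series). Combining with the first step gives $\opnorm{f(Y)}_r \leq \sum_n |f_n| R^{|n|} = \opnorm{f}_R < \infty$. This proves both $f(Y) \in (\kx)_r$ and the estimate.

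The only delicate point is the justification that the coefficientwise formal substitution coincides with the summed series. This relies on the grading argument (finiteness of contributions to each coefficient), together with nonnegativity to swap the sums; beyond that the argument is routine.
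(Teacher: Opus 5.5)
Your proof is correct and follows essentially the paper's argument: bound $\opnorm{Y^n}_r \leq R^{|n|}$ by repeated use of \cref{lem:submultiplicativity}, then sum $\sum_n |f_n| R^{|n|}$. Your remarks that only finitely many $n$ contribute to each coefficient and that nonnegativity justifies exchanging the sums just make explicit what the paper leaves implicit.
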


\begin{proof}
    For any $m \in \N^{d'}$, submultiplicativity gives $\opnorm{Y^m}_r \leq R^{|m|}$ (see \cref{lem:submultiplicativity}). 
    Writing $f(Y)=\sum_m f_m Y^m$ and summing over $m$ gives the estimate.
\end{proof}

\begin{lemma}[Composition of convergent series]
    \label{cor:composition-convergent-series}
    Let $\tau:\ky\to\kx$ be a convergent local homomorphism. 
    If $f\in\ky$ is convergent, then $\tau f$ is convergent.
\end{lemma}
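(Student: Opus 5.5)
The plan is to reduce the statement to \cref{lem:composition-estimate}, by choosing a small enough radius $r$ for the inner series and using the convergence radius of $f$ for $R$.

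Set $Y_i := \tau y_i \in \mathfrak{m}_x$ for $i \in \intset{1,d'}$. By \cref{lem:local-homo-substitution}, $\tau f = f(Y_1, \dotsc, Y_{d'})$ for every $f \in \ky$. Since $f$ is convergent, there is $R > 0$ with $f \in (\ky)_R$: if $|f_m| \leq C_f^{|m|+1}$, any $R < 1/C_f$ makes the series $\sum_m R^{|m|}|f_m|$ a geometric-type sum, which is finite.

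The key step is to find $r > 0$ such that $Y_i \in (\kx)_r$ and $\opnorm{Y_i}_r \leq R$ for all $i$. Each $Y_i$ is convergent by \cref{def:conv-homo}, so there is $r_0 > 0$ with $\opnorm{Y_i}_{r_0} < \infty$ for every $i$, as there are finitely many $i$. Since $Y_i \in \mathfrak{m}_x$, its constant term vanishes. Hence for $0 < r \leq r_0$,
\begin{equation}
    \opnorm{Y_i}_r = \sum_{|m| \geq 1} r^{|m|} |(Y_i)_m| \leq \frac{r}{r_0} \opnorm{Y_i}_{r_0}.
\end{equation}
This bound tends to $0$ as $r \to 0$. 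Choosing $r$ small enough therefore gives $\opnorm{Y_i}_r \leq R$ for all $i$ simultaneously.

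\Cref{lem:composition-estimate} then yields $\opnorm{\tau f}_r \leq \opnorm{f}_R < \infty$. It remains to note that finiteness of $\opnorm{\cdot}_r$ implies convergence in the sense of \eqref{eq-def:ps-conv}. Indeed, $|(\tau f)_m| \leq \opnorm{\tau f}_r\, r^{-|m|}$, and this is at most $C^{|m|+1}$ with $C := \max\{\opnorm{\tau f}_r, 1/r, 1\}$.

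The only point requiring care is the vanishing constant term of $Y_i$, which is what makes $\opnorm{Y_i}_r$ small as $r \to 0$. This is exactly where the locality of $\tau$ is used.
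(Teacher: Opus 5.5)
Your proposal is correct and follows the same route as the paper. You write $\tau f = f(\tau y_1,\dotsc,\tau y_{d'})$ via \cref{lem:local-homo-substitution}, pick $R$ with $f \in (\ky)_R$, use the vanishing constant terms of the $\tau y_i$ to shrink $r$ until $\opnorm{\tau y_i}_r \leq R$, and conclude with \cref{lem:composition-estimate}. You also correctly spell out two details the paper leaves implicit: the bound $\opnorm{Y_i}_r \leq (r/r_0)\opnorm{Y_i}_{r_0}$, and the passage from finiteness of $\opnorm{\cdot}_r$ back to \eqref{eq-def:ps-conv}.
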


\begin{proof}
    Choose $R>0$ such that $f \in (\ky)_R$. 
    Since the $\tau y_i$ are convergent and have zero constant term, one may choose $r>0$ such that $\tau y_i \in (\kx)_r$ and $\opnorm{\tau(y_i)}_r \leq R$.

    By \cref{lem:local-homo-substitution}, $\tau f = f(\tau y_1, \dotsc, \tau y_{d'})$, so the conclusion follows from \cref{lem:composition-estimate}.
\end{proof}

By the inverse function theorem for convergent power series (see \cref{sec:ift-convergent}), when $\tau$ is an isomorphism, $\tau$ is convergent if and only if $\tau^{-1}$ is.

Thus, conjugating by a convergent isomorphism preserves convergence.

\begin{lemma}
    \label{lem:conjugation-is-real-convergent}
    Let $\rho' : \fg \to \Der \ky$ be a convergent realization of a transitive pair $(\fg,\fh)$ in minimal dimension.
    Let $\tau : \ky \to \kx$ be a convergent isomorphism.
    
    Then $\tau_* \rho' : g \mapsto \tau \circ \rho'(g) \circ \tau^{-1}$ is a convergent realization of $(\fg,\fh)$.
\end{lemma}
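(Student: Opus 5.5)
By \cref{lem:conjugation-is-real-formal}, $\tau_* \rho'$ is already a realization of $(\fg,\fh)$. It therefore only remains to check that, for each $g \in \fg$, the formal vector field $\tau_*\rho'(g) = \tau \circ \rho'(g) \circ \tau^{-1}$ is convergent. A derivation $v \in \Der \kx$ is determined by its values on the coordinates, with $v^i = v(x_i)$. So I will show that each component $(\tau_*\rho'(g))(x_i) = \tau\bigl(\rho'(g)(\tau^{-1} x_i)\bigr)$ is a convergent power series in $x$.

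The computation has three steps, each handled by an earlier result.
\begin{enumerate}
    \item By the inverse function theorem for convergent power series (\cref{sec:ift-convergent}), $\tau^{-1} : \kx \to \ky$ is a convergent isomorphism. By \cref{lem:auto-local}, both $\tau$ and $\tau^{-1}$ are local. Hence $f_i := \tau^{-1}(x_i) \in \ky$ is a convergent power series in $y$.
    \item Write $\rho'(g) = \sum_{k} \rho'(g)^k \partial_{y_k}$. Then
    \begin{equation}
        \rho'(g) f_i = \sum_{k=1}^{d} \rho'(g)^k \, \partial_{y_k} f_i .
    \end{equation}
    Each coefficient $\rho'(g)^k$ is convergent because $\rho'$ is a convergent realization. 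Each derivative $\partial_{y_k} f_i$ is convergent by the gradient control estimate \eqref{eq:djf.r.dr}, which gives $\partial_{y_k} f_i \in (\ky)_{r/2}$ whenever $f_i \in (\ky)_r$. Products of convergent series are convergent by \cref{lem:submultiplicativity}, and so are finite sums. Thus $\rho'(g) f_i$ is convergent in $y$.
    \item Applying $\tau$ preserves convergence by \cref{cor:composition-convergent-series}, since $\tau$ is a convergent local homomorphism. Therefore $\tau\bigl(\rho'(g) f_i\bigr)$ is convergent in $x$.
\end{enumerate}

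I also need the identification $(\tau_*\rho'(g))^i = (\tau_*\rho'(g))(x_i)$. First, $\tau_*\rho'(g)$ is a derivation of $\kx$, being a conjugate of a derivation by an algebra isomorphism. Second, every derivation of $\kx$ equals $\sum_i v(x_i)\,\partial_i$. This is standard for formal derivations: they are continuous for the $\mathfrak m_x$-adic topology, since $v(\mathfrak m_x^{n}) \subset \mathfrak m_x^{n-1}$.

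The only point requiring any care is the appeal to the inverse function theorem to obtain convergence of $\tau^{-1}$. Everything else is a direct combination of the composition, gradient and submultiplicativity estimates.
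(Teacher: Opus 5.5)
Your proof is correct and follows the paper's argument. You reduce to convergence via \cref{lem:conjugation-is-real-formal}, get convergence of $\tau^{-1}$ from \cref{prop:ift-convergent}, and conclude that each component $\tau\bigl(\rho'(g)(\tau^{-1}x_i)\bigr)$ is convergent via \cref{cor:composition-convergent-series}. The only cosmetic difference is that you check convergence of $\rho'(g)(\tau^{-1}x_i)$ directly from the gradient and submultiplicativity estimates, where the paper cites the product estimate \cref{thm:prod.analytic}, which is itself built from those two.
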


\begin{proof}
    By \cref{lem:conjugation-is-real-formal}, $\tau_* \rho'$ is a formal realization of $(\fg,\fh)$.
    It remains to prove that, for any $g \in \fg$, $(\tau_* \rho')(g)$ is convergent.
    Let $g \in \fg$. 
    By assumption $\rho'(g)$ is a convergent derivation.
    By \cref{prop:ift-convergent}, $\tau^{-1}$ is convergent.
    By \cref{cor:composition-convergent-series} and \cref{thm:prod.analytic}, for all $i$, $(\tau_* \rho'(g)) x_i = \tau \circ \rho'(g) \circ \tau^{-1}(x_i)$ is convergent. 
\end{proof}

We now turn to the main results of this section.

\begin{proposition}
    \label{prop:canonical-comparison-morphism-convergent}
    In the setting of \cref{prop:canonical-comparison-morphism}, assume that $\rho'$ is convergent. 
    Then the canonical comparison homomorphism $\Psi$ given by \eqref{eq:Psi} is convergent.
\end{proposition}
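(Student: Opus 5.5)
We need to show that each $\Psi(y_i) = \bigl(\rho'(\mathcal{E}(x)) y_i\bigr)\vert_{y=0}$ is a convergent power series in $x$. Expanding $\mathcal{E}(x) = \sum_{m} \mathbf{b}^m x^m/m!$ gives
\begin{equation*}
    \Psi(y_i) = \sum_{m \in \N^d} \frac{x^m}{m!} \Bigl( \rho'(b_d)^{m_d} \dotsm \rho'(b_1)^{m_1} y_i \Bigr)(0).
\end{equation*}
It therefore suffices to bound the coefficient $\bigl(\rho'(b_d)^{m_d}\dotsm\rho'(b_1)^{m_1} y_i\bigr)(0)$ by $|m|!\,K^{|m|+1}$ for some constant $K$. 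Combined with the multinomial bound $|m|!/m! \leq d^{|m|}$, this gives geometric decay of the coefficients, which is exactly \eqref{eq-def:ps-conv}.

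To get this bound, we apply the product estimate \cref{thm:prod.analytic} in the variables $y$. Since $\rho'$ is convergent and $\{b_1,\dotsc,b_d\}$ is finite, there exist $r_2>0$ and $M>0$ such that $\rho'(b_j) \in (\Der \ky)_{r_2}$ with $\opnorm{\rho'(b_j)}_{r_2} \leq M$ for every $j$. We also have $\opnorm{y_i}_{r_2} = r_2$. With $r_1 := r_2/2$ and $n := |m|$, \cref{thm:prod.analytic} gives
\begin{equation*}
    \opnorm{\rho'(b_d)^{m_d}\dotsm\rho'(b_1)^{m_1} y_i}_{r_1} \leq n! \left(\frac{2e}{r_2}\right)^n M^n r_2.
\end{equation*}
The value at $0$ of a series is its constant coefficient, which is bounded by its $\opnorm{\cdot}_{r_1}$ norm. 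Hence
\begin{equation*}
    \bigl|\text{coefficient of } x^m/m! \text{ in } \Psi(y_i)\bigr| \leq n!\,\bigl(2eM/r_2\bigr)^n r_2 .
\end{equation*}
Dividing by $m!$ and using $n!/m! \leq d^n$, the coefficient of $x^m$ is at most $r_2\,(2eMd/r_2)^{|m|}$. This is bounded by $C^{|m|+1}$ for $C := \max\{r_2,\, 2eMd/r_2,\, 1\}$. So each $\Psi(y_i)$ is convergent, which means $\Psi$ is convergent in the sense of \cref{def:conv-homo}.

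The only delicate point is justifying the termwise expansion: that $\Psi(y_i)$ is indeed given by the formula above. This follows directly from the definition \eqref{eq:Psi}, since $\rho'$ is extended to $U(\fg)\lxl$ coefficientwise in $x$. Apart from that, the argument is a direct application of \cref{thm:prod.analytic}. The factorial growth it produces is exactly compensated by the $1/m!$ in $\mathcal{E}(x)$, up to the multinomial factor $d^{|m|}$.
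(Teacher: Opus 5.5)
Your proof is correct and follows essentially the same route as the paper. Both expand $\Psi(y_i)$ through $\mathcal{E}(x)$, bound each coefficient $\bigl(\rho'(\mathbf{b}^m) y_i\bigr)(0)$ with the product estimate \cref{thm:prod.analytic} for the finitely many convergent fields $\rho'(b_j)$, and absorb the resulting $|m|!$ into $m!$ via the multinomial bound; you simply make explicit the constants that the paper leaves implicit.
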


\begin{proof}
    For each coordinate $y_i$,
    \begin{equation}
        \Psi(y_i) = \sum_{m\in\N^d} \frac{\bigl(\rho'(b_d^{m_d}\dotsm b_1^{m_1}) y_i\bigr)(0)}{m!} x^m .
    \end{equation}
    The finite family $\rho'(b_1),\dotsc,\rho'(b_d)$ is bounded in $(\Der \ky)_r$ for some $r>0$. 
    Applying \cref{thm:prod.analytic} as in the proof of the sufficiency estimate gives a constant $C>0$ such that the coefficient of $x^m$ in $\Psi(y_i)$ has absolute value at most $C^{|m|}$. 
    Hence each $\Psi(y_i)$ is convergent.
\end{proof}

As for formal realizations, the general case follows.

\begin{proposition}
    \label{thm:convergent-morphism-transitive-pairs}
    Let $\fh, \fh' \subset\fg$ be Lie subalgebras of finite codimension.
    Let $\rho :\fg \to \Der \kx$ and $\rho' : \fg \to \Der \ky$ be  convergent realizations of $(\fg,\fh)$ and $(\fg,\fh')$ in minimal dimension.
    
    There exists a unique convergent local homomorphism $\tau : \ky \to \kx$ such that \eqref{eq:swap-rho-rho'} holds if and only if $\fh \subset \fh'$.
    When $\fh = \fh'$, it is an isomorphism and $\tau^{-1}$ is convergent.
\end{proposition}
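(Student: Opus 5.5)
The proof follows the formal argument of \cref{thm:formal-morphism-transitive-pairs} and upgrades each step to the convergent setting. Uniqueness needs nothing new: a convergent $\tau$ is in particular a formal local homomorphism satisfying \eqref{eq:swap-rho-rho'}, so \cref{prop:unique-equivariant-formal-morphism} applies. Likewise, the necessity of $\fh \subset \fh'$ is already contained in the converse part of \cref{thm:formal-morphism-transitive-pairs}, since that argument only uses \cref{lem:local-homo} and never convergence. So the real content is existence: when $\fh \subset \fh'$, the unique formal $\tau$ must be convergent, and when $\fh = \fh'$, $\tau^{-1}$ must be convergent too.

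For existence, I would fix $b_1, \dotsc, b_d$ whose classes form a basis of $\fg/\fh$ and let $\bar\rho$ be the associated Blattner--Draisma realization. The pair $(\fg,\fh)$ admits the convergent realization $\rho$, so the forward implication of \cref{thm:main} gives \eqref{eq:growth}. The reverse implication then shows that $\bar\rho$ is itself convergent; this point is what makes conjugation by convergent maps stay inside convergent realizations.

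Next I apply \cref{prop:canonical-comparison-morphism} twice. Applied to $\rho'$ and $\bar\rho$, it gives $\Psi' : \ky \to \kx$ intertwining $\rho'$ with $\bar\rho$, and $\Psi'$ is convergent by \cref{prop:canonical-comparison-morphism-convergent} because $\rho'$ is convergent. Applied to $\rho$ and $\bar\rho$, in the case $\fh = \fh$, it gives an isomorphism $\Psi$ of $\kx$ intertwining $\rho$ with $\bar\rho$, and $\Psi$ is convergent because $\rho$ is. By the convergent inverse function theorem (\cref{prop:ift-convergent}), $\Psi^{-1}$ is convergent.

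It remains to show that $\tau := \Psi^{-1}\circ\Psi'$ is convergent. Each $\tau(y_i) = \Psi^{-1}(\Psi'(y_i))$ is the image of a convergent series under a convergent local homomorphism, so it is convergent by \cref{cor:composition-convergent-series}. The intertwining property of $\tau$ and the fact that it is local are exactly as in the formal proof.

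When $\fh = \fh'$, both $\Psi'$ and $\Psi$ are convergent isomorphisms. Hence $\tau$ is a convergent isomorphism, and \cref{prop:ift-convergent} gives that $\tau^{-1}$ is convergent. Alternatively, $\tau^{-1} = \Psi'^{-1}\circ\Psi$, so its convergence follows directly from \cref{cor:composition-convergent-series}.

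The only delicate point is the convergence of the comparison maps, namely \cref{prop:canonical-comparison-morphism-convergent}, together with the observation that convergence of $\bar\rho$ is not actually needed for $\Psi$ and $\Psi'$ themselves. In \eqref{eq:Psi} only $\rho'$ (respectively $\rho$) appears, and its coefficients are bounded via \cref{thm:prod.analytic} by $n!\,C^n$; dividing by $m!$ and bounding the multinomial coefficient by $d^{|m|}$ leaves a geometric bound. I would check this bookkeeping carefully, and otherwise expect the proof to be a direct assembly of the cited results.
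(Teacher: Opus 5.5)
Your proposal is correct and follows the paper's proof. Uniqueness and the necessity of $\fh \subset \fh'$ come from the formal results. Existence is $\tau := \Psi^{-1}\circ\Psi'$, with $\Psi,\Psi'$ convergent by \cref{prop:canonical-comparison-morphism-convergent}, $\Psi^{-1}$ convergent by \cref{prop:ift-convergent}, and $\tau$ convergent by \cref{cor:composition-convergent-series}. As you note yourself, the detour through \cref{thm:main} to get convergence of $\bar\rho$ is unnecessary, since only $\rho'$ (respectively $\rho$) enters formula \eqref{eq:Psi}.
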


\begin{proof}
    The proof is the same as \cref{thm:formal-morphism-transitive-pairs}.
    The homomorphisms $\Psi$ and $\Psi'$ are convergent by \cref{prop:canonical-comparison-morphism-convergent} and so is $\tau := \Psi^{-1} \circ \Psi'$ by \cref{prop:ift-convergent} and \cref{cor:composition-convergent-series}.
\end{proof}

Finally, we note that the growth condition \eqref{eq:growth} is monotone with respect to $\fh$.

\begin{lemma}
    \label{lem:growth-mono}
    Let $\fh \subset \fh' \subset \fg$ be Lie subalgebras, with $\fh$ of finite codimension.
    If $(\fg,\fh)$ satisfies the growth condition \eqref{eq:growth}, then
    $(\fg,\fh')$ satisfies \eqref{eq:growth} too.
\end{lemma}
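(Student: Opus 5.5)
The argument is a direct comparison of quotient norms: the quotient map onto $\fg/\fh'$ factors through the quotient map onto $\fg/\fh$, so the estimate \eqref{eq:growth} transfers verbatim. First note that $(\fg,\fh')$ is a transitive pair: $\fh'$ is a Lie subalgebra by assumption, and $\codim_\fg \fh' \leq \codim_\fg \fh < +\infty$ since $\fh \subset \fh'$. Let $\chi : \fg \to \fg/\fh$ and $\chi' : \fg \to \fg/\fh'$ denote the quotient maps. Since $\fh \subset \fh'$, there is a well-defined surjective linear map $\pi : \fg/\fh \to \fg/\fh'$ given by $\pi(g + \fh) := g + \fh'$, and $\chi' = \pi \circ \chi$.

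Fix any norm on $\fg/\fh$ and any norm on $\fg/\fh'$. This is legitimate because condition \eqref{eq:growth} does not depend on the choice of norm. Both spaces are finite-dimensional, so $\pi$ is continuous, and there exists $M \geq 1$ such that $\|\pi(u)\|_{\fg/\fh'} \leq M \|u\|_{\fg/\fh}$ for every $u \in \fg/\fh$.

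Now let $F \subset \fg$ be finite, and let $C_F$ be the constant provided by \eqref{eq:growth} for $(\fg,\fh)$. For $n \in \N^*$ and $g_1, \dotsc, g_n \in F$, write $B := [g_1,[\dotsc,[g_{n-1},g_n]\dotsb]]$. Then
\begin{equation}
    \|\chi'(B)\|_{\fg/\fh'} = \|\pi(\chi(B))\|_{\fg/\fh'} \leq M\, C_F^n (n-1)! \leq (M C_F)^n (n-1)!,
\end{equation}
where the last inequality uses $M \geq 1$. Hence \eqref{eq:growth} holds for $(\fg,\fh')$ with constant $C'_F := M C_F$.

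There is no real obstacle in this argument. The only points requiring care are the independence of \eqref{eq:growth} from the chosen norm, which lets us pick arbitrary norms, and absorbing the constant $M$ into the geometric factor.

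Alternatively, one could go through \cref{thm:main}. The Blattner realization of $(\fg,\fh)$ is convergent, and $(\fg,\fh')$ has a realization $\rho'$ by \cref{thm:blattner}. However, transferring convergence along the comparison homomorphism of \cref{prop:canonical-comparison-morphism} would run in the wrong direction, so the direct estimate above is preferable.
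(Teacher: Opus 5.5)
Your proposal is correct and follows the paper's proof: factor $\chi' = \pi\circ\chi$ through the quotient map $\pi : \fg/\fh \to \fg/\fh'$, bound $\pi$ by a constant because $\fg/\fh$ is finite-dimensional, and absorb that constant into $C_F$. Your explicit choice $M \geq 1$ and your check that $(\fg,\fh')$ is transitive just spell out what the paper leaves implicit.
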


\begin{proof}
    Let $\chi:\fg\to \fg/\fh$ and $\chi':\fg\to \fg/\fh'$ be the quotient maps.
    Since $\fh\subset\fh'$, there is a unique linear map $\pi : \fg/\fh \to \fg/\fh'$ such that $\chi'=\pi\circ\chi$.
    Since $\fg/\fh$ is of finite dimension, $\pi$ is bounded: there exists $A>0$ such that, for all $v \in \fg/\fh$, $\|\pi(v)\|_{\fg/\fh'}\leq A\|v\|_{\fg/\fh}$.
    Applying this to every iterated bracket appearing in \eqref{eq:growth}, we get
    \begin{equation}
        \|\chi'([g_1,[\dotsc,[g_{n-1},g_n]\dotsb]])\|_{\fg/\fh'}
        \leq
        A\|\chi([g_1,[\dotsc,[g_{n-1},g_n]\dotsb]])\|_{\fg/\fh}.
    \end{equation}
    The extra factor $A$ is absorbed into the exponential constant. 
    Hence $(\fg,\fh')$ satisfies \eqref{eq:growth}.
\end{proof}

\section{Output realizations}
\label{sec:output-realizations}

Let $\fg$ be a Lie algebra over $\K$.
In this section, we study which linear subspaces $\fp$ of $U(\fg)$ can occur as kernels of finite-dimensional outputs attached to formal realizations.
We characterize such kernels first at the formal level, then discuss the uniqueness and convergence of the associated realizations.
At the formal level, this question is also solved in \cite{GrossmanLarson1992} (see \cref{rk:larson}).

\subsection{Output kernels and the canonical formula}

Let $d, q \in \N^*$.
We write $x = (x_1, \dotsc, x_d)$ and $\K^q \lxl$ for formal series with coefficients in $\K^q$.
Given a homomorphism of Lie algebras $\rho : \fg \to \Der \kx$, we still denote by $\rho : U(\fg)\to\End \kx$ its canonical extension to the universal enveloping algebra. 
It acts componentwise on $\K^q \lxl$.

We now fix a linear subspace $\fp$ of $U(\fg)$.
We write $\pi_\fp : U(\fg) \to U(\fg) / \fp$ for the quotient map.

\begin{definition}
    An \emph{output realization in dimensions $(d,q)$} of the pair
    $(\fg,\fp)$ is a pair $(\rho,h)$, where $\rho : \fg \to \Der \kx$ is a homomorphism of Lie algebras and $h \in \K^q \lxl$ with $h(0) = 0$ is such that
    \begin{equation}
        \label{eq:def-output-m-eq-ker}
        \fp
        =
        \left\{ a\in U(\fg) \mid (\rho(a)h)(0)=0\right\}.
    \end{equation}
\end{definition}

\begin{definition}
    The \emph{output isotropy} of $\fp$ is
    \begin{equation}
        \fhp
        :=
        \{g\in\fg \mid gU(\fg)\subset\fp\}.
    \end{equation}
\end{definition}

\begin{lemma}
    \label{lem:hm-lie-subalgebra}
    The output isotropy $\fhp$ is a Lie subalgebra of $\fg$.
\end{lemma}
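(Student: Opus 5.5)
We must show that $\fhp$ is a linear subspace of $\fg$ and that it is closed under the bracket. Note that the definition only uses the fact that $\fp$ is a linear subspace of $U(\fg)$; no realization is needed.

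\emph{Linearity.} For $g_1,g_2\in\fhp$ and $\lambda\in\K$, every $a\in U(\fg)$ gives
\begin{equation}
(g_1+\lambda g_2)a = g_1 a + \lambda g_2 a \in \fp,
\end{equation}
because $\fp$ is a subspace. Hence $(g_1+\lambda g_2)U(\fg)\subset\fp$, and $0\in\fhp$ trivially.

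\emph{Closure under the bracket.} Let $g_1,g_2\in\fhp$ and $a\in U(\fg)$. In $U(\fg)$ the bracket is the commutator, so
\begin{equation}
[g_1,g_2]a = g_1(g_2 a) - g_2(g_1 a).
\end{equation}
Since $g_2a$ and $g_1a$ are elements of $U(\fg)$, the definition of $\fhp$ applied to $g_1$ and to $g_2$ shows that $g_1(g_2a)\in g_1U(\fg)\subset\fp$ and $g_2(g_1a)\in g_2U(\fg)\subset\fp$. Their difference therefore lies in $\fp$, so $[g_1,g_2]U(\fg)\subset\fp$, that is, $[g_1,g_2]\in\fhp$.

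This completes the argument. The only point requiring any care is that the definition quantifies over all right factors $a\in U(\fg)$, and this is what allows us to absorb $g_2$ and $g_1$ into those factors.
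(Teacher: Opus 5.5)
Your proof is correct: linearity follows because $\fp$ is a subspace, and closure under the bracket follows from writing $[g_1,g_2]a = g_1(g_2a) - g_2(g_1a)$ and absorbing $g_2a$ and $g_1a$ into the arbitrary right factors allowed by the definition of $\fhp$. The paper states this lemma without proof, and your argument is the direct verification it leaves to the reader.
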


The main result of this section is the following characterization and explicit construction.

\begin{theorem}
    \label{thm:output-realization}
    The pair $(\fg,\fp)$ admits an output realization if and only if
    \begin{equation}
        \label{eq:output-realization-condition}
        1\in\fp,
        \qquad
        \codim_{U(\fg)}\fp<+\infty,
        \qquad
        \codim_{\fg}\fhp<+\infty.
    \end{equation}
    More explicitly, when \eqref{eq:output-realization-condition} holds, set $d:=\codim_{\fg}\fhp$ and $q:=\codim_{U(\fg)}\fp$.
    Then identify $U(\fg) / \fp$ with $\K^q$.   
    Choose $b_1,\dotsc,b_d\in\fg$ whose classes form a basis of $\fg/\fhp$ and define
    \begin{equation}
        \label{eq:canonical-output-h}
        h(x) := \sum_{m\in\N^d}
        \pi_{\fp}(\mathbf b^m)\frac{x^m}{m!}
        \in \K^q \lxl.
    \end{equation}
    Then $(\rho,h)$ is an output realization of $(\fg,\fp)$ in dimensions $(d,q)$, where $\rho : \fg \to \Der \kx$ is the canonical realization of the transitive pair $(\fg,\fhp)$ given by \cref{thm:blattner}.
\end{theorem}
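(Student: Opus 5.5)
Necessity and sufficiency are proved separately; the sufficiency part is where the explicit formula \eqref{eq:canonical-output-h} is checked.

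\emph{Necessity.} Let $(\rho,h)$ be an output realization in dimensions $(d,q)$. Then $1\in\fp$ because $(\rho(1)h)(0)=h(0)=0$. Moreover, $\fp$ is the kernel of the linear map $a\mapsto(\rho(a)h)(0)\in\K^q$, so $\codim_{U(\fg)}\fp\le q$. For the isotropy, set $\fh:=\rho^{-1}(\Der_0\kx)$. This is a Lie subalgebra of codimension at most $d$, since it is the kernel of $g\mapsto\rho(g)(0)\in\K^d$. I claim $\fh\subset\fhp$. Take $g\in\fh$ and $a\in U(\fg)$, and put $f:=\rho(a)h$. Then $(\rho(ga)h)(0)=(\rho(g)f)(0)=\sum_i\rho(g)^i(0)\,\partial_if(0)=0$, because $\rho(g)$ vanishes at $0$. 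Hence $gU(\fg)\subset\fp$, so $\fh\subset\fhp$ and $\codim_\fg\fhp\le d<+\infty$.

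\emph{Sufficiency.} Assume \eqref{eq:output-realization-condition}. By \cref{lem:hm-lie-subalgebra}, $(\fg,\fhp)$ is a transitive pair. Let $\rho$ be its Blattner realization from \cref{thm:blattner}, and define $h$ by \eqref{eq:canonical-output-h}, that is, $h=\pi_\fp(\mathcal E(x))$ with $\pi_\fp$ applied coefficientwise. We have $h(0)=\pi_\fp(1)=0$ since $1\in\fp$.

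The key identity to prove is
\begin{equation*}
    \rho(a)h=\pi_\fp(\mathcal E(x)a)\quad\text{for all } a\in U(\fg).
\end{equation*}
By linearity it suffices to treat monomials, and I proceed by induction on their length. The case $a=1$ is the definition of $h$. For $a=ga'$ with $g\in\fg$, the induction hypothesis gives $\rho(a)h=\rho(g)\pi_\fp(\mathcal E(x)a')=\sum_j\chi_j(\mathcal E(x)g)\,\partial_j\pi_\fp(\mathcal E(x)a')$. On the other hand, multiply the congruence \eqref{eq:key-congruence}, which holds modulo $I\lxl$ with $I=\fhp U(\fg)$, on the right by $a'$. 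This gives $\mathcal E(x)ga'\equiv\sum_j\chi_j(\mathcal E(x)g)\,\partial_j(\mathcal E(x))a'$ modulo $I\lxl$. By the definition of $\fhp$ we have $I\subset\fp$, so $\pi_\fp$ kills the remainder. Since $\partial_j$ commutes with right multiplication by the constant $a'$ and with the coefficientwise map $\pi_\fp$, this yields exactly $\rho(a)h=\pi_\fp(\mathcal E(x)ga')$.

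Evaluating the identity at $x=0$ gives $(\rho(a)h)(0)=\pi_\fp(a)$. Therefore \eqref{eq:def-output-m-eq-ker} holds: the set of $a$ with $(\rho(a)h)(0)=0$ is exactly $\ker\pi_\fp=\fp$. The dimensions are $(d,q)$ by construction.

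The only delicate point is the induction step. One must check that the congruence from Step 1 of the proof of \cref{thm:blattner} survives right multiplication by an arbitrary $a'\in U(\fg)$. It does, because $I\lxl$ is a right $U(\fg)$-module and $\fhp U(\fg)\subset\fp$ holds by definition of $\fhp$.
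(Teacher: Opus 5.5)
Your proof is correct and follows the paper's argument essentially step for step. For necessity, you use the evaluation maps $a\mapsto(\rho(a)h)(0)$ and $g\mapsto\rho(g)(0)$ and show that the kernel of the latter lies in $\fhp$. For sufficiency, you prove $\rho(a)h=\pi_\fp(\mathcal E(x)a)$ by right-multiplying the congruence \eqref{eq:key-congruence} by $a'$, use $\fhp U(\fg)\subset\fp$, and then evaluate at $x=0$.
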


\begin{proof}
    \emph{Necessity.}
    Let $(\rho,h)$ be an output realization in dimensions $(d,q)$.
    
    Define a linear map $G : U(\fg) \to \K^q$ by $G(a) := (\rho(a) h)(0)$.
    By \eqref{eq:def-output-m-eq-ker}, $\fp = \ker G$.
    Hence $\codim_{U(\fg)} \fp \leq q$.
    Moreover, $G(1) = h(0) = 0$, so $1\in\fp$.

    Consider the linear map $E_\rho : \fg \to \K^d$ defined by $E_\rho(g) := \rho(g)(0)$.
    Given $g \in \ker E_\rho$, for any $f \in \kx$, $(\rho(g) f)(0) = \mathrm{d}f(0) \rho(g)(0) = 0$.
    Hence, for any $g \in \ker E_\rho$ and $a \in U(\fg)$, $G(ga) = (\rho(g) \rho(a) h)(0) = 0$.
    Thus $\ker E_\rho \subset \fhp$.
    Therefore $\codim_{\fg}\fhp \leq \codim_{\fg}\ker(E_\rho) \leq d$.

    \medskip \noindent
    \emph{Sufficiency}.
    Conversely, assume \eqref{eq:output-realization-condition}.
    We now prove that $(\rho,h)$ realizes $\fp$.
    Because $1\in\fp$, we have $h(0) = \pi_{\fp}(1) = 0$.
    Recalling the generating series $\mathcal{E}(x)$ from \eqref{eq:gen-E},
    set, for $a \in U(\fg)$,
    \begin{equation}
        F_a(x):=\pi_{\fp}(\mathcal{E}(x)a) \in \K^q \lxl.
    \end{equation}
    In particular, $F_1 = h$.
    Let $g\in\fg$ and $a \in U(\fg)$.

    Multiplying on the right by $a$ the generating-series identity
    \eqref{eq:key-congruence} obtained in the proof of \cref{thm:blattner},
    composing with $\pi_{\fp}$ and using that $\pi_{\fp}=0$ on
    $\fhp U(\fg)$, we obtain
    \begin{equation}
        \pi_{\fp}(\mathcal{E}(x) g a) = \sum_{j=1}^d \chi_j(\mathcal{E}(x) g) \partial_j (\pi_{\fp} (\mathcal{E}(x) a)).
    \end{equation}
    Therefore $F_{ga} = \rho(g) F_a$.
    By induction on products in $U(\fg)$, we obtain $F_a = \rho(a) h$ for all $a \in U(\fg)$.
    Evaluating at $x=0$, we get $(\rho(a) h)(0) = F_a(0) = \pi_{\fp}(a)$.
    Hence \eqref{eq:def-output-m-eq-ker} holds.
\end{proof}

\begin{remark}
    \label{rk:larson}
    \cref{thm:output-realization} is very closely related to \cite[Theorem 1.1]{GrossmanLarson1992}.
    In this work, the authors study the case $q = 1$ (scalar output).
    Starting from a primitively generated bialgebra $H$, they characterize the functionals $p \in H^*$ which are differentially produced by a finite-dimensional formal state space, by a finite Lie-rank condition. 
    When $H=U(\fg)$ and $\fp=\ker p$, this condition is, up to the left/right convention, the finite codimension of the output isotropy $\fh_\fp$.
    In particular, we share the same explicit formula \eqref{eq:canonical-output-h} (see \cite[Lemma 2.2]{GrossmanLarson1992}).
\end{remark}

\subsection{Uniqueness of minimal output realizations}
\label{sec:output-uniqueness}

We prove the natural uniqueness statement for output realizations in minimal dimensions. 

\begin{proposition}
    \label{prop:output-uniqueness-with-output-matrix}
    Let $\fp\subset U(\fg)$ satisfying \eqref{eq:output-realization-condition}.
    Set $d:=\codim_{\fg}\fhp$ and $q:=\codim_{U(\fg)}\fp$.
    Let $(\rho_1, h_1)$ and $(\rho_2, h_2)$ be two output realizations of $(\fg,\fp)$ in dimensions $(d,q)$.
    Then there exist an automorphism $\tau$ of $\kx$ satisfying \eqref{eq:tau-rho} and a matrix $A \in \mathrm{GL}_q(\K)$ such that $\tau (h_2) = A h_1$.
\end{proposition}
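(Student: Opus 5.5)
The plan is to reduce the statement to the uniqueness of minimal realizations of transitive pairs (\cref{thm:uniqueness}), and then to identify the outputs using the separation property of \cref{lem:transitive-realization-separates-series}.

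\textbf{Step 1: each $\rho_i$ realizes $(\fg,\fhp)$ in minimal dimension.} Fix $i \in \{1,2\}$ and let $E_{\rho_i}(g) := \rho_i(g)(0)$. The necessity part of the proof of \cref{thm:output-realization} shows that $\ker E_{\rho_i} \subset \fhp$. On the other hand, $E_{\rho_i}$ takes values in $\K^d$, so $\codim_\fg \ker E_{\rho_i} \leq d = \codim_\fg \fhp$. Combined with the inclusion, this forces $\ker E_{\rho_i} = \fhp$, that is, $\fhp = \rho_i^{-1}(\Der_0 \kx)$. Hence $\rho_1$ and $\rho_2$ are both realizations of the transitive pair $(\fg,\fhp)$ in the minimal dimension $d$. \cref{thm:uniqueness} then provides an automorphism $\tau$ of $\kx$ such that $\tau \circ \rho_2(g) = \rho_1(g) \circ \tau$ for all $g \in \fg$. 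This relation extends to all $a \in U(\fg)$, and $\tau$ is local by \cref{lem:auto-local}.

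\textbf{Step 2: the matrix $A$.} Define $G_i : U(\fg) \to \K^q$ by $G_i(a) := (\rho_i(a) h_i)(0)$. By \eqref{eq:def-output-m-eq-ker}, $\ker G_i = \fp$. Since $\codim_{U(\fg)} \fp = q$, the image of $G_i$ is all of $\K^q$. Consequently each $G_i$ factors as $\bar G_i \circ \pi_\fp$, where $\bar G_i : U(\fg)/\fp \to \K^q$ is a linear isomorphism. Setting $A := \bar G_2 \bar G_1^{-1} \in \GL_q(\K)$, we obtain $G_2 = A G_1$.

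\textbf{Step 3: identification of outputs.} Put $f := \tau(h_2) - A h_1 \in \K^q\lxl$, where $\tau$ acts componentwise. For every $a \in U(\fg)$, the intertwining relation from Step 1 and \cref{lem:local-homo} give
\begin{equation}
(\rho_1(a)\tau h_2)(0) = (\tau(\rho_2(a) h_2))(0) = G_2(a).
\end{equation}
By linearity, $(\rho_1(a) A h_1)(0) = A G_1(a) = G_2(a)$. Hence $(\rho_1(a) f)(0) = 0$ for all $a \in U(\fg)$, and this holds for each component of $f$. Since $\rho_1$ realizes $(\fg,\fhp)$ with $d = \codim_\fg \fhp$, \cref{lem:transitive-realization-separates-series} applies and gives $f = 0$, that is, $\tau(h_2) = A h_1$.

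The only delicate point is Step 1. There one must check that the isotropy of each $\rho_i$ is exactly $\fhp$, and not merely contained in it, since \cref{thm:uniqueness} applies only to minimal realizations. This equality comes from the dimension count above. The remaining steps are linear algebra together with the separation lemma.
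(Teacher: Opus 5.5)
Your proposal is correct and follows the paper's proof essentially step by step. Minimality forces $\ker E_{\rho_i} = \fhp$, so \cref{thm:uniqueness} gives $\tau$; the common kernel $\fp$ of codimension $q$ yields $A$ with $G_2 = A G_1$; and \cref{lem:transitive-realization-separates-series} applied componentwise shows $\tau(h_2) - A h_1 = 0$.
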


\begin{proof}
    We established in the proof of \cref{thm:output-realization} that, if $(\rho,h)$ is an output realization of $(\fg, \fp)$, then $\codim_\fg \fhp \leq \codim_\fg \ker (E_\rho) \leq d$, where $E_\rho : \fg \to \K^d$ was defined by $E_\rho(g) := \rho(g)(0)$.
    Since $d = \codim_\fg \fhp$, $\ker E_{\rho_1} = \ker E_{\rho_2} = \fhp$.
    Hence $\rho_1$ and $\rho_2$ are two realizations of the same transitive pair $(\fg, \fhp)$.
    By \cref{thm:uniqueness}, there exists an automorphism $\tau$ of $\kx$ satisfying \eqref{eq:tau-rho}.

    It remains to compare the outputs. 
    For $i \in \{1,2\}$, recall the linear map $G_i : U(\fg) \to \K^q$ defined by $G_i(a) := (\rho_i(a) h_i)(0)$.
    Since $(\rho_i,h_i)$ is an output realization of $(\fg,\fp)$, $\ker G_i = \fp$.
    Since $\dim \K^q = q = \codim_{U(\fg)} \fp$, there exists $A \in \mathrm{GL}_q(\K)$ such that $G_2 = A G_1$.
    
    Set $Q := \tau(h_2) - A h_1 \in \K^q \lxl$.
    The $\fg$-equivariance relation \eqref{eq:tau-rho} extends naturally to a $U(\fg)$-equivariance relation.
    Thus
    \begin{equation}
        \forall a \in U(\fg), \quad
        \tau\circ\rho_2(a)=\rho_1(a)\circ\tau.
    \end{equation}
    Therefore, for any $a \in U(\fg)$, using \cref{lem:auto-local,lem:local-homo},
    \begin{equation}
        \begin{split}
            (\rho_1(a) Q) (0) 
            & = (\rho_1(a) \tau (h_2))(0) - (\rho_1(a) A h_1) (0) \\
            & = (\tau (\rho_2(a) h_2))(0) - A (\rho_1 (a) h_1)(0) \\
            & = (\rho_2(a) h_2)(0) - A (\rho_1(a) h_1)(0) = G_2(a) - A G_1 (a) = 0.
        \end{split}     
    \end{equation}
    By \cref{lem:transitive-realization-separates-series}, applied componentwise, we conclude that $Q = 0$ so $\tau(h_2) = A h_1$.
\end{proof}

\subsection{Convergent output realizations}
\label{sec:convergent-output-realizations}

We now give the convergent counterpart of \cref{thm:output-realization}.

Throughout this subsection, we assume that $\fp \subset U(\fg)$ is a linear subspace of finite codimension. 
We fix any norm on $U(\fg) / \fp$.
The following condition is independent of this choice of norm:
\begin{equation}
    \label{eq:output-growth}
    \forall F\subset\fg \text{ finite},\ 
    \exists C_F>0,\
    \forall n\in\N^*,\ 
    \forall g_1,\dotsc,g_n\in F,
    \qquad
    \|\pi_{\fp}(g_1\dotsm g_n)\|_{U(\fg) / \fp}
    \leq
    C_F^n n!.
\end{equation}

\begin{definition}
    An output realization $(\rho,h)$ of $(\fg,\fp)$ is \emph{convergent} when, for all $g \in \fg$, $\rho(g)$ is a convergent formal vector field, and when each component of $h$ is a convergent formal power series.
\end{definition}

We first show that the single estimate \eqref{eq:output-growth} implies the
growth condition required for the convergence of the underlying realization of the transitive pair $(\fg,\fhp)$.

\begin{lemma}
    \label{lem:finite-output-tests}
    Assume that $\codim_\fg \fhp < \infty$. 
    There exist $a_1,\dotsc,a_N\in U(\fg)$ and $K > 0$ such that
    \begin{equation}
        \label{eq:finite-output-tests}
        \forall g \in \fg, \qquad
        \|\chi(g)\|_{\fg/\fhp}
        \leq
        K\sum_{\nu=1}^N \|\pi_{\fp}(g a_\nu)\|_{U(\fg)/\fp}.
    \end{equation}
\end{lemma}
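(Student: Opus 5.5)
The plan is a finite-dimensional linear algebra argument. Consider the linear map $\Phi : \fg \to \operatorname{Hom}(U(\fg), U(\fg)/\fp)$, $\Phi(g)(a) := \pi_\fp(ga)$. By definition of the output isotropy, $\ker \Phi = \fhp$, so $\Phi$ factors through an injective linear map $\bar\Phi : \fg/\fhp \to \operatorname{Hom}(U(\fg),U(\fg)/\fp)$. Since $\fg/\fhp$ has finite dimension $d$ (by the assumption $\codim_\fg \fhp < \infty$), its image is a $d$-dimensional space of linear maps $U(\fg) \to U(\fg)/\fp$.

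The first step is to pick finitely many test elements $a_1,\dotsc,a_N \in U(\fg)$ such that the restricted map $v \mapsto (\bar\Phi(v)(a_\nu))_{1\le\nu\le N}$, from $\fg/\fhp$ to $(U(\fg)/\fp)^N$, is still injective. I would do this by a decreasing-kernel argument. Start with $W_0 := \fg/\fhp$. As long as $W_k \neq \{0\}$, pick a nonzero $v \in W_k$. By injectivity of $\bar\Phi$ there is some $a_{k+1}$ with $\bar\Phi(v)(a_{k+1}) \neq 0$. Set $W_{k+1} := \{ w \in W_k \mid \bar\Phi(w)(a_{k+1}) = 0\}$, which is a proper subspace of $W_k$. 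Dimensions strictly decrease, so after at most $N \le d$ steps we reach $W_N = \{0\}$, which is exactly the required injectivity. Equivalently, one can argue that the intersection of the kernels of the evaluation maps $v \mapsto \bar\Phi(v)(a)$, over all $a \in U(\fg)$, is zero, and that in a finite-dimensional space some finite sub-intersection already vanishes.

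The second step is that on the finite-dimensional space $\fg/\fhp$, the map $v \mapsto \sum_\nu \|\bar\Phi(v)(a_\nu)\|_{U(\fg)/\fp}$ is a norm, since injectivity makes it definite. All norms are equivalent, so there is $K>0$ with $\|v\|_{\fg/\fhp} \le K \sum_\nu \|\bar\Phi(v)(a_\nu)\|$. Apply this to $v = \chi(g)$ and use $\bar\Phi(\chi(g))(a_\nu) = \pi_\fp(g a_\nu)$ to obtain \eqref{eq:finite-output-tests}. Here $U(\fg)/\fp$ is finite-dimensional by the standing assumption, although only its normed structure is actually used.

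No step is a real obstacle. The only point needing care is that $\bar\Phi$ is well defined and injective, that is, $\ker\Phi = \fhp$ exactly. This holds by unwinding the definition: $\Phi(g)=0$ means $\pi_\fp(gU(\fg))=0$, i.e.\ $gU(\fg)\subset\fp$.
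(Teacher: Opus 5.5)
Your proof is correct and follows essentially the same route as the paper. The paper defines $\Lambda_a(\chi(g)) := \pi_\fp(ga)$ on $\fg/\fhp$, notes that these kernels intersect trivially by definition of the output isotropy, extracts finitely many $a_\nu$ making the combined map injective by finite-dimensionality, and concludes by equivalence of norms; your decreasing-kernel argument simply spells out the extraction step that the paper leaves implicit.
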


\begin{proof}
    For every $a\in U(\fg)$, define $\Lambda_a : \fg / \fhp \to U(\fg) / \fp$ by $\Lambda_a(\chi(g)) := \pi_\fp(g a)$, which is well-defined because $\fhp U(\fg)\subset\fp$.
    Moreover,
    \begin{equation}
        \bigcap_{a\in U(\fg)} \ker \Lambda_a=\{0\}.
    \end{equation}
    Indeed, if $\chi(g)$ belongs to this intersection, then $\pi_{\fp}(ga)=0$ for every $a\in U(\fg)$, hence $gU(\fg)\subset\fp$, and therefore $g\in\fhp$ by definition of $\fhp$.
    Since $\fg/\fhp$ is finite-dimensional, there exist finitely many elements $a_1,\dotsc,a_N\in U(\fg)$ such that the map
    \begin{equation}
        \Lambda:\fg/\fhp\to (U(\fg)/\fp)^N,
        \qquad
        \Lambda(\chi(g))
        :=
        \big(\pi_{\fp}(ga_1),\dotsc,\pi_{\fp}(ga_N)\big)
    \end{equation}
    is injective. 
    Thus, for some constant $K>0$, \eqref{eq:finite-output-tests} holds.
\end{proof}

\begin{lemma}
    \label{lem:output-growth-implies-isotropy-growth}
    Assume that \eqref{eq:output-realization-condition} and \eqref{eq:output-growth} hold. 
    Then the transitive pair $(\fg,\fhp)$ satisfies the convergence criterion \eqref{eq:growth} of \cref{thm:main}.
\end{lemma}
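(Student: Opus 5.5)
We reduce the iterated-bracket estimate \eqref{eq:growth} for $(\fg,\fhp)$ to the product estimate \eqref{eq:output-growth}, using \cref{lem:finite-output-tests}. The plan has three steps: control $\chi(g)$ by finitely many ``test'' products, expand the iterated bracket into associative products, and then apply \eqref{eq:output-growth} to each product.

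\emph{Step 1: reduction to test products.} Since \eqref{eq:output-realization-condition} holds, $\codim_\fg \fhp<\infty$. \cref{lem:finite-output-tests} then provides $a_1,\dotsc,a_N\in U(\fg)$ and $K>0$ such that, for every $g\in\fg$,
\[
\|\chi(g)\|_{\fg/\fhp}\le K\sum_{\nu}\|\pi_\fp(g a_\nu)\|_{U(\fg)/\fp}.
\]
Each $a_\nu$ is a finite linear combination of monomials $e_{1}\dotsm e_{k}$ with letters in $\fg$. Let $E\subset\fg$ be the finite set of all letters occurring in these monomials, let $k_0$ be the maximal monomial length, and let $L$ be the maximum, over all $\nu$, of the sum of the absolute values of the coefficients of $a_\nu$. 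It then suffices to bound $\|\pi_\fp(B\,e_1\dotsm e_k)\|$ uniformly over $k\le k_0$ and $e_j\in E$, where $B=[g_1,[\dotsc,[g_{n-1},g_n]\dotsb]]$ with $g_i\in F$.

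\emph{Step 2: expansion of the bracket.} Inside $U(\fg)$, expanding each commutator $[u,v]=uv-vu$ writes $B$ as a signed sum of exactly $2^{n-1}$ words, each of length $n$ with letters $g_{\sigma(1)},\dotsc,g_{\sigma(n)}$ for some permutation $\sigma$. Consequently $B e_1\dotsm e_k$ is a signed sum of $2^{n-1}$ words of length $n+k$ in the finite alphabet $F':=F\cup E$.

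\emph{Step 3: applying the output estimate.} We apply \eqref{eq:output-growth} to $F'$ and obtain a constant $C_{F'}$. Each word above then contributes at most $C_{F'}^{n+k}(n+k)!$, so
\[
\|\chi(B)\|_{\fg/\fhp}\le K N L\,2^{n-1}\max_{k\le k_0}C_{F'}^{n+k}(n+k)!.
\]
Since $(n+k)!\le (n+k_0)!\le (n-1)!\,(n+k_0)^{k_0+1}$ and $(n+k_0)^{k_0+1}\le c^n$ for some $c$ depending only on $k_0$, the right-hand side is bounded by $C^n(n-1)!$ for a suitable constant $C$ depending on $F$. This is exactly \eqref{eq:growth}. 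The factor $n$ lost in passing from $n!$ to $(n-1)!$ is harmless, because it is absorbed into the geometric factor in the same way.

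The only delicate point is the bookkeeping with the fixed, $n$-independent data ($N$, $K$, $L$, $k_0$, $E$). These must be fixed \emph{before} invoking \eqref{eq:output-growth}, so that the enlarged alphabet $F'$ is finite and independent of $n$. Once this is arranged, all the extra factors are absorbed into the exponential constant.
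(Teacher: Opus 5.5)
Your proposal is correct and follows the paper's proof essentially step by step. You reduce via \cref{lem:finite-output-tests} to finitely many test elements $a_\nu$, expand the iterated bracket into $2^{n-1}$ words, apply \eqref{eq:output-growth} on the enlarged finite alphabet, and absorb the shift from $(n+k)!$ to $(n-1)!$ into the geometric constant; the only cosmetic difference is that you bound $(n+k_0)!/(n-1)!$ by a polynomial in $n$, whereas the paper uses $(n+L)!\le (n-1)!\,(L+1)!\,2^{n+L}$.
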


\begin{proof}
    Let $a_1, \dotsc, a_N$ be given by \cref{lem:finite-output-tests}.
    Write each $a_\nu$ as a finite linear combination of words on $\fg$. 
    Let $A\subset\fg$ be a finite set containing all letters appearing in these words, and let $L\in\N$ be an upper bound on their lengths.
    
    Fix a finite set $F\subset\fg$.
    For $g_1, \dotsc, g_n \in F$, the element
    \begin{equation}
        B_n:=[g_1,[g_2,[\dotsc,[g_{n-1},g_n]\dotsb]]]
    \end{equation}
    is a sum of at most $2^{n-1}$ words of length $n$ in the letters $g_1,\dotsc,g_n$. 
    Hence, for each $\nu$, the product $B_n a_\nu$ is a finite linear combination, with coefficients independent of $n$, of words of length at most $n+L$ in the finite set $F\cup A$.
    Applying \eqref{eq:output-growth} to the finite set $F\cup A$, and absorbing the finitely many coefficients appearing in the $a_\nu$, we   obtain constants $C,M>0$ such that, for every $\nu$,
    \begin{equation}
        \|\pi_{\fp}(B_n a_\nu)\|_{U(\fg)/\fp}
        \leq
        C 2^n M^{n+L}(n+L)! .
    \end{equation}
    Writing $(n+L)! \leq (n-1)! (L+1)! 2^{n+L}$, there exists $C_F'>0$ such that
    \begin{equation}
        \|\pi_{\fp}(B_n a_\nu)\|_{U(\fg)/\fp}
        \leq
        (C_F')^n (n-1)! .
    \end{equation}
    The estimate \eqref{eq:growth} now follows from \eqref{eq:finite-output-tests}.
\end{proof}

\begin{theorem}
    \label{thm:convergent-output-realization}
    Let $\fg$ be a Lie algebra and let $\fp\subset U(\fg)$ be a linear subspace. 
    Then $(\fg,\fp)$ admits a convergent output realization if and only if \eqref{eq:output-realization-condition} and the growth condition \eqref{eq:output-growth} hold.
    When these conditions hold, the canonical output realization $(\rho,h)$ given by \cref{thm:output-realization} is convergent.
\end{theorem}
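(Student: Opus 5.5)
The plan is to prove necessity and sufficiency separately, reusing the analytic machinery of \cref{sec:necessary} for the former and \cref{lem:output-growth-implies-isotropy-growth} together with \cref{thm:main} for the latter.

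\emph{Necessity.} Let $(\rho,h)$ be a convergent output realization in dimensions $(d,q)$. By \cref{thm:output-realization}, \eqref{eq:output-realization-condition} holds. For \eqref{eq:output-growth}, I use the linear map $G(a) := (\rho(a)h)(0)$. Its kernel is $\fp$, so $G$ induces an injective linear map $U(\fg)/\fp \to \K^q$. By finite dimensionality, there is $K>0$ with $\|\pi_\fp(a)\| \leq K |G(a)|$. Given a finite set $F$, I choose $r>0$ and $C_r$ such that $\opnorm{\rho(g)}_r \leq C_r$ for all $g\in F$, and such that each component of $h$ lies in $(\kx)_r$. Applying \cref{thm:prod.analytic} with radii $r/2 < r$ to each component of $h$ gives
\begin{equation}
    |(\rho(g_1)\dotsm\rho(g_n)h)(0)| \leq \opnorm{\rho(g_1)\dotsm\rho(g_n)h}_{r/2} \leq n!\,(2e/r)^n C_r^n \opnorm{h}_r,
\end{equation}
which is \eqref{eq:output-growth} after absorbing constants into $C_F$.

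\emph{Sufficiency.} Assume \eqref{eq:output-realization-condition} and \eqref{eq:output-growth}. By \cref{lem:output-growth-implies-isotropy-growth}, the pair $(\fg,\fhp)$ satisfies \eqref{eq:growth}. \cref{thm:main} then shows that the Blattner realization $\rho$ of $(\fg,\fhp)$ is convergent. It remains to show that $h$ in \eqref{eq:canonical-output-h} is convergent. Its $x^m$ coefficient is $\pi_\fp(\mathbf b^m)/m!$, and $\mathbf b^m$ is a word of length $|m|$ in $F=\{b_1,\dotsc,b_d\}$. So \eqref{eq:output-growth} gives $\|\pi_\fp(\mathbf b^m)\| \leq C_F^{|m|}|m|!$. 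Bounding the multinomial coefficient by $|m|!/m! \leq d^{|m|}$ yields coefficients bounded by $(dC_F)^{|m|}$, so $h$ is convergent. By \cref{thm:output-realization}, $(\rho,h)$ is an output realization, and it is therefore convergent.

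The main subtlety is in the necessity direction. \eqref{eq:output-growth} involves arbitrary products of $n$ letters, not brackets, so the relevant tool is the product estimate \cref{thm:prod.analytic} applied to $h$, rather than \cref{thm:bracket.analytic}. One should check that it applies when $f$ is a component of $h$ with the $v_j=\rho(g_j)$; it does. The case $n$ with a repeated or empty word is harmless because $n\in\N^*$.
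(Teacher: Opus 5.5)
Your proof is correct and follows the paper's argument step by step. For necessity, you use a bounded left inverse of the injective map $U(\fg)/\fp\to\K^q$ induced by $G$, together with the product estimate of \cref{thm:prod.analytic} applied to the components of $h$. For sufficiency, you use \cref{lem:output-growth-implies-isotropy-growth} and \cref{thm:main} for $\rho$, then the multinomial bound on the coefficients of $h$. The only cosmetic difference is that you take inner radius $r/2$, which gives $(2e/r)^n$, while the paper effectively lets the inner radius shrink to obtain $(e/r)^n$; this changes nothing.
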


\begin{proof}
    We first prove the necessity of \eqref{eq:output-growth} (we already know that \eqref{eq:output-realization-condition} is necessary from \cref{thm:output-realization}). 
    Let $(\rho,h)$ be a convergent output realization of $(\fg,\fp)$ in dimensions $(d,q)$. 
    Recall the linear map $G : U(\fg) \to \K^q$ defined by $G(a) := (\rho(a) h)(0)$.
    Since $\ker G = \fp$ and $U(\fg) / \fp$ is finite-dimensional, there exists $C>0$ such that
    \begin{equation}
        \forall a \in U(\fg), \qquad
        \|\pi_{\fp}(a)\|_{U(\fg) / \fp}
        \leq 
        C \|G(a)\|_{\K^q}.
    \end{equation}
    Let $F\subset\fg$ be finite. 
    Since the vector fields $\rho(g)$ for $g\in F$, and the components of $h$ are convergent, there exists $r>0$ such that all these series belong to the corresponding analytic spaces of radius~$r$.
    Applying the product estimate of \cref{thm:prod.analytic} componentwise,
    \begin{equation}
        \|G(g_1\dotsm g_n)\|_{\K^q}
        = \|(\rho(g_1)\dotsm\rho(g_n)h)(0)\|_{\K^q}
        \leq n! \left(\frac{e}{r}\right)^n \left(\max_{g \in F} \opnorm{\rho(g)}_r \right)^n \opnorm{h}_r
    \end{equation}
    for all $n\in\N^*$ and $g_1,\dotsc,g_n\in F$. 
    This entails \eqref{eq:output-growth}.

    \bigskip

    Conversely, assume \eqref{eq:output-realization-condition} and \eqref{eq:output-growth}. By \cref{lem:output-growth-implies-isotropy-growth}, this transitive pair satisfies the convergence criterion \eqref{eq:growth} of \cref{thm:main}. 
    Hence its canonical realization $\rho$ is convergent.
    It remains to prove that the canonical $h$ given by \eqref{eq:canonical-output-h} is convergent.
    Applying \eqref{eq:output-growth} to the finite set $\{b_1,\dotsc,b_d\}$, there exists $C>0$ such that, for every $m\in\N^d$ with $|m|\geq 1$,
    \begin{equation}
        |h_m| = \frac{1}{m!} \|\pi_{\fp}(\mathbf b^m)\|_{U(\fg)/\fp}
        \leq \frac{1}{m!} C^{|m|}|m|!
        \leq (Cd)^{|m|}.
    \end{equation}
    Thus $h$ is convergent.
\end{proof}

\begin{corollary}
    \label{cor:convergent-output-uniqueness}
    Assume that \eqref{eq:output-realization-condition} and     \eqref{eq:output-growth} hold, and set $d:=\codim_{\fg}\fhp$ and $q:=\codim_{U(\fg)}\fp$. 
    Let $(\rho_1,h_1)$ and $(\rho_2,h_2)$ be two convergent output realizations of $(\fg,\fp)$ in dimensions $(d,q)$. 
    Then the automorphism $\tau$ in \cref{prop:output-uniqueness-with-output-matrix} is convergent.
\end{corollary}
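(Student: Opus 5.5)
The plan is to reduce to \cref{thm:convergent-morphism-transitive-pairs}. First, recall from the proof of \cref{prop:output-uniqueness-with-output-matrix} that $\ker E_{\rho_1} = \ker E_{\rho_2} = \fhp$. Hence $\rho_1$ and $\rho_2$ are two realizations of the same transitive pair $(\fg,\fhp)$ in the minimal dimension $d = \codim_\fg \fhp$. By assumption they are convergent realizations.

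We then apply \cref{thm:convergent-morphism-transitive-pairs} with $\fh = \fh' = \fhp$, $\rho := \rho_1$ and $\rho' := \rho_2$. This yields a convergent local homomorphism $\tau' : \kx \to \kx$ satisfying $\tau' \circ \rho_2(g) = \rho_1(g) \circ \tau'$ for all $g \in \fg$. Moreover, $\tau'$ is an isomorphism with convergent inverse.

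It remains to identify $\tau'$ with the automorphism $\tau$ produced in \cref{prop:output-uniqueness-with-output-matrix}. That $\tau$ was obtained from \cref{thm:uniqueness}, so it also satisfies \eqref{eq:tau-rho}. By the uniqueness part of \cref{prop:unique-equivariant-formal-morphism}, which is purely formal, any two local homomorphisms satisfying \eqref{eq:tau-rho} coincide. Automorphisms are local by \cref{lem:auto-local}, so $\tau = \tau'$, and $\tau$ is therefore convergent. The relation $\tau(h_2) = A h_1$ is untouched; as a consistency check, it is compatible with convergence, since $\tau(h_2)$ is convergent by \cref{cor:composition-convergent-series}.

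The only delicate point is this identification step, namely making sure the $\tau$ of \cref{prop:output-uniqueness-with-output-matrix} is the unique equivariant local homomorphism, rather than just some automorphism. Since uniqueness holds among all local homomorphisms, this should go through without difficulty. Note also that the growth hypotheses \eqref{eq:output-growth} are not needed beyond guaranteeing that convergent realizations exist; the corollary's content rests on the convergence of $\rho_1$ and $\rho_2$ themselves.
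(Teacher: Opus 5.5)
Your proof is correct and follows the paper's own argument, which simply invokes \cref{thm:convergent-morphism-transitive-pairs} in the case $\fh=\fh'=\fhp$. You also make explicit a step the paper leaves implicit: the resulting convergent homomorphism coincides with the $\tau$ of \cref{prop:output-uniqueness-with-output-matrix}, by the uniqueness statement in \cref{prop:unique-equivariant-formal-morphism} together with \cref{lem:auto-local}.
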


\begin{proof}
    This follows from the case $\fh = \fh' = \fhp$ of \cref{thm:convergent-morphism-transitive-pairs}.
\end{proof}

\section{Applications to control theory}
\label{sec:applications}

We give applications of the abstract results of the previous sections to the realization theory for control-affine systems.
Let $\Omega_0 \subset \R^d$ be an open neighborhood of $0$ in $\R^d$ and $\mathfrak{X}(\Omega_0)$ be the set of real-analytic vector fields on $\Omega_0$.
Consider systems of the form
\begin{equation} 
    \label{eq:affine}
    \dot{x}(t) = f_0(x(t)) + \sum_{i=1}^m u_i(t) f_i(x(t)),
\end{equation}
where $f_0, f_1, \dotsc, f_m \in \mathfrak{X}(\Omega_0)$ are given and $u_1, \dotsc, u_m \in L^1((0,T);\R)$ are controls to be chosen.
When well-defined (for example for small enough times and controls), the unique absolutely continuous solution to \eqref{eq:affine} with initial condition $x_0 \in \Omega_0$ is denoted $\gamma_f(t;u,x_0)$.

Systems of this form are both ubiquitous in engineering, physics or biology, and important for theoretical reasons (see classical textbooks on control theory \cite{BulloLewis2005,Coron2007,Jurdjevic1997,Sontag1998}).

\subsection{Background on local controllability}

We recall a few standard facts on the local controllability of system \eqref{eq:affine}, mainly to motivate the coordinate-invariant questions.
Many different notions are used (see \cite[Section 1.2]{BeauchardMarbach2026} or \cite{BoscainCannarsaFranceschiSigalotti2023}).
To fix ideas, a typical setting is to assume that $f_0(0) = 0$ so that $0$ is an equilibrium of the uncontrolled system and to study the small-state small-time local controllability.

\begin{definition}[Small-state STLC]
    \label{def:SS-STLC}
    We say that \eqref{eq:affine} is \emph{small-state small-time locally controllable} from $0$ when, for every $T > 0$, for every $\delta > 0$, there exists $r > 0$ such that, for every target state $x^* \in B(0,r)$, there exists $u \in L^1((0,T);\R^m)$ such that the solution to \eqref{eq:affine} associated with the initial condition $x(0) = 0$ and the control $u$ satisfies $x(T) = x^*$ and $x([0,T]) \subset B(0,\delta)$.
\end{definition}

Many necessary or sufficient conditions for local controllability involve Lie brackets of vector fields.
When $f, g \in \mathfrak{X}(\Omega_0)$ one defines $[f,g] := (Dg) f - (Df) g \in \mathfrak{X}(\Omega_0)$.
Given a family $\mathcal{F} \subset \mathfrak{X}(\Omega_0)$, denote by $\mathcal{L}(\mathcal{F}) \subset \mathfrak{X}(\Omega_0)$ the Lie algebra spanned by iterated Lie brackets of elements of $\mathcal{F}$.

A fundamental result is the following necessary and sufficient condition in the driftless case $f_0 = 0$, due to Chow \cite{Chow1939} and Rachevsky \cite{Rashevski1938} and usually called the \emph{Lie algebra rank condition}.

\begin{theorem}
    Assume that $f_0 = 0$.
    Then system \eqref{eq:affine} is small-state STLC from $0$ if and only if
    \begin{equation}
        \label{eq:LARC-bis}
        \dim \big\{ h(0) \mid h \in \mathcal{L}(\{f_1, \dotsc, f_m\}) \big\} = d.
    \end{equation}
\end{theorem}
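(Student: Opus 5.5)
The plan is to prove the two implications separately. For necessity I would use the analytic orbit theorem. For sufficiency I would use the classical construction that approximates bracket directions by commutators of flows, combined with a topological (degree) argument. Throughout, the driftless structure $f_0 = 0$ gives two simplifications. Time can be rescaled: if $u$ steers $0$ to $x^*$ in time $T$, then $\lambda u(\lambda \cdot)$ does so in time $T/\lambda$ along the same path. Trajectories can also be reversed by using $-u$. So "small time" is automatic once "small state" is achieved, and the real content concerns reaching a neighbourhood of $0$ along paths staying in $B(0,\delta)$.

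\emph{Necessity.} Suppose \eqref{eq:LARC-bis} fails, so that $\dim \{ h(0) \mid h \in \mathcal{L}(\{f_1,\dotsc,f_m\})\} = k < d$. Since the $f_i$ are real-analytic, Nagano's theorem applies: the orbit of $0$ under the flows of $f_1,\dotsc,f_m$ is an immersed analytic submanifold whose tangent space at each point equals the evaluation of $\mathcal{L}(\{f_1,\dotsc,f_m\})$ there. In particular, its dimension is $k$ at $0$. Every trajectory of \eqref{eq:affine} with $L^1$ controls stays in this orbit. This is seen by approximating $u$ by piecewise constant controls and using that a leaf of the analytic foliation is closed under limits of such concatenations, at least locally near $0$. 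The reachable set from $0$ within $B(0,\delta)$ is then contained in a countable union of $k$-dimensional submanifolds, which has empty interior. Hence it cannot contain any ball $B(0,r)$, contradicting Definition~\ref{def:SS-STLC}.

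\emph{Sufficiency.} I would pick iterated brackets $h_1,\dotsc,h_d \in \mathcal{L}(\{f_1,\dotsc,f_m\})$, of lengths $k_1,\dotsc,k_d$, such that $h_1(0),\dotsc,h_d(0)$ form a basis of $\R^d$. For each $h_j$, I would build by induction on the bracket length a composition $\Phi_j(s)$ of flows $e^{\pm \sigma f_i}$, with $\sigma = |s|^{1/k_j}$ and signs chosen according to $\operatorname{sign}(s)$, satisfying
\[
\Phi_j(s)(x) = x + s\, h_j(x) + o(|s|)
\]
uniformly for $x$ near $0$. The base case is the group commutator $e^{-\sigma g} e^{-\sigma f} e^{\sigma g} e^{\sigma f}$, which equals $\sigma^2 [f,g] + O(\sigma^3)$. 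The inductive step is obtained by nesting such commutators. Each $\Phi_j(s)$ is realized by a piecewise constant control of small amplitude and short total time, with a path staying $O(|s|^{1/k_j})$-close to its starting point. I then define
\[
\Phi(s_1,\dotsc,s_d) := \Phi_d(s_d) \circ \dotsb \circ \Phi_1(s_1)(0).
\]
This map is continuous near $0$ and satisfies $\Phi(s) = \sum_j s_j h_j(0) + o(|s|)$, so it is a small perturbation of an invertible linear map. By a Brouwer degree argument on a small sphere, $\Phi$ covers a ball $B(0,r)$. Time rescaling then yields every $T>0$, and choosing $|s|$ small keeps the paths in $B(0,\delta)$.

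The main obstacle is the commutator expansion. One must obtain the $o(|s|)$ remainder uniformly, and handle the sign of $s$ for even-length brackets, where $\Phi_j(-s)$ must be built by swapping the roles of the two inner factors so that the leading term becomes $-s\,h_j$. I would do this with the chronological calculus, or simply with Taylor expansions of analytic flows, tracking remainders by induction on the bracket length. Because $\Phi$ is only continuous and not differentiable at $s=0$, the degree argument is used in place of the inverse function theorem.
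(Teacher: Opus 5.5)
Your argument is correct in outline, but the paper gives no proof to compare it with. The statement is only quoted there, as background, as the Chow--Rashevsky theorem \cite{Chow1939,Rashevski1938}. Your route is the classical one. Necessity goes through Nagano's orbit theorem, which is exactly where the standing real-analyticity of the $f_i$ is used (for merely smooth fields the rank condition is not necessary). Sufficiency goes through nested commutators of flows plus a covering argument.

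The other classical proof of sufficiency avoids all remainder bookkeeping. Pick a word $i_1,\dotsc,i_d$ and small times at which $(t_1,\dotsc,t_d)\mapsto e^{t_d f_{i_d}}\dotsb e^{t_1 f_{i_1}}(0)$ has rank $d$. Such a point exists: if the maximal rank $k$ were $<d$, every $f_i$, hence every bracket, would be tangent to the $k$-dimensional image near a point of maximal rank, contradicting the rank condition, which persists near $0$. Then append the reversed flows so that the base point is sent back to $0$, and apply the ordinary inverse function theorem. That proof is shorter but purely qualitative. Yours yields quantitative information (targets at distance $r$ are reached along paths of size $O(r^{1/k})$, with $k$ the maximal bracket length, in the spirit of ball--box estimates). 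The price is the uniform commutator remainder estimates you correctly identify as the main obstacle.

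Two small points to fix.

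First, orbits need not be closed (they can be dense), so ``closure of leaves under limits'' is not a sound way to show that $L^1$ trajectories stay in the orbit. Argue instead as follows. Near any point $p$ of an orbit, all $f_i$ are tangent to an embedded local slice, so uniqueness of Carathéodory solutions keeps the trajectory locally in the orbit of $p$. Connectedness of $[0,T]$, together with the fact that orbits partition the domain, then gives that $\gamma([0,T])$ lies in the orbit of $0$.

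Second, $\Phi$ \emph{is} differentiable at $s=0$, with derivative the matrix whose columns are the $h_j(0)$. What fails is $C^1$ regularity near $0$ (e.g.\ at points where some $s_j=0$). That is why the inverse function theorem is unavailable and the degree argument is the right tool.
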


For control-affine systems with drift ($f_0 \neq 0$), one only knows necessary and sufficient conditions for particular subclasses (e.g.\ for linear \cite[Cor.\ 5.5]{Kalman1960} or odd \cite{Brunovsky1976} systems).
The general case is still an active field of research and many necessary or sufficient conditions are known (see \cite{BeauchardMarbach2026} for a survey of necessary conditions, and \cite{AgrachevGamkrelidze1993_Families,Kawski1987_Survey,Krastanov2009} for sufficient ones).

A very crude sufficient condition is the following one (see \cite[Ch.\ 4, Thm.\ 2]{Jurdjevic1997} or \cite[Cor.\ 2.9]{BeauchardLaurentMarbach2026}).

\begin{proposition}
    Assume that $f_0(0) = 0$ and \eqref{eq:LARC-bis}.
    Then system \eqref{eq:affine} is small-state STLC.
\end{proposition}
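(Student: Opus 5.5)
The plan is to build a local chart in which the drift is approximately linear and controlled by the bracket-generating directions, and then conclude with an open-mapping argument on the end-point map. The most economical route is to reduce to the driftless case by a time-scaling and perturbation argument. The key observation is that $f_0(0)=0$ makes the drift's contribution at $0$ small on short time intervals. Concretely, we would show that the end-point map $u \mapsto \gamma_f(T;u,0)$ is locally onto a neighborhood of $0$, with trajectories staying in $B(0,\delta)$.

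The first step uses the Lie algebra rank condition \eqref{eq:LARC-bis} to pick brackets $h_1,\dotsc,h_d \in \mathcal{L}(\{f_1,\dotsc,f_m\})$ with $h_1(0),\dotsc,h_d(0)$ a basis of $\R^d$. Each $h_k$ is a bracket of some length $\ell_k$. By the classical Chow--Rashevsky construction (commutator loops of controls), each can be approximately generated by controls on time intervals of size $\varepsilon$: the flow produces a displacement $\varepsilon^{\ell_k} h_k(x) + O(\varepsilon^{\ell_k+1})$. Composing these $d$ building blocks, with signed amplitudes $s_k \in [-1,1]$ and the scaling $\varepsilon^{\ell_k}|s_k|$, yields a continuous map $\Phi_\varepsilon : s \mapsto x(T)$. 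We would apply the rescaling $s_k \mapsto \varepsilon^{-\ell_k}$ to normalize it.

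The second step handles the drift. On a total time interval of length $T$ that is used for these maneuvers, the drift acts on the state $x$. Since $f_0(0)=0$ and $f_0$ is analytic, we have $|f_0(x)| \leq L|x|$. Gronwall then shows that the drift perturbs the trajectory by at most $O(T e^{LT}\sup|x|)$ relative to the driftless one. To keep this perturbation below the controlled displacement, we use small-time freedom. Given $T$, we concentrate the controls into a short window $[T-\tau,T]$ with large amplitudes, and keep $u=0$ before it, so the state stays at $0$ because $f_0(0)=0$. On the window we time-rescale, which makes the driftless part dominate. After rescaling, the system reads $\dot x = \tau f_0(x) + \sum v_i f_i(x)$ on unit time, and the drift term becomes a perturbation of size $\tau$.

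The third step is the degree/Brouwer argument. The normalized map $\Psi(s)$ is a $C^0$-small perturbation, of size $o(1)$ as $\varepsilon,\tau\to 0$, of the map $s \mapsto \sum_k s_k h_k(0)$ restricted to the cube. The latter covers a neighborhood of $0$ with nonzero degree. A Brouwer degree argument, or Cauchy--Peano via the fixed-point theorem, therefore shows that $\Psi$ still covers a ball $B(0,r)$. The trajectory bound $x([0,T])\subset B(0,\delta)$ follows from the smallness of all maneuvers.

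The main obstacle is the uniformity in the first step: the composite commutator maneuvers must produce $\sum \varepsilon^{\ell_k}s_k h_k(0)$ with errors $o(\varepsilon^{\max\ell_k})$ for all $k$ simultaneously. The brackets have different lengths, so lower-order brackets produce cross terms that could swamp the higher ones. I would first try to invoke the driftless theorem itself, as a black box, on the system with $\tau f_0$ added. This uses the fact that the Chow--Rashevsky surjectivity is stable under small continuous perturbations of the vector fields via degree theory, rather than redoing the explicit construction.
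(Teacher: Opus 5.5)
The paper does not prove this proposition (it quotes it from Jurdjevic and from Beauchard--Laurent--Marbach), so your argument has to stand on its own. As written it has a genuine gap, exactly at the point you flag as the main obstacle.

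\textbf{Step 1 fails.} The anisotropic normalization divides the component along $h_k(0)$ by $\varepsilon^{\ell_k}$. This does not give a small perturbation of $s\mapsto\sum_k s_k h_k(0)$. A block of length $\ell_k$ leaves a remainder of order $\varepsilon^{\ell_k+1}$ pointing in an arbitrary direction. After normalization this blows up as soon as it has a component along some $h_j(0)$ with $\ell_j\geq\ell_k+2$. For instance, take $d=3$, $f_1=\partial_1+x_1\partial_3$ and $f_2=\partial_2+\tfrac{x_1^2}{2}\partial_3$. Then $[f_1,f_2](0)=0$ and $[f_1,[f_1,f_2]]=\partial_3$, so $e_3$ requires a bracket of length at least $3$, and $h_3=[f_1,[f_1,f_2]]$ with $\ell_3=3$ is forced. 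But the first block alone gives $e^{\varepsilon s_1 f_1}(0)=(\varepsilon s_1,0,\varepsilon^2 s_1^2/2)$. That is a contribution $s_1^2/(2\varepsilon)$ to the normalized third component, whatever order the blocks are composed in.

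\textbf{The fallback does not repair this.} The system with $\tau f_0$ added is not driftless, so the driftless theorem does not apply to it. More fundamentally, that theorem only says the reachable set contains a ball. This is a qualitative covering property with no degree or rank information attached, and it is not by itself preserved under perturbations of the dynamics. The degree-theoretic stability you want applies to a \emph{specific continuous finite-dimensional family of controls} whose end-point map has nonzero degree, or to a regular point of the end-point map. Producing such a family is exactly what Step 1 was supposed to do, so the argument is circular at this point.

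\textbf{The fix is a single common scale for all directions.} For each $k$, run the commutator block realizing $h_k$ for time $(\eta|\sigma_k|)^{1/\ell_k}$. When $\sigma_k<0$, reverse the outermost bracket, or flip the sign of the control if $\ell_k=1$. This block displaces $x$ by $\eta\sigma_k h_k(x)+O((\eta|\sigma_k|)^{1+1/\ell_k})$. The composite map has the following properties:
\begin{itemize}
\item it is continuous on $[-1,1]^d$, including at $\sigma_k=0$;
\item it equals $\eta\sum_k\sigma_k h_k(0)+O(\eta^{1+1/\ell_{\max}})$;
\item its trajectories stay in a ball of radius $O(\eta^{1/\ell_{\max}})$.
\end{itemize}
Dividing by the single factor $\eta$ then gives a uniformly small perturbation of an invertible linear map.

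With this, your Steps 2 and 3 go through, provided the constants are chosen in the right order. Fix $\eta$ first, depending on $\delta$. Then keep $u=0$ on $[0,T-\tau]$ and rescale the window. Take $\tau\leq T$ so small that the Gronwall drift error is a small fraction of $\eta$; this error is $O(\tau)$ uniformly over controls of bounded $L^1$ norm. The degree argument then places $B(0,c\eta)$ in the reachable set while respecting the constraint $x([0,T])\subset B(0,\delta)$.
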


Condition \eqref{eq:LARC-bis} is sufficient but not necessary, as illustrated by the system $\dot{x}_1 = u_1$ and $\dot{x}_2 = x_1$.
A very crude necessary condition is the Lie algebra rank condition with $f_0$ (see \cite[Thm.\ 3.17]{Coron2007}).

\begin{proposition}
    Assume that $f_0(0) = 0$ and that system \eqref{eq:affine} is small-state STLC.
    Then
    \begin{equation}
        \label{eq:LARC-f0}
        \dim \big\{ h(0) \mid h \in \mathcal{L}(\{f_0, f_1, \dotsc, f_m\}) \big\} = d.
    \end{equation}
\end{proposition}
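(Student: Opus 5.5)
We prove the contrapositive: if \eqref{eq:LARC-f0} fails, the system cannot be small-state STLC. Let $V := \{ h(0) \mid h \in \mathcal{L}(\{f_0, \dotsc, f_m\}) \}$ and assume $\dim V < d$. Since all $f_i$ are real-analytic, the first step is to show that every trajectory starting from $0$ stays in a proper analytic object through $0$. Concretely, we pick a nonzero linear form $\ell$ on $\R^d$ vanishing on $V$. We then show that $\ell(\gamma_f(t;u,0)) = 0$ for all small $t$ and all small controls, which forbids reaching targets $x^*$ with $\ell(x^*) \neq 0$.

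To carry this out, we use the Chen--Fliess expansion. For small $T$ and $\|u\|_{L^1}$, analyticity gives, for any analytic function $\varphi$ (here $\varphi = \ell$),
\begin{equation*}
    \varphi(\gamma_f(t;u,0)) = \sum_{I} \Big(\int_{0<s_1<\dotsb<s_k<t} u_{i_k}(s_k)\dotsm u_{i_1}(s_1)\,\mathrm{d}s\Big)\,(f_{i_1}\dotsm f_{i_k}\varphi)(0),
\end{equation*}
with $u_0 := 1$. This series converges by the product estimate \cref{thm:prod.analytic}, which provides exactly the $k!\,C^k$ bounds compensating the $t^k/k!$ size of the iterated integrals. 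The coefficients $(f_{i_1}\dotsm f_{i_k}\ell)(0)$ must then be shown to vanish. For this we symmetrize the word $f_{i_1}\dotsm f_{i_k}$ acting as differential operators. Since $\ell$ is linear, $f_{i_k}\ell = \ell\circ f_{i_k}$. By induction, $f_{i_1}\dotsm f_{i_k}\ell(0)$ can be rewritten as $\ell$ applied to an iterated bracket evaluated at $0$, plus terms where some $f_{i_j}$ acts as a derivation at a point where the resulting function still has the same structure.

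The cleaner route is to use \cref{lem:bubble} in the associative algebra of differential operators. Every word is a sum of a bracket-first term and terms ending in a vector field applied last. At $0$, $(X\psi)(0) = d\psi(0)\,X(0)$. The induction hypothesis should be that $d(Y\ell)(0)$ annihilates $V$ for every product $Y$ of the $f_i$. An alternative is to note that the analytic distribution generated by $\mathcal{L}$ defines, by Nagano's theorem, an integral manifold through $0$ of dimension $\dim V < d$, which contains all trajectories from $0$.

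The main obstacle is this coefficient-vanishing step. The naive induction fails because derivatives of $f_{i_j}\ell$ at $0$ are not directly controlled by $V$. I would therefore use the Nagano orbit theorem, which holds for real-analytic families, as the decisive input. The orbit of $0$ under the flows of the $f_i$ is an immersed analytic submanifold whose tangent space at $0$ equals $V$. Trajectories of \eqref{eq:affine} remain in this orbit, which for small times lies in a set of dimension $<d$. That set has empty interior near $0$, contradicting small-state STLC, since the reachable set must contain a ball $B(0,r)$.
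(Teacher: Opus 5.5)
The paper gives no proof of this proposition; it only cites \cite[Thm.~3.17]{Coron2007}. Your decisive argument, via the Nagano--Sussmann orbit theorem, is the standard proof and it works. As written, however, the proposal contains one false claim and skips two justifications.

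\textbf{The opening plan is false, not just difficult.} When $\dim V<d$, trajectories from $0$ need not stay in the hyperplane $\ker\ell$, because the orbit of $0$ is only tangent to $V$ at $0$. Take $d=2$, $f_0=0$, $f_1=\partial_1+2x_1\partial_2$. Then $V=\R e_1$ and $\ell=x_2$, yet every trajectory satisfies $x_2=x_1^2$. Accordingly $(f_1f_1\ell)(0)=2\neq 0$, and $d(f_1\ell)(0)=2\,dx_1$ does not annihilate $V$, so your proposed induction hypothesis already fails for $Y=f_1$. The Chen--Fliess coefficient-vanishing step therefore cannot be repaired and should be deleted. The orbit argument replaces it entirely.

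\textbf{Two steps of the orbit argument need justification.}

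(a) The orbit is generated by flows, so you must show that trajectories with $L^1$ controls stay in it. The time-dependent field $f_0+\sum_i u_i(t)f_i$ is tangent to the orbit. Solve the Carath\'eodory ODE inside the orbit, in slice charts, and conclude by uniqueness together with an open--closed argument in $t$.

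(b) The empty-interior step is not justified by \emph{for small times}. Controls are unbounded in $L^1$, so even for small $T$ a trajectory can travel far along the immersed orbit while staying in $B(0,\delta)$. There are two correct ways to close this.
\begin{enumerate}
\item Global route: the whole orbit is Lebesgue-null. It is a countable union of images of the maps $(t_1,\dots,t_p)\mapsto e^{t_pf_{i_p}}\circ\dots\circ e^{t_1f_{i_1}}(0)$, whose differentials have rank at most $\dim V<d$, so Sard's theorem applies.
\item Local route, using the small-state constraint directly: Nagano gives a neighborhood $W$ of $0$ and a $(\dim V)$-dimensional analytic submanifold $M\ni 0$, closed in $W$, to which all $f_i$ are tangent. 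Choose $\delta$ with $B(0,\delta)\subset W$. Then every trajectory from $0$ staying in $B(0,\delta)$ remains in $M$, by the same open--closed argument. Since $M$ has empty interior, no ball $B(0,r)$ can be reached, contradicting small-state STLC.
\end{enumerate}
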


Condition \eqref{eq:LARC-f0} is necessary but not sufficient, as illustrated by the system $\dot{x}_1 = u$ and $\dot{x}_2 = x_1^2$.
It nevertheless implies a weaker property called \emph{accessibility} (see \cite[Ch.\ 3, Thm.\ 1]{Jurdjevic1997}).

\begin{proposition}
    \label{prop:LARC=>accessible}
    Assume that \eqref{eq:LARC-f0} holds.
    Let $T > 0$ and define 
    \begin{equation}
        \label{eq:AT0}
        A_{\leq T}(0) := \{ \gamma_f(t;u,0) \mid u \in L^1((0,t);\R^m), \enskip \gamma_f([0,t];u,0) \subset \Omega_0, \enskip t \leq T \}.
    \end{equation}
    Then $A_{\leq T}(0)$ contains open balls arbitrarily close to $0$.
\end{proposition}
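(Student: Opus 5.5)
We argue via the orbit theorem in the analytic setting, which is how this accessibility statement is classically obtained. Set $\mathcal{L} := \mathcal{L}(\{f_0, \dotsc, f_m\})$. First, the rank condition \eqref{eq:LARC-f0} at $0$ propagates to a neighborhood: we choose $h_1, \dotsc, h_d \in \mathcal{L}$ with $h_1(0), \dotsc, h_d(0)$ linearly independent. The determinant $\det(h_1(x), \dotsc, h_d(x))$ is continuous, so the rank is $d$ on a ball $B(0,\varepsilon) \subset \Omega_0$. It then suffices to show that, for every $\varepsilon' \in (0,\varepsilon)$ and $T>0$, the set $A_{\leq T}(0) \cap B(0,\varepsilon')$ has nonempty interior.

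The core step is a rank-increasing construction in the style of Krener. We use piecewise constant controls with values in $\{0, e_1, \dotsc, e_m\}$, so that trajectories concatenate flows of the fields $X_0 := f_0$ and $X_i := f_0 + f_i$; these fields generate the same Lie algebra $\mathcal{L}$. For small $\eta > 0$ we build inductively maps
\begin{equation}
    \Phi_k(t_1, \dotsc, t_k) := e^{t_k X_{i_k}} \circ \dotsb \circ e^{t_1 X_{i_1}}(0), \qquad t_j > 0, \ \sum_j t_j < \eta,
\end{equation}
whose differential has rank $k$ at some point $\bar t$ of the open simplex. Every value of $\Phi_k$ lies in $A_{\leq \eta}(0)$, and by continuity of flows it lies in $B(0,\varepsilon')$ once $\eta$ is small.

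The induction goes as follows. Suppose $\Phi_k$ has rank $k$ at $\bar t$ with $k<d$. Then near $\bar t$ its image is an embedded $k$-dimensional submanifold $M$. If every $X_i$ were tangent to $M$ at every point of $M$, then all iterated brackets would be tangent to $M$ too. This would contradict the rank-$d$ condition on $B(0,\varepsilon)$, since $M \subset B(0,\varepsilon)$ and $k<d$. So some $X_{i}$ is transversal to $M$ at some point $\Phi_k(t')$ with $t'$ close to $\bar t$. Appending the time $t_{k+1}$ for $X_{i}$ with $t_{k+1}>0$ small then gives rank $k+1$ for $\Phi_{k+1}$ near $(t', 0^+)$, by lower semicontinuity of rank. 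We choose times small enough to keep $\sum t_j < \eta$.

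At $k=d$, the inverse function theorem shows that $\Phi_d$ maps a neighborhood of $\bar t$ onto an open set contained in $A_{\leq T}(0) \cap B(0,\varepsilon')$. Since $\varepsilon'$ was arbitrary, this yields open balls arbitrarily close to $0$.

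The main obstacle is the tangency argument. The fields $X_i$ are tangent to $M$ only along $M$ and need not be defined as fields on $M$ globally. The fact we need is that vector fields tangent to an embedded submanifold have brackets tangent to it. We apply this fact locally near a point of $M$, which is legitimate because $M$ is embedded near $\Phi_k(\bar t)$. The base case $k=0$ needs separate care because $0$ may be an equilibrium of $f_0$. Here the rank condition gives some $X_i(0)\neq 0$: otherwise every $X_i$ vanishes at $0$, every bracket vanishes at $0$, and \eqref{eq:LARC-f0} fails. So $\Phi_1(t_1) = e^{t_1 X_i}(0)$ has rank $1$ for small $t_1>0$. Analyticity is not needed for this direction; smoothness suffices.
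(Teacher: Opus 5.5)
Your proposal is correct. The paper gives no proof of its own and simply cites \cite[Ch.~3, Thm.~1]{Jurdjevic1997}, and your argument is the standard Krener-type proof found there: concatenated flows of $f_0$ and $f_0+f_i$, rank increased until tangency of all brackets to the image submanifold contradicts the rank condition near $0$, then the inverse function theorem at full rank. Your remark that only smoothness and the propagation of the rank condition to a neighborhood of $0$ are needed is also accurate.
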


\subsection{Control-affine systems as convergent realizations}

We now interpret \eqref{eq:affine} in the language of \cref{sec:realizations-Lie}.
By taking Taylor expansions at the origin, every analytic vector field on $\Omega_0$ defines a convergent formal vector field in $\Der \R\lxl$. 
We shall use the same notation for an analytic vector field and for its Taylor expansion at $0$.

Let $X := \{X_0,X_1, \dots, X_m\}$ be a finite set of  \emph{non-commutative indeterminates}, and $\mathcal{L}(X)$ be the \emph{free Lie algebra} generated by $X$ over $\R$.

Given $f_0, f_1, \dotsc, f_m \in \mathfrak{X}(\Omega_0)$, define the homomorphism of Lie algebras 
\begin{equation}
    \rho_f : 
    \begin{cases}
        \mathcal{L}(X) & \to \Der \R\lxl \\
        X_i & \mapsto f_i.
    \end{cases}
\end{equation}
Thus, for $b \in \mathcal{L}(X)$, the vector field $\rho_f(b)$ is obtained by replacing each indeterminate $X_i$ by $f_i$.  For example, $\rho_f([X_1,[X_0,X_1]]) = [f_1,[f_0,f_1]]$.

We also introduce the \emph{evaluation map}
\begin{equation}
    \label{eq:Ef}
    E_f :
    \begin{cases}
        \mathcal{L}(X) &\to \R^d, \\
        b &\mapsto \rho_f(b)(0).
    \end{cases}
\end{equation}

\begin{lemma}
    With these notations, $\rho_f$ is a convergent realization in $d$ variables of the transitive pair $(\mathcal{L}(X),\ker E_f)$.
    It is a realization in minimal dimension if and only if $E_f$ is onto.
\end{lemma}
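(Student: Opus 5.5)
The plan is to check the definitions one by one, since the statement is essentially a direct verification.

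\emph{Homomorphism.} By the universal property of the free Lie algebra $\mathcal{L}(X)$, the assignment $X_i \mapsto f_i$ extends uniquely to a Lie algebra homomorphism into the Lie algebra of real-analytic vector fields on $\Omega_0$. We then compose with the Taylor expansion at $0$, which sends $\mathfrak{X}(\Omega_0)$ into $\Der \R\lxl$. This composition is again a Lie algebra homomorphism, because the bracket $[f,g] = (Dg)f - (Df)g$ of analytic fields is computed from Taylor series by the same formula as the commutator of formal derivations. Here we use that Taylor expansion is an algebra homomorphism commuting with $\partial_i$. This gives $\rho_f$.

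\emph{Isotropy and convergence.} By definition of $E_f$ in \eqref{eq:Ef}, for $b \in \mathcal{L}(X)$ we have $\rho_f(b) \in \Der_0 \R\lxl$ if and only if $E_f(b) = 0$. Hence $\rho_f^{-1}(\Der_0 \R\lxl) = \ker E_f$, so $\rho_f$ is a realization in $d$ variables of $(\mathcal{L}(X), \ker E_f)$. Transitivity follows automatically from the remarks after the definition of realization: $\ker E_f$ is a Lie subalgebra and has codimension at most $d$. For convergence, each $\rho_f(b)$ is the Taylor expansion at $0$ of an analytic vector field on the open set $\Omega_0 \ni 0$. Its components are therefore analytic functions near $0$, whose Taylor coefficients satisfy Cauchy estimates of the form \eqref{eq-def:ps-conv}. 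So every $\rho_f(b)$ is a convergent formal vector field.

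\emph{Minimality.} The induced map $\mathcal{L}(X)/\ker E_f \to \R^d$ is injective, so $\codim \ker E_f = \dim \im E_f = \rank E_f$. The realization uses $d$ variables, so it is in minimal dimension $d = \codim \ker E_f$ exactly when $\rank E_f = d$, that is, when $E_f$ is onto.

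The only step needing any care is the homomorphism property of the Taylor expansion map. I expect it to be routine, by checking the bracket formula coefficientwise.
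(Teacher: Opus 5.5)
Your proposal is correct and follows essentially the same route as the paper: convergence from the analyticity of the iterated brackets $\rho_f(b)$, the equivalence $\rho_f(b)(0)=0 \iff b\in\ker E_f$ giving the isotropy, and minimality from $\codim_{\mathcal{L}(X)} \ker E_f = \rank E_f$. Your only addition is the explicit check that Taylor expansion at $0$ intertwines the brackets, which the paper takes for granted when defining $\rho_f$ as a Lie algebra homomorphism.
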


\begin{proof}
    Since the vector fields $f_i$ are analytic, every iterated Lie bracket $\rho_f(b)$ is analytic, hence defines a convergent formal vector field.
    Moreover,
    \begin{equation}
        \rho_f(b) \in \Der_0 \R \lxl
        \iff 
        \rho_f(b)(0) = 0
        \iff 
        b \in \ker E_f.
    \end{equation}
    Thus $\rho_f$ is a realization of $(\mathcal{L}(X),\ker E_f)$.
    Finally, $\codim_{\mathcal{L}(X)} \ker E_f = \rank E_f$, so the realization is minimal in dimension $d$ exactly when    $\rank E_f=d$.
\end{proof}

\subsection{Equivalence of control systems}

Small-state STLC (as well as many other control properties) is clearly invariant under a local change of coordinates.
This observation motivates the question: \emph{when are two control-affine systems of the form \eqref{eq:affine} equivalent} (in the sense that they can be related by a change of coordinates)?
And more precisely, how can one determine this from the vector fields involved?

The following result is due to Krener \cite[Theorem 1]{Krener1973}.
We re-interpret it using \cref{sec:realizations-Lie,sec:coordinate-changes}.

\begin{proposition}
    \label{prop:Krener}
    Let $d, d' \in \N^*$, $f_0, f_1, \dotsc, f_m$ analytic vector fields defined on a neighborhood of~$0$ in $\R^d$ and $g_0, g_1, \dotsc, g_m$ analytic vector fields defined on a neighborhood of $0$ in $\R^{d'}$.
    Assume that the maps $E_f$ and $E_g$ of \eqref{eq:Ef} are onto.
    The following statements are equivalent.
    \begin{enumerate}
        \item \label{it:krener-1} 
        $\ker E_f \subset \ker E_g$.
        
        \item \label{it:krener-2} 
        There exists a linear map $L : \R^d \to \R^{d'}$ such that, for all $b \in \mathcal{L}(X)$, $\rho_g(b)(0) = L \rho_f(b)(0)$.

        \item \label{it:krener-3} 
        There exists an open neighborhood $\Omega$ of $0$ in $\R^d$ and an analytic map $\lambda : \Omega \to \R^{d'}$ satisfying $\lambda(0) = 0$ and $d \lambda(0) = L$ such that, for all $i \in \intset{0,m}$ and $x \in \Omega$, $d \lambda(x) f_i(x) = g_i(\lambda(x))$.
        
        \item \label{it:krener-4} 
        There exists an open neighborhood $\Omega$ of $0$ in $\R^d$ and an analytic map $\lambda : \Omega \to \R^{d'}$ satisfying $\lambda(0) = 0$ and $d \lambda(0) = L$ such that, for all $T > 0$ and $u \in L^1((0,T);\R^m)$ for which $\gamma_f([0,T];u,0) \subset \Omega$, then $\gamma_g(t;u,0) = \lambda(\gamma_f(t;u,0))$ for all $t \in [0,T]$. 
    \end{enumerate}
    When these hold, we say that the $g$-system is \emph{embedded} in the $f$-system.
\end{proposition}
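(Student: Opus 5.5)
We prove the cycle of implications \ref{it:krener-1} $\Rightarrow$ \ref{it:krener-2} $\Rightarrow$ \ref{it:krener-1}, \ref{it:krener-1} $\Rightarrow$ \ref{it:krener-3} $\Rightarrow$ \ref{it:krener-1}, and \ref{it:krener-3} $\Leftrightarrow$ \ref{it:krener-4}.

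\emph{Equivalence of the first two items.} Since $E_f$ is onto $\R^d$, the condition $\ker E_f \subset \ker E_g$ is exactly what is needed for $E_g$ to factor through $E_f$. That is, there is a unique linear $L$ with $E_g = L \circ E_f$, which is \ref{it:krener-2}. Conversely, \ref{it:krener-2} trivially gives $\ker E_f \subset \ker E_g$.

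\emph{From the first item to the third.} By the lemma of the previous subsection, $\rho_f$ and $\rho_g$ are convergent realizations in minimal dimension of $(\mathcal{L}(X),\ker E_f)$ and $(\mathcal{L}(X),\ker E_g)$.
\begin{itemize}
\item By \cref{thm:convergent-morphism-transitive-pairs} with $\fh=\ker E_f \subset \fh'=\ker E_g$, there is a convergent local homomorphism $\tau : \R\llbracket y \rrbracket \to \R\lxl$ with $\tau\circ \rho_g(b) = \rho_f(b)\circ\tau$.
\item Set $\lambda := (\tau y_1,\dotsc,\tau y_{d'})$. It is analytic near $0$ and satisfies $\lambda(0)=0$ by locality.
\item Apply the equivariance to $y_j$ with $b=X_i$. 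By \cref{lem:local-homo-substitution}, $\tau(g_i^j) = g_i^j(\lambda)$, while $\rho_f(X_i)\tau(y_j) = d\lambda_j\, f_i$. This gives the formal identity $d\lambda(x) f_i(x) = g_i(\lambda(x))$, which holds as analytic functions on a small ball $\Omega$ since both sides converge there.
\item Differentiating the identity for a general bracket $b$ at $0$ gives $d\lambda(0)\rho_f(b)(0) = \rho_g(b)(0)$. Hence $d\lambda(0)=L$ by surjectivity of $E_f$.
\end{itemize}

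\emph{From the third item back to the first.} The relation $d\lambda\, f_i = g_i\circ\lambda$ says that $f_i$ and $g_i$ are $\lambda$-related. Brackets of $\lambda$-related fields are $\lambda$-related, so $d\lambda\,\rho_f(b) = \rho_g(b)\circ\lambda$ for all $b\in\mathcal{L}(X)$, by induction on brackets and linearity. Evaluating at $0$ gives $E_g = d\lambda(0) E_f$, whence $\ker E_f\subset\ker E_g$.

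\emph{Equivalence of the third and fourth items.} Assume \ref{it:krener-3}. For $x(t)=\gamma_f(t;u,0)\in\Omega$, the curve $z(t):=\lambda(x(t))$ is absolutely continuous and satisfies
\[
\dot z = d\lambda(x)\Bigl(f_0(x)+\sum_i u_i f_i(x)\Bigr) = g_0(z)+\sum_i u_i g_i(z),
\]
with $z(0)=0$. By uniqueness of Carathéodory solutions (the $g_i$ are locally Lipschitz), $z(t)=\gamma_g(t;u,0)$.

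For the converse we use piecewise constant controls. Fix $x$ near $0$ and $i$. Choose a control steering $0$ to $x$, then concatenate a constant control $u$ on a short interval $[t_0,t_0+s]$. Differentiating $\gamma_g = \lambda\circ\gamma_f$ in $s$ at $s=0$ gives $g_0(\lambda(x)) + \sum_i u_i g_i(\lambda(x)) = d\lambda(x)\bigl(f_0(x)+\sum_i u_i f_i(x)\bigr)$ at every reachable $x$. Varying $u$ separates the $f_0$ part from the $f_i$ parts.

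\emph{Main obstacle.} The difficulty is that reachable points need not fill a neighborhood of $0$. By \cref{prop:LARC=>accessible}, $A_{\leq T}(0)$ contains open balls arbitrarily close to $0$, but these balls need not contain $0$. On such an open set $U\subset\Omega$ we get $d\lambda\, f_i = g_i\circ\lambda$, and since $\Omega$ can be taken to be a connected ball, analytic continuation extends the identity to all of $\Omega$. This recovers \ref{it:krener-3}.
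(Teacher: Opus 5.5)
Your proposal is correct and follows the same route as the paper: linear algebra for the equivalence of the first two items, \cref{thm:convergent-morphism-transitive-pairs} together with $\lambda$-relatedness of iterated brackets for the equivalence with the third item, uniqueness of solutions of the ODE for the third item implying the fourth, and \cref{prop:LARC=>accessible} plus analytic continuation for the converse. You also supply details the paper leaves implicit, namely the check that $d\lambda(0)=L$ via surjectivity of $E_f$, and the separation of the drift $f_0$ from the control fields $f_i$ by using short constant-control extensions.
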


\begin{proof}
    The equivalence between \cref{it:krener-1,it:krener-2} is a basic exercise in linear algebra.
    By a standard exercise in differential geometry (see \cite[Prop.\ 8.30]{Lee2013}), \cref{it:krener-3} is also equivalent to the fact that $d\lambda(x) \rho_f(b)(x) = \rho_g(b)(\lambda(x))$ for all $b \in \mathcal{L}(X)$.
    The equivalence between \cref{it:krener-1,it:krener-3} is thus precisely \cref{thm:convergent-morphism-transitive-pairs}.
    
    \cref{it:krener-3} implies \cref{it:krener-4} because $\gamma_g(t;u,0)$ and $\lambda(\gamma_f(t;u,0))$ satisfy the same ODE when $d\lambda \cdot f_i = g_i \circ \lambda$.
    Conversely, the equality of trajectories $\gamma_g = \lambda (\gamma_f)$ implies that, for any $i \in \intset{0,m}$, any $T > 0$, and any $x \in A_{\leq T}(0)$ (defined in \eqref{eq:AT0}), one has $d\lambda(x) f_i(x) = g_i(\lambda(x))$.
    By \cref{prop:LARC=>accessible}, $A_{\leq T}(0)$ contains an open ball.
    By analyticity, we thus have $d\lambda(x) f_i(x) = g_i(\lambda(x))$ everywhere.
\end{proof}

\cref{prop:Krener} implies that a control-affine system is entirely characterized by $\ker E_f$.
The role of the ``set of relations'' $\ker E_f$ was already highlighted by Sussmann in \cite[Theorem D]{Sussmann1985}.

\begin{corollary}
    \label{cor:kernel}
    Let $f_0, f_1, \dotsc, f_m$ and $g_0, g_1, \dotsc, g_m$ be analytic vector fields defined on a neighborhood of~$0$ in $\R^d$.
    Assume that $E_f$ and $E_g$ are onto.
    Then the two systems are analytically diffeomorphic if and only if $\ker E_f = \ker E_g$.
\end{corollary}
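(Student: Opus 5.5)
The plan is to derive \cref{cor:kernel} from \cref{prop:Krener}, applied twice with the roles of the two systems exchanged, and then to show that the two resulting analytic maps are mutually inverse. Here $d = d'$, and both $E_f$ and $E_g$ are onto. Consequently $\rho_f$ and $\rho_g$ are convergent realizations in minimal dimension of the transitive pairs $(\mathcal{L}(X), \ker E_f)$ and $(\mathcal{L}(X), \ker E_g)$.

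\emph{Necessity.} Suppose $\lambda$ is an analytic diffeomorphism between neighborhoods of $0$ with $\lambda(0)=0$ and $d\lambda(x) f_i(x) = g_i(\lambda(x))$ for all $i$. By \cite[Prop.\ 8.30]{Lee2013}, this relation carries over to brackets:
\[
d\lambda(x)\rho_f(b)(x) = \rho_g(b)(\lambda(x)) \quad \text{for all } b \in \mathcal{L}(X).
\]
Evaluating at $x = 0$ gives $E_g(b) = d\lambda(0) E_f(b)$. Because $d\lambda(0)$ is invertible, this yields $\ker E_f = \ker E_g$. If "diffeomorphic" is instead understood only through a map $\lambda$ as in item 3 of \cref{prop:Krener}, the inclusion $\ker E_f \subset \ker E_g$ follows directly from that proposition, and the reverse inclusion follows by applying it to $\lambda^{-1}$.

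\emph{Sufficiency.} Suppose $\ker E_f = \ker E_g$. \Cref{prop:Krener} then supplies an analytic $\lambda$ with $\lambda(0)=0$ that intertwines $f$ with $g$. Applying the same proposition with the roles exchanged supplies an analytic $\mu$ with $\mu(0)=0$ that intertwines $g$ with $f$. To see that $\lambda$ is a local diffeomorphism, use item 2: $E_g = L E_f$ with $L = d\lambda(0)$. Since $E_g$ is onto $\R^d$, the linear map $L$ is onto, hence invertible because $d = d'$. The analytic inverse function theorem then shows that $\lambda$ is a local analytic diffeomorphism, which proves the claim.

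As a cleaner alternative, one can argue through \cref{thm:convergent-morphism-transitive-pairs} in the equal-isotropy case. Pullback $\tau : f \mapsto f \circ \lambda$ is a local homomorphism satisfying \eqref{eq:swap-rho-rho'}, and that result says it is an isomorphism with convergent inverse. This translates into $\lambda$ being an analytic diffeomorphism.

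The main point requiring care is the translation between the analytic conjugation $d\lambda \cdot f_i = g_i \circ \lambda$ and the formal statement $\tau \circ \rho_g(b) = \rho_f(b) \circ \tau$ with $\tau(\varphi) = \varphi \circ \lambda$. Concretely, one checks that $f_i(\varphi \circ \lambda) = (g_i \varphi) \circ \lambda$ and extends this to brackets. This is the same identification already used in the proof of \cref{prop:Krener}, so it is routine.
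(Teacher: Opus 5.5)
Your proposal is correct and follows essentially the paper's route: the corollary is read off from \cref{prop:Krener}, whose analytic content is \cref{thm:convergent-morphism-transitive-pairs}. In the case $\ker E_f=\ker E_g$ that result gives an isomorphism with convergent inverse, just as your argument via $d\lambda(0)=L$ being invertible (because $E_g=LE_f$ is onto and $d=d'$) and the analytic inverse function theorem does; the auxiliary map $\mu$ obtained by exchanging the roles of the two systems is never used and can be dropped.
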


\subsection{Realization of control-affine systems}

By \cref{cor:kernel}, a control-affine system is entirely characterized by the kernel of the associated evaluation map $E_f$.
This observation motivates the characterization of the linear maps of the form~$E_f$ within the space of linear maps $\mathcal{L}(X) \to \R^d$.

Our main abstract result \cref{thm:main} yields the following equivalence.

\begin{theorem}
    \label{thm:realization-control}
    Let $E : \mathcal{L}(X) \to \R^d$ be a surjective linear map.
    The following are equivalent:
    \begin{enumerate}
        \item There exist a neighborhood $\Omega_0 \subset \R^d$ of $0$ and $f_0, f_1, \dotsc, f_m \in \mathfrak{X}(\Omega_0)$ such that $E = E_f$.

        \item $\ker E$ is a Lie subalgebra of $\mathcal{L}(X)$ and
        \begin{equation}
            \label{eq:realization-control-growth}
            \begin{split}
                \forall F \subset \mathcal{L}(X) \text{ finite}, \enskip
                \exists C_F>0, \enskip
                & \forall n\in\N^*, \enskip
                \forall b_1,\dotsc,b_n\in F, \\
                &
                \| E ([b_1, [\dotsc, [b_{n-1}, b_n]\dotsb]]) \|_{\R^d}
                \leq
                C_F^n (n-1)!.
            \end{split}
        \end{equation}
    \end{enumerate}
\end{theorem}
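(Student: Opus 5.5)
The plan is to reduce both directions to \cref{thm:main} applied to the transitive pair $(\mathcal{L}(X), \fh)$ with $\fh := \ker E$. Since $E$ is surjective, it induces a linear isomorphism $\bar E : \mathcal{L}(X)/\fh \to \R^d$ with $E = \bar E \circ \chi$. Hence $\codim_{\mathcal{L}(X)} \fh = d < +\infty$, and
\[
\|E(b)\|_{\R^d} = \|\chi(b)\|_{\fg/\fh}
\]
for the norm transported by $\bar E$. Under this identification, \eqref{eq:realization-control-growth} is literally \eqref{eq:growth} for the pair $(\mathcal{L}(X),\fh)$, because \eqref{eq:growth} does not depend on the choice of norm.

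(1)$\Rightarrow$(2): assume $E = E_f$. By the lemma of the previous subsection, $\rho_f$ is a convergent realization of $(\mathcal{L}(X), \ker E_f)$. In particular, $\ker E$ is a Lie subalgebra, being $\rho_f^{-1}(\Der_0 \R\lxl)$. The forward implication of \cref{thm:main} then gives \eqref{eq:growth}, hence \eqref{eq:realization-control-growth}. One could also apply \cref{thm:bracket.analytic} directly, as in the necessity proof.

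(2)$\Rightarrow$(1): $(\mathcal{L}(X),\fh)$ is transitive with codimension $d$, and it satisfies \eqref{eq:growth}. By \cref{thm:main}, the Blattner--Draisma realization $\rho : \mathcal{L}(X) \to \Der \R\lxl$ in $d$ variables is convergent. We take $\K=\R$ throughout, which is consistent since $\mathcal{L}(X)$ is real. The generators $X_0,\dots,X_m$ are finitely many, so there is a common radius $r>0$ for which all $\rho(X_i) \in (\Der \R\lxl)_r$. These series then define genuine analytic vector fields $f_i$ on $\Omega_0 := B(0,r)$. Because $\mathcal{L}(X)$ is free, $\rho_f$ agrees with $\rho$ on the generators, so $\rho_f = \rho$ at the level of Taylor series; iterated brackets of analytic fields have Taylor series equal to the formal brackets. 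Therefore $\ker E_f = \rho^{-1}(\Der_0) = \fh = \ker E$.

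The remaining, and main, obstacle is upgrading equality of kernels to $E = E_f$. Both maps are surjective onto $\R^d$ with the same kernel, so there is $A \in \GL_d(\R)$ with $E = A E_f$. I then replace $f_i$ by the linearly conjugated fields $\tilde f_i(y) := A f_i(A^{-1} y)$, defined on the neighborhood $A(\Omega_0)$. Pushforward by a linear diffeomorphism commutes with Lie brackets, so $\rho_{\tilde f}(b)(0) = A \rho_f(b)(0)$ for every $b$. Hence $E_{\tilde f} = A E_f = E$, which completes the argument.
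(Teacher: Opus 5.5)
Your proof is correct and follows the route the paper intends: the paper simply states that \cref{thm:main}, applied to the transitive pair $(\mathcal{L}(X),\ker E)$ with the norm on $\mathcal{L}(X)/\ker E$ transported by $E$, yields the equivalence. You also make explicit the step from $\ker E_f = \ker E$ to $E_f = E$, which the paper leaves implicit; your linear conjugation handles it correctly, and it could alternatively be avoided by choosing $b_i$ with $E(b_i) = e_i$ in \cref{thm:blattner}, since then $\rho(g)(0) = (\chi_1(g),\dotsc,\chi_d(g)) = E(g)$ for all $g$.
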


Let us give a typical real-analytic example.

\begin{example}
    Take $d = 2$ and consider the real-analytic affine system on $\Omega_0 = (-1,1)^2 \subset \R^2$:
    \begin{equation}
        \dot{x}_1 = u(t), \qquad \dot{x}_2 = \frac{x_1}{1-x_1},
    \end{equation}
    which is of the form \eqref{eq:affine} with $f_0(x) = \frac{x_1}{1-x_1} e_2$ and $f_1(x) = e_1$.
    
    Let $X = \{ X_0, X_1 \}$ and $I$ be the ideal of $\mathcal{L}(X)$ spanned by brackets containing at least two $X_0$.
    The map $E_f$ satisfies
    \begin{equation}
        E_f(X_0) = 0, \quad 
        E_f(X_1) = e_1, \quad 
        E_f(\ad_{X_1}^n (X_0)) = n! e_2 \quad (n \geq 1), \quad 
        E_f \vert_{I} = 0.
    \end{equation}
    Thus
    \begin{equation}
        \ker E_f = \R X_0 \oplus \vect \{ \ad_{X_1}^n(X_0) - n! [X_1,X_0] \mid n \geq 2 \} \oplus I.
    \end{equation}
    One easily checks that the right-hand side is indeed a Lie subalgebra of $\mathcal{L}(X)$.

    Let us check that $E_f$ satisfies the growth condition \eqref{eq:realization-control-growth}.
    For $b \in \mathcal{L}(X)$, write
    \begin{equation}
        b \equiv \alpha(b) X_1 + \sum_{k = 0}^{K(b)} \beta_k(b) \ad_{X_1}^k(X_0)
        \quad \pmod{I}.
    \end{equation}
    Then, for any $n \geq 2$,
    \begin{equation}
        [b_1, [\dotsc, [b_{n-1}, b_n]\dotsb]]
        \equiv
        \sum_{j=1}^n \left(\prod_{i \neq j} \alpha(b_i)\right) (-1)^{n-j} \sum_{k = 0}^{K(b_j)} \beta_k(b_j) \ad_{X_1}^{n-1+k}(X_0)
        \quad \pmod{I}.
    \end{equation}
    Let $F \subset \mathcal{L}(X)$ be a finite family.
    Let $A_F := \max_{b \in F}  |\alpha(b)| + \sum_{k=0}^{K(b)} |\beta_k(b)|$ and $K_F := \max_{b \in F} K(b)$.
    Hence
    \begin{equation}
        \| E_f([b_1, [\dotsc, [b_{n-1}, b_n]\dotsb]]) \| 
        \leq n A_F^n (n-1+K_F)!
        \leq A_F^n 2^{n+K_F} (K_F+1)! (n-1)!
    \end{equation}
    which is of the form \eqref{eq:realization-control-growth}.
\end{example}

As a counterexample, let us consider a system which is not real-analytic.

\begin{example}
    \label{ex:non-conv}
    Let $d = 3$.
    By Borel's lemma, there exists $\varphi \in C^\infty(\R;\R)$ such that $\varphi(0) = 0$ and, for every $k \in \N^*$, $\varphi^{(k)}(0)=(2k+1)!$.
    Let $f_0, f_1:\R^3 \rightarrow \R^3$ be defined by 
    \begin{equation}
        f_0(x) := 
        \begin{pmatrix}
            0 \\ x_1 \\ \varphi(x_2)    
        \end{pmatrix}
        \quad \text{and} \quad
        f_1(x) := 
        \begin{pmatrix}
            1 \\ 0 \\ 0    
        \end{pmatrix}.
    \end{equation}
    In particular, $f_0$ is not real-analytic in any neighborhood of $0$.
    
    With the notation $M_1 := [X_1, X_0]$, we have
    \begin{equation}
        E_f(X_0) = 0, \quad
        E_f(X_1) = e_1, \quad
        E_f(M_1) = e_2, \quad
        E_f(\ad_{M_1}^k(X_0)) = \varphi^{(k)}(0) e_3
        \quad (k \geq 1).
    \end{equation}
    Consider the family $F := \{ X_0, M_1 \}$.
    Then, for $k \geq 1$,
    \begin{equation}
        \| E_f(\ad_{M_1}^k(X_0)) \|
        = \| E_f([M_1, [M_1, \dotsc, [M_1, X_0]\dotsb]]) \|
        = (2k+1)!
    \end{equation}
    and thus cannot be bounded by $C_F^{k+1} k!$ for any finite constant $C_F$.
    So \eqref{eq:realization-control-growth} is not satisfied.
\end{example}

\subsection{Existence of embedded systems}

The goal of this section is to recover and clarify results obtained in \cite[Section 10.4]{BeauchardMarbach2026} concerning the existence of embedded systems.
First, the following result is a direct consequence of the realization theory, whereas it required a dedicated proof in \cite[Prop.\ 10.11]{BeauchardMarbach2026}.

\begin{proposition} \label{PropB5}
    Let $f_0, f_1, \dotsc, f_m \in \mathfrak{X}(\Omega_0)$ with $\rank E_f = d$.
    Let $I$ be an ideal of $\mathcal{L}(X)$ and $d' := \codim_{\R^d} E_f(I)$.
    Then there exists an embedded system on $\R^{d'}$ such that $\ker d\lambda(0) = E_f(I)$.
\end{proposition}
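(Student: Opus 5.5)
The plan is to build the embedded system as a convergent realization of a larger Lie subalgebra, and then let \cref{prop:Krener} supply $\lambda$. Set $\fh := \ker E_f$ and $\fh' := \fh + I$.

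\textbf{Step 1: $\fh'$ is a Lie subalgebra of finite codimension.} Since $I$ is an ideal and $\fh$ is a Lie subalgebra,
\begin{equation}
    [\fh + I, \fh + I] \subset [\fh,\fh] + I \subset \fh + I,
\end{equation}
so $\fh'$ is a Lie subalgebra. Moreover $E_f$ is onto, so it induces an isomorphism $\mathcal{L}(X)/\fh \to \R^d$. Under this isomorphism $\fh'/\fh$ corresponds to $E_f(\fh') = E_f(I)$. Hence
\begin{equation}
    \codim_{\mathcal{L}(X)} \fh' = d - \dim E_f(I) = d'.
\end{equation}

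\textbf{Step 2: a convergent realization of $(\mathcal{L}(X),\fh')$.} The pair $(\mathcal{L}(X),\fh)$ has the convergent realization $\rho_f$, so by \cref{thm:main} it satisfies \eqref{eq:growth}. By \cref{lem:growth-mono}, $(\mathcal{L}(X),\fh')$ satisfies \eqref{eq:growth} as well. Applying \cref{thm:main} again, the Blattner--Draisma realization $\rho'$ of $(\mathcal{L}(X),\fh')$ in $d'$ variables is convergent.

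Set $g_i := \rho'(X_i)$ for $i \in \intset{0,m}$. These are convergent formal vector fields, hence analytic vector fields on a neighborhood of $0$ in $\R^{d'}$. Since $\rho'$ is a Lie algebra homomorphism, $\rho' = \rho_g$, and therefore $\ker E_g = \fh'$. Because the codimension of $\fh'$ is exactly $d'$, the map $E_g$ is onto $\R^{d'}$.

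\textbf{Step 3: the embedding.} We have $\ker E_f = \fh \subset \fh' = \ker E_g$, which is item~\ref{it:krener-1} of \cref{prop:Krener}. That proposition therefore gives an analytic $\lambda$ with $\lambda(0)=0$ and $d\lambda(0) = L$, where $L$ is the linear map with $E_g = L E_f$. So the $g$-system is embedded in the $f$-system.

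It remains to compute $\ker L$. Since $E_f$ is onto, $\ker L = E_f(\ker E_g) = E_f(\fh + I) = E_f(I)$, because $E_f$ vanishes on $\fh$.

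\textbf{Main obstacle.} The delicate step is Step 2: ensuring that the realization of the larger subalgebra $\fh'$ converges, and not merely exists formally. This is exactly what \cref{lem:growth-mono} combined with \cref{thm:main} provides. The remaining steps are bookkeeping, namely checking that $E_g$ is onto so that the hypotheses of \cref{prop:Krener} hold, and identifying $\ker L$.
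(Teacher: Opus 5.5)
Your proposal is correct and follows the paper's proof essentially step by step: you set $\fh' = \ker E_f + I$, transfer the growth condition via \cref{lem:growth-mono}, obtain a convergent realization in $d'$ variables from \cref{thm:main}, invoke \cref{prop:Krener}, and identify $\ker d\lambda(0) = E_f(I)$ using $E_g = L E_f$. You are slightly more explicit than the paper in checking that $\rho' = \rho_g$ (by freeness of $\mathcal{L}(X)$) and that $E_g$ is onto, which \cref{prop:Krener} requires.
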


\begin{proof}
    Let $\fh := \ker E_f$ and $\fh' := \fh + I$.
    Since $\fh$ is a Lie subalgebra of $\mathcal{L}(X)$ and $I$ is an ideal, $\fh'$ is a Lie subalgebra of $\mathcal{L}(X)$.
    By \cref{thm:main}, since the vector fields $f_i$ are analytic, the pair $(\mathcal{L}(X),\fh)$ satisfies the growth condition \eqref{eq:growth}.
    By \cref{lem:growth-mono}, since $\fh \subset \fh'$, this also holds for the pair $(\mathcal{L}(X),\fh')$.
    Moreover, since $\rank E_f = d$, $\codim_{\mathcal{L}(X)} \fh' = \codim_{\R^d} E_f(I) = d'$.
    By \cref{thm:main}, the pair $(\mathcal{L}(X),\fh')$ has a convergent realization $\rho$ in minimal dimension $d'$.
    For $0 \leq i \leq m$, let $g_i := \rho(X_i)$.
    The $g_i$ are real-analytic vector fields in a neighborhood of $0 \in \R^{d'}$.
    By \cref{prop:Krener}, the $g$-system is embedded in the $f$-system.
    It remains to identify $\ker d\lambda(0)$.
    Let $v \in \R^d$. 
    Since $E_f$ is onto, there exists $b \in \mathcal{L}(X)$ such that $v = E_f(b)$.
    Then $d \lambda(0) v = d\lambda(0) E_f(b) = E_g(b)$.
    Thus $v = E_f(b) \in \ker d\lambda(0)$ if and only if $b \in \ker E_g = \fh'$.
    Hence $\ker d\lambda(0) = E_f(I)$.
\end{proof}

\begin{remark}
    \cref{PropB5} states that $I$ being an ideal of $\mathcal{L}(X)$ is a sufficient condition for the existence of an embedded system such that $\ker d\lambda(0) = E_f(I)$.
    In \cite[Remark.\ 10.13]{BeauchardMarbach2026}, we observed that this condition is not necessary, and that the correct assumption might be linked with realization theory.
    Indeed, the above proof shows that the necessary and sufficient condition is that $\fh + I$ is a Lie subalgebra.
\end{remark}

Such a result allows to extract a simpler subsystem from a given system.
In \cite[Section 10.4]{BeauchardMarbach2026}, we used such arguments to prove necessary conditions for small-time local controllability in a very low regularity setting.

\begin{example}
    Let $d = 3$ and consider the system
    \begin{equation}
        \begin{cases}
            \dot{x}_1 = u \\
            \dot{x}_2 = x_1^2 + u x_3 + x_1^4 e^{x_1} \\
            \dot{x}_3 = x_1^3 e^{x_1}.
        \end{cases}
    \end{equation}
    In the form \eqref{eq:affine}, one checks that
    \begin{equation}
        E_f(X_0) = 0, \quad 
        E_f(X_1) = e_1, \quad 
        E_f(\ad_{X_1}^2(X_0)) = 2 e_2, \quad 
        E_f(\ad_{X_1}^3(X_0)) = 6 e_3
    \end{equation}
    and that no other bracket yields contributions on $e_1$ or $e_2$.
    Consider $I$ the Lie ideal of $\mathcal{L}(X)$ spanned by brackets with at least 3 times $X_1$.
    \cref{PropB5} guarantees the existence of an embedded system such that $\ker d\lambda(0) = E_f(I) = \R e_3$.
    Indeed, setting $\lambda(x) := (x_1, x_2 - x_1 x_3)$ leads to the system $\dot{y}_1 = u$ and $\dot{y}_2 = y_1^2$, which is easier to study (in particular, it is nilpotent).
\end{example}

\subsection{Realization of Chen--Fliess series}
\label{subsec:Realisation_CF}

We now apply the output-realization results of \cref{sec:output-realizations} to the classical realization problem for Chen--Fliess series.
We first present this expansion in \cref{Prop:CF_CV_affine}.

\subsubsection{Convergence of the Chen--Fliess expansion}

Let $\mathcal{A}(X) := U(\mathcal{L}(X))$, which we identify with the free associative algebra generated by $X$.
Its canonical basis is the set $X^*$ of words in the letters $X_i$.
For a word \(\omega=X_{j_1}\dotsm X_{j_\ell}\in X^*\),
\begin{itemize}
    \item for vector fields $f_0,f_1,\dotsc, f_m$, we denote by $f_{\omega}$ the differential operator $f_{j_1} \dotsm f_{j_\ell}$, which acts on scalar-valued functions on $\R^d$, and on vector-valued functions $\Psi=(\psi_1,\dots,\psi_d)$ by $f_{\omega} \Psi := (f_{\omega} \psi_1,\dots, f_{\omega} \psi_{d})$;
    
    \item for $t>0$ and $u=(u_1,\dotsc,u_m)\in L^1((0,t);\R^m)$, we denote by $\int_0^t u_{\omega}$ the real number
    \begin{equation}
        \int_0^t u_\omega
        :=
        \int_{\Delta^\ell(t)}
        u_{j_1}(\tau_1)\dotsm u_{j_\ell}(\tau_\ell)\dd \tau,
    \end{equation}
    where $u_0 \equiv 1$ and $\Delta^{\ell}(t) := \{ (\tau_1, \dotsc, \tau_{\ell}) \in (0,t)^{\ell} ; \enskip 0 < \tau_1 < \dotsb < \tau_{\ell} < t \}$.
\end{itemize}
For the empty word $\emptyset$, we set $f_\emptyset = \Id$ and $\int_0^t u_\emptyset = 1$.

One has the following local convergence result for the Chen--Fliess expansion (see for instance \cite[Proposition 136]{BeauchardLeBorgneMarbach2023}, \cite[Proposition~3.37]{Coron2007}, or \cite[Proposition~4.3]{Sussmann1983}).

\begin{proposition}
    \label{Prop:CF_CV_affine}
    Let $f_0,\dotsc,f_m \in \mathfrak{X}(\Omega_0)$ and $h \in C^\omega(\Omega_0;\R^q)$.
    For $t>0$, $u\in L^1((0,t);\R^m)$ and $x_0 \in \Omega_0$ small enough,
    \begin{equation}
        \label{CF_affine_serie}
        h(\gamma_f(t;u,x_0))
        =
        \sum_{\omega\in X^*}
        \left(\int_0^t u_\omega\right)
        (f_\omega h)(x_0),
    \end{equation}
    and the series converges absolutely.
\end{proposition}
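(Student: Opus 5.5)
The proof follows the classical route: write down the Chen--Fliess expansion of $t\mapsto h(\gamma_f(t;u,x_0))$ as an iterated Duhamel formula, then show that the remainder vanishes and the series converges absolutely. The convergence comes from the analytic estimates of \cref{sec:necessary}.

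\textbf{Step 1 (finite expansion).} Fix $r_2>0$ small enough that the Taylor expansions of $f_0,\dotsc,f_m$ and of each component of $h$ at $0$ belong to $(\Der \kx)_{r_2}$ and $(\kx)_{r_2}$. Set $M:=\max_i \opnorm{f_i}_{r_2}$. Let $\varphi$ be analytic, $x(\cdot):=\gamma_f(\cdot;u,x_0)$ and $u_0\equiv 1$. Then
\[
\tfrac{\dd}{\dd \tau}\varphi(x(\tau))=\sum_{j=0}^m u_j(\tau)(f_j\varphi)(x(\tau)).
\]
Integrating from $0$ to $t$ and iterating $N$ times gives
\[
h(x(t))=\sum_{|\omega|<N}\Big(\int_0^t u_\omega\Big)(f_\omega h)(x_0)+R_N ,
\]
where $R_N$ is a sum over words $\omega$ of length $N$ of iterated integrals of $u_{j_1}(\tau_1)\dotsm u_{j_N}(\tau_N)\,(f_\omega h)(x(\tau_1))$ over $\Delta^N(t)$. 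Here I need to check the ordering convention carefully, so that the operator $f_{j_1}\dotsm f_{j_\ell}$ matches the simplex ordering $\tau_1<\dotsb<\tau_\ell$ used in $\int_0^t u_\omega$.

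\textbf{Step 2 (pointwise bounds on $f_\omega h$).} Fix $r_1<r_2$. For $|y|$ small (say $\|y\|_\infty<r_1$), a power series satisfies $|\psi(y)|\le \opnorm{\psi}_{r_1}$. By \cref{thm:prod.analytic},
\[
\sup_{\|y\|_\infty<r_1}|(f_\omega h)(y)|\le \ell!\,\big(eM/(r_2-r_1)\big)^{\ell}\opnorm{h}_{r_2}
\]
for $|\omega|=\ell$. On the controls side, $\big|\int_0^t u_\omega\big|\le \|\tilde u\|_{L^1(0,t)}^\ell/\ell!$, where $\tilde u:=\sum_{j=0}^m|u_j|$, since the iterated integral over the simplex of a product is bounded by the $\ell$-th power of the $L^1$ norm divided by $\ell!$. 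Summing over the at most $(m+1)^\ell$ words of length $\ell$ (the bound on $\tilde u$ already absorbs the choice of letters, so one checks this factor does not appear twice), the $\ell$-th block of the series is bounded by $\opnorm{h}_{r_2}\,q^\ell$ with
\[
q:=\frac{eM\,(t+\|u\|_{L^1})}{r_2-r_1}.
\]
The factorials cancel exactly.

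\textbf{Step 3 (smallness and trajectory confinement).} Choose $t$ and $\|u\|_{L^1}$ small so that $q<1$. By a standard Grönwall/a priori bound, choose them together with $x_0$ small so that the trajectory stays in $\{\|y\|_\infty<r_1\}$ on $[0,t]$; this also makes $\gamma_f$ well defined. Then the series converges absolutely, and $|R_N|\le \opnorm{h}_{r_2}\,q^N\to0$, which proves \eqref{CF_affine_serie}.

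The main obstacle is Step 2. Two things need care there. First, the evaluation of $f_\omega h$ happens at the moving point $x(\tau_1)$, not at $0$, so the estimate must hold uniformly on a polydisc; the norms $\opnorm{\cdot}_{r}$ give this. Second, the factorial from \cref{thm:prod.analytic} must be exactly compensated by the volume $t^\ell/\ell!$ of the simplex, which forces the smallness condition on $t+\|u\|_{L^1}$ rather than on $t$ alone.
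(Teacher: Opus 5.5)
The paper gives no proof of its own for this proposition; it cites \cite[Proposition 136]{BeauchardLeBorgneMarbach2023}, \cite[Proposition~3.37]{Coron2007} and \cite[Proposition~4.3]{Sussmann1983}. Your argument is the standard one from those references and it is correct: the iterated integral Duhamel/Taylor formula, a uniform bound on $f_\omega h$ over a polydisc via \cref{thm:prod.analytic}, and the $\ell!$ cancelled by the volume of $\Delta^\ell(t)$. Your ordering matches the paper's convention: the smallest time $\tau_1$ goes with the leftmost operator $f_{j_1}$ and with the evaluation point $x(\tau_1)$.

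One detail should be made explicit. Take $r_2$ small enough that the closed polydisc of radius $r_2$ lies in $\Omega_0$. Then, by the identity theorem, the sums of the Taylor series of $f_i$, $h$ and $f_\omega h$ coincide with these functions on that polydisc. This is what justifies evaluating the formal norm bound at the moving point $x(\tau_1)$.
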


In particular, choosing $h = \Id : \R^d \to \R^d$ yields an explicit expansion of the state $\gamma_f(t;u,x_0)$.

Expansion \eqref{CF_affine_serie} suggests the following idea.
Given a family of coefficients $(g_\omega)_{\omega \in X^*} \subset \R^q$, one can consider the operator
\begin{equation}
    u \mapsto \sum_{\omega \in X^*} \left(\int_0^t u_\omega\right) g_\omega.
\end{equation}
Such operators were introduced by Fliess in \cite{Fliess1981} under the name \emph{causal functionals} and are now widely studied under the name \emph{Fliess operators} (see e.g.\ \cite{GrayEbrahimiFard2021}).

\subsubsection{Output realization}

The goal is to characterize the sequences $(g_{\omega})_{\omega \in X^*} \subset \R^q$, for which there exist analytic vector fields $f_0,f_1,\dots,f_m$ and an analytic map $h$ such that, for every $\omega \in X^*$, $(f_{\omega}h)(0)=g_{\omega}$.
For convenience, we identify any sequence $(g_{\omega})_{\omega \in X^*} \subset \R^q$ with the unique linear map $G:\mathcal{A}(X) \rightarrow \R^q$ such that, for every $\omega \in X^*$, $G(\omega)=g_{\omega}$.

We assume below that $G(1)=0$, which amounts to fixing $h(0)=0$.
Define
\begin{equation}
    \label{eq:def-fpG-fhG}
    \fp_G:=\ker G,
    \qquad
    \fh_G:=
    \{b\in\mathcal{L}(X)\mid G(ba)=0
    \text{ for every }a\in\mathcal{A}(X)\}.
\end{equation}
Thus $\fh_G$ is precisely the output isotropy associated with the output kernel $\fp_G$.
Our results of \cref{sec:output-realizations} entail the following result (see \cref{rk:biblio-CF} for bibliographic comments).

\begin{theorem}
    \label{thm:CF-output-realization}
    Let $G:\mathcal{A}(X)\to\R^q$ be a surjective linear map such that $G(1)=0$.
    The following assertions are equivalent.
    \begin{enumerate}
        \item There exist $d\in\N$, a neighborhood $\Omega_0\subset\R^d$ of $0$, analytic vector fields $f_0,\dotsc,f_m\in\mathfrak{X}(\Omega_0)$, and an analytic map $h:\Omega_0\to\R^q$, with $h(0)=0$, such that
        \begin{equation}
            \label{eq:realize-G-output}
            \forall \omega\in X^*,
            \qquad
            (f_\omega h)(0)=G(\omega).
        \end{equation}

        \item The output isotropy $\fh_G$ has finite codimension in $\mathcal{L}(X)$, and $G$ satisfies the analytic growth condition
        \begin{equation}
            \label{eq:CF-growth}
            \forall F\subset\mathcal{L}(X) \text{ finite},\enskip
            \exists C_F>0,\enskip
            \forall n\in\N^*,\enskip
            \forall b_1,\dotsc,b_n\in F, \quad
            \|G(b_1\dotsm b_n)\|_{\R^q}
            \leq C_F^n n!.
        \end{equation}
    \end{enumerate}
    When these hold, one may take $d=\codim_{\mathcal{L}(X)}\fh_G$.
    More explicitly, choose $b_1,\dotsc,b_d\in\mathcal{L}(X)$ whose classes form a basis of $\mathcal{L}(X)/\fh_G$, let $\rho$ be the Blattner--Draisma realization of $(\mathcal{L}(X),\fh_G)$, set $f_i := \rho(X_i)$ for $0 \leq i \leq m$ and set
    \begin{equation}
        \label{eq:canonical-CF-output}
        h(x)
        :=
        \sum_{m\in\N^d}
        G(\mathbf b^m)
        \frac{x^m}{m!},
        \qquad
        \mathbf b^m:=b_d^{m_d}\dotsm b_1^{m_1}.
    \end{equation}
    Then the vector fields $f_i$ and $h$ are analytic on a neighborhood $\Omega_0$ of $0$, and \eqref{eq:realize-G-output} holds.

    Moreover, this minimal realization is locally unique up to analytic change of coordinates: if $\tilde f_0,\dotsc,\tilde f_m$ and $\tilde h$ form another analytic realization of $G$ in dimension $d$, then there exists a unique local analytic diffeomorphism $\lambda$, with $\lambda(0)=0$, such that $d\lambda \cdot f_i = \tilde{f}_i \circ \lambda$ and $h = \tilde{h} \circ \lambda$.
\end{theorem}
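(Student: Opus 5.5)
The plan is to translate \cref{thm:CF-output-realization} into the abstract output-realization framework of \cref{sec:output-realizations}, with $\fg := \mathcal{L}(X)$, $U(\fg) = \mathcal{A}(X)$ and $\fp := \fp_G = \ker G$. Since $G$ is surjective onto $\R^q$, we have $\codim_{\mathcal{A}(X)} \fp_G = q$, and $G$ induces an isomorphism $\mathcal{A}(X)/\fp_G \simeq \R^q$. We use this isomorphism both as the identification $U(\fg)/\fp \simeq \K^q$ of \cref{thm:output-realization} and to define the norm in \eqref{eq:output-growth}. Under this norm, \eqref{eq:output-growth} becomes exactly \eqref{eq:CF-growth}. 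Moreover, $G(1)=0$ means $1 \in \fp_G$, and $\fh_G = \fhp$ by \eqref{eq:def-fpG-fhG}.

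For the implication (1)$\Rightarrow$(2), we take $\Omega_0$, the $f_i$ and $h$ as in (1). Let $\rho := \rho_f : \mathcal{L}(X) \to \Der \R\lxl$, extended to $\mathcal{A}(X)$, and identify $h$ with its Taylor series. Then $(\rho(\omega)h)(0) = (f_\omega h)(0) = G(\omega)$ for every word $\omega$, so by linearity $\{a \mid (\rho(a)h)(0) = 0\} = \ker G$. Hence $(\rho,h)$ is a convergent output realization of $(\mathcal{L}(X),\fp_G)$ in dimensions $(d,q)$. The necessity parts of \cref{thm:output-realization} and \cref{thm:convergent-output-realization} then give $\codim \fh_G < \infty$ together with \eqref{eq:CF-growth}.

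For the implication (2)$\Rightarrow$(1), note that \eqref{eq:output-realization-condition} holds. \cref{thm:convergent-output-realization} then shows that the canonical pair $(\rho,h)$ of \cref{thm:output-realization}, with $d = \codim \fh_G$, is convergent. Under our identification, $\pi_\fp(\mathbf b^m) = G(\mathbf b^m)$, so $h$ is exactly \eqref{eq:canonical-CF-output}. Convergent formal vector fields and series are analytic near $0$, so the $f_i := \rho(X_i)$ are analytic on a common small ball $\Omega_0$, and so is $h$. The relation $(\rho(a)h)(0) = \pi_\fp(a) = G(a)$, established in the proof of \cref{thm:output-realization}, specializes on words to \eqref{eq:realize-G-output}, since $\rho(\omega) = f_\omega$ on Taylor series and hence at $0$.

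For uniqueness, let $(\tilde f, \tilde h)$ be another analytic realization in dimension $d$; it is also a convergent output realization of $(\mathcal{L}(X),\fp_G)$ in dimensions $(d,q)$. \cref{prop:output-uniqueness-with-output-matrix} and \cref{cor:convergent-output-uniqueness} give a convergent automorphism $\tau$ intertwining the two realizations, with $\tau(\tilde h) = A h$. The key observation is that $A = \Id$ here: both realizations produce the same map $a \mapsto (\rho(a)h)(0) = G(a)$, so $G_2 = G_1$ in that proof, and since $G$ is surjective the matrix $A$ must be $\Id$. Finally, $\tau$ is a substitution $f \mapsto f \circ \lambda$ with $\lambda := (\tau x_1, \dotsc, \tau x_d)$ convergent. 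Then $\tau \circ \rho_{\tilde f}(g) = \rho_f(g) \circ \tau$ reads $d\lambda \cdot f_i = \tilde f_i \circ \lambda$, and $\tau(\tilde h) = h$ reads $h = \tilde h \circ \lambda$. Uniqueness of $\lambda$ follows from \cref{prop:unique-equivariant-formal-morphism}.

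The main obstacle I expect is this last translation step. The directions of $\tau$ versus $\lambda$ must match the stated conventions, and the passage from formal identities to analytic ones on a neighborhood must be justified, which I would do by the identity theorem for convergent power series. Keeping the pointwise constraint $A=\Id$ correct is part of the same bookkeeping.
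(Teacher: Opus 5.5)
Your proposal is correct and follows the paper's proof. Both arguments apply \cref{thm:convergent-output-realization} (with \cref{thm:output-realization}) to $\fg=\mathcal{L}(X)$ and $\fp=\fp_G=\ker G$, identify $\mathcal{A}(X)/\fp_G$ with $\R^q$ through $G$ so that \eqref{eq:output-growth} becomes \eqref{eq:CF-growth}, and get uniqueness from \cref{prop:output-uniqueness-with-output-matrix,cor:convergent-output-uniqueness}. Your explicit check that $A=\Id$ (both realizations induce the same surjective $G$), which yields $h=\tilde h\circ\lambda$, fills in a step the paper leaves implicit.
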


\begin{proof}
    Since $\mathcal{A}(X)=U(\mathcal{L}(X))$, this is exactly
    \cref{thm:convergent-output-realization} applied with $\fg = \mathcal{L}(X)$ and $\fp = \fp_G$.
    The condition $G(1)=0$ gives $1\in\fp_G$, and $\codim_{\mathcal{A}(X)}\fp_G=\rank G \leq q$.
    Moreover, the output isotropy $\fh_{\fp_G}$ is precisely $\fh_G$.
    Finally, the quotient $\mathcal{A}(X)/\fp_G$ identifies with $\im G$, so \eqref{eq:output-growth} is equivalent to \eqref{eq:CF-growth}, up to the choice of an equivalent norm on a finite-dimensional space.
    Formula \eqref{eq:canonical-CF-output} is the canonical output formula \eqref{eq:canonical-output-h} written through the map $G$.

    Analytic uniqueness follows from \cref{prop:output-uniqueness-with-output-matrix,cor:convergent-output-uniqueness}.
\end{proof}

\begin{remark}
    \label{rk:biblio-CF}
    \cref{thm:CF-output-realization} is stated and proved by Reutenauer in \cite{Reutenauer1986} in the case of a scalar output ($q = 1$).
    Reutenauer's work is based on Fliess' paper \cite[Theorem I.2]{Fliess1983} (see also \cite[Theorem 2.1]{Jakubczyk1986}).
    Both \cite{Fliess1983} and \cite{Jakubczyk1986} rely on the simpler growth condition:
    \begin{equation}
        \label{eq:growth-Fliess}
        \exists C > 0,
        \forall \omega \in X^*, \quad 
        \| G(\omega) \| \leq C^{|\omega|+1} |\omega|!
    \end{equation}
    Unfortunately, this weaker assumption is not enough to guarantee the analyticity of the realization (see the counterexample given in \cref{sec:hypo}).

    As noted by Oussous in \cite{JacobOussous1990,Oussous1991}, the results of Fliess and Reutenauer were not entirely explicit.
    Here, both the vector fields of \eqref{eq:formula-blattner} and the output of \eqref{eq:canonical-CF-output} can be computed coefficient-wise.
    Such explicit formulas were present in \cite{GrossmanLarson1992} (without the convergence analysis).
\end{remark}

Let us give a toy example, illustrating the difference between the dimension $q$ of the output and the dimension $d$ of the state.

\begin{example}
    Take $q = 1$ and consider the linear map $G : \mathcal{A}(X) \to \R^q$ given by $G(X_1^2 X_0) = 1$ and $G(\omega) = 0$ for all other $\omega \in X^*$.
    One checks that
    \begin{equation}
        \mathcal{L}(X) = \fh_G \oplus \R b_1 \oplus \R b_2
        \quad \text{with} \quad 
        b_1 := X_1, \quad b_2 := \ad_{X_1}^2(X_0)
    \end{equation}
    so that $d = \codim_{\mathcal{L}(X)} \fh_G = 2$.
    The associated canonical realization is given by $f_0(x) = (0,\frac 12 x_1^2)$, $f_1(x) = (1,0)$ and the output $h(x) = x_2$. 
\end{example}

\subsubsection{State realization}

\cref{thm:CF-output-realization} realizes the coefficients as the output of a finite-dimensional analytic system.
One can ask when this output realization is actually a state realization, meaning that the output is the inclusion of a submanifold of the ambient coefficient space.

\begin{corollary}
    \label{cor:CF-state-realization}
    Let $G:\mathcal{A}(X)\to\R^q$ satisfy the equivalent conditions of \cref{thm:CF-output-realization}.
    Set $E := G|_{\mathcal{L}(X)}$ and $d := \rank E$.
    The following assertions are equivalent.
    \begin{enumerate}
        \item There exist a local analytic submanifold $M \subset \R^q$, with $0 \in M$ and $\dim M = d$, and analytic vector fields $f_0,\dotsc,f_m$ on $M$ such that
        \begin{equation}
            \label{eq:state-realization-G}
            \forall \omega\in X^*,
            \qquad
            f_\omega(\Id)(0)=G(\omega),
        \end{equation}
        where $\Id : M \hookrightarrow \R^q$ denotes the inclusion map.

        \item One has $\fh_G = \ker E$, or equivalently,
        \begin{equation}
            \label{eq:state-realization-ideal-condition}
            \forall b\in\ker E, \quad
            \forall a\in\mathcal{A}(X),
            \qquad
            G(ba)=0.
        \end{equation}
    \end{enumerate}
\end{corollary}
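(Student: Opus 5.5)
The plan is to prove the two implications separately. In each direction the main tools are \cref{thm:CF-output-realization} and the minimality results of \cref{sec:output-realizations}. First note that $\fh_G \subset \ker E$ always holds: if $b \in \fh_G$, take $a = 1$ to get $G(b) = 0$. Hence the equality $\fh_G = \ker E$ is equivalent to the inclusion $\ker E \subset \fh_G$, which is exactly \eqref{eq:state-realization-ideal-condition}. This settles the "or equivalently" in the second assertion.

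\emph{(1) $\Rightarrow$ (2).} Choose local analytic coordinates $\phi : \Omega_0 \subset \R^d \to M$ with $\phi(0) = 0$. Pull back the $f_i$ to analytic vector fields $\hat f_i$ on $\Omega_0$ and set $h := \phi$, viewed as a map $\Omega_0 \to \R^q$. Then $(\hat f_\omega h)(0) = G(\omega)$ for all words $\omega$. Let $b \in \ker E$ and $a \in \mathcal{A}(X)$. By linearity, $G(ba) = (\rho_{\hat f}(b)\,\rho_{\hat f}(a) h)(0) = d\bigl(\rho_{\hat f}(a)h\bigr)(0)\cdot \rho_{\hat f}(b)(0)$. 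Since $\rho_{\hat f}(b)$ is a vector field, $\rho_{\hat f}(b) h = dh \cdot \rho_{\hat f}(b)$, so $E(b) = G(b) = d\phi(0)\,\rho_{\hat f}(b)(0)$. Because $\phi$ is an immersion, $d\phi(0)$ is injective, so $E(b) = 0$ forces $\rho_{\hat f}(b)(0) = 0$. Therefore $G(ba) = 0$. This is the same argument as the necessity part of \cref{thm:output-realization}, where one shows $\ker E_\rho \subset \fhp$.

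\emph{(2) $\Rightarrow$ (1).} Assume $\fh_G = \ker E$. Then $\codim_{\mathcal{L}(X)} \fh_G = \rank E = d$. Apply \cref{thm:CF-output-realization} to obtain analytic $f_0,\dots,f_m$ on $\Omega_0 \subset \R^d$ and an analytic $h$ with $h(0) = 0$ realizing $G$ in dimension $d$. The key step is to show that $dh(0)$ is injective. Choose $b_1,\dots,b_d$ whose classes form a basis of $\mathcal{L}(X)/\fh_G$. Since $\fh_G = \ker E$, the vectors $E(b_j) = G(b_j) = dh(0)\,\rho(b_j)(0)$ are linearly independent in $\R^q$. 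As $\rho$ is a realization in minimal dimension, the vectors $\rho(b_j)(0)$ form a basis of $\R^d$, so $dh(0)$ has rank $d$. By the constant-rank (immersion) theorem, after shrinking $\Omega_0$, $h$ is an analytic embedding onto a local analytic submanifold $M := h(\Omega_0)$ of dimension $d$ containing $0$. Push the $f_i$ forward to analytic vector fields $h_* f_i$ on $M$. Then $(h_*f)_\omega(\Id)\circ h = f_\omega h$, and evaluating at $0$ gives \eqref{eq:state-realization-G}.

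The main obstacle is the rank computation $\rank dh(0) = d$, together with the bookkeeping that identifies $\rho(b)(0)$ with tangent data at the origin. The rest is transport of structure through the embedding. One should also check that $G$ surjective is not needed for this corollary: if $G$ is not surjective, replace $\R^q$ by $\im G$ where required, since \cref{thm:CF-output-realization} is stated for surjective $G$, and $M$ then sits in $\im G \subset \R^q$.
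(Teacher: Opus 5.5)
Your proof is correct and follows the paper's argument essentially step for step. For (1)$\Rightarrow$(2) you use that $b\in\ker E$ forces the vector field of $b$ to vanish at $0$ (the differential of the inclusion or chart is injective), hence $G(ba)=0$. For (2)$\Rightarrow$(1) you take the canonical realization of \cref{thm:CF-output-realization}, deduce $\operatorname{rank} dh(0)=d$ from $dh(0)\rho(b)(0)=E(b)$ and $\ker E=\fh_G$, and push the vector fields forward along the resulting embedding; your explicit chart and basis-based rank count are only cosmetic variations.
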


\begin{proof}
    Assume first that such a state realization on $M$ is given.
    If $b \in \ker E$, then $f_b(\Id)(0)=G(b)=0$, hence the vector
    $f_b(0)\in T_0M$ vanishes.
    Therefore, for every $a\in\mathcal{A}(X)$,
    \begin{equation}
        G(ba)
        = f_b(f_a \Id)(0)
        = 0,
    \end{equation}
    so $b \in \fh_G$.
    Thus $\ker E \subset \fh_G$.
    The reverse inclusion follows immediately from the definition of $\fh_G$, by taking $a=1$.
    Hence $\fh_G = \ker E$.

    Conversely, if $\fh_G = \ker E$, let $(\rho,h)$ be given by \cref{thm:CF-output-realization}.
    For $b\in\mathcal{L}(X)$,
    \begin{equation}
        dh(0)\rho(b)(0)
        = (\rho(b)h)(0)
        = G(b)
        = E(b).
    \end{equation}
    Since $\rho$ realizes the transitive pair $(\mathcal{L}(X),\fh_G)$, $\rho(b)(0) = 0$ if and only if $b \in \fh_G = \ker E$.
    Thus $dh(0)$ has rank $d$.
    Hence, after possibly restricting the domain of $h$, $M:=h(\Omega_0)$ is an analytic submanifold of $\R^q$, and
    $h : \Omega_0\to M$ is a local analytic diffeomorphism.
    Pushing forward the vector fields $\rho(X_i)$ by $h$ gives analytic vector fields on $M$, and \eqref{eq:state-realization-G} follows by induction on $\omega$. 
\end{proof}

\subsection{Discussion on the growth condition}
\label{sec:hypo}

Inspecting the proofs of \cref{prop:bound-coeff,thm:convergent-output-realization}, we observe that the growth conditions~\eqref{eq:realization-control-growth} of \cref{thm:realization-control} and \eqref{eq:CF-growth} of \cref{thm:CF-output-realization} could be replaced by their restriction to the family 
\begin{equation}
    F := X \cup \{ b_1, \dotsc, b_d \}
\end{equation}
where $b_1, \dotsc, b_d$ are chosen so that their classes span a supplement of $\ker E$ (or $\fh_G$) in $\mathcal{L}(X)$.

It is however insufficient to only consider the family $F = X$, as done in \cite[Condition (C)]{Fliess1983}.
Indeed, consider the linear map $G : \mathcal{A}(X) \to \R$ (so $q = 1$) defined by $G((X_1 X_0)^k X_0) = (2k+1)!$ for all $k \geq 1$ and $G(\omega) = 0$ for all other $\omega \in X^*$.
This map satisfies \eqref{eq:growth-Fliess} with $C = 1$.
However, it corresponds to the output $h(x) := x_3$ of the system of \cref{ex:non-conv} which involves a vector field $f_0$ which is not real-analytic.
Note that this map does not satisfy our stronger condition \eqref{eq:CF-growth} because $G([X_1,X_0]^k X_0) = (2k+1)!$ so the condition fails for the family $F = \{ X_0, [X_1, X_0] \}$.

\subsection{System of the coordinates of the second kind in a Hall basis}

In this paragraph, we highlight the link between the Blattner--Draisma realization formula \eqref{eq:formula-blattner} and Sussmann's inductive formula for the coordinates of the second kind in a Hall basis, introduced in \cite{Sussmann1986} (see also \cite[Section 2.5]{BeauchardLeBorgneMarbach2023}).

\subsubsection{Hall sets}

We follow Viennot's convention of \cite{Viennot1978} for Hall sets, but other conventions exist (e.g.\ in \cite{Reutenauer2003}).

Let $\Br(X)$ denote the free magma over the set $X$.
Elements of $\Br(X)$ can be represented as binary trees, e.g.\ $(X_0,(X_0,X_1))$, when $X_0, X_1 \in X$.
For $b \in \Br(X)$, $|b| \ge 1$ denotes its length, defined by $|b| = 1$ for $b \in X$, and $|(a,b)| = |a| + |b|$.

\begin{definition}
    \label{def:Hall}
    A \emph{Hall set} is a subset $\B$ of $\Br(X)$ endowed with a total order $<$ such that
    \begin{itemize}
        \item $X \subset \B$;
        \item for all $a, b \in \Br(X)$, $(a, b) \in \B$ if and only if
        \begin{equation}
            a, b \in \B, \qquad
            a < b, \qquad 
            \text{and} \qquad
            (b \in X \text{ or } b = (b', b'') \text{ with } b' \leq a);
        \end{equation}
        \item for all $a, b \in \B$ such that $(a,b) \in \B$ then $a < (a,b)$.
    \end{itemize}
\end{definition}

Viennot proved in \cite{Viennot1978} that any Hall set yields a basis of the free Lie algebra $\mathcal{L}(X)$.

\begin{lemma}
    \label{lem:facto}
    Let $\B$ be a Hall set on $X$ and $b \in \B$.
    There exists a unique $r \in \N$, $m_1, \dotsc, m_r \in \N^*$, $a_1 < \dotsb < a_r \in \B$ and $k \in \{0, \dotsc, m\}$, such that
    \begin{equation}
        \label{eq:b-facto}
        b = \ad_{a_r}^{m_r} \dotsc \ad_{a_1}^{m_1}(X_k).
    \end{equation}
\end{lemma}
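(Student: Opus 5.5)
Both existence and uniqueness will follow from a strong induction on the length $|b|$, peeling off the outermost bracket using the Hall set axioms of \cref{def:Hall}. Note first that the $k$ in the statement should range over $\{0,\dotsc,m\}$ with $m$ the number of controls, i.e.\ $X_k \in X$.

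\emph{Existence.} If $b \in X$, take $r=0$ and $b = X_k$. Otherwise $b = (a,c)$ with $a,c \in \B$, $a<c$, and $|c| < |b|$. By induction, write $c = \ad_{a_r}^{m_r}\dotsb\ad_{a_1}^{m_1}(X_k)$ with $a_1<\dotsb<a_r$; then $b = \ad_a(c)$, and it remains to check that $a \geq a_r$. If $a = a_r$, we merge the two factors into $\ad_{a_r}^{m_r+1}$. If $a > a_r$, we append a new factor $a_{r+1} := a$ with $m_{r+1} = 1$.

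The order comparison comes from the Hall condition on $(a,c)$: either $c \in X$, in which case $r = 0$ and nothing needs checking, or $c = (c',c'')$ with $c' \leq a$. The left factor of $c$ is $c' = a_r$, because the decomposition of $c$ is obtained by peeling its outermost bracket: if $m_r \geq 2$ or $r \geq 1$, then $c = (a_r, \ad_{a_r}^{m_r-1}\dotsb(X_k))$. To make this identification clean, I would strengthen the induction hypothesis to state that, when $r \geq 1$, the element $b$ equals $(a_r, b')$ as a tree with $b' \in \B$. This holds by construction. Hence $a \geq c' = a_r$, as needed.

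\emph{Uniqueness.} Here the formula must be read in $\Br(X)$, i.e.\ with $\ad_a(c) := (a,c)$. Then the outermost bracket of $b$ determines $a_r$, and also determines the remaining factorization of the right child. By induction, the decomposition of the right child is unique, and the rule that consecutive equal $a$'s are merged into one exponent (enforced by the strict inequalities $a_1<\dotsb<a_r$) fixes $m_r$. Concretely, suppose $b = (a,c)$ has two decompositions. In both, $a_r = a$. Removing one factor $\ad_{a}$ gives decompositions of $c$, whose last letter is either $a$ (when $m_r \geq 2$) or strictly smaller than $a$ (when $m_r = 1$). By induction these decompositions of $c$ coincide, so the ones for $b$ do too.

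The main obstacle I expect is to verify cleanly that the intermediate trees $\ad_{a_j}^{i}\dotsb(X_k)$ are in $\B$, so that the induction applies to them. This is immediate in the existence direction, since they are right subtrees of Hall trees, and right subtrees of Hall elements are Hall elements by the "only if" direction of the axiom. It is also where the condition $b' \leq a$ is used.
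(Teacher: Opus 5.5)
Your proof is correct and is essentially the paper's argument. The paper simply says the lemma follows from \cref{def:Hall} by iterating the decomposition, and your strong induction on $|b|$ along the right spine spells out exactly that iteration: for existence you use the condition $c' \leq a$ to get $a \geq a_r$, and for uniqueness you read off the left child in $\Br(X)$.
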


\begin{proof}
	This is a direct consequence of \cref{def:Hall}, up to iterating the decomposition.
\end{proof}

\begin{definition}
    Let $\B$ be a Hall set on $X$ and $b \in \B$ as in \eqref{eq:b-facto}.
    We call $a_1, \dotsc, a_r$ the \emph{Hall factors} of $b$ and we define $\Fac(b) := \{ a_1, \dots, a_r \} \subset \B$.
    In particular, for $b \in X$, $\Fac(b) = \emptyset$.
    We also define by induction $\Fac^+(b) := \Fac(b) \cup \Fac^+(a_1) \cup \dotsb \cup \Fac^+(a_r)$ and $\Fac^*(b) := \{ b \} \cup \Fac^+(b)$.
    In particular, when $(a,b) \in \B$,
    \begin{equation}
        \label{eq:fac-rec}
        \Fac^*((a,b)) = \{ (a,b) \} \cup \Fac^*(a) \cup \Fac^+(b).
    \end{equation}
\end{definition}

\subsubsection{Infinite product and coordinates of the second kind}

We recall the explicit formula defining coordinates of the second kind indexed by a Hall set $\B$ on a finite set $X$, due to Sussmann in \cite[equation (19)]{Sussmann1986}.

\begin{definition}
    \label{def:xi}
    Let $\B$ be a Hall set on $X = \{ X_0, \dotsc, X_m \}$. 
    Let $T > 0$ and $u \in L^1((0,T);\R^m)$ (with $u_0 \equiv 1$).
    We define the associated coordinates of the second kind by induction for $b \in \B$ by
    \begin{equation}
        \xi_b(0,u) = 0
        \quad \text{and} \quad
        \dot{\xi}_b(t,u) = \frac{\xi_{a_r}^{m_r}}{m_r!} \dotsb \frac{\xi_{a_1}^{m_1}}{m_1!} (t,u) \cdot u_k(t)
    \end{equation}
    when $b = \ad_{a_r}^{m_r} \dotsb \ad_{a_1}^{m_1}(X_k)$ is given by the unique representation of \cref{lem:facto}.
\end{definition}

Sussmann used in \cite{Sussmann1986} these coordinates to express the solutions to formal differential equations and to linear differential equations as infinite products (see also \cite[Section 2.5]{BeauchardLeBorgneMarbach2023}).
We recently proved in \cite{LeBorgneMarbach2026} that this expansion also converges for nonlinear real-analytic vector fields.

\begin{theorem}
    Let $\Omega_0 \subset \R^d$ be an open neighborhood of $0$ and $f_0, \dotsc, f_m \in \mathfrak{X}(\Omega_0)$.
    Let $\B$ be a Hall set on $X = \{ X_0, \dotsc, X_m \}$.
    There exists $\eta > 0$ such that, for any $T > 0$, $u \in L^1((0,T);\R^m)$ and $x_0 \in \R^d$ with $|x_0| \le \eta$ and $T + \|u\|_{L^1} \le \eta$, and for any $t \in [0,T]$,
    \begin{equation}
        \label{eq:thm.main}
        \gamma_f(t;u,x_0) = \left(\overset{\longrightarrow}{\prod_{b \in \B}} e^{\xi_b(t,u) \rho_f(b)}\right)(x_0),
    \end{equation}
    where $\xi_b(t,u) \in \R$ is a coefficient defined in \cref{def:xi}, $\rho_f(b) \in \mathfrak{X}(\Omega_0)$ is an iterated Lie bracket, and $e^{\xi_b(t,u) \rho_f(b)}$ denotes the time-one flow of the autonomous vector field $\xi_b(t,u) \rho_f(b)$ (which is well-defined under the given smallness assumption).
\end{theorem}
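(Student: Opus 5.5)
The plan is to prove \eqref{eq:thm.main} in two layers. The first is a formal identity in the completed free associative algebra. The second is an analytic convergence argument that transports this identity to the flows of the real-analytic fields $f_i$.

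\textbf{Formal identity.} Let $S(t) := \sum_{\omega \in X^*} (\int_0^t u_\omega)\, \omega$ be the Chen series. By construction it is the unique solution of $\dot S = S\,(X_0 + \sum_i u_i X_i)$ with $S(0) = 1$, with words read so that $f_\omega$ acts correctly in \eqref{CF_affine_serie}. I would show that the ordered product $P(t) := \overset{\longrightarrow}{\prod}_{b \in \B} e^{\xi_b(t,u) b}$ satisfies the same equation. The argument is an induction on the Hall order, using the Hall-factor decomposition of \cref{lem:facto} and the recursion \eqref{eq:fac-rec}. When one differentiates $P$, the factor $e^{\xi_b b}$ contributes $\dot\xi_b\, b$. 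Conjugating this term through the exponentials of larger elements produces the terms $\frac{\xi_{a}^{m}}{m!}\ad_{a}^{m}$. These recombine, exactly as in Sussmann's original computation, into $P\,(X_0 + \sum_i u_i X_i)$. This is the definition of $\xi_b$ in \cref{def:xi} read backwards. Hence $P = S$ coefficientwise, since every coefficient of a word of length $n$ involves only finitely many $b$ with $|b| \le n$.

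\textbf{Realization and truncation.} Next I apply $\rho_f$ to this identity. I interpret the infinite product in \eqref{eq:thm.main} as the limit as $N \to \infty$ of the finite ordered composition of time-one flows over $b \in \B$ with $|b| \le N$. For an analytic test function $h$, say a coordinate, this finite composition applied to $x_0$ equals $\bigl(\prod e^{\xi_b \rho_f(b)} h\bigr)(x_0)$, where each flow is expanded as an exponential series of the derivation $\xi_b \rho_f(b)$ acting on $h$. By \cref{Prop:CF_CV_affine}, the left-hand side $h(\gamma_f(t;u,x_0))$ is the Chen--Fliess series. It therefore suffices to show two things. First, the truncated products converge in some analytic norm $\opnorm{\cdot}_{r}$. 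Second, the double series obtained by expanding all the exponentials converges absolutely. Absolute convergence lets me rearrange the expansion to match the formal identity word by word.

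\textbf{Estimates.} For the fields I would use \cref{thm:bracket.analytic}, which gives $\opnorm{\rho_f(b)}_{r_1} \le (|b|-1)!\,(C/(r_2-r_1))^{|b|-1} M^{|b|}$. The number of Hall elements of length $n$ is at most $(m+1)^n$. For the coefficients I would prove, by induction along \cref{def:xi}, an estimate of the form $|\xi_b(t,u)| \le K^{|b|}(T+\|u\|_{L^1})^{|b|}/|b|!$, or at least one with a factorial gain matching the loss above. The factor $\frac{\xi_a^m}{m!}$ in the definition naturally produces multinomial denominators.

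\textbf{Main obstacle.} The hardest step is combining these bounds. The products involve infinitely many flows, and each application of \cref{thm:prod.analytic} costs a radius decrement. The natural plan is to allot a decrement $\delta_n$ to the flows of length $n$ with $\sum_n \delta_n < r/2$, for instance $\delta_n \sim r/n^2$. I would then check that the factor $(C/\delta_n)^{n}(n-1)!$ is beaten by $K^n\eta^n/n!$ times $(m+1)^n$ for $\eta$ small. If the crude $1/|b|!$ bound on $\xi_b$ fails for some Hall elements, I would instead group the flows by length and estimate each block via the Chen--Fliess convergence, which already absorbs the $n!$ growth. The remainder after truncation at length $N$ is then controlled by the tail of a convergent geometric series.
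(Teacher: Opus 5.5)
The paper gives no proof of this statement: it is quoted from separate work of the authors, where it is the main result. Your formal layer is fine; it is Sussmann's identity, and only finitely many Hall elements affect each word. But the analytic layer, which is the entire difficulty, is not supplied, and the ingredients you propose for it do not work.

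\textbf{The coefficient bound is false.} The bound $|\xi_b(t,u)|\le K^{|b|}(T+\|u\|_{L^1})^{|b|}/|b|!$ fails. Take a Hall set whose order is compatible with length, with $X_0<X_1$. Set $c:=(X_0,X_1)$, $y:=(X_0,c)$ and $b_k:=\ad_c^k(y)$. The axioms of \cref{def:Hall} give $b_k\in\B$ for every $k$, with Hall factorization $\ad_c^k\ad_{X_0}^2(X_1)$. For $m=1$ and $u_1\equiv 1$ on $(0,t)$, \cref{def:xi} gives $\xi_{X_0}=t$ and $\xi_c=t^2/2$. Hence $\xi_{b_k}(t)=t^{2k+3}/\bigl((2k+3)\,2^{k+1}k!\bigr)$. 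Meanwhile $|b_k|=2k+3$ and $T+\|u\|_{L^1}=2t$. Since $(2k+2)!/(2^{k+1}k!)$ outgrows every geometric sequence, the bound fails.

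\textbf{The termwise majorant diverges.} Multiply this true size of $\xi_{b_k}$ by the letter-level bound $(|b_k|-1)!\,C^{|b_k|}$ of \cref{thm:bracket.analytic}. You get a quantity of order $(2k+1)!!\,(Ct)^{2k+3}$, which diverges for every $t>0$. So no radius bookkeeping that controls $\rho_f(b)$ through that bound can make $\sum_b|\xi_b|\,\opnorm{\rho_f(b)}$ converge. The same goes for the word-level expansion, where $|f_\omega h(x_0)|$ is only bounded by $|\omega|!\,C^{|\omega|}$. Hence the absolute convergence of the expanded double series, which you need in order to rearrange, cannot be derived from the bounds you list. \cref{Prop:CF_CV_affine} cannot supply it either. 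It concerns only the word-indexed series with coefficients $\int_0^t u_\omega$, that is, after all cancellations between contributions of different Hall elements have been carried out.

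\textbf{The radius allocation loses even under your bound.} With $\delta_n\sim r/n^2$, the factor $(C/\delta_n)^n$ is of size $(n^2/r)^n$, and no geometric gain absorbs that. You should instead pay the bracket estimate once, between two fixed radii. Then spend a decrement on each near-identity flow: its cost is only linear in $1/\delta$, so geometric decrements $\delta_b\approx\varepsilon^{|b|}$ with $\varepsilon(m+1)<1$ are affordable.

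\textbf{The fallback is not an argument.} The Hall order need not be compatible with length; $X_0<(X_0,X_1)<X_1$ is allowed, for instance. Regrouping by length would then mean commuting non-commuting flows. In any case, a block of flows is not the Chen--Fliess series of any system, so \cref{Prop:CF_CV_affine} says nothing about it.

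\textbf{What is missing.} You need an estimate on $\rho_f(b)$ organized along the Hall factorization of \cref{lem:facto}, so that the factors $1/m_i!$ in $\dot\xi_b$ pay for the losses. For $b_k$ this works: applying \cref{thm:bracket.analytic} to the $k+1$ fields $\rho_f(c),\dotsc,\rho_f(c),\rho_f(y)$ costs only $k!\,C^k$, which $\xi_{b_k}$ compensates. This must be combined with control of the radius losses through arbitrarily deep nestings of Hall factors, or else replaced by an argument that is not termwise at all. Your proposal identifies this obstacle but does not overcome it.
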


\subsubsection{System of the coordinates of the second kind}

We consider the control-affine system of the coordinates of the second kind of a Hall set. 
We highlight its link with realization theory.

\begin{proposition}
    \label{prop:xi-fbeb}
    Let $\B$ be a Hall set on $X = \{ X_0, \dotsc, X_m \}$.
    Let $B$ be a finite subset of $\B$.
    Assume that $B$ is stable by $\Fac$.
    Then $\Xi := (\xi_b)_{b \in B}$ is the solution to a control-affine system
    \begin{equation}
        \label{eq:EDO-XI}
        \dot{\Xi} = F_0(\Xi) + u_1(t) F_1(\Xi) + \dotsb + u_m(t) F_m(\Xi)
        \quad \text{and} \quad
        \Xi(0) = 0,
    \end{equation}
    where, for $0 \leq j \leq m$, with the notations of \cref{lem:facto},
    \begin{equation}
        \label{eq:XI-FJ}
        F_j(\Xi) = \sum_{b \in B, k_b = j} \frac{\xi_{a_r}^{m_r}}{m_r!} \dotsb \frac{\xi_{a_1}^{m_1}}{m_1!} e_b.
    \end{equation}
    Moreover, this system is the Blattner--Draisma realization associated with the complement $B$, ordered by $b_1 < \dotsb < b_d$, of the transitive pair $(\mathcal{L}(X),\fh)$ where $\fh := \vect (\B \setminus B)$.
    
    In particular, for all $b \in \B$, one has
    \begin{equation}
        \rho_F(b)(0) = 
        \begin{cases}
            e_b & \text{if } b \in B \\
            0 & \text{otherwise.}
        \end{cases}
    \end{equation}
\end{proposition}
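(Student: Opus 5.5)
The plan has three steps: first derive the ODE, then use a homogeneity argument to compute the values $\rho_F(b)(0)$, and finally identify $\rho_F$ with the Blattner--Draisma realization through the comparison homomorphism $\Psi$ of \cref{prop:canonical-comparison-morphism}.

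\textbf{Step 1: the ODE.} Let $b \in B$, written as in \cref{lem:facto}. Then $\dot\xi_b$ is the monomial $\frac{\xi_{a_r}^{m_r}}{m_r!}\dotsb\frac{\xi_{a_1}^{m_1}}{m_1!}$ multiplied by $u_{k_b}$. Since $B$ is stable by $\Fac$, every factor $a_i$ lies in $B$. Grouping the components according to the value of $k_b$ therefore gives exactly \eqref{eq:EDO-XI} with the vector fields \eqref{eq:XI-FJ}, and $\Xi(0)=0$ holds by \cref{def:xi}.

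\textbf{Step 2: computing $\rho_F(b)(0)$.} Let $\rho_F : \mathcal{L}(X)\to\Der\R\lxl$ be the homomorphism with $X_j\mapsto F_j$. I will first prove the displayed formula for $\rho_F(b)(0)$; this gives both the Lie subalgebra property and the realization property.
\begin{itemize}
\item \emph{Homogeneity.} Give the coordinate $\xi_c$ the weight $|c|$. Each $F_j$ is then homogeneous of weight $-1$, since $|b| = 1+\sum m_i|a_i|$. Consequently $\rho_F(b)$ is homogeneous of weight $-|b|$ for every $b\in\B$.
\item \emph{Consequence at the origin.} The value $\rho_F(b)(0)$ can only have components $e_c$ with $|c|=|b|$, and these components are constants.
\item \emph{Induction.} I then argue by induction on $|b|$, using $\rho_F((a,b'))=[\rho_F(a),\rho_F(b')]$. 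The goal is to show that the constant part of weight $|b|$ is $e_b$ if $b\in B$ and $0$ otherwise.
\end{itemize}
The key computation is the action of $\partial_{\xi_a}$ on the monomials of \eqref{eq:XI-FJ}. Differentiating the monomial of $c=\ad_{a_r}^{m_r}\dotsb\ad_{a_1}^{m_1}(X_k)$ with respect to its largest factor $a_r$ lowers $m_r$ by one. This mirrors the Hall recursion $c=(a_r,c')$ of \cref{def:Hall}. The term $-\rho_F(b')\rho_F(a)$ has to be shown to vanish at weight $|b|$, using the order conditions $a<b'$ and $b''\le a$.

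I expect this triangularity bookkeeping to be the main obstacle. If the direct induction becomes unwieldy, the fallback is Sussmann's formal product identity:
\begin{equation*}
  \Xi(t) = \Big(\overset{\longrightarrow}{\prod_{b\in\B}} e^{\xi_b(t,u)\rho_F(b)}\Big)(0).
\end{equation*}
Combined with the fact that the $\xi_b$ are algebraically independent functionals of $u$, this should let me read off the linear terms.

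\textbf{Step 3: identification with the Blattner--Draisma realization.} Step 2 shows that $E_F$ maps $\B\setminus B$ to $0$ and $B$ bijectively onto the canonical basis. Hence $\fh=\vect(\B\setminus B)=\ker E_F$, which is automatically a Lie subalgebra of codimension $d=|B|$, and $\rho_F$ is a realization of $(\mathcal{L}(X),\fh)$ in minimal dimension.

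Let $\rho$ be the Blattner--Draisma realization of this pair for the ordered complement $b_1<\dotsb<b_d$ of $B$. By \cref{prop:canonical-comparison-morphism} and \cref{prop:unique-equivariant-formal-morphism}, the isomorphism $\Psi(f)=(\rho_F(\mathcal{E}(x))f)|_{\Xi=0}$ conjugates $\rho_F$ to $\rho$. It therefore suffices to prove $\Psi(\xi_{b_i})=x_i$ for every $i$; then $\Psi$ is the identity and $\rho=\rho_F$.

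To prove $\Psi(\xi_{b_i})=x_i$, note that $\Psi(\xi_{b_i})$ is obtained by applying the flows $e^{x_1\rho_F(b_1)},\dotsc,e^{x_d\rho_F(b_d)}$ to the coordinate $\xi_{b_i}$ and evaluating at $0$. I would compute it by the same weighted-homogeneity and triangular structure as in Step 2. Starting from $0$, the flow of $\rho_F(b_j)$ should only move the coordinate $\xi_{b_j}$ by $x_j$, modulo coordinates that the later flows, in the Hall order, leave untouched at the relevant weight. This is again the delicate ordering argument, and it is where I would invoke the Hall-set axioms most carefully.
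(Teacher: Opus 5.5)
Your Step 1 matches the paper. Two parts of your architecture are sound. Once $\rho_F(b)(0)$ is known for all $b\in\B$, the subalgebra property of $\fh=\ker E_F$ comes for free. And if $\Psi(\xi_{b_i})=x_i$, the intertwining relation of \cref{prop:canonical-comparison-morphism} does force $\rho_F=\rho$. But Steps 2 and 3 carry the whole content of the proposition, and neither is proved; you flag them yourself as ``the main obstacle'' and ``the delicate ordering argument''.

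Worse, the induction of Step 2 cannot close as set up. For $b=(a,b')$ with $a\in B$, the inductive hypothesis gives $\rho_F(b)(0)=\partial_{\xi_a}\rho_F(b')(0)-D\rho_F(a)(0)\,\rho_F(b')(0)$. This involves the first-order jets of $\rho_F(a)$ and $\rho_F(b')$ at $0$, not just their values. Your remark on differentiating the monomials of \eqref{eq:XI-FJ} handles this only when $b'\in X$, where $\rho_F(b')=F_k$ is explicit. For a bracket $b'$ you need the linear part of $\rho_F(b')$, hence second-order jets of its constituents, and so on, i.e.\ the full Taylor expansion of every $\rho_F(c)$. Weighted homogeneity only tells you which $e_c$ may occur, not their coefficients.

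Step 3 has the same defect. Homogeneity and Step 2 show that $\Psi(\xi_{b_i})-x_i$ is a combination of monomials $x^m$ with $|m|\ge 2$ and $\sum_k m_k|b_k|=|b_i|$. Nothing in your argument rules these out. Ruling them out is exactly the statement $(\rho_F(\mathbf b^m)\xi_{b_i})(0)=0$ for $|m|\ge 2$, and your heuristic that each flow ``only moves $\xi_{b_j}$'' restates this unproved triangularity.

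What is missing is an algebraic computation in the Hall--PBW basis; this is how the paper avoids analysing $\rho_F$ at all. The paper proceeds as follows:
\begin{itemize}
\item It proves that $\vect(\B\setminus B)$ is a Lie subalgebra (\cref{lem:sup-fac-lie}, via the ideals $I_q$ of $\fg_{\ge q}$).
\item It applies \cref{thm:blattner} with the block order of \cref{def:block-order}, and shows $\chi_b^{\prec}=\chi_b^{\mathrm H}$ for $b\in B$ (\cref{cor:chi-compat}).
\item It uses the ordered contraction \cref{lem:ordered-contraction}: the Hall--PBW component of $a_r^{m_r}\dotsb a_1^{m_1}X_j$ along $b$ is $1$ if $b=\ad_{a_r}^{m_r}\dotsb\ad_{a_1}^{m_1}(X_j)$ and $0$ otherwise.
\end{itemize}
Plugging this into \eqref{eq:formula-blattner} gives $\rho(X_j)=F_j$ directly. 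The values $\rho_F(b)(0)$ are then a consequence rather than an input, and no comparison map $\Psi$ is needed.

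Your fallback can also be made into a proof; it is the alternative mentioned in the paper's closing remark. But you must compare all coefficients, not only the linear ones. Combine Sussmann's identity $\sum_w(\int_0^t u_w)\,w=\overset{\longleftarrow}{\prod_{b\in\B}}e^{\xi_b b}$ with the Chen--Fliess expansion of $\Xi$ and the linear independence of iterated integrals. This gives $(\rho_F(w)\xi_c)(0)=\chi^{\mathrm H}_c(w)$ for every $w\in\mathcal{A}(X)$ and $c\in B$. Taking $w\in\B$ yields Step 2. Taking $w=\mathbf b^m$, which is a Hall--PBW monomial because $b_1<\dotsb<b_d$ in the Hall order, yields $\Psi(\xi_{b_i})=x_i$. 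Organized this way, your route would dispense with the $I_q$ lemmas and the block-order compatibility, at the price of importing Sussmann's product theorem. As written, however, that key input is only gestured at, not established.
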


\begin{proof}
    The fact that $\Xi := (\xi_b)_{b \in B}$ solves \eqref{eq:EDO-XI} with the vector fields \eqref{eq:XI-FJ} is a reformulation of \cref{def:xi}.
    Since $B$ is factor-stable, the coordinates appearing in \eqref{eq:XI-FJ} are indeed part of $\Xi$.

    Since $\B$ is a basis of $\mathcal{L}(X)$, $\mathcal{L}(X) = \vect B \oplus \fh$, and $\fh$ is a Lie subalgebra by \cref{lem:sup-fac-lie} (since $B$ is factor-stable).
    As $B$ is finite, $\codim_{\mathcal{L}(X)} \fh = |B| < +\infty$, so $(\mathcal{L}(X),\fh)$ is a transitive pair.

    Let $\rho$ be given by \eqref{eq:formula-blattner} for the totally ordered basis $(\B,\prec)$ of \cref{def:block-order}, in which $B = \{ b_1 \prec \dotsb \prec b_d \}$ is an initial segment and $\B \setminus B$ is a basis of $\fh$.
    \cref{thm:blattner} applies and $\rho$ is a realization of $(\mathcal{L}(X),\fh)$.
    Identifying the $i$-th coordinate with $\xi_{b_i}$ and splitting each multi-index according to its support, \eqref{eq:formula-blattner} reads
    \begin{equation}
        \rho(X_j) = \sum_{b \in B} \sum_{r \geq 0} \sum_{a_1 < \dotsb < a_r \in B} \sum_{m \in (\N^*)^r} \chi_b^{\prec}(a_r^{m_r} \dotsb a_1^{m_1} X_j) \frac{\xi_{a_r}^{m_r}}{m_r!} \dotsb \frac{\xi_{a_1}^{m_1}}{m_1!} e_b.
    \end{equation}
    By \cref{cor:chi-compat}, the coefficient equals $\chi_b^{\mathrm{H}}(a_r^{m_r} \dotsb a_1^{m_1} X_j)$, which is now computed in the Hall PBW basis.
    By \cref{cor:chi_b=pbw}, it equals $1$ if $b = \ad_{a_r}^{m_r} \dotsb \ad_{a_1}^{m_1}(X_j)$, and $0$ otherwise.
    By the uniqueness in \cref{lem:facto}, for each $b \in B$ with $k_b = j$ exactly one term survives, and its Hall factors belong to $B$ since $B$ is factor-stable.
    Hence $\rho(X_j) = F_j$ for $0 \leq j \leq m$, so $\rho = \rho_F$.

    Finally, let $b \in \B$. If $b \notin B$, then $b \in \fh$, so $\rho_F(b)(0) = 0$.
    If $b \in B$, taking $m = 0$ in \eqref{eq:formula-blattner} gives $\rho_F(b)(0) = \sum_{c \in B} \chi^{\prec}_c(b) e_c = e_b$, since $b$ is an element of the PBW basis associated with $(\B,\prec)$.
\end{proof}

\begin{corollary}
    In particular, in the context of \cref{prop:xi-fbeb}, if $X_0 \notin B$, then $F_0(0) = 0$.
\end{corollary}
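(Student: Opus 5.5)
The plan is to read the value $F_0(0)$ off the explicit formula \eqref{eq:XI-FJ} of \cref{prop:xi-fbeb}, with the last identity of that proposition as a cross-check. Evaluating \eqref{eq:XI-FJ} at $\Xi = 0$ with $j = 0$ gives
\begin{equation}
    F_0(0) = \sum_{b \in B,\, k_b = 0} \left( \frac{\xi_{a_r}^{m_r}}{m_r!} \dotsb \frac{\xi_{a_1}^{m_1}}{m_1!} \right)\Big|_{\Xi = 0} e_b .
\end{equation}
Each monomial has $m_i \geq 1$ for every factor present, so it vanishes at $\Xi = 0$ unless $r = 0$, that is, unless the product is empty.

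So the key step is to identify which $b \in B$ have $k_b = 0$ and $r = 0$. By the factorization of \cref{lem:facto}, the case $r = 0$ means exactly that $b = X_{k_b}$. With $k_b = 0$ this forces $b = X_0$. Since $X_0 \notin B$ by assumption, no term survives, and hence $F_0(0) = 0$.

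As a consistency check, the same conclusion follows from the last display of \cref{prop:xi-fbeb}. Since $F_0 = \rho_F(X_0)$ and $X_0 \in X \subset \B$ but $X_0 \notin B$, that display gives $\rho_F(X_0)(0) = 0$ directly. I see no real obstacle here. The only point needing care is using the uniqueness in \cref{lem:facto} to ensure that the empty-product case corresponds exactly to the generator $X_k$.
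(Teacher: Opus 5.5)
Your proof is correct. It takes a slightly different route from the paper, which you in fact also give as your cross-check.

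\textbf{Your route.} You evaluate \eqref{eq:XI-FJ} directly at $\Xi = 0$. Every monomial with $r \geq 1$ vanishes because all $m_i \geq 1$. The only possible constant term in $F_0$ would therefore come from $b = X_0$, which is excluded. This uses only the first part of \cref{prop:xi-fbeb}, namely the reformulation of \cref{def:xi} as a control-affine ODE. It needs none of the Hall-set and PBW machinery behind the identification with the Blattner--Draisma realization.

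\textbf{The paper's route.} The corollary is stated as the specialization to $b = X_0 \in X \subset \B$ of the last display of \cref{prop:xi-fbeb}. That display rests on identifying $\rho_F$ with the Blattner--Draisma realization of $(\mathcal{L}(X),\fh)$. Since $X_0 \in \B \setminus B \subset \fh$, $X_0$ lies in the isotropy, so $\rho_F(X_0)(0) = 0$. This is exactly your consistency check.

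\textbf{Comparison.} Your direct argument is more elementary and self-contained. The paper's route places the fact inside the general isotropy statement $\rho_F(b)(0) = 0$ for every $b \in \B \setminus B$. For brackets $b$, that statement could not be read off \eqref{eq:XI-FJ} without computing iterated brackets of the $F_j$.

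\textbf{A minor point.} The implication from $r = 0$ to $b = X_{k_b}$ is immediate from \eqref{eq:b-facto}. The uniqueness in \cref{lem:facto} is needed only so that $k_b$ and the monomial attached to $b$ in \eqref{eq:XI-FJ} are well defined.
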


\begin{remark}
    Another option to prove \cref{prop:xi-fbeb} is to use that, at the formal level, Sussmann's infinite product 
    \begin{equation}
        S(t) := \left(\overset{\longleftarrow}{\prod_{b \in \B}} e^{\xi_b(t,u) b}\right) \in \widehat{\mathcal{A}}(X)   
    \end{equation}
    is a formal series solution to $\dot{S} = S (X_0 + u_1 X_1 + \dotsb u_m X_m)$ (see \cite[eq.\ (4)]{Sussmann1986}), and compare it with the generating series $\mathcal{E}(x)$ introduced in \eqref{eq:gen-E} for the proof of the realization theorem.
\end{remark}

\subsubsection{Algebraic lemmas for the proof of \cref{prop:xi-fbeb}}

Below, $(\B,<)$ is a Hall set over $X$ and $B = \{ b_1 < \dotsb < b_d \}$ is a finite factor-stable subset of $\B$.

\paragraph{Tails of Hall sets.}
For $q \in \B$, we consider $\B_{\geq q} := \{ c \in \B \mid c \geq q \}$ and its linear span $\fg_{\ge q}$.

\begin{lemma}
    \label{lem:gq-subalgebra}
    For $q \in \B$, $\fg_{\ge q} = \vect \B_{\ge q}$ is a Lie subalgebra of $\mathcal{L}(X)$. 
\end{lemma}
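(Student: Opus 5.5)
The plan is to derive the statement from a stronger claim about brackets of two Hall elements. Since $\B$ is a basis of $\mathcal{L}(X)$ and the bracket is bilinear, it suffices to show that $[a,b] \in \fg_{\geq q}$ whenever $a, b \in \B_{\geq q}$. This follows from the claim below, because $\min(a,b) \geq q$.

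\medskip\noindent\textbf{Claim.} For all $a, b \in \B$ with $a < b$, the bracket $[a,b]$ lies in the span of $\{ c \in \B \mid c > a \}$.

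The case $a = b$ is trivial, and the case $a > b$ follows from antisymmetry.

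\medskip\noindent\textbf{Induction scheme.} I would prove the claim by a double induction.
\begin{itemize}
    \item The outer induction is on the total length $n := |a| + |b|$.
    \item For fixed $n$, the inner induction is downward on $a$, the smaller element, for the order $<$. This is legitimate because $a$ ranges over Hall elements of length $< n$, and there are finitely many of these since $X$ is finite. So "all pairs of total length $n$ whose minimum is $> a$" is a valid inner hypothesis.
\end{itemize}

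\medskip\noindent\textbf{Case 1: $(a,b) \in \B$.} This covers in particular $b \in X$, since then the condition in \cref{def:Hall} holds automatically. Then $[a,b]$ is the Hall element $(a,b)$ itself, and \cref{def:Hall} gives $a < (a,b)$.

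\medskip\noindent\textbf{Case 2: $(a,b) \notin \B$.} Then $b = (b', b'')$ with $b' > a$. Since $(b',b'') \in \B$, we also have $b' < b''$, hence $b'' > a$. Jacobi's identity gives
\begin{equation*}
    [a,[b',b'']] = [[a,b'],b''] + [b',[a,b'']].
\end{equation*}
I treat the two terms separately.
\begin{itemize}
    \item First term. By the outer induction (smaller total length), $[a,b']$ is a combination of Hall elements $c > a$ with $|c| = |a| + |b'|$. Each resulting pair $[c,b'']$ has total length $n$ and $\min(c,b'') > a$. So the inner induction applies and yields Hall elements strictly larger than $\min(c,b'') > a$.
    \item Second term. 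By the outer induction, $[a,b'']$ is a combination of Hall elements $c' > a$. Each pair $[b',c']$ has total length $n$ and $\min(b',c') > a$. The inner hypothesis again gives Hall elements $> a$.
\end{itemize}
In both cases, if $c = b''$ or $b' = c'$ the bracket vanishes and there is nothing to prove.

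\medskip\noindent\textbf{Main obstacle.} The delicate point is choosing the induction measure. Jacobi's identity does not decrease the total length of the first term. So the induction must be lexicographic: total length first, then decreasing minimum. The inner induction is well-founded only because Hall elements of bounded length form a finite set. I would check carefully that every pair produced by the rewriting has the same total length $n$ and a strictly larger minimum, as argued above. Once the claim holds, closure of $\fg_{\geq q}$ under the bracket, and hence the statement, is immediate.
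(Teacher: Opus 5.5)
Your proof is correct, and its reduction is the same as the paper's. The paper also reduces the lemma to the fact that $[a,b] \in \vect\{c \in \B \mid c > \min(a,b)\}$ for all $a,b \in \B$. The difference is that the paper simply cites this as Theorem~2.1 of \cite{BeauchardLeBorgneMarbach2022}, whereas you prove it from scratch. You use the Jacobi rewriting $[a,[b',b'']] = [[a,b'],b''] + [b',[a,b'']]$ together with a lexicographic induction: first on total length, then downward on the minimum. The downward step is well founded because $X$ is finite, so only finitely many Hall elements have length $<n$.

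The case analysis holds up. When $(a,b)\notin\B$ with $a<b$, the definition forces $b=(b',b'')\in\B$ with $a<b'<b''$. Every pair produced by the rewriting then has total length $n$ and minimum strictly larger than $a$, so the inner hypothesis applies.

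One fact you use tacitly should be stated explicitly: Hall elements are homogeneous and form a basis. Hence the expansion of $[a,b']$ (resp.\ $[a,b'']$) only involves Hall elements of length $|a|+|b'|$ (resp.\ $|a|+|b''|$). This is what keeps the new pairs at total length exactly $n$.

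Your route makes the lemma self-contained, with Viennot's basis theorem as its only input. The paper's citation is shorter but relies on the external reference.
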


\begin{proof}
    By \cite[Thm.\ 2.1]{BeauchardLeBorgneMarbach2022}, for all $a, b \in \B$, $[a, b] \in \vect \{ c \in \B \mid c > \min (a,b) \}$.
    The claim follows.
\end{proof}

\begin{lemma}
    \label{lem:q-mark-ab}
    Let $q \in \B$ and $a, b \in \B_{\ge q}$ such that $(a,b) \in \B$.
    Then
    \begin{equation}
        \label{eq:q-mark-ab}
        q \in \Fac^*((a,b)) 
        \quad \Longleftrightarrow \quad
        q \in \Fac^*(a) 
        \text{ or }
        q \in \Fac^*(b).
    \end{equation}
\end{lemma}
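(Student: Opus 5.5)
The plan is to reduce everything to the recursion \eqref{eq:fac-rec}, $\Fac^*((a,b)) = \{(a,b)\} \cup \Fac^*(a) \cup \Fac^+(b)$, which holds because $(a,b) \in \B$. Two order facts from \cref{def:Hall} then rule out the two boundary elements on which the sets $\Fac^*((a,b))$ and $\Fac^*(a) \cup \Fac^*(b)$ can differ, namely $(a,b)$ itself and $b$. The hypothesis $a, b \in \B_{\ge q}$ is used only in this step.

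\emph{Forward implication.} Let $q \in \Fac^*((a,b))$. By \eqref{eq:fac-rec}, there are three cases.
\begin{itemize}
\item If $q \in \Fac^*(a)$, we are done.
\item If $q \in \Fac^+(b)$, then $q \in \Fac^*(b)$, since $\Fac^+(b) \subset \Fac^*(b) = \{b\} \cup \Fac^+(b)$ by definition.
\item Suppose $q = (a,b)$. The third axiom of \cref{def:Hall} gives $a < (a,b) = q$. This contradicts $a \ge q$, so the case cannot occur.
\end{itemize}

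\emph{Reverse implication.} If $q \in \Fac^*(a)$, then $q \in \Fac^*((a,b))$ directly by \eqref{eq:fac-rec}. Suppose instead $q \in \Fac^*(b) = \{b\} \cup \Fac^+(b)$.
\begin{itemize}
\item If $q \in \Fac^+(b)$, we conclude again by \eqref{eq:fac-rec}.
\item The remaining case is $q = b$. Since $(a,b) \in \B$, the second axiom of \cref{def:Hall} gives $a < b = q$. This contradicts $a \ge q$, so the case is excluded.
\end{itemize}

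I do not expect a real obstacle; the argument is bookkeeping. The only point needing care is that \eqref{eq:fac-rec} really has $\Fac^+(b)$ and not $\Fac^*(b)$, that is, $b$ itself is not a Hall factor of $(a,b)$. I would justify this from \cref{lem:facto}: if $b = \ad_{a_r}^{m_r}\dotsb\ad_{a_1}^{m_1}(X_k)$, then $(a,b)$ is obtained by prepending $a$ (raising $m_r$ by one when $a = a_r$). This is allowed because $a < b$ and, when $b \notin X$, the Hall condition $b' \le a$ holds with $b' = a_r$. Hence $\Fac((a,b)) = \{a\} \cup \Fac(b)$, and unwinding the inductive definition of $\Fac^+$ yields \eqref{eq:fac-rec}.
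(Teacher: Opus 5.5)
Your proposal is correct and follows the paper's proof: both read \eqref{eq:q-mark-ab} off the recursion \eqref{eq:fac-rec} after excluding $q=(a,b)$ and $q=b$ via $q \le a < (a,b)$ and $q \le a < b$. Your derivation of \eqref{eq:fac-rec} from \cref{lem:facto}, namely $\Fac((a,b)) = \{a\} \cup \Fac(b)$ using the Hall condition $a_r \le a$, is a correct supplement to a fact the paper states without proof.
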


\begin{proof}
    Since $q \leq a < b$, $q \neq b$.
    Since $q \leq a < (a,b)$, $q \neq (a,b)$.
    So \eqref{eq:fac-rec} proves \eqref{eq:q-mark-ab}.
\end{proof}

\begin{lemma}
    \label{lem:Iq-ideal}
    Let $q \in \B$.
    The subspace $I_q := \vect \{ b \in \B \mid q \in \Fac^*(b) \}$ is an ideal of $\fg_{\geq q}$.
\end{lemma}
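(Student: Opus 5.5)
We have to show that $I_q$ is a subspace of $\fg_{\geq q}$ and that $[c, b] \in I_q$ for all $c \in \B_{\geq q}$ and $b \in \B$ with $q \in \Fac^*(b)$.

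\emph{Inclusion.} If $q \in \Fac^*(b)$, then $b \geq q$. This holds because every element of $\Fac^*(b)$ is $\leq b$. To see this, induct along \eqref{eq:fac-rec}: write $b=(a,b')$. By the third axiom of \cref{def:Hall} we have $a < (a,b')$, so elements of $\Fac^*(a)$ are $\leq a < b$. Elements of $\Fac^+(b')$ are factors of the shape $\ad_{a_r}^{m_r}\dotsb$, whose Hall factors $a_i$ appear as left arguments of sub-brackets of $b$. Each such $a_i$ is smaller than the corresponding bracket, and the bracket is in turn smaller than $b$ by repeated application of $a<(a,b)$. So $I_q \subset \fg_{\geq q}$.

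\emph{Bracket stability.} We prove that $[c,b]\in I_q$ by induction on $|c|+|b|$. The tool is the classical Hall rewriting algorithm for $[c,b]$ in the basis $\B$, which is also the mechanism behind \cite[Thm.\ 2.1]{BeauchardLeBorgneMarbach2022}.

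\begin{itemize}
\item If $c=b$, the bracket is $0$.
\item If $c>b$, write $[c,b]=-[b,c]$ and use the reversed case.
\item If $c<b$ and $(c,b)\in\B$, then \cref{lem:q-mark-ab} applies, since $c,b\geq q$. Because $q\in\Fac^*(b)$, we get $q\in\Fac^*((c,b))$, hence $(c,b)\in I_q$.
\item If $c<b$ and $(c,b)\notin\B$, then $b=(b',b'')$ with $b'>c$. By Jacobi,
\[
[c,b]=[[c,b'],b'']+[b',[c,b'']].
\]
The marker $q$ lies in $\Fac^*(b')$ or in $\Fac^*(b'')$ by \cref{lem:q-mark-ab}, because $b',b''\geq q$ ($b''>b'>c\ge q$). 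Suppose $q\in\Fac^*(b'')$. Then $[c,b'']\in I_q$ by induction, and bracketing with $b'\in\B_{\ge q}$ stays in $I_q$, again by induction. The case $q\in\Fac^*(b')$ is symmetric: first $[c,b']\in I_q$, then bracketing with $b''$ stays in $I_q$.
\end{itemize}

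\emph{Expected difficulty.} The inductions above apply the claim to linear combinations of basis elements, not to single ones. One must check that every term produced still involves only basis elements of $\B_{\geq q}$ and has strictly smaller total length or a smaller order parameter. \cref{lem:gq-subalgebra} guarantees the terms stay in $\fg_{\ge q}$. The difficulty is that $|c|+|b|$ is preserved by Jacobi, so it is not a valid induction measure on its own. I would instead use Viennot's standard termination order: induct on $|c|+|b|$, and within a fixed length, by decreasing $\min(c,b)$. I would also reuse the exact case analysis of the proof of \cite[Thm.\ 2.1]{BeauchardLeBorgneMarbach2022}, carrying the marker $q$ along, and verify that it is preserved in each rewriting step.
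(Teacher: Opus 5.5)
Your route differs genuinely from the paper's. The paper runs no Jacobi induction. It invokes \cite[Theorem 4.12]{BeauchardLeBorgneMarbach2022}: for $a<b$, $[a,b]$ is spanned by Hall trees built from $a$ and the leaves of the relative folding $T_a(b)$, each leaf used once. The forward direction of \cref{lem:q-mark-ab} pushes the marker from $b$ down to a leaf of $T_a(b)$, and the backward direction pushes it back up to every such tree. Your approach needs only the weaker \cite[Thm.\ 2.1]{BeauchardLeBorgneMarbach2022}, already used in \cref{lem:gq-subalgebra}, and avoids the folding machinery; the price is a two-level induction. That induction does work, but as written your argument is incomplete in four places.

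(i) The inner induction applies only if every same-length pair produced by Jacobi has a strictly larger minimum. Here the induction is at fixed $n=|c|+|b|$ with decreasing $\min$, which is well founded since only finitely many Hall elements have length $<n$. For $c<b=(b',b'')$ with $c<b'<b''$, the needed fact is exactly Thm.\ 2.1: $[c,b'],[c,b'']\in\vect\{e\in\B\mid e>c\}$, so the pairs $(e,b'')$ and $(b',e)$ have minimum $>c$. When the marker must pass through $[c,b']$ or $[c,b'']$, note that $I_q$ and $\vect\{e\in\B\mid e>c\}$ are both spanned by subsets of $\B$. Hence these brackets lie in the span of $\{e\in\B\mid e>c,\ q\in\Fac^*(e)\}$. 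Citing that theorem suffices; you need not redo its proof.

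(ii) The claim must be stated symmetrically, $q\in\Fac^*(c)\cup\Fac^*(b)$, as in the paper. Your antisymmetry step creates pairs whose \emph{smaller} element carries the marker, and then your fourth bullet breaks, since neither $b'$ nor $b''$ need contain $q$. In that case $[c,b'],[c,b'']\in I_q$ by the outer induction, and the inner induction concludes.

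(iii) In each subcase you handle only one Jacobi term. For example, if $q\in\Fac^*(b'')$, the term $[[c,b'],b'']$ also requires the inner induction on the pairs $(e,b'')$, with the marker carried by $b''$.

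(iv) Your proof of $I_q\subset\fg_{\ge q}$ is flawed. The axiom $a<(a,b)$ only compares a tree with its left factor, and right subtrees may exceed $b$: in Hall sets obtained by mirroring the Lyndon basis, $(a,b)<b$ always. The correct argument uses \cref{lem:facto}: there $a_1<\dots<a_r$, and $a_r<b$ by the third axiom, so $\Fac(b)\subset\{e\in\B\mid e<b\}$. Induction then gives $\Fac^*(b)\subset\{e\in\B\mid e\le b\}$. The paper leaves this inclusion implicit.
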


\begin{proof}
    For $a < b$, we recall the notion of relative folding of $b$ with respect to $a$, introduced in \cite[Section~4.1]{BeauchardLeBorgneMarbach2022}, as the element $T_a(b) \in \Br(\B)$ defined by induction as
    \begin{equation}
        T_a(b) := 
        \begin{cases}
            b & \text{if } (a,b) \in \B, \\
            \langle T_a(b_1), T_a(b_2) \rangle & \text{if } (a,b) \notin \B \text{ and } b = (b_1, b_2).
        \end{cases}
    \end{equation}
    Since $a < b$, when $(a,b) \notin \B$, one has indeed $b \notin X$ and $b = (b_1, b_2)$ with $a < b_1 < b_2$.

    It is enough, by bilinearity and antisymmetry, to prove that, for any $a<b$ in $\B_{\geq q}$ such that $q\in\Fac^*(a)\cup\Fac^*(b)$, one has $[a, b] \in I_q$.
    By \cite[Theorem 4.12]{BeauchardLeBorgneMarbach2022}, $[a,b]$ is spanned by elements $c \in \B$ which are trees involving $a$ and each leaf of $T_a(b)$ exactly once.
    Iterating the backwards implication of \eqref{eq:q-mark-ab}, it suffices to prove that either $q \in \Fac^*(a)$ or that there exists a leaf $d$ of $T_a(b)$ such that $q \in \Fac^*(d)$.
    We assumed that $q \in \Fac^*(a) \cup \Fac^*(b)$.
    If $q \in \Fac^*(b)$, iterating the direct implication of \eqref{eq:q-mark-ab}, we obtain that there exists a leaf $d$ of $T_a(b)$ such that $q \in \Fac^*(d)$.
\end{proof}

\begin{lemma}
    \label{lem:kq-in-h}
    For $q \in \B \setminus B$, one has $I_q \subset \vect (\B \setminus B)$.
\end{lemma}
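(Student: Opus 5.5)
It is enough to show that every Hall element $b \in \B$ with $q \in \Fac^*(b)$ lies outside $B$. Indeed, $I_q$ is by definition spanned by exactly such elements. Once each spanning element is known to belong to $\B \setminus B$, we get $I_q \subset \vect(\B \setminus B)$ at once, because $\B$ is a basis of $\mathcal{L}(X)$.

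The plan is to argue by contraposition, using the assumption that $B$ is factor-stable. Take $b \in B$ with $q \in \Fac^*(b)$. We will show $\Fac^*(b) \subset B$. This forces $q \in B$, which contradicts $q \in \B \setminus B$.

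The inclusion $\Fac^*(b) \subset B$ for $b \in B$ follows by induction on the length $|b|$, or equivalently on the recursive definition of $\Fac^+$. First, $b \in B$ by assumption. Next, stability of $B$ under $\Fac$ gives $\Fac(b) = \{a_1, \dotsc, a_r\} \subset B$. Each $a_i$ is a proper subtree of $b$, so $|a_i| < |b|$, and the induction hypothesis gives $\Fac^*(a_i) \subset B$. Since $\Fac^+(b) = \Fac(b) \cup \bigcup_i \Fac^+(a_i)$, we conclude $\Fac^*(b) = \{b\} \cup \Fac^+(b) \subset B$. The base case is $b \in X$. There $\Fac(b) = \emptyset$, so $\Fac^*(b) = \{b\} \subset B$.

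No step is a real obstacle. The only point to be careful about is that "stable by $\Fac$" is understood as $\Fac(b) \subset B$ for every $b \in B$, and that this one-step stability propagates to $\Fac^*$ through the induction above. Note that the lemma only uses $q \notin B$. The hypothesis that $I_q$ is an ideal of $\fg_{\ge q}$ (\cref{lem:Iq-ideal}) is not needed here; it will matter later, when this lemma is combined with it to show that $\vect(\B \setminus B)$ is a subalgebra.
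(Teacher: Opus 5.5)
Your proof is correct and follows the paper's argument: if a spanning element $b$ of $I_q$ were in $B$, factor-stability would give $\Fac^*(b) \subset B$, hence $q \in B$, a contradiction. The one addition is that you spell out the induction on $|b|$ showing that stability under $\Fac$ propagates to $\Fac^*$, which the paper leaves implicit.
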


\begin{proof}
    By definition, $I_q$ is spanned by the $b \in \B$ such that $q \in \Fac^*(b)$.
    For such a $b$, one has $b \notin B$.
    Otherwise, since $B$ is factor-stable, $\Fac^*(b) \subset B$, so $q \in B$.
    Hence $I_q \subset \vect(\B \setminus B)$.
\end{proof}

\begin{lemma}
    \label{lem:sup-fac-lie}
    The subspace $\vect (\B \setminus B)$ is a Lie subalgebra.
\end{lemma}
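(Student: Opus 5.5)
The plan is to show that $\vect(\B \setminus B)$ is a sum of ideals of suitable tail subalgebras, and more precisely to show directly that for $a, c \in \B \setminus B$ the bracket $[a,c]$ lies in $\vect(\B\setminus B)$. By bilinearity and antisymmetry it suffices to treat $a < c$ with $a, c \in \B \setminus B$. The natural choice is $q := a$ in \cref{lem:Iq-ideal}. Then $a, c \in \B_{\geq q}$, and $q = a \in \Fac^*(a)$ since $\Fac^*(a)$ contains $a$ by definition. Hence $a \in I_q$ and $c \in \fg_{\geq q}$.

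Since $I_q$ is an ideal of $\fg_{\geq q}$ by \cref{lem:Iq-ideal}, and $\fg_{\geq q}$ is itself a Lie subalgebra by \cref{lem:gq-subalgebra}, we obtain $[a,c] \in I_q$. Because $q = a \in \B \setminus B$, \cref{lem:kq-in-h} gives $I_q \subset \vect(\B \setminus B)$. Therefore $[a,c] \in \vect(\B \setminus B)$. The case $a = c$ is trivial, since $[a,a] = 0$, and the case $a > c$ follows by antisymmetry. Extending bilinearly over the basis $\B \setminus B$ of the subspace shows that $\vect(\B \setminus B)$ is closed under the bracket.

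There is no real obstacle here once the three lemmas are available; the content lies entirely in \cref{lem:Iq-ideal}. The only point to check carefully is that the ideal property of $I_q$ inside $\fg_{\geq q}$ applies to the pair $(a,c)$. This requires $c \geq q$, which holds precisely because we chose $q$ to be the smaller of the two elements. It also requires $a \in I_q$, which holds because $\Fac^*$ contains the element itself. Choosing $q = \min(a,c)$ makes both conditions hold simultaneously.
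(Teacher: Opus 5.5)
Your proof is correct and is essentially the paper's own argument: for $a<c$ in $\B\setminus B$ you take $q=a$, note $a\in I_a$ (since $a\in\Fac^*(a)$) and $c\in\fg_{\ge a}$, deduce $[a,c]\in I_a$ from \cref{lem:Iq-ideal}, and conclude with $I_a\subset\vect(\B\setminus B)$ from \cref{lem:kq-in-h}. Your handling of the cases $a=c$ and $a>c$ by antisymmetry, and the extension by bilinearity, are also correct.
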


\begin{proof}
    Given $a < b \in \B \setminus B$, since $a \in I_a$ and $b \in \fg_{\ge a}$, \cref{lem:Iq-ideal} gives $[a,b] \in I_a$ and $I_a \subset \vect (\B \setminus B)$ by \cref{lem:kq-in-h}.
\end{proof}

\paragraph{Characterization of the PBW basis of a Hall set.}
We prove \cref{cor:chi_b=pbw}.

\begin{lemma}
    \label{lem:Uq-subalgebra}
    For $q \in \B$, the following subspace is a non-unital subalgebra of $\mathcal{A}(X)$:
    \begin{equation}
        U_q := \vect \{ c_r \dotsb c_1 \mid r \geq 1,\ c_r \geq \dotsb \geq c_1 \geq q,\ c_i \in \B \}.
    \end{equation}
\end{lemma}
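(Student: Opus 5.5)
The plan is to show that $U_q$ is closed under multiplication, so it suffices to show that for any two non-increasing products $u = c_r \dotsb c_1$ and $v = c'_s \dotsb c'_1$ with all letters in $\B_{\geq q}$, the product $uv$ lies in $U_q$. The first step is to observe that $U_q$ is exactly the image of the non-unital part of $U(\fg_{\geq q})$ inside $\mathcal{A}(X)$. Indeed, by \cref{lem:gq-subalgebra}, $\fg_{\geq q}$ is a Lie subalgebra of $\mathcal{L}(X)$ with basis $\B_{\geq q}$, totally ordered by the Hall order. Since $\mathcal{A}(X) = U(\mathcal{L}(X))$, the inclusion $\fg_{\ge q} \hookrightarrow \mathcal{L}(X)$ induces an algebra map $U(\fg_{\ge q}) \to \mathcal{A}(X)$. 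Its image is the subalgebra generated by $\B_{\ge q}$.

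The second step is to identify this generated subalgebra with $U_q \oplus \K 1$. Applying \cref{thm:PBW} to $\fg_{\geq q}$ with the basis $\B_{\geq q}$ shows that every element of $U(\fg_{\ge q})$ is a linear combination of ordered monomials $c_r \dotsb c_1$ with $c_r \geq \dotsb \geq c_1 \geq q$ (grouping equal letters into powers). Hence every product of elements of $\B_{\geq q}$ is a combination of such ordered monomials, plus possibly the constant $1$.

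The main point to check is that no constant term $1$ appears when we reorder a product of length $r \geq 1$. For this, I would use the grading of $U(\fg_{\geq q})$ by the augmentation ideal, or more concretely run the straightening argument directly. Swapping adjacent letters $c c' = c' c + [c,c']$ replaces a word by words of the same length or one less. The bracket $[c,c']$ is expanded in $\B$ and lies in $\fg_{\ge q}$ by \cref{lem:gq-subalgebra}, so it is a nonzero-length combination of letters in $\B_{\geq q}$. Thus words never shrink to length $0$, and induction on (length, number of inversions) expresses any nonempty word in letters of $\B_{\ge q}$ inside $U_q$.

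Closure then follows at once. A product $uv$ is a nonempty word in letters of $\B_{\ge q}$, so it lies in $U_q$. The obstacle I expect is precisely the bracket step: one needs $[c,c'] \in \vect \B_{\ge q}$ and not merely $\in \mathcal{L}(X)$, and this is exactly what \cref{lem:gq-subalgebra} provides.
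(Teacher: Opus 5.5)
Your proof is correct and follows the paper's route. The paper's proof is the single line ``this follows from \cref{lem:gq-subalgebra}'', and your straightening argument supplies the detail behind it: swap $cc' = c'c + [c,c']$ with $[c,c'] \in \vect \B_{\geq q}$, induct on length and number of inversions, and note that no word ever shrinks to length $0$.
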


\begin{proof}
    This follows from \cref{lem:gq-subalgebra}.
\end{proof}

\begin{lemma}
    \label{lem:aRQ-RQ}
    For $q \in \B$, let $\mathcal{R}_q := \sum_{p \leq q} U_p p$.
    For $q \leq a \in \B$, $a \mathcal{R}_q \subset \mathcal{R}_q$. 
\end{lemma}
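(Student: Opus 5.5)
We need to show: for $q \le a \in \B$, $a\mathcal{R}_q \subset \mathcal{R}_q$, where $\mathcal{R}_q = \sum_{p\le q} U_p\, p$. By linearity it suffices to treat $a\, u\, p$ with $p \le q$ and $u \in U_p$; we also allow $u = 1$, since $U_p p$ is meant to contain $p$ itself. In this situation $a \ge q \ge p$, so $a \in \B_{\ge p}$.

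The plan is to show directly that $a u \in U_p$, with the convention that $U_p$ is extended by the unit. We then get $a u p \in U_p p \subset \mathcal{R}_q$. There are two cases.

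\textbf{Case $u = 1$.} Then $a \cdot 1 = a$ is a word $c_1 = a$ with $c_1 \ge p$, so $a \in U_p$. Hence $a p \in U_p p$.

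\textbf{Case $u = c_r \dotsb c_1$ with $c_r \ge \dotsb \ge c_1 \ge p$, $c_i \in \B$.} Since $a \in \B_{\ge p}$, the single-letter word $a$ lies in $U_p$. By \cref{lem:Uq-subalgebra}, $U_p$ is a (non-unital) subalgebra of $\mathcal{A}(X)$. Therefore $a u \in U_p$ as a product of two elements of $U_p$.

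In both cases $a u p \in U_p p$ with $p \le q$, which gives $a \mathcal{R}_q \subset \mathcal{R}_q$.

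The only point that needs care is \cref{lem:Uq-subalgebra} itself, which the paper derives from \cref{lem:gq-subalgebra}. The justification I would give is as follows. $\fg_{\ge p}$ is a Lie subalgebra, so by PBW its enveloping algebra $U(\fg_{\ge p})$ embeds in $\mathcal{A}(X)$. Its augmentation ideal is spanned by the ordered monomials in $\B_{\ge p}$ (PBW applied to $\fg_{\ge p}$ with basis $\B_{\ge p}$), and this span is exactly $U_p$. Hence $U_p$ is the augmentation ideal of $U(\fg_{\ge p})$, which is closed under multiplication.

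Because the result follows this directly, I expect no real obstacle. The thing to check is the bookkeeping: that the term $p$ itself (the case $u=1$) is included in the intended definition of $\mathcal{R}_q$, and that we never need to reorder $a$ past $p$. We do not, because $p$ stays on the right.
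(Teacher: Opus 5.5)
Your proof is correct and is essentially the paper's argument: $a \ge q \ge p$ gives $a \in U_p$, and closure of $U_p$ under multiplication then yields $a(U_p p) = (aU_p)p \subset U_p p$. Your identification of $U_p$ with the augmentation ideal of $U(\mathfrak{g}_{\ge p})$ is a fine way to justify that closure.

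One correction: $U_p$ only contains products with $r \ge 1$ factors, so $p$ itself is not in $U_p p$. The paper later relies on $\mathcal{R}_q$ consisting of products of at least two elements. Your case $u = 1$ is therefore unnecessary, though harmless here.
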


\begin{proof}
    Let $p\le q$. 
    Since $q\le a$, one has $p\le a$, hence $a\in U_p$.
    As $U_p$ is a non-unital subalgebra by \cref{lem:Uq-subalgebra}, $a(U_p p)=(aU_p)p\subset U_p p$.
    Summing over $p\le q$ gives $a\mathcal R_q\subset\mathcal R_q$.
\end{proof}

We now prove the main ordered contraction relation.

\begin{lemma}
    \label{lem:ordered-contraction}
    Let $b_1 \leq \dotsb \leq b_N \in \B$ and $X_j \in X$.
    Set $T_0 := X_j$ and $T_s := b_s T_{s-1}$ for $1 \le s \le N$.
    Define $H_0:=X_j$, and recursively
    \begin{equation}
        H_s:=
        \begin{cases}
            (b_s,H_{s-1}),
            &
            \text{if }H_{s-1}\in\B
            \text{ and }(b_s,H_{s-1})\in\B,\\
            0,
            &
            \text{otherwise.}
        \end{cases}
    \end{equation}
    Then, for every $1 \le s \le N$, $T_s - H_s \in \mathcal{R}_{b_s}$.
\end{lemma}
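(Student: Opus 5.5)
Here $T_s = b_s \dotsb b_1 X_j$ is a product in $\mathcal{A}(X)$, and $H_s$ is either $0$ or the Hall element obtained by successively bracketing $b_s,\dotsc,b_1$ onto $X_j$. I will argue by induction on $s$, with the stronger hypothesis that $T_s - H_s \in \mathcal{R}_{b_s}$, where $H_s$ is identified with its Lie polynomial.

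\textbf{Base case $s=1$.} We have $T_1 = b_1 X_j = [b_1,X_j] + X_j b_1$.
\begin{itemize}
\item The term $X_j b_1$ is $c\,p$ with $c = X_j$ and $p = b_1$. If $X_j \geq b_1$, then $X_j \in U_{b_1}$, so $X_j b_1 \in U_{b_1} b_1 \subset \mathcal{R}_{b_1}$. If $X_j < b_1$, I would rather write $T_1 = b_1 X_j$ directly as $(b_1)\cdot X_j$. This forces a more careful treatment, and the choice of convention has to be checked.
\item For the bracket: if $(b_1,X_j) \in \B$, then $[b_1,X_j] = H_1$. Otherwise $b_1 \geq X_j$ up to order, and \cite[Thm.\ 2.1]{BeauchardLeBorgneMarbach2022} places $[b_1,X_j]$ in $\vect\{c > \min\}$. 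I still need to show this lies in $\mathcal{R}_{b_1}$ (elements $c \geq b_1$ of $\B$ are in $U_p$, but a trailing factor $p$ is required).
\end{itemize}

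\textbf{Inductive step.} Write
\[
T_s = b_s T_{s-1} = b_s H_{s-1} + b_s (T_{s-1}-H_{s-1}).
\]
By induction $T_{s-1}-H_{s-1} \in \mathcal{R}_{b_{s-1}} \subset \mathcal{R}_{b_s}$; monotonicity in $q$ is immediate from the definition as a sum over $p \leq q$. Since $b_s \geq b_{s-1}$, \cref{lem:aRQ-RQ} gives $b_s\mathcal{R}_{b_{s-1}} \subset \mathcal{R}_{b_{s-1}}$, so the second term lies in $\mathcal{R}_{b_s}$. For the first term, use
\[
b_s H_{s-1} = [b_s,H_{s-1}] + H_{s-1} b_s .
\]
Here $H_{s-1} b_s \in U_{b_s} b_s$ whenever $H_{s-1} \geq b_s$. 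The remaining task is to show $[b_s,H_{s-1}] - H_s \in \mathcal{R}_{b_s}$.

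\textbf{Main obstacle.} The crux is a lemma of the form: if $a \in \B$, $c \in \B\cup\{0\}$, and the relevant order relations hold, then
\[
[a,c] - (a,c)\,\mathbf{1}_{(a,c)\in\B} \in \mathcal{R}_a ,
\]
together with the ordering facts $H_{s-1} > b_{s-1}$ and $H_{s-1} \geq b_s$ needed for the term $H_{s-1} b_s$.

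\begin{itemize}
\item When $(a,c)\in\B$, the difference is $0$.
\item When $(a,c)\notin\B$ and $c \in \B$ with $c > a$, the bracket expands in $\B_{>a}$ by \cite[Thm.\ 2.1]{BeauchardLeBorgneMarbach2022}. But a Lie element $e \in \B_{>a}$ on its own is not in $\mathcal{R}_a$, since $\mathcal{R}_a$ only contains products ending in some $p \leq a$. So this route alone is insufficient.
\end{itemize}

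This suggests the statement implicitly covers the case where these contributions cancel, or where $\mathcal{R}$ captures the terms. I would first try to rewrite, using that $c = (c',c'')$ with $c' > a$, that
\[
[a,c] = [[a,c'],c''] + [c',[a,c'']],
\]
and recurse with the folding $T_a$ as in \cref{lem:Iq-ideal}, tracking products rather than brackets.

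If that fails, I would enlarge the inductive hypothesis to a statement of the form: $T_s - H_s \in \mathcal{R}_{b_s} + \mathcal{J}$, where $\mathcal{J}$ is the span of PBW monomials with at least two factors, or of Hall elements $\notin\{H_s\}$ in an appropriate sense. I would then check that this is what \cref{cor:chi_b=pbw} needs.

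The order facts are routine from \cref{def:Hall}: $(b_s,H_{s-1}) \in \B$ requires $b_s < H_{s-1}$, and $b_s < (b_s,H_{s-1})$.
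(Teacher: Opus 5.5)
Your inductive skeleton is the right one. The splitting $T_s=b_sH_{s-1}+b_s(T_{s-1}-H_{s-1})$, and the treatment of the second summand by \cref{lem:aRQ-RQ} together with $\mathcal{R}_{b_{s-1}}\subset\mathcal{R}_{b_s}$, are exactly what the paper does. The gap is in the summand $b_sH_{s-1}$, and in the same way in your base case. You always expand it as $[b_s,H_{s-1}]+H_{s-1}b_s$ and then look for a general statement $[a,c]-(a,c)\mathbf{1}_{(a,c)\in\B}\in\mathcal{R}_a$. As you notice yourself, no such statement holds.

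The ordering fact $H_{s-1}\geq b_s$ that you call routine is also false in general. Take the Hall set of Lyndon words on $X_0<X_1$ (lexicographic order, left standard factorization), where $X_0<(X_0,X_1)<X_1$. Then $X_j=X_1$, $b_1=X_0$, $b_2=X_1$ give $H_1=(X_0,X_1)<b_2$. In this case neither $H_1b_2$ nor the nonzero Lie element $[b_2,H_1]$ lies in $\mathcal{R}_{b_2}$; only their sum does. Neither of your fallbacks is carried out, so the argument does not close. The second fallback would also need the span of PBW monomials of length at least two to be stable under left multiplication by $b_s$, which fails because reordering creates length-one commutators.

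What is missing is a case distinction on how $b_s$ compares with $H_{s-1}$, plus one structural observation. If $H_{s-1}=0$, then $H_s=0$ and the summand vanishes. Otherwise $H_{s-1}$ is either the letter $X_j$ (for $s=1$) or equal to $(b_{s-1},H_{s-2})$, whose left factor satisfies $b_{s-1}\leq b_s$.

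\emph{Case $b_s<H_{s-1}$.} The condition of \cref{def:Hall} is then automatically met. So $H_s=(b_s,H_{s-1})\in\B$ is exactly $[b_s,H_{s-1}]$, and $b_sH_{s-1}-H_s=H_{s-1}b_s\in U_{b_s}b_s\subset\mathcal{R}_{b_s}$. The troublesome situation ``$c>a$ but $(a,c)\notin\B$'' simply never occurs.

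\emph{Case $b_s\geq H_{s-1}$.} Then $H_s=0$, and you should not split off the bracket at all. The product $b_sH_{s-1}$ is already ordered, lying in $U_{H_{s-1}}H_{s-1}\subset\mathcal{R}_{H_{s-1}}\subset\mathcal{R}_{b_s}$.

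This second case is precisely the move you began in your base case for $X_j<b_1$, where $b_1X_j\in U_{X_j}X_j\subset\mathcal{R}_{b_1}$. You neither finished it nor transferred it to the inductive step. With this dichotomy, the base case is just the instance $H_0=X_j$ of the same argument.
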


\begin{proof}
    We proceed by induction on $s$.
    Assuming that either $s = 0$ or $1 \le s < N$ and $T_s - H_s \in \mathcal{R}_{b_s}$, write
    \begin{equation}
        \label{eq:TSHS}
        T_{s+1} - H_{s+1}
        = b_{s+1} (T_s - H_s) + (b_{s+1} H_s - H_{s+1}).
    \end{equation}
    If $s = 0$, $T_0 - H_0 = X_j - X_j = 0$ so the first term vanishes.
    If $s \ge 1$, since $b_s \le b_{s+1}$, by \cref{lem:aRQ-RQ} and the induction hypothesis, $b_{s+1} (T_s-H_s) \in \mathcal{R}_{b_s} \subset \mathcal{R}_{b_{s+1}}$.
    It remains to treat the second term.
    If $H_s=0$, this term is zero.
    Otherwise, $H_s\in\B$. 
    \begin{itemize}
        \item If $b_{s+1} < H_s$, then $H_{s+1} = (b_{s+1},H_s) \in \B$ and $b_{s+1} H_s - H_{s+1} = H_s b_{s+1} \in \mathcal{R}_{b_{s+1}}$.

        \item If $b_{s+1} \geq H_s$, then $H_{s+1} = 0$ and $b_{s+1} H_s \in \mathcal{R}_{H_s} \subset \mathcal{R}_{b_{s+1}}$.
    \end{itemize}
    This proves that $T_{s+1} - H_{s+1} \in \mathcal{R}_{b_{s+1}}$, which concludes the proof by induction.    
\end{proof}

\begin{lemma}
    \label{cor:chi_b=pbw}
    Let $\B$ be a Hall set over $X$.
    In the PBW basis of $\mathcal{A}(X)$ associated with $\B$, for any $b_1 \leq \dotsb \leq b_N \in \B$ and $X_j \in X$, the projection of $b_N \dotsb b_1 X_j$ onto $\mathcal{L}(X)$ is $[b_N, [\dotsc, [b_1, X_j]\dotsb]]$ if $(b_N,(\dotsc, (b_1,X_j)\dotsb)) \in \B$, and $0$ otherwise.
\end{lemma}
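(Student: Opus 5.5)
The plan is to apply \cref{lem:ordered-contraction} with $s = N$. Set $T_N = b_N \dotsb b_1 X_j$; the lemma gives $T_N - H_N \in \mathcal{R}_{b_N}$. The statement then reduces to two facts. First, the projection of $H_N$ onto $\mathcal{L}(X)$ equals the claimed value. Second, every element of $\mathcal{R}_{b_N}$ has zero component on $\mathcal{L}(X)$ in the PBW basis associated with $\B$.

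The first fact follows from the definition of $H_N$. If the nested tree $(b_N,(\dotsc,(b_1,X_j)\dotsb))$ belongs to $\B$, then each intermediate tree $(b_s,(\dotsc,(b_1,X_j)\dotsb))$ is in $\B$. This uses the Hall set property in \cref{def:Hall}: a tree in $\B$ has both children in $\B$. Hence the recursion never produces $0$, and $H_N$ is exactly this Hall element. Identifying Hall trees with their Lie brackets, $H_N = [b_N,[\dotsc,[b_1,X_j]\dotsb]]$, which is a single PBW basis element of degree one, so its projection onto $\mathcal{L}(X)$ is itself. If the nested tree is not in $\B$, then at some step either $H_{s-1} = 0$ or $(b_s,H_{s-1}) \notin \B$. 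The recursion then yields $H_N = 0$, because once some $H_s$ is zero all later ones are. The only case to check is whether $H_N$ could equal the nested tree by some other route, and the recursion rules this out.

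The second fact is where the real work is. I must show that every $c_r\dotsb c_1 p$ with $r\ge1$, $c_r\ge\dotsb\ge c_1\ge p$ and $c_i, p\in\B$ is a PBW monomial of degree $r+1\ge 2$. Such products are exactly the ordered products in the PBW basis for the order on $\B$: nonincreasing from left to right, matching the convention $b_p^{n_p}\dotsb b_1^{n_1}$ with $b_p>\dotsb>b_1$ in \cref{thm:PBW}. So no rewriting is needed. Each spanning element of $U_p p$ is a PBW basis monomial of length at least $2$, and its component along any single $b\in\B$ is zero. By linearity, $\mathcal{R}_{b_N}=\sum_{p\le b_N}U_p p$ has vanishing projection onto $\mathcal{L}(X)$.

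The main obstacle is confirming that the Hall order used in \cref{thm:PBW} matches this nonincreasing convention, so that $c_r \dotsb c_1 p$ really is a basis element including when some $c_i$ are equal. Equal factors correspond to powers, so this is allowed. I also need the degree-one PBW elements to be exactly $\B$, which identifies the \enquote{projection onto $\mathcal{L}(X)$} with the sum of the $b$-components. Combining the two facts gives the statement.
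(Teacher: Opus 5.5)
Your proposal is correct and follows the paper's proof: you apply \cref{lem:ordered-contraction} at $s=N$, identify $H_N$ with either the nested Hall element or $0$, and note that $\mathcal{R}_{b_N}$ has no component on $\mathcal{L}(X)$. Your remark that the spanning products $c_r\dotsb c_1 p$ with $c_r\ge\dotsb\ge c_1\ge p$ are already ordered PBW monomials of length at least two makes explicit what the paper's brief phrase ``a product of at least two elements'' relies on.
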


\begin{proof}
    We apply \cref{lem:ordered-contraction}.
    Any element in $\mathcal{R}_{b_N}$ is a product of at least two elements so has zero projection onto $\mathcal{L}(X)$.
    The conclusion follows.
\end{proof}

\paragraph{Compatibility of the two Poincaré--Birkhoff--Witt decompositions.}
We define an order for which $B$ is an initial segment, and compare the two PBW bases.
Let $\fh := \vect (\B \setminus B)$.

\begin{definition}
    \label{def:block-order}
    We define the \emph{block order} $\prec$ on $\B$ as the (non-Hall) total order which agrees with $<$ on $B$ and on $\B \setminus B$, and for which $a \prec b$ for every $a \in B$ and $b \in \B \setminus B$.
\end{definition}

\begin{lemma}
    \label{lem:JB}
    Let $N \ge 1$ and $c_1 \le \dotsb \le c_N \in \B$ such that $c_i \notin B$ for some $i$.
    Then
    \begin{equation}
        c_N \dotsb c_1 \in \fh \mathcal{A}(X).
    \end{equation}
\end{lemma}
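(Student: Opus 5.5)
The idea is to bring one factor lying outside $B$ to the far left of the product, using \cref{lem:bubble}, and to check that every commutator produced along the way stays in $\fh = \vect(\B\setminus B)$. The ideal structure of \cref{lem:Iq-ideal} together with \cref{lem:kq-in-h} should guarantee this.

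Fix an index $i$ with $q := c_i \notin B$. Write
\[
c_N \dotsb c_1 = (c_N \dotsb c_{i+1}\, c_i)\, c_{i-1}\dotsb c_1 .
\]
Since $I := \fh\mathcal{A}(X)$ is a right ideal, it suffices to show that $c_N \dotsb c_{i+1} c_i \in \fh\mathcal{A}(X)$. The trailing factor $c_{i-1}\dotsb c_1$ is then harmless.

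Apply \cref{lem:bubble} in $\mathcal{A}(X)$ with $a := c_i$ and $a_1,\dotsc,a_{N-i} := c_{i+1},\dotsc,c_N$. This gives
\[
c_N\dotsb c_{i+1} c_i = \sum_{J} \mu_J(c_i)\, a_{j_h}\dotsb a_{j_1},
\]
where each $\mu_J(c_i)$ is an iterated bracket $[c_{t_\ell},[\dotsc,[c_{t_1},c_i]\dotsb]]$ with $c_i$ innermost and all outer letters $c_t \geq c_i = q$. It therefore remains to show that every $\mu_J(c_i)$ lies in $\fh$. Each term $\mu_J(c_i)\,a_{j_h}\dotsb a_{j_1}$ then belongs to $\fh\mathcal{A}(X)$, and so does the sum.

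To see that $\mu_J(c_i)\in\fh$, argue by induction on the number of outer brackets, showing that $\mu_J(c_i)\in I_q$.
\begin{itemize}
\item For the base case, $q \in \Fac^*(q)$, so $c_i = q \in I_q$.
\item For the inductive step, each outer letter $c_t$ belongs to $\B_{\geq q}\subset\fg_{\geq q}$. By \cref{lem:Iq-ideal}, $I_q$ is an ideal of $\fg_{\geq q}$, so bracketing an element of $I_q$ with $c_t$ stays in $I_q$.
\end{itemize}
This step uses $I_q \subset \fg_{\geq q}$. That inclusion is implicit in the statement of \cref{lem:Iq-ideal}, and it also follows directly: $q \in \Fac^*(b)$ forces $b \geq q$, since Hall factors are smaller than the element they build (by the third axiom of \cref{def:Hall}, $a<(a,b)$). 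Finally, \cref{lem:kq-in-h} gives $I_q \subset \vect(\B\setminus B) = \fh$, because $q\notin B$.

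The only delicate point is this use of \cref{lem:Iq-ideal}: one must check that all outer letters are $\geq q$, which holds by the ordering $c_1\le\dotsb\le c_N$, and that the ideal property applies to repeated brackets. Both are straightforward once the lemma is granted. The rest is bookkeeping with the right-ideal property of $\fh\mathcal{A}(X)$.
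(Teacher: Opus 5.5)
Your proof is correct and is essentially the paper's argument. The paper likewise isolates $q = c_i \notin B$ and uses that $I_q$ is an ideal of $\fg_{\ge q}$ containing $q$, so that the right ideal $I_q\mathcal{A}(X)$ is stable under left multiplication by the $c_j \ge q$, then concludes with \cref{lem:kq-in-h}; your use of \cref{lem:bubble} just unfolds that stability property explicitly (the degenerate case $i = N$, where there is nothing to bubble, is immediate).
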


\begin{proof}
    Write $c_N \dotsb c_1 = (c_N \dotsb c_{i+1}) q (c_{i-1} \dotsb c_1)$, where $q = c_i \notin B$ and $c_j \geq q$ for $j > i$.
    By \cref{lem:Iq-ideal}, $I_q$ is a Lie ideal of $\fg_{\ge q}$ and hence the right ideal $I_q \mathcal{A}(X)$ of $\mathcal{A}(X)$ is stable under left-multiplication by $\fg_{\ge q}$.
    Since $q \in I_q$, $c_N \dotsb c_1 \in I_q \mathcal{A}(X)$.
    By \cref{lem:kq-in-h}, $I_q \mathcal{A}(X) \subset \fh \mathcal{A}(X)$.
\end{proof}

For $b \in B$, we denote by $\chi_b^{\mathrm{H}}$ and $\chi_b^{\prec}$ the coordinate forms along the basis element $b$ of the PBW bases associated with $(\B,<)$ and with $(\B,\prec)$.

\begin{lemma}
    \label{cor:chi-compat}
    For every $b \in B$, one has $\chi_b^{\prec} = \chi_b^{\mathrm{H}}$.
\end{lemma}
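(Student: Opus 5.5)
The plan is to show that both coordinate forms $\chi_b^{\prec}$ and $\chi_b^{\mathrm{H}}$ can be computed through one decomposition of $\mathcal{A}(X)$ that does not depend on the chosen order. By \eqref{eq:Ug=I+} applied to the block order $\prec$, in which $B = \{b_1 \prec \dotsb \prec b_d\}$ is an initial segment and $\B \setminus B$ is a basis of $\fh$, we have
\begin{equation}
    \mathcal{A}(X) = \Big( \bigoplus_{n \in \N^d} \R\, \mathbf{b}^n \Big) \oplus \fh \mathcal{A}(X).
\end{equation}
Here $\mathbf{b}^n = b_d^{n_d} \dotsb b_1^{n_1}$, and $\chi_b^{\prec}$ is the coefficient of $b = \mathbf{b}^{e_b}$ in this decomposition. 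In particular, $\chi_b^{\prec}$ vanishes on $\fh\mathcal{A}(X)$ and on $\mathbf{b}^n$ for $n \neq e_b$, and it equals $1$ on $b$.

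It then suffices to check that $\chi_b^{\prec}$ takes the same values as $\chi_b^{\mathrm{H}}$ on every element of the Hall PBW basis $c_N \dotsb c_1$ with $c_N \geq \dotsb \geq c_1$ in $\B$. We split into two cases.
\begin{itemize}
    \item \emph{Some $c_i \notin B$.} Then \cref{lem:JB} gives $c_N \dotsb c_1 \in \fh\mathcal{A}(X)$, so $\chi_b^{\prec}(c_N \dotsb c_1) = 0$. Also $\chi_b^{\mathrm{H}}(c_N \dotsb c_1) = 0$, unless the word is the single letter $b$. That is impossible, since $b \in B$ while $c_i \notin B$.
    \item \emph{All $c_i \in B$.} The restriction of $\prec$ to $B$ coincides with $<$. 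Hence the nonincreasing product $c_N \dotsb c_1$ is exactly some $\mathbf{b}^n$ of the PBW basis for $\prec$. So $\chi_b^{\prec}(c_N \dotsb c_1) = \mathbf{1}_{n = e_b} = \chi_b^{\mathrm{H}}(c_N \dotsb c_1)$.
\end{itemize}
Both functionals are linear and agree on a basis of $\mathcal{A}(X)$, hence they are equal.

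The main obstacle is the first case, which rests on \cref{lem:JB}. That lemma is what makes a Hall-sorted word containing a non-$B$ letter lie in $\fh\mathcal{A}(X)$ without reordering letters outside the Hall order. Since the lemma is already available, the remaining subtle point is the bookkeeping: confirming that $\chi_b^{\prec}$ is characterized by the decomposition above, which involves only $b_1, \dotsc, b_d$ and $\fh$, as noted in the remark after \cref{thm:blattner}.
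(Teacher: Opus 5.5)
Your proof is correct and follows essentially the same route as the paper's. You take the decomposition \eqref{eq:Ug=I+} for the block order $\prec$ and compare the two functionals on the Hall-PBW basis. When some letter lies outside $B$, you use \cref{lem:JB}; when all letters lie in $B$, you use that $<$ and $\prec$ agree on $B$.
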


\begin{proof}
    Let $b \in B$. It suffices to compare these two linear maps on the Hall-PBW basis.

    Let $M_B$ be the span of the products $c_N \dotsb c_1$ with $N \in \N$ and $c_N \geq \dotsb \geq c_1 \in B$ (including the empty product~$1$).
    By \eqref{eq:Ug=I+} applied to $(\B,\prec)$, $\mathcal{A}(X) = M_B \oplus \fh \mathcal{A}(X)$.

    Let $a := c_N \dotsb c_1$ where $c_N \ge \dotsb \ge c_1$ be a Hall-PBW monomial.
    \begin{itemize}
        \item 
        If all $c_i \in B$, since $<$ and $\prec$ agree on $B$, $a$ is the same PBW basis element for both orders, so $\chi_b^{\prec}(a) = \chi_b^{\mathrm{H}}(a)$.

        \item 
        If $c_i \notin B$ for some $i$, then $a$ is a Hall-PBW monomial distinct from $b$, so $\chi_b^{\mathrm{H}}(a) = 0$.
        By \cref{lem:JB}, $a \in \fh \mathcal{A}(X)$ and $\chi_b^\prec(a) = 0$.\qedhere
    \end{itemize}
\end{proof}

\appendix

\section{Inverse function theorem for power series}

In \cref{sec:coordinate-changes}, we used the inverse function theorem for (formal or convergent) power series.

\subsection{Formal version}
\label{sec:ift-formal}

The formal version is a classical result (see e.g.\ \cite[Theorem 9.7]{Sambale2023}).
We give a self-contained proof here, to set up the notations for the proof of our convergent version.
We use the following notion.

\begin{definition}[Order]
    For $f \in \kx$, we denote by $\ord(f)$ the least total degree of a monomial occurring in $f$.
    By convention, $\ord(0) = + \infty$.
    In particular, $f \in \mathfrak{m}_x$ if and only if $\ord(f) \geq 1$.
    
    For a vector series $f = (f_1, \dots, f_d) \in (\kx)^d$, we set $\ord(f) = \min_i \ord(f_i)$.
\end{definition}

\begin{lemma}
    \label{lem:HUV}
    Let $H \in (\kx)^d$ with $\ord(H) \geq 2$ and $U, V \in (\ky)^d$ with $\ord(U) \geq 1$ and $\ord(V) \geq 1$.
    Then $\ord(H(U) - H(V)) \geq \ord(U-V) + 1$.
\end{lemma}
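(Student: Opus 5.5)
The statement is a multilinear estimate, so the plan is to reduce it to monomials and then factor a difference of products. Write $H = (H_1, \dotsc, H_d)$ and treat each component separately. Each $H_i$ is a formal sum $\sum_{|m| \geq 2} h_{i,m} x^m$, and substituting $U, V \in \mathfrak{m}_y^d$ makes sense coefficientwise. This is because $\ord(U^m) \geq |m|$, so only finitely many monomials contribute to each coefficient. Since $\ord$ satisfies $\ord(\sum_k f_k) \geq \inf_k \ord(f_k)$, it suffices to prove, for every multi-index $m$ with $|m| \geq 2$,
\begin{equation*}
    \ord(U^m - V^m) \geq \ord(U - V) + 1 .
\end{equation*}
For the infinite sum, we note that for a fixed target degree only finitely many $m$ contribute, so the inequality passes to the sum.

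For a single monomial, list the factors: write $U^m = u_1 \dotsm u_n$ and $V^m = v_1 \dotsm v_n$ with $n = |m| \geq 2$. Here each $u_k$ is some $U_j$ and $v_k$ is the corresponding $V_j$. Telescoping gives
\begin{equation*}
    u_1 \dotsm u_n - v_1 \dotsm v_n = \sum_{k=1}^n v_1 \dotsm v_{k-1} (u_k - v_k) u_{k+1} \dotsm u_n .
\end{equation*}
In each term, $u_k - v_k$ has order at least $\ord(U-V)$. The remaining $n - 1 \geq 1$ factors each lie in $\mathfrak{m}_y$, so they contribute order at least $n - 1 \geq 1$. Using $\ord(fg) = \ord(f) + \ord(g)$, or just $\geq$, which is all we need, each term has order at least $\ord(U-V) + 1$, and hence so does the sum.

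The only point needing care is the degenerate case $U = V$. There $\ord(U-V) = +\infty$ and $H(U) - H(V) = 0$, so the claim holds trivially under the convention $\ord(0) = +\infty$. There is no real obstacle. The only bookkeeping step is justifying that the substitution and the order inequality commute with the infinite sum over $m$, which follows because $\ord(U^m - V^m) \geq |m|$ makes the family summable in the $\mathfrak{m}_y$-adic topology.
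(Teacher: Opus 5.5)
Your proposal is correct and follows the paper's own argument: reduce to monomials of $H$, write $U^m - V^m$ as a telescoping sum in which each term contains one component of $U-V$ and $|m|-1 \geq 1$ factors in $\mathfrak{m}_y$, and then sum over the monomials. Your added remark that the sum over $m$ has only finitely many contributions in each degree fills in the "summing over the monomials of $H$" step that the paper leaves implicit.
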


\begin{proof}
    Let $\alpha \in \N^d$ with $|\alpha| \geq 2$. 
    Write the difference $U^\alpha-V^\alpha$ as a sum of terms in which one factor is some component of $U-V$, while the remaining $|\alpha|-1$ factors have order at least $1$. 
    Hence every such term has order at least $\ord(U-V)+|\alpha|-1\geq \ord(U-V)+1$.
    Summing over the monomials of $H$ gives the desired estimate.
\end{proof}

\begin{proposition}
    \label{prop:ift-formal}
    Let $\tau : \ky \to \kx$ be a local homomorphism with $d = d'$.
    Then $\tau$ is an isomorphism if and only if the matrix $A := (\partial_{x_j} (\tau y_i) \vert_{x=0})_{1 \leq i, j \leq d}$ is invertible.
\end{proposition}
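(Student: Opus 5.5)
Necessity is elementary and I would dispose of it first. If $\tau$ has an inverse $\sigma : \kx \to \ky$, it is local by \cref{lem:auto-local}. Write $Y := (\tau y_1,\dotsc,\tau y_d)$ and $Z := (\sigma x_1, \dotsc, \sigma x_d)$. By \cref{lem:local-homo-substitution}, $\sigma\circ\tau = \Id$ gives $Y(Z(y)) = y$. Taking linear parts (only linear terms of $Y$ and $Z$ contribute to degree one, since both have order $\geq 1$) yields $A B = I_d$, where $B$ is the linear part of $Z$. Hence $A$ is invertible.

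For sufficiency, assume $A$ is invertible. After composing with the linear automorphism $y \mapsto A^{-1} y$, which is clearly a local isomorphism, we may assume $A = I_d$. Then $Y(x) = x + H(x)$ with $H \in (\kx)^d$ and $\ord(H) \geq 2$.

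I then construct a right inverse $Z \in (\ky)^d$ with $\ord(Z) \geq 1$ and $Y(Z(y)) = y$. This equation is equivalent to the fixed-point equation $Z = \Phi(Z) := y - H(Z)$. Define iterates $Z^{(0)} := y$ and $Z^{(k+1)} := \Phi(Z^{(k)})$; all have order $\geq 1$. By \cref{lem:HUV},
\[
\ord\bigl(Z^{(k+1)} - Z^{(k)}\bigr) \geq \ord\bigl(Z^{(k)} - Z^{(k-1)}\bigr) + 1 .
\]
So the sequence stabilizes degree by degree and converges in the $\mathfrak{m}_y$-adic topology to some $Z$. Substitution $U \mapsto H(U)$ is continuous for this topology, again by \cref{lem:HUV}, so $Z = \Phi(Z)$, i.e.\ $Y \circ Z = y$. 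The same lemma gives uniqueness of the fixed point, since two fixed points $Z, Z'$ would satisfy $\ord(Z - Z') \geq \ord(Z - Z') + 1$.

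Let $\sigma : \kx \to \ky$ be the local homomorphism with $\sigma x_i = Z_i$ (\cref{lem:local-homo-substitution}). Then $\sigma\circ\tau$ sends $y_i$ to $Y_i(Z) = y_i$, so $\sigma\circ\tau = \Id$ by the uniqueness part of \cref{lem:local-homo-substitution}.

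It remains to show that $\tau\circ\sigma = \Id$. Since $Z = y + O(2)$, its linear part is the identity, so the same construction applied to $Z$ produces a local $\sigma'$ with $\sigma'\circ\sigma = \Id$. Then
\[
\tau = \sigma'\circ\sigma\circ\tau = \sigma',
\]
hence $\tau\circ\sigma = \Id$ and $\tau$ is an isomorphism.

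The main technical point is justifying that substitution commutes with the adic limits and that composition of local homomorphisms corresponds to substitution of series. Both follow from truncation arguments: the coefficients of degree $\leq n$ of $H(U)$ depend only on the part of $U$ of degree $\leq n$.
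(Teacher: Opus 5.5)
Your proof is correct. For necessity and for building the left inverse it matches the paper: the same fixed-point iteration, controlled by \cref{lem:HUV}. Normalizing to $A = I_d$ first, instead of carrying $B = A^{-1}$ as the paper does, is only cosmetic.

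The genuine difference is how you show the left inverse is two-sided. The paper sets $W := X(Y(x))$ and deduces $Y(W) = Y(x)$ from $Y(X(y)) = y$. It then applies \cref{lem:HUV} to $\Delta := W - x = B\bigl(H(x) - H(W)\bigr)$, obtaining $\ord(\Delta) \geq \ord(\Delta) + 1$ and hence $W = x$. This is a direct injectivity argument for substitution into $Y$, the same mechanism as the fixed-point uniqueness you note but never use. You could have closed your proof that way too.

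You instead observe that $\sigma$ again has identity linear part. You run the construction a second time to get $\sigma'$ with $\sigma' \circ \sigma = \Id$, and conclude $\tau = \sigma' \circ \sigma \circ \tau = \sigma'$ by associativity.

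Your route is purely structural: it needs only that left inverses exist and again satisfy the hypothesis, and it avoids a second order estimate. The paper's route avoids re-running the iteration and gets $X(Y(x)) = x$ from a one-line computation.
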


\begin{proof}
    \emph{Necessity.}
    Assume that $\tau$ is an isomorphism, and let $\eta : \kx \to \ky$ be its inverse.
    Let $B := (\partial_{y_j} (\eta x_i) \vert_{y=0})_{1 \leq i, j \leq d}$.
    Write $\tau y_i = \sum A_{ij} x_j \pmod{\mathfrak{m}_x^2}$ and $\eta x_j = \sum B_{jk} y_k \pmod{\mathfrak{m}_y^2}$.
    Since $\eta \circ \tau = \Id_{\ky}$ and $\tau \circ \eta = \Id_{\kx}$, we obtain $A B = B A = \Id_d$.
    So $A \in \GL_d(\K)$.

    \medskip

    \emph{Sufficiency: construction of a left inverse.}
    Conversely, assume that $A \in \GL_d(\K)$.
    Let $Y(x) := (Y_1(x), \dotsc, Y_d(x))^t$ where $Y_i(x) := \tau y_i$.
    We write $Y$ as its linear part plus higher-order terms:
    \begin{equation}
        Y(x) = A x + H(x) \qquad \text{where} \qquad \ord(H) \geq 2.
    \end{equation}
    We seek a local homomorphism $\eta : \kx \to \ky$ such that $Y(X(y)) = y$ where $X(y) := (X_1(y), \dotsc, X_d(y))$ where $X_i(y) = \eta x_i$.
    Hence, letting $B := A^{-1}$, we must solve the equation
    \begin{equation}
        \label{eq:XBHX}
        X(y) = B y - B H(X(y)).
    \end{equation}
    We solve this equation by formal iteration, letting $X^{(0)}(y) := B y$ and, for $N \geq 0$,
    \begin{equation}
        X^{(N+1)}(y) := B y - B H(X^{(N)}(y)).
    \end{equation}
    For all $N \geq 0$, $\ord (X^{(N)}) \geq 1$.
    Moreover, the homogeneous parts of $X^{(N)}$ stabilize degree by degree.
    Indeed, \cref{lem:HUV} proves by induction that $\ord(X^{(N+1)}-X^{(N)}) \geq N + 2$.
    So we may define a formal vector series $X(y) := \lim_{N \to \infty} X^{(N)}(y)$, which, passing to the limit, solves \eqref{eq:XBHX}.
    Hence we have $\eta \circ \tau = \Id_{\ky}$.

    \medskip \noindent \emph{Sufficiency: the left inverse is a right inverse.}
    It remains to prove that $\tau\circ\eta=\Id_{\kx}$. 
    Set $W(x) := X(Y(x))$.
    Since $Y(X(y))=y$, substituting $y=Y(x)$ gives $Y(W(x)) = Y(x)$.
    Define $\Delta(x):=W(x)-x$.
    From $Y(W(x))=Y(x)$ and $Y(x)=Ax+H(x)$, we obtain
    \begin{equation}
        \Delta(x) = B (H(x) - H(W(x))).
    \end{equation}
    By \cref{lem:HUV}, $\ord(\Delta(x)) \geq \ord(\Delta(x)) + 1$, so $\ord(\Delta(x)) = + \infty$ and $W(x) = x$.
    This proves that $\tau\circ\eta=\Id_{\kx}$, and thus $\tau$ is an isomorphism with inverse $\eta$.
\end{proof}

\subsection{Convergent version}
\label{sec:ift-convergent}

The convergent version is analogous to the inverse function theorem for analytic functions.
With our precise notations, it is studied in \cite[Part II, Chapter II]{Serre1992}.

\begin{proposition}
    \label{prop:ift-convergent}
    Let $\tau:\ky\to\kx$ be an isomorphism.
    Then $\tau$ is convergent (in the sense of \cref{def:conv-homo}) if and only if its inverse is convergent.
\end{proposition}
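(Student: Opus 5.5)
By symmetry, since $\tau^{-1}$ is again an isomorphism, it suffices to show: if $\tau$ is convergent, then $\eta := \tau^{-1}$ is convergent. As in the proof of \cref{prop:ift-formal}, set $Y(x) := (\tau y_1, \dotsc, \tau y_d)^t = Ax + H(x)$ with $A \in \GL_d(\K)$ and $\ord(H) \geq 2$. Let $X(y) := (\eta x_1, \dotsc, \eta x_d)^t$. It is the unique formal solution with $\ord(X) \geq 1$ of the fixed-point equation \eqref{eq:XBHX}, namely $X = By - BH(X)$ with $B = A^{-1}$. By \cref{lem:local-homo-substitution}, convergence of $\eta$ means exactly that each component of $X$ is a convergent series. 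I plan to obtain this by a majorant-series argument, which avoids a contraction argument in Banach spaces.

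\textbf{Step 1: a scalar majorant.} Since $H$ is convergent with $\ord(H) \geq 2$, there exist $M, \rho > 0$ such that, for every $m$ with $|m| \geq 2$ and every $i$, $\sum_{k} |B_{ik}|\,|H_{k,m}| \leq M \rho^{-|m|}$. Let $\beta := \max_i \sum_j |B_{ij}|$. Consider the scalar analytic equation in one variable $t$
\begin{equation}
    \Phi(t) = \beta t + \sum_{n \geq 2} M \rho^{-n} \binom{n+d-1}{d-1} \Phi(t)^n .
\end{equation}
The binomial factor counts the multi-indices $m$ with $|m| = n$; it only modifies the radius, so I may replace it by $2^{n+d-1}$ and simply rename $\rho$. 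The right-hand side is then $\beta t + \phi(\Phi)$ with $\phi(s) = M' s^2/(1 - s/\rho')$, a rational function. The equation becomes a quadratic equation for $\Phi$. Its root vanishing at $0$ is analytic near $t = 0$ by the explicit quadratic formula, or alternatively by the classical one-variable analytic implicit function theorem, since $\partial_\Phi$ of the equation equals $1$ at the origin. Hence $\Phi$ has nonnegative coefficients and a positive radius of convergence.

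\textbf{Step 2: coefficientwise domination.} I will show by induction on the degree $n$ that, for every component $i$, $\sum_{|m| = n} |X_{i,m}| \leq \Phi_n$, where $\Phi_n$ is the coefficient of $t^n$ in $\Phi$. In \eqref{eq:XBHX}, the coefficients of degree $n$ of $X$ depend only on the linear term $By$ and on the lower-degree coefficients of $X$ entering $H(X)$, because $\ord(H) \geq 2$. The triangle inequality and the submultiplicativity of \cref{lem:submultiplicativity}, applied to the weighted $\ell^1$ sums over each degree, then give the bound degree by degree, in the same way as \cref{lem:composition-estimate}. Concretely, I would compare with the substitution $y_j \mapsto t$ for all $j$, which turns $\opnorm{\cdot}_r$ into the evaluation of the majorant at $t = r$.

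\textbf{Conclusion and main obstacle.} Step 2 yields $\opnorm{X_i}_r \leq \Phi(r) < \infty$ for small $r > 0$, so $\eta$ is convergent. The delicate point is the bookkeeping in Step 2. I must make sure that the majorant is stated uniformly over all multi-indices of a given total degree, so that composition reduces to a one-variable inequality, and that the induction is genuinely well-founded, which relies on $\ord(H) \geq 2$ exactly as in \cref{lem:HUV}. If this becomes awkward, my fallback is to cite \cite[Part II, Chapter II]{Serre1992} directly.
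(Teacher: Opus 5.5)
Your argument is correct, but it takes a genuinely different route from the paper. The paper does not build a majorant equation. After reducing to $A=\Id_d$, it picks $s>0$ with $\max_i\opnorm{H_i}_s\le s/2$, which is possible because $\ord(H)\ge 2$ gives $\opnorm{H_i}_s=O(s^2)$. It then shows by induction, using only \cref{lem:composition-estimate}, that the Picard iterates $X^{(N)}$ from the formal proof stay in the ball $\max_i\opnorm{X^{(N)}_i}_{s/2}\le s$. Since these iterates stabilize degree by degree, the bound passes to the limit $X$. That proof reuses the existing lemmas almost verbatim; its only extra ingredient is this (mild) lower-semicontinuity step when passing to the limit.

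Your Cauchy majorant method works directly on the formal solution $X$, needs neither iterates nor the reduction to $A=\Id_d$, and gives an explicit radius through the quadratic formula. In exchange it needs more bookkeeping, all standard and mostly flagged by you:
\begin{itemize}
\item You need the degree-collapsed series $\hat f(t)=\sum_n\bigl(\sum_{|m|=n}|f_m|\bigr)t^n$ and the coefficientwise inequality $\widehat{fg}\preceq\hat f\,\hat g$. This has the same proof as \cref{lem:submultiplicativity}, but that lemma as stated is a fixed-radius inequality and does not by itself give coefficientwise domination.
\item You need the fact that the recursively defined solution is monotone in the coefficients of the equation. This is what justifies replacing $\binom{n+d-1}{d-1}$ by $2^{n+d-1}$.
\item You need to identify the Taylor series of the analytic root with the unique formal solution $\Phi$ of $\Phi=\beta t+\phi(\Phi)$, $\Phi(0)=0$. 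The nonnegativity of the $\Phi_n$ comes from this recursion (each $\Phi_n$ is a polynomial with nonnegative coefficients in $\Phi_1,\dotsc,\Phi_{n-1}$), not from the quadratic formula as your wording suggests.
\end{itemize}
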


\begin{proof}
    We use the notations of \cref{prop:ift-formal}.
    Up to a linear change of variables, it suffices to handle the case when $\tau$ is convergent and $A = \Id_d$.
    One needs to prove that the limit $X$ of the sequence $X^{(0)}(y) = y$ and $X^{(N+1)}(y) = y - H(X^{(N)}(y))$ is convergent, where $\ord(H) \geq 2$ and $H$ is convergent.
    Since each $H_i$ is convergent and has order at least $2$, we may choose $s > 0$ such that $\max_i \opnorm{H_i}_s \leq \frac s 2$.
    Indeed, if $H_i \in (\kx)_{s_0}$, then for $0 < s \leq s_0$,
    \begin{equation}
        \opnorm{H_i}_s
        =
        \sum_{|m|\geq 2} |(H_i)_m|s^{|m|}
        \leq
        \left(\frac{s}{s_0}\right)^2\opnorm{H_i}_{s_0},
    \end{equation}
    and therefore $\opnorm{H_i}_s/s\to0$ as $s\to0$.

    We claim that, for every $N \geq 0$, $\max_i \opnorm{X_i^{(N)}}_{s/2} \leq s$.
    This is true for $N = 0$.
    Assume it holds for some $N$.
    Applying the composition estimate \cref{lem:composition-estimate} to $H_i$ and $X^{(N)}$, we get
    \begin{equation}
        \opnorm{X_i^{(N+1)}}_{s/2}
        \leq
        \opnorm{y_i}_{s/2}
        +
        \opnorm{H_i(X^{(N)})}_{s/2}
        \leq
        \frac{s}{2}+\frac{s}{2}
        =
        s.
    \end{equation}
    Passing to the limit yields that $X_i \in (\ky)_{s/2}$, so $\tau^{-1}$ is convergent.
\end{proof}

\section*{Acknowledgments}

The authors acknowledge support from grants ANR-20-CE40-0009 (Project TRECOS), ANR-24-CE40-5470 (Project CHAT) and ANR-11-LABX-0020 (Labex Lebesgue), as well as from the Fondation Simone et Cino Del Duca -- Institut de France.

\bibliographystyle{plain}
\bibliography{control}

\end{document}